\documentclass[a4paper,11pt]{amsart}
\usepackage{amsmath,amssymb,amsthm,mathtools,mathrsfs}
\usepackage{graphicx}
\usepackage{stmaryrd}
\usepackage{microtype}
\usepackage{enumitem}
\usepackage{booktabs,longtable,array}
\usepackage{xcolor}
\usepackage{hyperref}
\usepackage{cleveref}
\usepackage[T1]{fontenc}
\usepackage{lmodern}
\hypersetup{colorlinks=true,linkcolor=blue!55!black,citecolor=green!45!black,urlcolor=blue!65!black}

\allowdisplaybreaks
\newtheorem{theorem}{Theorem}[section]
\newtheorem{proposition}[theorem]{Proposition}
\newtheorem{propositiondefinition}[theorem]{Proposition--Definition}
\newtheorem{lemma}[theorem]{Lemma}
\newtheorem{corollary}[theorem]{Corollary}

\newtheorem{definition}[theorem]{Definition}
\newtheorem{remark}[theorem]{Remark}

\newtheorem{example}[theorem]{Example}

\newcommand{\g}{\mathfrak g}
\newcommand{\dloop}{\mathfrak l}
\newcommand{\dplus}{\mathfrak l_+}
\newcommand{\dminus}{\mathfrak l_-}

\newcommand{\cO}{\mathcal O}
\newcommand{\cA}{\mathcal A}
\newcommand{\cT}{\mathcal T}

\newcommand{\Res}{\operatorname{Res}}
\newcommand{\Tr}{\operatorname{Tr}}
\newcommand{\ad}{\operatorname{ad}}
\newcommand{\Ad}{\operatorname{Ad}}
\newcommand{\Der}{\operatorname{Der}}
\newcommand{\Sym}{\operatorname{Sym}}
\newcommand{\Zhu}{\operatorname{Zhu}}
\newcommand{\gr}{\operatorname{gr}}

\newcommand{\id}{\mathrm{id}}
\newcommand{\dd}{\mathrm d}
\newcommand{\Uenv}{U}
\newcommand{\normal}[1]{\mathopen{:}#1\mathclose{:}}
\newcommand{\Lcur}{\mathsf L}

\newcommand{\Tu}{\partial_u}

\newcommand{\bbC}{\mathbb C}
\newcommand{\kk}{\Bbbk}
\newcommand{\Ych}{\mathcal Y^{\mathrm{ch}}}
\newcommand{\whkappa}{%
  \mathchoice
    {\widehat{\kappa}}%
    {\widehat{\kappa}}%
    {\scalebox{0.72}{$\widehat{\kappa}$}}%
    {\scalebox{0.52}{$\widehat{\kappa}$}}%
}

\title{Chiral Extensions of Cotangent Yangian} 
\author{Keyou Zeng}
\address{Center of Mathematical Sciences and Applications, Harvard University, Cambridge, MA 02138, USA}
\email{kzeng@cmsa.fas.harvard.edu}
\date{}
\begin{document}

\begin{abstract}
We construct chiral extensions of the cotangent Yangian using vertex algebroids. At zero level, the construction yields a vertex algebra with a compatible coproduct, whose Zhu reduction recovers the cotangent Yangian of Abedin and Niu. We also construct vertex algebra extensions at non-zero levels and study level-additive coproducts. The zero-level vertex algebra admits two natural quasi-conformal structures. We compute the corresponding sphere one-point conformal blocks and equip each block space with an associative product induced by the coproduct. For the first quasi-conformal structure, the blocks space can be identified with the dual of a certain Lie algebroid homology; for the second, we describe the block algebra and its classical limit in terms of mini-twistor geometry.
\end{abstract}
\maketitle

\tableofcontents

\section{Introduction}
\label{sec:introduction}
Vertex algebras provide an algebraic axiomatization of two-dimensional conformal field theory, establishing fundamental connections between infinite-dimensional Lie theory, representation theory, and quantum groups. Their relationship with quantum groups takes several forms. In one direction, certain vertex algebras give rise to Yangian-type algebras through Zhu reduction: the (H-twisted) Zhu algebra of an affine \(\mathcal W\)-algebra is the corresponding finite \(\mathcal W\)-algebra \cite{DeSoleKacZhu,Arakawa2007}, which in type \(A\) is isomorphic to a truncation of a shifted Yangian \cite{BrundanKleshchev2006}, while matrix-extended \(\mathcal W_\infty\) algebras have Zhu algebras isomorphic to Yangians \cite{GaiottoRapcakZhou}. In another direction, Etingof and Kazhdan construct quantum vertex algebras from spectral \(R\)-matrices \cite{EtingofKazhdanV}, replacing ordinary locality with a braided exchange relation and incorporating spectral parameters into the state--field correspondence. 

In this paper, we construct chiral extensions of the cotangent Yangian following the former approach, albeit through a distinct geometric method. Our starting point is the realization of the cotangent Yangian via polynomial currents acting by dressing transformations on a negative-loop group. This dressing action naturally defines an action Lie algebroid. By applying the theory of vertex algebroids, we chiralize this Lie algebroid to obtain an ordinary vertex algebra in an independent coordinate $z$, keeping the separate Yangian spectral parameter $u$; in particular, unlike in quantum vertex-algebra constructions, vertex translation is not identified with spectral translation $\partial_u$. At zero level, we equip this algebra with a compatible coproduct and counit. Zhu reduction then recovers the dense evaluated cotangent Yangian of Abedin and Niu \cite{AbedinNiu}. 

The coproduct has substantial algebraic consequences beyond Zhu reduction. When equipped with a quasi-conformal structure, the vertex algebra globalizes on an algebraic curve, allowing us to define the spaces of conformal blocks and coinvariants. Under compatibility conditions with the coproduct, the space of sphere conformal blocks acquires an associative convolution product. We establish two compatible quasi-conformal structures on the zero-level chiral cotangent Yangian and determine the resulting block algebras. In the first quasi-conformal structure, its coinvariants can be described in terms of a Lie algebroid homology, and detects only the center of $\mathfrak g$. For the second structure, which incorporates spectral dilation, we obtain a noncommutative convolution algebra governed by mini-twistor geometry.

We now describe the construction more precisely. Let $\mathfrak g$ be a finite-dimensional complex Lie algebra. The formal loop algebra admits the standard rational splitting
\[
  \mathfrak g((u^{-1})) = \mathfrak g[u] \oplus u^{-1}\mathfrak g[[u^{-1}]].
\]
The negative loop subalgebra integrates to the prounipotent group $G_- = \exp\bigl(u^{-1}\mathfrak g[[u^{-1}]]\bigr)$, which can be identified with the first congruence subgroup $G_1[[u^{-1}]]$. Its coordinate ring $A = \mathcal O(G_-)$ consists of polynomial functions depending on finitely many loop coefficients. The splitting determines an infinitesimal dressing action $\rho: \mathfrak g[u] \to \operatorname{Der}(A)$, and hence an action Lie algebroid $A \otimes \mathfrak g[u]$. When $\mathfrak g$ is equipped with a nondegenerate invariant form, it identifies with the cotangent Lie algebroid of the rational Poisson--Lie structure on $G_-$.

The theory of vertex algebroids provides the bridge from this classical geometric data to a vertex algebra. For each invariant symmetric bilinear form $\widehat\kappa$ on $\mathfrak g[u]$, the extension theorem of Gorbounov, Malikov, and Schechtman \cite{GMS} produces a vertex algebroid $\mathcal A_{\whkappa}(A,\mathfrak g[u])$. We define the corresponding chiral cotangent Yangian as its universal vertex envelope:
\[
  \Ych_{\whkappa}(T^*\mathfrak g) := \Uenv\bigl(\mathcal A_{\whkappa}(A,\mathfrak g[u])\bigr).
\]
It is generated by functions $f \in A$ and currents $\Lcur_\xi$ ($\xi \in \mathfrak g[u]$), with their mixed operator product expansions determined by the dressing action.

While the extension theorem constructs the vertex algebra, it does not by itself provide a coalgebraic structure. At level zero, however, group multiplication $m$ on $G_-$ lifts naturally to a morphism of vertex algebroids. Setting $\Ych(T^*\mathfrak g) := \Ych_0(T^*\mathfrak g)$, the coproduct $\Delta: \Ych(T^*\mathfrak g) \to \Ych(T^*\mathfrak g) \otimes \Ych(T^*\mathfrak g)$ and counit $\epsilon: \Ych(T^*\mathfrak g) \to \kk$ are determined on generators by
\[
  \Delta(f) = m^*f,
  \qquad
  \Delta(\Lcur_\xi) = \Lcur_\xi \otimes 1 + \Lcur_{M_{g_1}\xi}^{(2)},
  \qquad
  \epsilon(f) = f(e),
  \qquad
  \epsilon(\Lcur_\xi) = 0,
\]
for $f \in A$ and $\xi \in \mathfrak g[u]$, where $M_g\xi = (\operatorname{Ad}_{g^{-1}}\xi)_+$ is the projection of the adjoint action, and $\Lcur_{M_{g_1}\xi}^{(2)}$ denotes the current in the second tensor factor with coefficients evaluated at $g_1 \in G_-$ in the first tensor factor.  Together, these equip the zero-level chiral cotangent Yangian with the structure of a graded local vertex bialgebra (Thm.~\ref{thm:zero-algebroid-coprod} and Cor.~\ref{cor:zero-bialg}).

Zhu reduction connects these vertex algebras directly to associative algebras. While the general theorem of Arakawa, Chebotarov, and Malikov \cite{ArakawaChebotarovMalikov} yields an associative algebra isomorphism $\operatorname{Zhu}\bigl(\Ych_{\whkappa}(T^*\mathfrak g)\bigr) \cong \mathcal O(G_-) \#_\rho U(\mathfrak g[u])$ at any level, at level zero this map preserves the coalgebra structure, and we have an isomorphism of bialgebras:
\[
  \operatorname{Zhu}\bigl(\Ych(T^*\mathfrak g)\bigr) \cong Y^\circ_1(T^*\mathfrak g)
\]
recovering the dense evaluated cotangent Yangian of Abedin and Niu \cite{AbedinNiu} (Cor.~\ref{cor:Zhu-dense-bialg-comp}).

At nonzero levels, lifting group multiplication requires level additivity $\widehat\kappa_0 = \widehat\kappa_1 + \widehat\kappa_2$. This yields level-additive coproducts $\Delta_{\whkappa_1,\whkappa_2}: \Ych_{\whkappa_1+\whkappa_2}(T^*\mathfrak g) \to \Ych_{\whkappa_1}(T^*\mathfrak g) \otimes \Ych_{\whkappa_2}(T^*\mathfrak g)$, and in particular endows each vertex algebra $\Ych_{\whkappa}(T^*\mathfrak g)$ with a comodule structure of the zero-level vertex bialgebra $\Ych(T^*\mathfrak g)$.

We next turn to the global consequences of the coproduct. A (quasi-)conformal structure specifies the transformation behavior of a vertex algebra under formal coordinate changes, allowing it to globalize to an algebraic curve \cite{FrenkelBenZvi}. Although $\Ych(T^*\mathfrak g)$ possesses no conformal vector, it admits two natural quasi-conformal structures:
\begin{enumerate}
  \item The \emph{standard nonnegative structure} $\{\mathscr{L}_n\}_{n\geq -1}$, where functions in $A$ have conformal weight zero and currents $\Lcur_\xi$ have conformal weight one.
  \item The \emph{minitwistor structure} $\{\mathscr{L}^{\mathrm{MT}}_n\}_{n\geq -1}$, defined by combining the preceding standard grading with spectral dilation $E_u$:
  \[
    \mathscr L_0^{\mathrm{MT}} = \mathscr L_0 - E_u,
  \]
\end{enumerate}

For both quasi-conformal structures, we show that the coproduct descends to the sphere one-point coinvariants, endowing them with a coalgebra structure, while the spaces of conformal blocks $\mathscr{B}$ acquire an associative convolution product:
\[
  (\lambda \star \mu)(v) = (\lambda \otimes \mu)\Delta(v), \quad \lambda,\mu \in \mathscr{B},\; v \in \Ych(T^\ast\mathfrak g).
\]

For the standard quasi-conformal structure, the coinvariants are canonically described by the degree-zero Lie algebroid homology of the cotangent Lie algebroid of $G_-$. For reductive $\mathfrak g$, this homology depends only on the center $\mathfrak z(\mathfrak g)$: writing $Z_- = \exp\bigl(u^{-1}\mathfrak z(\mathfrak g)[[u^{-1}]]\bigr) \subset G_-$ for the central negative-loop subgroup, the coinvariants identify with $\mathcal O(Z_-)$, and the block algebra is isomorphic to the commutative convolution algebra $\mathcal O(Z_-)^\vee$.

The mini-twistor quasi-conformal structure yields a substantially richer, noncommutative block algebra at level zero. Its coordinate-change rule identifies $u$ with the fibre coordinate of $\mathcal O_{\mathbb P^1}(2)$, combining the vertex and spectral coordinates geometrically into the mini-twistor space $\mathbb{MT} = \operatorname{Tot}_{\mathbb P^1}\mathcal O(2)$. Cover $\mathbb P^1$ by the standard affine charts $U_0, U_\infty$. Over the chart overlap, the spaces of $\mathfrak g$-valued regular functions and principal parts along the zero section form a matched pair of Lie algebras $(\mathfrak D_+, \mathfrak D_-)$, concretely realized by
\[
  \mathfrak D_+ = \mathfrak g[u, t, t^{-1}], \qquad \mathfrak D_- = u^{-1}\mathfrak g[u^{-1}, t, t^{-1}].
\]
We show that every conformal block $\lambda \in \mathscr B^{\mathrm{MT}}$ is represented by a linear map $F_\lambda: U(\mathfrak D_+) \to \widehat{U(\mathfrak D_-)}$ satisfying a two-chart invariance condition under the chart subalgebras $\mathfrak D_+^0 = \mathfrak g[u,t]$ and $\mathfrak D_+^\infty = \mathfrak g[w,z]$ (with $w = u t^{-2}, z = t^{-1}$). The convolution of such maps is governed by the Koppinen smash product \cite{BrzezinskiEntwiningCohomology,SchauenburgDoiKoppinen}:
\[
  (F \diamond G)(X) = \sum_{(X)} Y_{(2)}\, G\bigl(X_{(2)} \triangleleft Y_{(1)}\bigr), \qquad Y = F(X_{(1)}),
\]
where $\triangleleft$ denotes the right action of $U(\mathfrak D_-)$ on $U(\mathfrak D_+)$ induced by the matched pair decomposition. Thm.~\ref{thm:mt-kop-comp} proves that $F_{\lambda \star \mu} = F_\lambda \diamond F_\mu$, identifying $\mathscr B^{\mathrm{MT}}$ with the closed subalgebra of two-chart invariant maps in $\operatorname{Hom}_{\mathbb C}(U(\mathfrak D_+), \widehat{U(\mathfrak D_-)})$.

The Polynomial degree in the coordinate generators inducesan almost-commutative filtration $P$ on $\mathscr B^{\mathrm{MT}}$. Geometrically, the classical limit is described by the sheaf of Lie algebras
\[
  \mathscr{G}_{\mathbb{MT}} = (\mathfrak g \otimes \mathcal O_{\mathbb{MT}}) \ltimes (\mathfrak g^\vee \otimes K_{\mathbb{MT}}),
\]
where $\mathfrak g \otimes \mathcal O_{\mathbb{MT}}$ acts coadjointly on $\mathfrak g^\vee \otimes K_{\mathbb{MT}}$. Via the two-chart \v{C}ech cover, the quotient of overlap currents by the two chart subalgebras identifies with the cohomology $H^1(\mathbb{MT}, \mathscr{G}_{\mathbb{MT}})$. Taking into account the residual zero-mode symmetry under $H^0(\mathbb{MT}, \mathscr{G}_{\mathbb{MT}}) = \mathfrak g$, this yields an isomorphism of (completed) commutative algebras (Thm.~\ref{thm:mt-classical-limit} and Cor.~\ref{cor:mt-cohomology}):
\[
  \widehat{\operatorname{gr}}_P \mathscr B^{\mathrm{MT}} \cong \left( \operatorname{Sym}\bigl(H^1(\mathbb{MT},\mathscr{G}_{\mathbb{MT}})\bigr)^\vee \right)^{\mathfrak g}.
\]

The mini-twistor geometry reflects the physical motivation of our construction, inspired by the work of Costello and Paquette \cite{CostelloPaquetteCelestial} on a six-dimensional field theory on twistor space. Here, the chiral cotangent Yangian is expected to arise from a five-dimensional field theory on the mini-twistor correspondence space---an $\mathbb R$-fibration over the mini-twistor space $\mathbb{MT}$ locally modeled by $\mathbb R \times \mathbb C_z \times \mathbb C_u$. The two holomorphic directions $z,u$ provide the vertex and spectral coordinates, while defect fusion along the $\mathbb R$-fibres induces the coproduct. Although the underlying chiral algebras differ, both frameworks share the central feature that twistor geometry governs the global structure of conformal blocks. While motivated by these field-theoretic origins, all constructions and arguments in the present paper are purely algebraic.

\section{Vertex algebroid}
\label{sec:vertexextensions}

In this section, we review the theory of vertex algebroids and related notions following Gorbounov, Malikov, and Schechtman \cite{GMS}. Readers already familiar with these materials may skip this section.

\subsection{Extended Lie algebroids}
Throughout the algebraic constructions, let $\kk$ be a field of characteristic zero. Let $A$ be a commutative $\kk$-algebra. We first review the notion of an extended Lie $A$-algebroid introduced in \cite{GMS}.

\begin{definition}	
	An \emph{extended Lie $A$-algebroid} $\mathcal{T}$ is a quintuple
	\[
	(A,T,\Omega,\partial,\langle-,-\rangle)
	\]
	consisting of a Lie $A$-algebroid $T$, an $A$-module $\Omega$ with a module structure over the Lie algebra $T$, a derivation $\partial:A\to\Omega$ which is also a morphism of $T$-modules, and an $A$-bilinear pairing $\langle-,-\rangle:T\times\Omega\to A$.
	These data must satisfy the following identities:
	\begin{align*}
		\langle\tau,\partial a\rangle&=\tau(a),
\\
		\tau(a\omega)&=\tau(a)\omega+a\tau(\omega),
\\
		(a\tau)(\omega)&=a\tau(\omega)+\langle\tau,\omega\rangle\partial a,
\\
		\tau\langle\nu,\omega\rangle
		&=\langle[\tau,\nu],\omega\rangle+\langle\nu,\tau(\omega)\rangle,
	\end{align*}
	with $a \in A$, $\tau,\nu \in T$ and $\omega \in \Omega$
\end{definition}

\begin{example}
\label{ex:action-ext-alg}
Let a Lie algebra $\mathfrak a$ act on $A$ by derivations through $\rho:\mathfrak a\to\Der(A)$. The associated \emph{action Lie algebroid} is $T=A\otimes_{\kk}\mathfrak a$, with anchor induced by $\rho$. There is also a canonical extended Lie algebroid associated to it. Set $\Omega=\Omega^1_{A}$ the module of K\"ahler differentials (over $\kk$) and let $\partial = \dd:A\to\Omega$ be the universal derivation. The $T$-action on $\Omega$ and the pairing $\langle-,-\rangle$ are induced by the standard $\Der(A)$ action on $\Omega^1_{A}$ and pairing $\Der(A)\times\Omega^1_{A}\to A$. These data satisfy the axioms of an extended Lie algebroid. We will denote this extended Lie algebroid by $\cT(A, \mathfrak a)$.
\end{example}

Let $\mathcal T=(A,T,\Omega,\partial,\langle-,-\rangle)$ be an extended Lie algebroid.  Following \cite[Sec.~1.3]{GMS}, we define its \emph{de Rham--Chevalley complex} $\Omega^{\bullet}_{\mathrm{DR}}(\mathcal T)$ as follows. In degrees zero and one, $\Omega^0_{\mathrm{DR}}(\mathcal T)=A$, $\Omega^1_{\mathrm{DR}}(\mathcal T)=\Omega$. For $n\geq2$, let $\Omega^n_{\mathrm{DR}}(\mathcal T)$ be the $A$-module of $A$-linear maps $h:\Lambda_A^{n-1}T\longrightarrow\Omega$ for which
\[
(\tau_1,\ldots,\tau_n) \mapsto \langle\tau_1,h(\tau_2,\ldots,\tau_n)\rangle
\] is alternating in all $n$ arguments.

For $n\geq1$ and $h\in\Omega^n_{\mathrm{DR}}(\mathcal T)$, let $d_{\mathrm{Lie}}h$ denote the Chevalley-Eilenberg differential for the underlying Lie algebra $T$ acting on $\Omega$: 
\begin{align*}
(d_{\mathrm{Lie}}h)(\tau_1,\ldots,\tau_n)
={}& \sum_{i=1}^{n}(-1)^{i+1} \tau_i\!\left(
h(\tau_1,\ldots,\widehat{\tau_i},\ldots,\tau_n) \right)\notag\\
&+ \sum_{1\leq i<j\leq n}(-1)^{i+j} h\!\left( [\tau_i,\tau_j], \tau_1,\ldots,\widehat{\tau_i},\ldots, \widehat{\tau_j},\ldots,\tau_n
\right).
\end{align*}
The differential of the complex $\Omega^{\bullet}_{\mathrm{DR}}(\mathcal T)$ is simply
\begin{align*}
d_{\mathrm{DR}}a&=-\partial a,
\\
(d_{\mathrm{DR}}h)(\tau_1,\ldots,\tau_n)
&=(d_{\mathrm{Lie}}h)(\tau_1,\ldots,\tau_n)
-\partial\langle\tau_1,h(\tau_2,\ldots,\tau_n)\rangle,
\qquad n\geq1.
\end{align*}
For example,
$(d_{\mathrm{DR}}\omega)(\tau)
=\tau(\omega)-\partial\langle\tau,\omega\rangle$ for
$\omega\in\Omega$.  One has $d_{\mathrm{DR}}^2=0$.

There is a natural cochain map $K^\bullet :\Omega_A^\bullet \to \Omega^{\bullet}_{\mathrm{DR}}(\mathcal T)$, where $(\Omega_A^{\bullet} = \bigwedge^{\bullet}\Omega_A^1, \dd)$ is the algebraic de Rham complex. We define $K^0=-\id_A$ and $K^1: \Omega_A^1\longrightarrow\Omega$ to be the map $a\,\dd b\longmapsto a\,\partial b$ induced by $\partial$.  For $n\geq1$,
\begin{equation*}
K^n(\alpha)(\tau_1,\ldots,\tau_{n-1}):=
K^1\!\left(\iota_{\rho(\tau_{n-1})}\cdots
\iota_{\rho(\tau_1)}\alpha\right),
\qquad
\alpha\in\Omega_A^n.
\end{equation*}
The map $K^1$ intertwines the $T$-actions, so Cartan calculus gives the cochain identity
\begin{equation*}
d_{\mathrm{DR}}K^n=K^{n+1}\dd .
\end{equation*}
Consequently, $K^\bullet$ induces a map $H^\bullet_{\mathrm{dR}}(A) \to H^\bullet_{\mathrm{DR}}(\mathcal{T})$.

\subsection{Vertex algebroid}
\label{sec:valg-def}

\begin{definition}
	A \emph{vertex $A$-algebroid} consists of an extended Lie $A$-algebroid $\mathcal T=(A,T,\Omega,\partial,\langle-,-\rangle)$ equipped with additional data $(\gamma,\chi,c)$, denoted
\[
\cA=(\mathcal T,\gamma,\chi,c),
\]
where:
\begin{enumerate}[label=(\roman*)]
\item $\gamma:A\times T\to\Omega$ is $\kk$-bilinear;
\item $\chi:T\times T\to A$ is a symmetric $\kk$-bilinear pairing;
\item $c:T\times T\to\Omega$ is skew-symmetric and $\kk$-bilinear.
\end{enumerate}
These data satisfy the five identities (A1)--(A5) below. \footnote{Following \cite{GMS}, the pairings $\langle-,-\rangle$ on $T\times\Omega$ and $\chi$ on $T\times T$ may equivalently be combined into a single symmetric bilinear pairing on $T\oplus\Omega$ vanishing on $\Omega\times\Omega$. We keep $\chi$ notationally distinct to avoid ambiguity.}
\end{definition}

For $a,b\in A$ and $\tau,\tau_i\in T$ the identities are
\begin{align}
\gamma(a,b\tau)
&=\gamma(ab,\tau)-a\gamma(b,\tau)
-\tau(a)\partial b-\tau(b)\partial a,
\tag{A1}\label{eq:GMS-A1}\\
\chi(a\tau_1,\tau_2)
&=a\chi(\tau_1,\tau_2)
+\langle\tau_2,\gamma(a,\tau_1)\rangle
-\tau_1\tau_2(a),
\tag{A2}\label{eq:GMS-A2}
\end{align}
and
\begin{align}
 c(a\tau_1,\tau_2)
={}&a c(\tau_1,\tau_2)
+\gamma(a,[\tau_1,\tau_2])
-\gamma(\tau_2(a),\tau_1)
+\tau_2\bigl(\gamma(a,\tau_1)\bigr)
\notag\\
&-\frac12\chi(\tau_1,\tau_2)\partial a
+\frac12\partial\bigl(\tau_1\tau_2(a)\bigr)
-\frac12\partial\langle\tau_2,\gamma(a,\tau_1)\rangle,
\tag{A3}\label{eq:GMS-A3}
\end{align}
while covariance of the $T$--$T$ pairing is
\begin{align}
\chi([\tau_1,\tau_2],\tau_3)
+\chi(\tau_2,[\tau_1,\tau_3])
={}&\tau_1\chi(\tau_2,\tau_3)
-\frac12\tau_2\chi(\tau_1,\tau_3)
-\frac12\tau_3\chi(\tau_1,\tau_2)
\notag\\
&+\langle\tau_2,c(\tau_1,\tau_3)\rangle
+\langle\tau_3,c(\tau_1,\tau_2)\rangle,
\tag{A4}\label{eq:GMS-A4}
\end{align}
and the final Jacobi condition is
\begin{align}
(d_{\mathrm{Lie}}c)(\tau_1,\tau_2,\tau_3)
={}&-\frac12\partial\Bigl(
\chi([\tau_1,\tau_2],\tau_3)
+\chi([\tau_1,\tau_3],\tau_2)
-\chi([\tau_2,\tau_3],\tau_1)
\notag\\
&\hspace{18mm}
-\tau_1\chi(\tau_2,\tau_3)
+\tau_2\chi(\tau_1,\tau_3)
-2\langle\tau_3,c(\tau_1,\tau_2)\rangle
\Bigr).
\tag{A5}\label{eq:GMS-A5}
\end{align}

The following extension theorem from \cite{GMS} will be important for our later constructions.
\begin{theorem}
\label{lem:frame-reconstruct}
Let $(A,T,\Omega,\partial,\langle-,-\rangle)$ be a extended Lie algebroid and let $T_0\subset T$ be a $\kk$-submodule generating $T$ over $A$.  Suppose we prescribe $\kk$-bilinear maps
\begin{equation*}
\gamma_0:A\times T_0\longrightarrow\Omega,
\qquad
\chi_0:\Sym^2T_0\longrightarrow A,
\qquad
c_0:\Lambda^2T_0\longrightarrow\Omega,
\end{equation*}
Then these maps admit at most one extension $(\gamma,\chi,c)$ satisfying (A1)--(A3). For $a,b\in A$ and $\tau,\tau_1,\tau_2\in T_0$, any such extension is necessarily given by 
\begin{align}
\gamma(a,b\tau)
={}&\gamma_0(ab,\tau)-a\gamma_0(b,\tau)
-\tau(a)\partial b-\tau(b)\partial a,
\label{eq:frame-gamma}\\
\chi(a\tau_1,b\tau_2)
={}&ab\chi_0(\tau_1,\tau_2)
+a\langle\tau_1,\gamma_0(b,\tau_2)\rangle
+b\langle\tau_2,\gamma_0(a,\tau_1)\rangle
\notag\\
&-a\tau_2\tau_1(b)-b\tau_1\tau_2(a)
-\tau_1(b)\tau_2(a),
\label{eq:frame-pairing}
\end{align}
and
\begin{equation}\label{eq:frame-c}
  \begin{aligned}
    c&(a\tau_1,b\tau_2) =  ab c_0(\tau_1,\tau_2) +\gamma(ab,[\tau_1,\tau_2]) +\gamma_0(a\tau_1(b),\tau_2) -\gamma_0(b\tau_2(a),\tau_1) \\
 &-a\tau_1\!\left(\gamma_0(b,\tau_2)\right) +b\tau_2\!\left(\gamma_0(a,\tau_1)\right)+\frac12\chi_0(\tau_1,\tau_2)(a\partial b-b\partial a) \\
 &-[\tau_1,\tau_2](a)\partial b -[\tau_1,\tau_2](b)\partial a +\frac12\bigl( \tau_1(b)\partial\tau_2(a) -\tau_2(a)\partial\tau_1(b) \bigr)\\
 &+\frac12\Bigl( -\langle\tau_1,\gamma_0(b,\tau_2)\rangle\partial a +\langle\tau_2,\gamma_0(a,\tau_1)\rangle\partial b +a\partial\langle\tau_1,\gamma_0(b,\tau_2)\rangle -b\partial\langle\tau_2,\gamma_0(a,\tau_1)\rangle \Bigr) \\
&+\frac12\partial\bigl(
b\tau_1\tau_2(a)-a\tau_2\tau_1(b)
\bigr).
  \end{aligned}
\end{equation}
Assume that the displayed formulas give well-defined maps on $T$. If these data satisfy (A4) and (A5) for all $\tau_i\in T_0$, then the resulting maps define a vertex algebroid $\cA = (\mathcal T,\gamma,\chi,c)$ \cite[Thm.~1.9]{GMS}.
\end{theorem}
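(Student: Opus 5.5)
The statement has two parts: uniqueness with the explicit formulas, which follows from (A1)--(A3) alone, and existence under (A4), (A5) on $T_0$, which the statement attributes to \cite[Thm.~1.9]{GMS}. I would derive the formulas directly and reduce the existence part to a propagation argument for (A4) and (A5).

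\emph{Formula for $\gamma$.} Since $T_0$ generates $T$ over $A$, every element of $T$ is a finite sum $\sum b_i\tau_i$ with $\tau_i\in T_0$. The maps $\gamma,\chi,c$ are $\kk$-bilinear, so they are determined by their values on pairs of the form $(a,b\tau)$ and $(a\tau_1,b\tau_2)$. Applying (A1) with $\tau\in T_0$ rewrites $\gamma(a,b\tau)$ through $\gamma(ab,\tau)=\gamma_0(ab,\tau)$ and $\gamma(b,\tau)=\gamma_0(b,\tau)$. This gives \eqref{eq:frame-gamma}, so $\gamma$ is forced.

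\emph{Formula for $\chi$.} I would apply (A2) once in the first slot to get
\[
\chi(a\tau_1,b\tau_2)=a\chi(\tau_1,b\tau_2)+\langle b\tau_2,\gamma_0(a,\tau_1)\rangle-\tau_1(b\tau_2)(a).
\]
Next I use symmetry of $\chi$ and (A2) again to peel off $b$ from $\chi(\tau_1,b\tau_2)$. Expanding the action $(b\tau_2)(a)$ via the anchor and collecting terms should give exactly \eqref{eq:frame-pairing}; the term $-\tau_1(b)\tau_2(a)$ comes from the Leibniz rule.

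\emph{Formula for $c$.} I would apply (A3) in the first slot and then, using skew-symmetry, in the second slot. Every $\gamma$ that appears, for instance $\gamma(a,[\tau_1,b\tau_2])$, is replaced by \eqref{eq:frame-gamma}, and every $\chi$ by \eqref{eq:frame-pairing}. I expect this step to be the main bookkeeping burden for uniqueness. Tracking the $\frac12\partial$ terms and using the extended-algebroid axioms then yields \eqref{eq:frame-c}, in particular $\tau(a\omega)=\tau(a)\omega+a\tau(\omega)$ and $\langle\tau,\partial a\rangle=\tau(a)$. Together these three steps give uniqueness.

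\emph{Existence.} Well-definedness is assumed in the statement, so I would first verify that the defined maps satisfy (A1)--(A3) on all of $T$. I would do this by checking that the right-hand sides are compatible with further $A$-rescaling, i.e.\ that substituting $a'a$ for $a$ reproduces the identities; this is a cocycle-type check.

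The central step, and the expected main obstacle, is to propagate (A4) and (A5) from $T_0$ to $T$. I would define the defect of (A4) as a function of $(\tau_1,\tau_2,\tau_3)$, and the defect of (A5) likewise. Using (A1)--(A3), I would show that each defect is $A$-linear in every argument, i.e.\ tensorial. A tensorial expression that vanishes on a generating set vanishes everywhere. Proving tensoriality is a lengthy computation, and for it I would rely on \cite[Thm.~1.9]{GMS}, as the statement does. Conceptually, the cleanest route is to note that $A$-linearity of the defects is equivalent to the Jacobi and skew-symmetry identities holding in the associated vertex-algebra truncation, whose full verification is carried out there.
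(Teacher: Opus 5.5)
Your derivation of \eqref{eq:frame-gamma}--\eqref{eq:frame-c} from (A1)--(A3) is correct. Handing existence to \cite[Thm.~1.9]{GMS} is exactly what the paper does: it offers nothing beyond that citation.

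Two small fixes in the $c$-step:
\begin{itemize}
\item $[\tau_1,\tau_2]$ need not lie in $T_0$. So $\gamma(a,b[\tau_1,\tau_2])$ must be expanded with (A1) for a general section, not with \eqref{eq:frame-gamma}. The resulting $-a\gamma(b,[\tau_1,\tau_2])$ cancels the $+a\gamma(b,[\tau_1,\tau_2])$ coming from $a\,c(\tau_1,b\tau_2)$, which is why \eqref{eq:frame-c} keeps $\gamma(ab,[\tau_1,\tau_2])$.
\item The extended-algebroid axiom that actually enters is $(b\tau_2)(\omega)=b\tau_2(\omega)+\langle\tau_2,\omega\rangle\partial b$, with $\omega=\gamma_0(a,\tau_1)$.
\end{itemize}

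The existence sketch, however, relies on a claim that is false as stated. The (A5) defect is not $A$-linear in every argument on the strength of (A1)--(A3) alone. Write $D_4,D_5$ for the left minus right side of (A4), (A5). Since $d_{\mathrm{Lie}}c$ is alternating and $\chi$ is symmetric, the definitions give directly
\[
D_5(\tau_1,\tau_2,\tau_3)+D_5(\tau_1,\tau_3,\tau_2)=\partial D_4(\tau_1,\tau_2,\tau_3).
\]
Hence $A$-linearity of both defects in all slots would force $D_4\,\partial a=0$ for every $a$.

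Here is a concrete counterexample. Take $A=\kk[x]$, a nonabelian $\mathfrak a$ acting trivially, and the data of Ex.~\ref{ex:affine-level-action-ext} with a non-invariant $\kappa$. Then (A1)--(A3) hold and $D_5$ vanishes on $1\otimes\mathfrak a$. Yet $D_5(\xi,x\eta,\zeta)=D_4(\xi,\eta,\zeta)\,\dd x\neq0$ whenever $D_4(\xi,\eta,\zeta)\neq0$.

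What is true is the following:
\begin{itemize}
\item $D_4$ is $A$-trilinear given (A1)--(A3) and the extended-algebroid axioms.
\item $D_5$ is $A$-linear in its last slot, and satisfies $D_5(\tau_1,a\tau_2,\tau_3)=aD_5(\tau_1,\tau_2,\tau_3)+D_4(\tau_1,\tau_2,\tau_3)\,\partial a$.
\end{itemize}
So the propagation must be sequential. First extend (A4) from $T_0$ to $T$. Then $D_5$ is alternating, hence $A$-trilinear, and (A5) extends.

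In truncation language, (A4) and (A5) are themselves the $(0)$--$(1)$ and $(0)$--$(0)$ Jacobi identities. Tensoriality comes from (A2), (A3) being the commutator formulas $[\tau_{(1)},a_{(-1)}]=(\tau_{(0)}a)_{(0)}$ and $[\tau_{(0)},a_{(-1)}]=(\tau_{(0)}a)_{(-1)}$. The displayed relation between $D_4$ and $D_5$ is just skew-symmetry, $u_{(0)}v+v_{(0)}u=\partial(u_{(1)}v)$.
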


\begin{example}
\label{ex:affine-level-action-ext}
For the extended action Lie algebroid of Ex.~\ref{ex:action-ext-alg}, and given an invariant symmetric bilinear form $\kappa$ on $\mathfrak a$, prescribe on $T_0=1\otimes\mathfrak a$ the data
$$
\gamma(a,\xi)=0, \qquad \chi(\xi,\eta)=\kappa(\xi,\eta), \qquad c(\xi,\eta)=0,\quad \text{ for } a\in A,\;\xi,\eta\in\mathfrak a.
$$
Here (A4) is the invariance identity $\kappa([\xi,\eta],\zeta)+\kappa(\eta,[\xi,\zeta])=0$, and (A5) is automatic. Thm.~\ref{lem:frame-reconstruct} therefore gives a vertex algebroid. For $a,b\in A$ and $\xi,\eta\in\mathfrak a$, formulas~\eqref{eq:frame-gamma}--\eqref{eq:frame-c} simplify to
\begin{equation}\label{eq:action-ext}
  \begin{aligned}
    \gamma(a,b\xi)&=-\xi(a)\partial b-\xi(b)\partial a,\\
\chi(a\xi,b\eta)&=ab\,\kappa(\xi,\eta)-a\,\eta\xi(b)-b\,\xi\eta(a)-\xi(b)\eta(a),\\
2c(a\xi,b\eta)&=\kappa(\xi,\eta)(a\partial b-b\partial a)-2\bigl([\xi,\eta](a)\partial b+[\xi,\eta](b)\partial a\bigr)\\
&+\xi(b)\partial\eta(a)-\eta(a)\partial\xi(b)
+\partial\bigl(b\,\xi\eta(a)-a\,\eta\xi(b)\bigr).
\end{aligned}
\end{equation}
\end{example}

\subsection{Truncation and universal vertex envelopes}
\label{sec:valg-truncation}
We assume the reader to be familiar with the basic theory of ($\mathbb Z_{\geq 0}$-graded) vertex algebras. We use the notation $V=\bigoplus_{n\geq0}V_n$ for a nonnegatively graded vertex algebra, with vacuum $\mathbf1\in V_0$, translation $\partial$ of degree one, and state--field map $Y(a,z)b=\sum_{n\in\mathbb Z}a_{(n)}b\,z^{-n-1}$. We refer to \cite{FrenkelBenZvi} for detailed introduction to vertex algebras.

For vertex algebroids satisfying $\Omega=A\partial A$, the following construction identifies their data with the data of a degree-$\leq1$ truncated vertex algebra, together with a splitting of the quotient $V_1/\Omega$. More precisely, let $ V=\bigoplus_{n\ge0}V_n$ be a $\mathbb{Z}_{\geq 0}$-graded vertex algebra over $\kk$ and put $A=V_0$. Let $\Omega\subset V_1$ denote the $\kk$-submodule spanned by the
elements $a_{(-1)}\partial b$.  The vertex operation $~\!_{(-1)}$ make this subspace $\Omega$ an $A$-module, even though $V_1$ itself is not an $A$-module in general. Set
\[
T:=V_1/\Omega.
\]
Then $T$ is a Lie $A$-algebroid: the $~\!_{(-1)}$-product gives the $A$-module structure, the $~\!_{(0)}$-product descends to the Lie bracket and the action on $A$, and $\partial:A\to\Omega$ is a derivation.

The vertex algebra $V$ is called splittable if there exists a $\kk$-linear splitting $s:T\to V_1$ of the canonical projection $V_1 \to T$. A \emph{split} vertex algebra is a pair $(V,s)$ in which such a splitting has been chosen. We define a skew symmetric operation $[-,-]: V_1\times V_1 \to V_1$ by
\[
[x,y]:=\frac12\bigl(x_{(0)}y-y_{(0)}x\bigr).
\]
By \cite[Thm.~2.3]{GMS}, these operations define a vertex algebroid structure
\begin{align*}
\gamma(a,\tau)=s(a\tau)-a_{(-1)}s(\tau), \;\; \chi(\tau,\nu)=s(\tau)_{(1)}s(\nu),\;\; c(\tau,\nu)=s([\tau,\nu])-[s(\tau),s(\nu)].
\end{align*}
We write $t(V):=V_0\oplus V_1$ for the $1$-truncated vertex algebra, with the products $~_{(-1)},~_{(0)},~_{(1)}$ inherited from $V$, and $\mathcal A(t(V),s)$ for the vertex algebroid obtained from a chosen splitting $s$.

Conversely, from a vertex algebroid we set
\[
V_0=A,
\qquad
V_1=T\oplus\Omega,
\]
We obtain a split 1-truncated vertex algebra by defining the vertex operations as
\begin{equation}
	a_{(-1)}b=ab, \qquad a_{(-1)}\omega=a\omega, \qquad a_{(-1)}\tau=a\tau-\gamma(a,\tau),
\label{eq:GMS-reconstruct-m1}
\end{equation}
and
\begin{align}
\tau_{(0)}a&=\tau(a),\quad  \tau_{(0)}\omega=\tau(\omega),\quad  \tau_{(0)}\nu =[\tau,\nu]-c(\tau,\nu)+\frac12\partial\chi(\tau,\nu),
\label{eq:GMS-reconstruct-z2}\\
\tau_{(1)}\nu&=\chi(\tau,\nu), \qquad \tau_{(1)}\omega=\langle\tau,\omega\rangle, \qquad \omega_{(1)}\omega'=0,
\label{eq:GMS-reconstruct-1}
\end{align}
for $\tau,\nu\in T$ and $\omega,\omega'\in\Omega$.
The remaining products follow from truncated skew-symmetry. These operations define a $1$-truncated vertex algebra, denoted by $u(\cA)$, with the displayed splitting. This construction is inverse to the construction of $\mathcal{A}(t(V),s)$ when $\Omega=A\partial A$ \cite[Sec.~2.4]{GMS}.

The universal vertex envelope $\Uenv(\cA)$ of a vertex algebroid $\cA$ is the $\mathbb{Z}_{\geq 0}$-graded vertex algebra generated by $A$ in weight zero and
$T\oplus\Omega$ in weight one with the truncated operations \eqref{eq:GMS-reconstruct-m1}--\eqref{eq:GMS-reconstruct-1}. It have the following universal property. For a $\mathbb{Z}_{\geq 0}$-graded vertex algebra $W$, one has
\begin{equation*}
\operatorname{Hom}_{\mathrm{VA}}
\bigl(\Uenv(\cA),W\bigr)
\cong
\operatorname{Hom}_{\mathrm{VA}_{\leq 1}}
\bigl(u(\cA),t(W)\bigr).
\end{equation*}
Here $\mathrm{VA}$ denotes $\mathbb{Z}_{\ge 0}$-graded vertex algebras with degree-preserving morphisms, while $\mathrm{VA}_{\leq 1}$ denotes $1$-truncated vertex algebras with morphisms preserving all truncated operations.  In other words, $\Uenv$ is left adjoint to truncation.

We briefly recall the PBW theorem of vertex envelopes. There is a standard filtration $F$ on $\Uenv(\cA)$ such that $F^r\Uenv(\cA)$ is the span of the monomials containing at most $r$ letters translated from $T$, where letters translated from $A$ or $\Omega$ have order zero. \cite{GMS} constructed a filtration $H$ on $\Uenv(\cA)$ that furter refine $F$ and proved the following. 
\begin{theorem}
\label{thm:vertex-env-PBW}
Assume that $T$ and $\Omega$ are free $A$-modules.  The filtration $H$ is exhaustive, compatible with conformal weight, and functorial in $\cA$.  The generator classes induce an isomorphism of commutative differential $A$-algebras
\begin{equation*}
\Sym_A\!\left(
\bigoplus_{i\geq1}T^{(i)}
\oplus
\bigoplus_{i\geq1}\Omega^{(i)}
\right)
\xrightarrow{\ \sim\ }
\gr_H\Uenv(\cA),
\end{equation*}
where $T^{(i)}$ and $\Omega^{(i)}$ are copies of $T$ and $\Omega$ in conformal weight $i$, represented by their $(i-1)$st translates.
\end{theorem}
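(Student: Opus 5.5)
The plan is to construct the filtration $H$ explicitly and then establish the isomorphism by producing maps in both directions. For the direction from $\Sym$ to $\gr$, I would first refine the letter-count filtration $F$. Set $H$ to be the filtration by pairs (number of letters from $T$, total conformal weight), or equivalently a lexicographic refinement of $F$ by the number of $\Omega$-letters. The point is to make $\gr_H\Uenv(\cA)$ supercommutative. Commutators of letters are given by $\tau_{(0)}\nu$, $\tau_{(1)}\nu=\chi(\tau,\nu)$ and $\tau_{(0)}a=\tau(a)$, and each of these strictly lowers the $T$-count. Functoriality and compatibility with weight are clear, since morphisms of vertex algebroids send letters to letters. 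Exhaustiveness follows because $\Uenv(\cA)$ is spanned by normally ordered monomials in the translates of the generators.

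With this filtration, $\gr_H$ becomes a commutative differential algebra with derivation induced by $\partial$. It receives $A$ in degree zero: one checks that $a_{(-1)}b=ab$, and that $A$ acts on the $T$-classes through $a\tau$ because $\gamma(a,\tau)\in\Omega$ has lower $T$-count. The classes $\partial^{i-1}\tau$ and $\partial^{i-1}\omega$ are $A$-linear modulo lower filtration. By the universal property of $\Sym_A$ on free modules, we obtain a surjective map of differential $A$-algebras from $\Sym_A(\bigoplus T^{(i)}\oplus\bigoplus\Omega^{(i)})$ onto $\gr_H\Uenv(\cA)$. Surjectivity holds because the translates of the generators generate $\Uenv(\cA)$.

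Injectivity is the main obstacle, and I would prove it by constructing a representation of $\Uenv(\cA)$ large enough to detect the PBW basis. Choose $A$-bases $\{\tau_\alpha\}$ of $T$ and $\{\omega_\beta\}$ of $\Omega$, and let $M=\Sym_A$ of the stated space. Define vertex operators on $M$ for the generators: $a$ and $\omega$ act by multiplication-type fields, and $\tau$ acts by a creation part plus annihilation parts. The annihilation parts are built from the anchor, the $T$-action on $\Omega$, $\chi$, $\langle-,-\rangle$, $c$ and $\gamma$, following the free-field (chiral differential operator) construction. The axioms (A1)--(A5) are exactly what is needed to verify locality and the truncated OPEs of Eqs.~\eqref{eq:GMS-reconstruct-m1}--\eqref{eq:GMS-reconstruct-1}. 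Concretely, (A4) and (A5) give the Jacobi identity for the $\tau$--$\tau$ brackets, and (A1)--(A3) give the $A$-linearity corrections.

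By the existence theorem for vertex algebras generated by mutually local fields, $M$ then carries a vertex algebra structure whose $1$-truncation is $u(\cA)$. The universal property of $\Uenv(\cA)$ gives a morphism $\Uenv(\cA)\to M$ sending $\mathbf 1\mapsto 1$. Its associated graded composite $\Sym_A(\cdots)\to\gr_H\Uenv(\cA)\to\gr M\cong\Sym_A(\cdots)$ is the identity, because each creation part reproduces the generator exactly. Hence the first map is injective.

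The hardest step is the locality and OPE verification for the constructed fields, especially the $\tau$--$\tau$ OPE. Here the correction terms $c$ and $\frac12\partial\chi$ must combine correctly, and freeness is used to define the fields on basis elements and extend them $A$-linearly. I would try first to reduce this to the case of a free $A$-basis $T_0$, by the uniqueness part of Thm.~\ref{lem:frame-reconstruct}, and to check (A4)--(A5) only on basis elements. As a fallback, one can cite \cite{GMS} for this computation.
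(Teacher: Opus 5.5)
The paper gives no proof of this statement; it is quoted from \cite{GMS}. Your sketch therefore has to stand on its own. Its overall strategy is reasonable: commutativity of $\gr$ gives surjectivity, and a faithful model on the symmetric algebra gives injectivity. As written, however, it has a flaw in the setup and a gap at the central step.

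\textbf{The filtration.} Take $H$ to be the filtration by the pair ($T$-count, conformal weight). Then your claim that the classes of $\partial^{i-1}\tau$ and $\partial^{i-1}\omega$ are $A$-linear modulo lower filtration is false. Since $\partial a\in\Omega$, one has $\partial(a_{(-1)}\omega)=(\partial a)_{(-1)}\omega+a_{(-1)}\partial\omega$ and $\partial(a_{(-1)}\tau)=(\partial a)_{(-1)}\tau+a_{(-1)}\partial\tau$. In each identity both terms have the same $T$-count and the same weight. Consequently, in that associated graded:
\begin{itemize}
\item the translate maps are not $A$-linear;
\item the induced differential does not kill $A$, since it sends $a$ to the generally nonzero class of $\partial a$;
\item there are no well-defined $A$-submodules $T^{(i)},\Omega^{(i)}$ that are copies of $T$ and $\Omega$.
\end{itemize}
At best you get a basis-dependent isomorphism of $A$-algebras, not the stated isomorphism of differential $A$-algebras.

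Your second description (refining by the number of $\Omega$-letters) is not equivalent to the first. It works only if, inside each $F^r/F^{r-1}$, the secondary filtration \emph{decreases} with the number of positive-weight letters, counting $\partial^k a$ for $k\geq1$ as $\Omega$-letters. Then the terms $(\partial^j a)_{(-1)}\partial^{i-1-j}x$ with $j\geq1$ drop, which is what the refinement $H$ of $F$ has to accomplish.

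A related point: morphisms do not send letters to letters, since $\tau\mapsto g_T\tau+h(\tau)$. Functoriality holds only because $h(\tau)$ has lower $T$-count.

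\textbf{Injectivity.} This is the whole content of the theorem, and it is not carried out. You never write down the fields on $M$, and no free-field realization of a general vertex algebroid is available. The $\beta\gamma$ system realizes chiral differential operators in \'etale coordinates. It does not realize, for example, the action algebroids of this paper, whose anchor is far from free.

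The fields on $M$ are forced by $u(\cA)$, and they are not of a simple creation-plus-annihilation form. Already for $b\in A$ one has $\tau_{(-1)}b=b\tau-\gamma(b,\tau)+\partial\bigl(\tau(b)\bigr)$. Likewise
$a_{(0)}\bigl((\tau_1)_{(-1)}\tau_2\bigr)=-\tau_1(a)\tau_2-\tau_2(a)\tau_1+\gamma(\tau_1(a),\tau_2)+\gamma(\tau_2(a),\tau_1)-\partial\bigl(\tau_1\tau_2(a)\bigr)$.
On longer monomials these corrections iterate through $\gamma$, iterated anchors, $\chi$ and $c$.

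Writing the fields on all of $M$ therefore amounts to knowing the normally ordered product of $\Uenv(\cA)$ in PBW coordinates, which is what is being proved. Verifying their mutual locality is the real work. It is the vertex analogue of the inductive construction of the $\mathfrak g$-action on $\Sym(\mathfrak g)$ in the classical PBW proof, or of Rinehart's argument for Lie--Rinehart algebras. Saying that (A1)--(A5) are exactly what is needed is not a verification, and falling back on \cite{GMS} is precisely what the paper does.

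To close the gap you need one of two things:
\begin{itemize}
\item an actual construction of such a module, with locality checked; or
\item an argument controlling the ideal generated by $a_{(-1)}b-ab$, $a_{(-1)}\omega-a\omega$ and $a_{(-1)}\tau-a\tau+\gamma(a,\tau)$ inside the enveloping vertex algebra of the associated vertex Lie algebra, whose PBW basis over $\kk$ is known.
\end{itemize}
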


Inside the universal envelope $\Uenv(\cA)$, the truncated products determine the operator product expansions among the generators. The algebra $A$ generates a commutative vertex subalgebra with regular OPEs: $a(z)b(w) \sim 0$, $a,b \in A$. The weight-one fields $T$ act on $A$ via the anchor map,
\begin{equation*}
\tau(z)a(w) \sim \frac{\tau(a)(w)}{z-w}, \qquad \tau \in T,\ a \in A,
\end{equation*}
Finally, for sections $\tau,\nu \in T$, their OPE can be expressed in terms of the vertex algebroid operations as:
\begin{equation}
\tau(z)\nu(w) \sim \frac{\chi(\tau,\nu)(w)}{(z-w)^2} + \frac{[\tau,\nu](w) - c(\tau,\nu)(w) + \frac{1}{2}\partial\chi(\tau,\nu)(w)}{z-w}.
\label{eq:GMS-OPE-dictionary}
\end{equation}

\subsection{Morphisms and changes of splitting}
\label{sec:mor-splittings}
The preceding subsection relates split $1$-truncated vertex algebras and vertex algebroids.  We now describe their morphisms.  A morphism of vertex algebroids
\[
g:\cA\longrightarrow\cA'
\]
is described by a quadruple $(g_A,g_T,g_\Omega,h)$, where $(g_A,g_T,g_\Omega)$ is a morphism of the underlying extended Lie algebroids and $h: T\longrightarrow\Omega'$ satisfies
\begin{align}
h(a\tau)
&=g_A(a)h(\tau)-\gamma'(g_A(a),g_T\tau)+g_\Omega\gamma(a,\tau),
\label{eq:GMS-mor-gamma}\\
g_A\chi(\tau_1,\tau_2)
&=\chi'(g_T\tau_1,g_T\tau_2)
+\langle g_T\tau_1,h(\tau_2)\rangle
+\langle g_T\tau_2,h(\tau_1)\rangle,
\label{eq:GMS-mor-pair}
\end{align}
and
\begin{align}
g_\Omega c(\tau_1,\tau_2)
={}&c'(g_T\tau_1,g_T\tau_2)
+\frac12\partial'\langle g_T\tau_1,h(\tau_2)\rangle
-\frac12\partial'\langle g_T\tau_2,h(\tau_1)\rangle
\notag\\
&-g_T\tau_1\bigl(h(\tau_2)\bigr)
+g_T\tau_2\bigl(h(\tau_1)\bigr)
+h([\tau_1,\tau_2]).
\label{eq:GMS-mor-c}
\end{align}
Equivalently, if we regard $(\tau, \eta)$ as elements of $T\oplus \Omega = u(\cA)_1$, the above conditions are precisely the conditions for
\begin{equation*}
(\tau,\eta)
\longmapsto
\bigl(g_T\tau,g_\Omega\eta+h(\tau)\bigr)
\end{equation*}
together with $g_A$, to preserve the split $1$-truncated vertex-algebra operations.   

If $g':\cA'\to\cA''$ has components $(g'_A,g'_T,g'_\Omega,h')$, composition is given by
\begin{equation*}
g'\circ g = \bigl(g'_Ag_A,g'_Tg_T,g'_\Omega g_\Omega,g'_\Omega h+h'g_T\bigr),
\end{equation*}
In this way, $u$ provides an equivalence between the category of vertex algebroids and the category of split $1$-truncated vertex algebras (where morphisms preserve the one-form subspaces, but are not required to preserve the chosen splittings).

The following extension theorem from \cite[Thm.~3.8]{GMS} for morphisms is the counterpart of Thm.~\ref{lem:frame-reconstruct}.
\begin{theorem}
\label{thm:GMS-ext-morphisms}
Let $\cA$ and $\cA'$ be vertex algebroids, let $T_0\subset T$ be a $\kk$-submodule generating $T$ as an $A$-module, and let $(g_A,g_T,g_\Omega)$ be a morphism of their underlying extended Lie algebroids.  For a $\kk$-linear map $h:T\to\Omega'$, the quadruple $(g_A,g_T,g_\Omega,h)$ defines a unique vertex-algebroid morphism if and only if \eqref{eq:GMS-mor-gamma} holds for $a\in A$ and $\xi\in T_0$, while \eqref{eq:GMS-mor-pair} and \eqref{eq:GMS-mor-c} hold for $\xi,\eta\in T_0$.
\end{theorem}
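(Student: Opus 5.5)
Theorem~\ref{thm:GMS-ext-morphisms} says a quadruple $(g_A,g_T,g_\Omega,h)$ is a vertex-algebroid morphism exactly when \eqref{eq:GMS-mor-gamma} holds on $A\times T_0$ and \eqref{eq:GMS-mor-pair}, \eqref{eq:GMS-mor-c} hold on $T_0\times T_0$. I would prove it by reducing to the dictionary of Section~\ref{sec:mor-splittings}. There, the three identities \eqref{eq:GMS-mor-gamma}--\eqref{eq:GMS-mor-c} for all arguments are equivalent to the map $(\tau,\eta)\mapsto(g_T\tau,g_\Omega\eta+h(\tau))$, together with $g_A$, preserving the truncated operations \eqref{eq:GMS-reconstruct-m1}--\eqref{eq:GMS-reconstruct-1}. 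Necessity is then trivial. For sufficiency, uniqueness is also clear: $h$ is given on all of $T$, and its values on $T_0$ together with \eqref{eq:GMS-mor-gamma} determine it on $AT_0=T$.

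The real content is propagation. I would show that if the identities hold on $T_0$, they hold on $A$-multiples. I would do this in three steps, mirroring Theorem~\ref{lem:frame-reconstruct}.

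\emph{Step 1.} Show that \eqref{eq:GMS-mor-gamma} for $\tau\in T_0$ implies it for $b\tau$. Compute $h(ab\tau)$ in two ways, using \eqref{eq:GMS-mor-gamma} once for $ab$ and once for $a$ applied to $b\tau$. Then rewrite $\gamma(a,b\tau)$ and $\gamma'(g_Aa,g_A b\,g_T\tau)$ by (A1). The leftover terms $\tau(a)\partial b+\tau(b)\partial a$ match their primed counterparts, because $(g_A,g_T,g_\Omega)$ intertwines anchors and $\partial$. This establishes \eqref{eq:GMS-mor-gamma} on all of $A\times T$.

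\emph{Step 2.} Show that \eqref{eq:GMS-mor-pair} is stable under $\tau_1\mapsto a\tau_1$. Expand $\chi(a\tau_1,\tau_2)$ by (A2) on both sides. Substitute $h(a\tau_1)$ from Step~1, and use that $g$ preserves $\langle-,-\rangle$ and anchors. The terms involving $\langle g_T\tau_2,\gamma'(\cdot)\rangle$ then cancel against those coming from $h(a\tau_1)$. By symmetry this gives \eqref{eq:GMS-mor-pair} on all of $T\times T$.

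\emph{Step 3.} Show that \eqref{eq:GMS-mor-c} is stable under $\tau_1\mapsto a\tau_1$. Expand $c(a\tau_1,\tau_2)$ by (A3) on both sides, and apply $g_\Omega$. Substitute $h(a\tau_1)$ and $h(a[\tau_1,\tau_2])$ via Step~1, and use Step~2 for the $\chi$ terms.

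I expect Step~3 to be the main obstacle, because of the many $\tfrac12\partial$ terms and the term $g_T\tau_2(h(a\tau_1))$. To organize it, I would rather work in the truncated-vertex-algebra language. Set $\phi(\tau)=(g_T\tau,h(\tau))$. Then Step~1 says $\phi(a_{(-1)}x)=g_A(a)_{(-1)}\phi(x)$, and the claims become the preservation of the ${}_{(0)}$ and ${}_{(1)}$ products. The truncated Borcherds identities of $u(\cA)$ and $u(\cA')$ give the standard formulas
\[
x_{(0)}(a_{(-1)}y)=a_{(-1)}x_{(0)}y+(x_{(0)}a)_{(-1)}y,
\]
\[
(a_{(-1)}x)_{(0)}y=a_{(-1)}x_{(0)}y-(y_{(0)}a)_{(-1)}x+\partial(\cdots)
\]
and their ${}_{(1)}$ analogues. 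These show that the set of $x$ for which $\phi$ preserves the products with all $y$ is closed under $a_{(-1)}$ and under adding $\Omega$. Since $A_{(-1)}T_0$ together with $\Omega$ spans $u(\cA)_1$, preservation on $T_0$ propagates to all of $u(\cA)_1$. This yields the remaining identities \eqref{eq:GMS-mor-pair} and \eqref{eq:GMS-mor-c}, completing the proof.
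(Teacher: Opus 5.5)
The paper gives no proof of this statement. It imports it from \cite[Thm.~3.8]{GMS}, so there is no in-text argument to compare against. Your proposal is correct and self-contained, and it takes a more conceptual route than a brute-force check of \eqref{eq:GMS-mor-c} on $A$-multiples (the analogue of how Theorem~\ref{lem:frame-reconstruct} is obtained).

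Step~1 works exactly as you describe. The terms $\tau(a)\partial b+\tau(b)\partial a$ cancel by anchor and $\partial$-compatibility, provided you read ``two ways'' as: use the hypothesis for $(ab,\tau)$ on one side, and substitute $h(b\tau)$ into the right-hand side for $(a,b\tau)$.

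Switching to $u(\cA)$ is the right way to handle Step~3. For $x,y\in T$, the truncated quasi-associativity identities are
\[
(a_{(-1)}x)_{(1)}y=a\,(x_{(1)}y)-x_{(0)}(y_{(0)}a),\qquad
(a_{(-1)}x)_{(0)}y=a_{(-1)}(x_{(0)}y)+(\partial a)_{(-1)}(x_{(1)}y)-(y_{(0)}a)_{(-1)}x-\partial\bigl(x_{(0)}(y_{(0)}a)\bigr).
\]
These are equivalent to (A2) and (A3), together with the Leibniz rule for the bracket. Propagation from $T_0$ is therefore formal, and your Step~2 becomes redundant. Your route avoids ever expanding \eqref{eq:GMS-mor-c}. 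The price is reliance on the dictionary of Sec.~\ref{sec:mor-splittings} and on the truncated axioms holding in both $u(\cA)$ and $u(\cA')$.

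Three points need tightening.

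\emph{Containment of $T_0$.} The set of $x$ whose products with \emph{all} $y$ are preserved does not contain $T_0$ by hypothesis, because the hypothesis only covers $y\in T_0$. First fix $x\in T_0$ and propagate in the second slot, using
\[
x_{(0)}(a_{(-1)}y)=a_{(-1)}(x_{(0)}y)+(x_{(0)}a)_{(-1)}y,\qquad x_{(1)}(a_{(-1)}y)=a\,(x_{(1)}y)+(x_{(0)}a)_{(0)}y.
\]
Alternatively, use skew-symmetry together with $\phi\partial=\partial' g_A$. Only then propagate in the first slot.

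\emph{The term $(\partial a)_{(-1)}(x_{(1)}y)$.} This term is not a total derivative. So the $(0)$-product cannot be propagated without propagating the $(1)$-product for the same pair at the same time. Your set includes both products, so the argument survives, but your displayed ``$\partial(\cdots)$'' hides this dependence.

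\emph{Step~1 on all of $T$.} You apply $a_{(-1)}$ to outputs $x_{(0)}y$ that lie outside $T_0$. This is why you need Step~1 on all of $T$, not just on $T_0$.
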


We will mostly deal with the specialization of the above theorem for action Lie algebroids. 
\begin{corollary}
\label{cor:frame-mor-crit}
In the setting of the above theorem, suppose that $T=A\otimes_{\kk}\mathfrak a$. Given a $\kk$-linear map $h_0:\mathfrak a\to\Omega'$, there is a unique map $h:T\to\Omega'$ extending $h_0$ and satisfying
\eqref{eq:GMS-mor-gamma}, given by
\begin{equation}
h(a\xi)
=
g_A(a)h_0(\xi)
-\gamma'\bigl(g_A(a),g_T\xi\bigr)
+g_\Omega\gamma(a,\xi).
\label{eq:frame-h-ext}
\end{equation}
Then $(g_A,g_T,g_\Omega,h)$ is a vertex-algebroid morphism if and only if \eqref{eq:GMS-mor-pair} and \eqref{eq:GMS-mor-c} hold for $\xi,\eta\in\mathfrak a$.
\end{corollary}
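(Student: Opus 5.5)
The plan is to deduce the corollary from Thm.~\ref{thm:GMS-ext-morphisms} with $T_0=1\otimes\mathfrak a$, which generates $T=A\otimes_\kk\mathfrak a$ over $A$. The work splits into two parts. First, show that $h_0$ has a unique extension satisfying \eqref{eq:GMS-mor-gamma}, and that it is given by \eqref{eq:frame-h-ext}. Second, check that this extension meets the hypotheses of the theorem.

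\emph{Uniqueness and existence of $h$.} Since $T$ is free over $A$ with $\kk$-basis of $\mathfrak a$ tensored in, every element of $T$ is a finite sum $\sum a_i\xi_i$. Any $\kk$-linear $h$ extending $h_0$ and satisfying \eqref{eq:GMS-mor-gamma} must therefore have the form \eqref{eq:frame-h-ext} on each summand, which gives uniqueness.

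For existence, I will define $h$ on $A\otimes_\kk\mathfrak a$ by \eqref{eq:frame-h-ext}. The right-hand side is $\kk$-bilinear in $(a,\xi)$, because $\gamma,\gamma'$ are $\kk$-bilinear and $g_A,g_T,g_\Omega$ are $\kk$-linear. Hence it descends to a well-defined $\kk$-linear map on the tensor product. Setting $a=1$ should recover $h_0$. This needs $\gamma(1,\xi)=0$ and $\gamma'(1,\cdot)=0$, both of which follow from (A1) with $a=b=1$: the identity reduces to $\gamma(1,\tau)=\gamma(1,\tau)-\gamma(1,\tau)-0$, so $\gamma(1,\tau)=0$. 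The same argument gives $\gamma'(1,\cdot)=0$, and $g_A(1)=1$.

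\emph{Checking \eqref{eq:GMS-mor-gamma} for $\xi\in T_0$.} The theorem only requires \eqref{eq:GMS-mor-gamma} for $a\in A$ and $\xi\in T_0$. For such $\xi$ this is exactly the defining formula \eqref{eq:frame-h-ext}, so the identity holds by construction. With that in place, Thm.~\ref{thm:GMS-ext-morphisms} says $(g_A,g_T,g_\Omega,h)$ is a vertex-algebroid morphism if and only if \eqref{eq:GMS-mor-pair} and \eqref{eq:GMS-mor-c} hold on $T_0$, i.e.\ for $\xi,\eta\in\mathfrak a$. This is the claimed criterion.

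The only step I expect to need care is applying the theorem correctly: its statement fixes $h$ on all of $T$ but imposes \eqref{eq:GMS-mor-gamma} only on generators. To be safe I will also check \eqref{eq:GMS-mor-gamma} for general $\tau=b\xi$, so that the constructed $h$ agrees with the one the theorem produces. Substituting \eqref{eq:frame-h-ext} into both sides of $h(ab\xi)$ versus $g_A(a)h(b\xi)-\gamma'(g_A a,g_T(b\xi))+g_\Omega\gamma(a,b\xi)$, and expanding each $\gamma(\cdot,b\xi)$ and $\gamma'(\cdot,g_A(b)g_T\xi)$ with (A1), the difference reduces to terms of the form $\tau(a)\partial b$. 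These match because $g$ is a morphism of extended Lie algebroids: it intertwines the anchors, $g_A(\tau(a))=(g_T\tau)(g_A a)$, and the derivations, $g_\Omega\partial=\partial' g_A$. This is routine bookkeeping.
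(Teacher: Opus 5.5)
Your proposal is correct and follows the paper's route: the paper gives no separate proof and treats the corollary as the specialization of Thm.~\ref{thm:GMS-ext-morphisms} to $T_0=1\otimes\mathfrak a$, where \eqref{eq:GMS-mor-gamma} on generators forces \eqref{eq:frame-h-ext}, exactly as you argue. Your extra check of \eqref{eq:GMS-mor-gamma} for $\tau=b\xi$ also goes through; besides anchor and differential compatibility, it uses the $g_A$-semilinearity $g_T(b\xi)=g_A(b)\,g_T\xi$ and $g_\Omega(a\omega)=g_A(a)\,g_\Omega\omega$.
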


Moreover, for the extended action Lie algebroids, the underlying morphism $(g_A,g_T,g_{\Omega})$ can also be determined by their data on $T_0 = 1\otimes\mathfrak a$.  Namely, given an algebra map $g_A:A\to A'$ and a $\kk$-linear map $g_{T_0} :\mathfrak a\to T'$ satisfying
\begin{equation*}
\rho'_{g_{T_0}(\xi)}\bigl(g_A(a)\bigr)=g_A\bigl(\rho_\xi(a)\bigr),
\qquad
[g_{T_0}(\xi),g_{T_0}(\eta)]=g_{T_0}([\xi,\eta]),
\end{equation*}
define
\begin{equation*}
g_T(a\xi)=g_A(a)g_{T_0}(\xi),
\qquad
g_\Omega(a\,\dd b)=g_A(a)\,\dd g_A(b).
\end{equation*}
Then a vertex-algebroid lift is determined by $h_0:\mathfrak a\to\Omega'$, subject to the identities \eqref{eq:GMS-mor-pair} and \eqref{eq:GMS-mor-c} for $\xi,\eta\in\mathfrak a$.

The case used very often later is the following morphism associated with a change of splitting data.
\begin{proposition}
\label{prop:change-splitting}
Let $\cA$ be a vertex algebroid and let $\Theta:T\to\Omega$ be a $\kk$-linear map. We keep the underlying extended Lie algebroid fixed and define new vertex algebroid data by
\begin{equation}\label{eq:GMS-change}
  \begin{aligned}
    \gamma^\Theta(a,\tau) &=\gamma(a,\tau)+\Theta(a\tau)-a\Theta(\tau),\\
\chi^\Theta(\tau,\nu) &=\chi(\tau,\nu) +\langle\tau,\Theta(\nu)\rangle +\langle\nu,\Theta(\tau)\rangle,\\
c^\Theta(\tau,\nu) &=c(\tau,\nu)+\Theta([\tau,\nu]) -\tau\Theta(\nu)+\nu\Theta(\tau) +\frac12\partial\langle\tau,\Theta(\nu)\rangle -\frac12\partial\langle\nu,\Theta(\tau)\rangle.
  \end{aligned}
\end{equation}
These formulas define a vertex algebroid $\cA^\Theta$. Moreover,
\begin{equation*}
\Phi_\Theta
:=
(\id_A,\id_T,\id_\Omega,-\Theta):
\cA\xrightarrow{\sim}\cA^\Theta
\end{equation*}
is a vertex-algebroid isomorphism, and it extends uniquely to an isomorphism of universal vertex envelopes.
For successive changes by $\Theta_1$ and $\Theta_2$,
\begin{equation*}
(\cA^{\Theta_1})^{\Theta_2}=\cA^{\Theta_1+\Theta_2},
\qquad
\Phi_{\Theta_2}\circ\Phi_{\Theta_1}
=\Phi_{\Theta_1+\Theta_2},
\qquad
\Phi_\Theta^{-1}=\Phi_{-\Theta}.
\end{equation*}
\end{proposition}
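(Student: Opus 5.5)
The cleanest route is to transport the whole structure through the split $1$-truncated vertex algebra $u(\cA)$, where a change of splitting is a linear automorphism of the weight-one space. Define
\[
\phi_\Theta:u(\cA)_1=T\oplus\Omega\to T\oplus\Omega,\qquad (\tau,\eta)\mapsto(\tau,\eta-\Theta(\tau)),
\]
with $\phi_\Theta=\id$ on $A$. Next, transport the truncated operations of $u(\cA)$ along $\phi_\Theta$: set $x\,{}_{(n)}^{\Theta}\,y:=\phi_\Theta\bigl(\phi_\Theta^{-1}x\,{}_{(n)}\,\phi_\Theta^{-1}y\bigr)$. By construction this is a $1$-truncated vertex algebra, and $\phi_\Theta$ is an isomorphism onto it. It is still split by the inclusion $T\hookrightarrow T\oplus\Omega$, and it has the same $A$, $\Omega$, $\partial$, $\langle-,-\rangle$ and Lie algebroid $T$. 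The reason is that $\phi_\Theta$ is the identity modulo $\Omega$ and on $A\oplus\Omega$.

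By \cite[Thm.~2.3]{GMS} the transported structure therefore defines a vertex algebroid over the same extended Lie algebroid. Its data $(\gamma',\chi',c')$ are read off from the formulas $\gamma(a,\tau)=s(a\tau)-a_{(-1)}s(\tau)$ and so on, with the new splitting $s'(\tau)=\phi_\Theta(\tau)=\tau-\Theta(\tau)$ relative to the old structure. Concretely:
\begin{itemize}
\item From $a_{(-1)}(\tau-\Theta\tau)=a\tau-\gamma(a,\tau)-a\Theta(\tau)$ we get $\gamma'(a,\tau)=\gamma(a,\tau)+\Theta(a\tau)-a\Theta(\tau)$.
\item Using $\tau_{(1)}\omega=\langle\tau,\omega\rangle$ and $\omega_{(1)}\omega'=0$, up to the sign convention for $\Theta$, we get $\chi'$.
\item Using $\tau_{(0)}\omega=\tau(\omega)$ together with truncated skew-symmetry $\omega_{(0)}\tau=-\tau(\omega)+\partial\langle\tau,\omega\rangle$, we get $c'$ from $c'(\tau,\nu)=s'([\tau,\nu])-[s'\tau,s'\nu]$.
\end{itemize}
The sign of $\Theta$ must be fixed so that these match \eqref{eq:GMS-change}. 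Should the sign come out reversed, I will use $\phi_{-\Theta}$ and adjust accordingly. This identification is the one step needing care, namely the symmetrization $\tfrac12(x_{(0)}y-y_{(0)}x)$ and the skew-symmetry corrections producing the $\tfrac12\partial\langle\cdot,\cdot\rangle$ terms. Everything else is formal.

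The resulting data satisfy (A1)--(A5) automatically, so $\cA^\Theta$ is a vertex algebroid. The map on $T\oplus\Omega$ is $(\tau,\eta)\mapsto(\tau,\eta-\Theta\tau)$, which by the discussion of \S\ref{sec:mor-splittings} is exactly the morphism $(\id,\id,\id,-\Theta)$. It preserves all truncated operations, hence is a vertex-algebroid morphism. Equivalently, one can verify \eqref{eq:GMS-mor-gamma}--\eqref{eq:GMS-mor-c} directly with $h=-\Theta$, which reproduces \eqref{eq:GMS-change} verbatim. Its inverse is $(\id,\id,\id,\Theta)$, so it is an isomorphism.

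Functoriality of $\Uenv$, via the adjunction with truncation, extends it to an isomorphism of envelopes. Uniqueness holds because $\Uenv(\cA)$ is generated by $A\oplus T\oplus\Omega$.

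For the last assertions, the composition law $g'\circ g$ gives
\[
(\id,\id,\id,-\Theta_2)\circ(\id,\id,\id,-\Theta_1)=(\id,\id,\id,-\Theta_1-\Theta_2).
\]
Moreover, \eqref{eq:GMS-change} is additive in $\Theta$, since every term is linear in $\Theta$ with coefficients independent of $\gamma,\chi,c$. This yields $(\cA^{\Theta_1})^{\Theta_2}=\cA^{\Theta_1+\Theta_2}$. Finally, $\Phi_{-\Theta}\circ\Phi_\Theta=\Phi_0=\id$.
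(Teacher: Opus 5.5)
Your route is genuinely different from the paper's. The paper asserts that (A1)--(A5) for $\cA^\Theta$ follow from those of $\cA$ by a direct calculation, which it does not write out. It then substitutes $h=-\Theta$ into \eqref{eq:GMS-mor-gamma}--\eqref{eq:GMS-mor-c} and notes that these rearrange to \eqref{eq:GMS-change}, and reads off the successive-change statements from the composition formula. You instead transport the truncated operations of $u(\cA)$ along $\phi_\Theta$. The axioms for $\cA^\Theta$ and the morphism property of $\Phi_\Theta$ then come for free, and the only computation left is identifying the extracted data with \eqref{eq:GMS-change}. This replaces the unwritten verification of (A1)--(A5) with a structural argument, at the cost of relying on the truncated-algebra/algebroid dictionary; the paper's direct route has no such dependence. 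Your additivity argument for $(\cA^{\Theta_1})^{\Theta_2}=\cA^{\Theta_1+\Theta_2}$ is also more transparent than the paper's.

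Two things need fixing.

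\textbf{Sign of the splitting.} Transporting along $\phi_\Theta$ and using the inclusion splitting is the same as keeping $u(\cA)$ and using the splitting $\phi_\Theta^{-1}(\tau)=\tau+\Theta(\tau)$, not $\phi_\Theta(\tau)=\tau-\Theta(\tau)$; compare the Remark after the proposition. Your displayed line with $a_{(-1)}(\tau-\Theta\tau)$ actually yields $\gamma(a,\tau)-\Theta(a\tau)+a\Theta(\tau)$, so the conclusion you wrote does not follow from it. With $\tau+\Theta(\tau)$ all three formulas come out exactly.
\begin{itemize}
\item $(\tau+\Theta\tau)_{(1)}(\nu+\Theta\nu)$ gives $\chi^\Theta$, using $(\Theta\tau)_{(1)}\nu=\langle\nu,\Theta\tau\rangle$.
\item In $c'$, the cross terms $\tfrac12\bigl(\tau_{(0)}\Theta\nu-(\Theta\nu)_{(0)}\tau\bigr)=\tau(\Theta\nu)-\tfrac12\partial\langle\tau,\Theta\nu\rangle$ and the $\Theta([\tau,\nu])$ term reproduce \eqref{eq:GMS-change}. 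The latter term comes from applying $\phi_\Theta$ to the $T$-component $[\tau,\nu]$.
\end{itemize}
So no switch to $\phi_{-\Theta}$ is needed, and making one would give the wrong sign for $\Phi_\Theta$.

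\textbf{Generality of $\Omega$.} \cite[Thm.~2.3]{GMS}, as used in this paper, extracts $\Omega$ as the span of the elements $a_{(-1)}\partial b$. When $\Omega\neq A\partial A$ it does not return the extended Lie algebroid you started from, yet the proposition is stated for an arbitrary vertex algebroid. You can either restrict to $\Omega=A\partial A$, which covers every use in the paper since there $\Omega$ is a module of K\"ahler differentials. Or you can justify that the GMS extraction argument applies with $\Omega$ designated as a fixed submodule with the relevant closure properties. The transported structure inherits these properties because, as you note, $\phi_\Theta$ fixes $A\oplus\Omega$ and is the identity modulo $\Omega$.
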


\begin{proof}
Let $\cA^\Theta$ denote the vertex algebroid $(\mathcal T, \gamma^\Theta, \chi^\Theta, c^\Theta)$ defined by \eqref{eq:GMS-change}. A straightforward calculation using the axioms for $\cA$ shows that $\cA^\Theta$ satisfies the vertex-algebroid axioms (A1)--(A5).

By definition, the quadruple $\Phi_\Theta = (\id_A, \id_T, \id_\Omega, -\Theta)$ defines a morphism of vertex algebroids $\cA \to \cA^\Theta$ if and only if the morphism conditions \eqref{eq:GMS-mor-gamma}--\eqref{eq:GMS-mor-c} are satisfied. Substituting the components yields
\begin{align*}
-\Theta(a\tau) &= -a\Theta(\tau) - \gamma^\Theta(a,\tau) + \gamma(a,\tau),\\
\chi(\tau_1,\tau_2) &= \chi^\Theta(\tau_1,\tau_2) - \langle\tau_1,\Theta(\tau_2)\rangle - \langle\tau_2,\Theta(\tau_1)\rangle,\\
c(\tau_1,\tau_2) &= c^\Theta(\tau_1,\tau_2) - \frac12\partial\langle\tau_1,\Theta(\tau_2)\rangle + \frac12\partial\langle\tau_2,\Theta(\tau_1)\rangle + \tau_1\bigl(\Theta(\tau_2)\bigr) - \tau_2\bigl(\Theta(\tau_1)\bigr) - \Theta([\tau_1,\tau_2]).
\end{align*}
Rearranging these equations yields precisely the defining identities \eqref{eq:GMS-change}. Thus $\Phi_\Theta$ is a morphism of vertex algebroids. 

The composition identity follows immediately from the morphism composition formula, which also shows $(\cA^{\Theta_1})^{\Theta_2}=\cA^{\Theta_1+\Theta_2}$. In particular, $\Phi_\Theta$ is invertible with inverse $\Phi_\Theta^{-1} = \Phi_{-\Theta}$.
\end{proof}

\begin{remark}
If an ambient splittable vertex algebra is given and a chosen splitting $s:T\to V_1$ is replaced by $s^\Theta(\tau)=s(\tau)+\Theta(\tau)$, then the two extracted vertex-algebroid presentations are $\cA$ and $\cA^\Theta$.  Indeed, the same element of $V_1$ is written as 
\begin{equation*}
s(\tau)+\eta
=
s^\Theta(\tau)+\bigl(\eta-\Theta(\tau)\bigr).
\end{equation*}
\end{remark}

\section{Cotangent Yangian Lie Algebroid}
\label{sec:coYangian-alg}

\subsection{The prounipotent negative-loop group}
\label{sec:formal-neg-loop-group}

Throughout this section, $\g$ is a finite-dimensional Lie algebra over $\kk$ and
\begin{equation*}
\dloop=\g((u^{-1})), \qquad \dplus=\g[u],\qquad \dminus=u^{-1}\g[[u^{-1}]].
\end{equation*}

We recall the construction of the pro-unipotent BCH group $G_-$ integrating $\dminus$. Consider the filtration on $\dminus$ by
\begin{equation*}
F^r\dminus=u^{-r}\g[[u^{-1}]],
\qquad r\geq1.
\end{equation*}
The quotient
\begin{equation*}
\dminus^{(N)}
:=
\dminus/F^{N+1}\dminus
\cong
\bigoplus_{r=1}^{N}\g u^{-r}
\end{equation*}
is a finite-dimensional nilpotent Lie algebra. We therefore define
\begin{equation*}
G_-^{(N)} := \exp(\dminus^{(N)})
\end{equation*}
via the BCH formula. Then we define
\begin{equation*}
G_-=\varprojlim_N G_-^{(N)}
\end{equation*}
Thus $G_-$ is a pro-unipotent pro-affine group.  Its algebra of regular
functions is the direct limit
\begin{equation*}
A:=\cO(G_-) = \varinjlim_N\cO(G_-^{(N)}) \cong \Sym(\g^*[u]).
\end{equation*}

We write $g(u)$, or simply $g$, for an element of $G_-$. As usual for a pro-affine group, formulas involving $g$ are understood functorially over commutative k-algebras and coefficientwise in $u^{-1}$. For every commutative $\kk$-algebra $R$, we take $g\in G_-(R)=\varprojlim_N G_-^{(N)}(R)$; equality in $G_-(R)$ means equality after projection to $G_-^{(N)}(R)$ for every $N$. We usually suppress $R$ from the notation.

There is also a loop-group description.  Suppose we have a smooth affine algebraic group $G$ over $\kk$ with $\operatorname{Lie}(G)=\g$. Its first congruence subgroup is
\begin{equation*}
G_1[[u^{-1}]] := \ker\bigl(G[[u^{-1}]]\overset{|_{u^{-1} = 0}}{\longrightarrow} G \bigr).
\end{equation*}
At order $N$, exponential and logarithm identify $\ker\bigl( G[u^{-1}]/(u^{-N-1})\longrightarrow G \bigr)$ with the BCH group of $u^{-1}\g[u^{-1}]/(u^{-N-1})$.  These identifications commute with truncation and yield an isomorphism
\begin{equation*}
G_-\xrightarrow{\sim}G_1[[u^{-1}]].
\end{equation*}
For complex simple $\g$, this is the first congruence subgroup appearing in the commutative filtration of the Yangian \cite{KamnitzerWebsterWeekesYacobi,IlinRybnikov}. That filtration identifies Yangian as a quantization of the associated Poisson algebra of $\mathcal{O}(G_-)$. This Poisson structure will also be important for our later construction.

We record the vector fields induced by translation and dilation of the spectral parameter.
\begin{lemma}
\label{lem:spectral-affine-vf}
Spectral translation $\sigma_s:g(u)\mapsto g(u+s)$ and dilation
$\delta_\lambda:g(u)\mapsto g(\lambda^{-1}u)$ define pro-algebraic actions
of $\mathbb G_a$ and $\mathbb G_m$ on $G_-$. Differentiating their
pullbacks at the identity gives regular vector fields
$D_u,E_u\in\Der(A)$, respectively, with $E_u$ the Euler derivation for
pole degree.
\end{lemma}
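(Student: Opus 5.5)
The plan is to verify each claim at every finite level $N$ and then pass to the limit.

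\textbf{Step 1: the maps are group automorphisms.} The substitutions $u\mapsto u+s$ and $u\mapsto \lambda^{-1}u$ are automorphisms of $\kk((u^{-1}))$ over $R$, for every commutative $\kk$-algebra $R$. Expanding $(u+s)^{-r}=u^{-r}(1+su^{-1})^{-r}$ as a series in $u^{-1}$ shows that translation sends $u^{-r}\g[[u^{-1}]]\otimes R$ into itself. Hence it preserves each filtration step $F^r\dminus$. Dilation multiplies $u^{-r}$ by $\lambda^{r}$, so it also preserves the filtration and is diagonal on $\g u^{-r}$.

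Both substitutions are Lie algebra automorphisms of $\dminus\otimes R$, since they commute with the loop bracket. They therefore descend to automorphisms of the finite-dimensional nilpotent quotients $\dminus^{(N)}\otimes R$, compatibly in $N$. Because $G_-^{(N)}$ is the BCH group of $\dminus^{(N)}$, any Lie automorphism is a group automorphism. Thus $\sigma_s$ and $\delta_\lambda$ act on $G_-^{(N)}(R)$ and on $G_-(R)$. Equivalently, $\sigma_s(\exp X)=\exp(X(u+s))$ and $\delta_\lambda(\exp X)=\exp(X(\lambda^{-1}u))$.

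\textbf{Step 2: the actions are algebraic.} On the linear coordinates of $\dminus^{(N)}\cong\bigoplus_{r\le N}\g u^{-r}$, the map $\sigma_s$ is given by a matrix whose entries are polynomials in $s$: they are binomial coefficients times powers of $s$, and the matrix is unipotent block-triangular. The map $\delta_\lambda$ is given by the diagonal matrix $\lambda^r$ on $\g u^{-r}$.

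These coaction formulas $\cO(G_-^{(N)})\to\cO(G_-^{(N)})\otimes\kk[s]$ and $\cO(G_-^{(N)})\to\cO(G_-^{(N)})\otimes\kk[\lambda^{\pm1}]$ are compatible with the inclusions $\cO(G_-^{(N)})\subset\cO(G_-^{(N+1)})$. Passing to the direct limit gives pro-algebraic actions of $\mathbb G_a$ and $\mathbb G_m$. The action axioms $\sigma_s\sigma_t=\sigma_{s+t}$ and $\delta_\lambda\delta_\mu=\delta_{\lambda\mu}$ hold because they already hold for the substitutions in $u$.

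\textbf{Step 3: the vector fields.} Differentiating a coaction at $s=0$ (respectively $\lambda=1$) always yields a derivation of the coordinate ring. Since each coaction preserves every $\cO(G_-^{(N)})$, the resulting derivations $D_u$ and $E_u$ preserve each finitely generated piece, so they are regular on $A$.

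To identify $E_u$, use the exponential coordinates $x^{a}_{r}$, the dual coordinates to $\g u^{-r}$, on which $A\cong\Sym(\g^*[u])$. Dilation scales $x^a_r$ by $\lambda^{r}$ on the pulled-back function, up to the sign convention for the dual action. Hence $E_u x^a_r=\pm r\,x^a_r$, which is the Euler derivation for pole degree. The sign is fixed by the convention $\delta_\lambda:g(u)\mapsto g(\lambda^{-1}u)$: with pullback of functions, $x^a_r\circ\delta_\lambda=\lambda^r x^a_r$, so $E_u x^a_r=r\,x^a_r$.

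\textbf{Main obstacle.} The point needing care is that exponential coordinates are group-theoretically meaningful, so that these coordinate statements are legitimate. This holds because the BCH identification is an isomorphism of varieties at each level $N$, and the substitutions commute with $\exp$.
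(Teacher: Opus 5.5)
Your proposal is correct and follows the paper's own argument: the substitutions preserve the filtration $F^r\dminus$, induce compatible algebraic actions on the finite BCH quotients $G_-^{(N)}$, and differentiating the resulting coactions on $A=\varinjlim_N\cO(G_-^{(N)})$ gives $D_u$ and $E_u$. You add details the paper leaves implicit: Lie automorphisms of $\dminus^{(N)}$ are automatically BCH-group automorphisms, and $\delta_\lambda^*x^a_r=\lambda^r x^a_r$ in exponential coordinates, which identifies $E_u$ as the pole-degree Euler derivation.
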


\begin{proof}
The substitutions preserve the filtration of $\dminus$ and induce algebraic actions on its finite BCH quotients, compatibly with truncation. Their pullbacks therefore act on $A$ and differentiate to the asserted derivations.
\end{proof}

\subsection{Action Lie algebroid from bicrossed product}
\label{sec:bicrossed-action-algs}
Let $\mathfrak p_+$ and $\mathfrak p_-$ be finite-dimensional Lie algebras.  A \emph{matched pair} consists of bilinear maps
\begin{equation*}
\phi_+:\mathfrak p_+\otimes\mathfrak p_-\longrightarrow\mathfrak p_-,
\qquad
\phi_-:\mathfrak p_-\otimes\mathfrak p_+\longrightarrow\mathfrak p_+
\end{equation*}
such that the bracket
\begin{equation*}
[(\xi,v),(\eta,w)]
=\bigl([\xi,\eta]+\phi_-(v)\eta-\phi_-(w)\xi [v,w]+\phi_+(\xi)w-\phi_+(\eta)v\bigr),
\end{equation*}
for $\xi,\eta\in\mathfrak p_+$ and $v,w\in\mathfrak p_-$ satisfies the Jacobi identity. The resulting Lie algebra on $\mathfrak p_+\oplus\mathfrak p_-$ is also called the \emph{bicrossed product} $\mathfrak p_+\bowtie\mathfrak p_-$.

Equivalently, suppose that a Lie algebra $\mathfrak p$ has a vector-space decomposition $\mathfrak p=\mathfrak p_+\oplus\mathfrak p_-$ into Lie subalgebras. If $p_\pm:\mathfrak p\to\mathfrak p_\pm$ are the projections, then
\begin{equation*}
\phi_+(\xi)v=p_-[\xi,v], \qquad \phi_-(v)\xi=p_+[v,\xi]
\end{equation*}
defines a matched pair, and identifies $\mathfrak p_+\bowtie\mathfrak p_-$ with $\mathfrak p$. Therefore, we can use these two descriptions interchangeably.  

Recall that a Manin triple $(\mathfrak p,\mathfrak p_+,\mathfrak p_-)$ consists of a finite-dimensional Lie algebra $\mathfrak p$ with a nondegenerate invariant symmetric form $\kappa_{\mathfrak p}$, together with two complementary $\kappa_{\mathfrak p}$-isotropic Lie subalgebras $\mathfrak p_\pm$. Therefore, every Manin triple gives rise to a Lie algebra matched pair.

We assume that the Lie algebra $\mathfrak p_-$ integrates to a group $\mathsf P_-$ and that the Lie algebra action $\ad|_{\mathfrak p_-}:\mathfrak p_-\longrightarrow\Der(\mathfrak p)$ integrates uniquely to an action:
\begin{equation*}
\Ad:\mathsf P_-\longrightarrow \operatorname{Aut}(\mathfrak p).
\end{equation*}
We can use either smooth or algebraic setting. In the smooth setting we take $\kk=\mathbb R$, and let $\mathsf P_-$ be the simply connected Lie group integrating $\mathfrak p_-$. The above condition is automatic. For later applications to vertex algebroid/algebra, it will also be convenient to work in the algebraic setting. We can instead assume a compatible algebraic integration: an algebraic group $\mathsf P$ with Lie algebra $\mathfrak p$ and a smooth closed algebraic subgroup
$\mathsf P_-\subset\mathsf P$ with Lie algebra $\mathfrak p_-$.  Then $\Ad:\mathsf P_-\to \operatorname{Aut}(\mathfrak p)$ is the restriction of the ambient adjoint action. We use $\cO_{\mathsf P_-}$ in both settings: it denotes the sheaf of smooth real functions in the smooth setting and the algebraic structure sheaf in the algebraic setting.  Similarly, $\Omega^1_{\mathsf P_-}$ denotes smooth one-forms or algebraic K\"ahler one-forms, respectively.

\begin{remark}
A common alternative is to use formal group schemes, where every finite-dimensional matched pair (or Manin triple) gives rise to a matched pair of formal groups $\widehat{\mathsf P}_+\bowtie\widehat{\mathsf P}_-$ (or a formal double group $(\widehat{\mathsf P},\widehat{\mathsf P}_+,\widehat{\mathsf P}_-)$). However, this setup introduces additional completions in the base algebra and in the universal vertex envelope, so we do not pursue it here.
\end{remark}

For $g\in\mathsf P_-$ and $\xi\in\mathfrak p_+$, put
\begin{equation*}
\rho_\xi(g):=-(L_g)_*p_-\Ad_{g^{-1}}\xi.
\end{equation*}
The smooth case of the following proposition is the standard infinitesimal matched-pair action constructed by Majid \cite[Theorem~3.1]{Majid1990}. The same infinitesimal computation applies in the algebraic setting.

\begin{proposition}
\label{prop:finite-factor-action}
The fields $\rho_\xi$ are global smooth vector fields in the smooth
setting and global regular vector fields in the algebraic setting.  They
satisfy
\begin{equation*}
[\rho_\xi,\rho_\eta]=\rho_{[\xi,\eta]}.
\end{equation*}
Consequently, $\rho$ defines an infinitesimal action
\begin{equation*}
\rho:\mathfrak p_+ \longrightarrow
\Gamma(\mathsf P_-,\mathcal T_{\mathsf P_-}).
\end{equation*}
\end{proposition}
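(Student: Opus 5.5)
The plan is to identify $\rho$ with the infinitesimal generator of a local (or formal) right action of $\mathsf P_+$ on $\mathsf P_-$ coming from the factorization $\mathsf P\supset \mathsf P_-\mathsf P_+$. We then check the bracket relation directly at the Lie-algebra level. Working at the Lie-algebra level avoids needing $\mathsf P_+$ to exist globally.

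\textbf{Regularity.} First we prove regularity. The map $g\mapsto \Ad_{g^{-1}}\xi$ is a morphism $\mathsf P_-\to\mathfrak p$, smooth or algebraic, because $\Ad$ is a smooth or algebraic representation by hypothesis. Composing with the linear projection $p_-$ gives a regular map $\mathsf P_-\to\mathfrak p_-$. Left-trivialization $(g,v)\mapsto (L_g)_*v$ is an isomorphism $\mathsf P_-\times\mathfrak p_-\cong T\mathsf P_-$, regular in both settings. Hence $\rho_\xi$ is a global section of the tangent sheaf, and $\xi\mapsto\rho_\xi$ is linear.

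\textbf{Bracket relation.} Write $\rho_\xi=-\sum_i f^\xi_i\, X_i$, where $X_i$ are the left-invariant fields for a basis $e_i$ of $\mathfrak p_-$ and $f^\xi(g)=p_-\Ad_{g^{-1}}\xi$. For left-invariant $X_v$ we have
\[
X_v\bigl(\Ad_{g^{-1}}\xi\bigr)=-[v,\Ad_{g^{-1}}\xi],
\]
so
\[
X_v f^\xi = -p_-[v,\Ad_{g^{-1}}\xi].
\]
Using $[X_v,X_w]=X_{[v,w]}$, the bracket is
\[
[\rho_\xi,\rho_\eta]=X_{F},\qquad F=\rho_\xi(f^\eta)-\rho_\eta(f^\xi)+[f^\xi,f^\eta].
\]
Set $\Xi=\Ad_{g^{-1}}\xi$ and $H=\Ad_{g^{-1}}\eta$, and decompose $\Xi=\Xi_++\Xi_-$, $H=H_++H_-$. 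Then
\[
\rho_\xi(f^\eta)=-X_{\Xi_-}f^\eta=p_-[\Xi_-,H].
\]
It follows that
\[
F=p_-[\Xi_-,H]-p_-[H_-,\Xi]+[\Xi_-,H_-].
\]
Expanding with $\Xi_-$ and $H_-$ in $\mathfrak p_-$, and using that $\mathfrak p_\pm$ are subalgebras so that $p_-[\Xi_+,H_+]=0$, we get
\[
F=p_-\bigl([\Xi_-,H_+]+[\Xi_+,H_-]+[\Xi_-,H_-]\bigr)=p_-[\Xi,H]=p_-\Ad_{g^{-1}}[\xi,\eta].
\]
The last step uses that $\Ad_{g^{-1}}$ is a Lie algebra automorphism. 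This gives $[\rho_\xi,\rho_\eta]=-X_{f^{[\xi,\eta]}}$, where $X_f$ means $\sum_i f_i X_i$. That is exactly $\rho_{[\xi,\eta]}$. The overall sign must be tracked carefully: $\rho$ carries a minus sign, so the commutator of two such fields carries sign $(+)$, and the terms must recombine into $-X_{(\cdot)}$.

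\textbf{Main obstacle.} The main obstacle is bookkeeping of signs and of the convention for the derivative of $\Ad_{g^{-1}}$ along left-invariant fields. One must check $X_v(\Ad_{g^{-1}}\xi)=-\ad_v\Ad_{g^{-1}}\xi$ using $(g e^{tv})^{-1}=e^{-tv}g^{-1}$, and likewise the sign in $[X_v,X_w]$ for fields written with function coefficients. In the bracket $[\sum f_iX_i,\sum h_jX_j]$, the term $\sum f_ih_j[X_i,X_j]$ yields $X_{[f,h]}$; I will verify this against the sign convention in $\rho$. If a sign discrepancy appears, it is fixed by the overall minus in $\rho$, which makes $\rho$ a Lie algebra homomorphism rather than an anti-homomorphism.

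Finally, the algebraic setting uses the same formal computation. All identities are polynomial identities among regular functions and derivations, valid because $\Ad$ differentiates to $\ad|_{\mathfrak p_-}$.
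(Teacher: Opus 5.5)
Your strategy is the right one: left-trivialize, write $\rho_\xi=-X_{f^\xi}$ with $f^\xi(g)=p_-\Ad_{g^{-1}}\xi$, and compute the bracket directly. Done correctly, it would give a self-contained version of the computation that the paper does not carry out; the paper cites Majid's Theorem~3.1 and says the same infinitesimal computation applies algebraically. Your regularity paragraph is fine. However, the bracket computation as written does not prove the statement. It contains two errors and then asserts the desired sign instead of deriving it.

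\textbf{The errors.}
\begin{enumerate}
\item Since $\rho_\xi=-X_{f^\xi}$, one has $[\rho_\xi,\rho_\eta]=[X_{f^\xi},X_{f^\eta}]=X_F$ with
\[
F=X_{f^\xi}(f^\eta)-X_{f^\eta}(f^\xi)+[f^\xi,f^\eta]=-\rho_\xi(f^\eta)+\rho_\eta(f^\xi)+[f^\xi,f^\eta].
\]
The signs of your first two terms are reversed.
\item Even with your $F$, the expansion gives $p_-[\Xi_-,H_+]+p_-[\Xi_+,H_-]+3[\Xi_-,H_-]$, not $p_-[\Xi,H]$.
\item Had you obtained $F=p_-[\Xi,H]=f^{[\xi,\eta]}$, your own conventions would give $[\rho_\xi,\rho_\eta]=X_{f^{[\xi,\eta]}}=-\rho_{[\xi,\eta]}$. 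That is an anti-homomorphism, not the $-X_{f^{[\xi,\eta]}}$ you state.
\end{enumerate}

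\textbf{Why the sign matters.} This sign is the entire content of the claim. Replacing $\rho$ by $-\rho$ leaves $[\rho_\xi,\rho_\eta]$ unchanged but flips $\rho_{[\xi,\eta]}$, so the computation itself must decide between homomorphism and anti-homomorphism. Saying a discrepancy ``is fixed by the overall minus in $\rho$'' is not an argument, because that minus is already built into $\rho_\xi=-X_{f^\xi}$.

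\textbf{The repair.} It uses only your correct ingredient $\rho_\xi(f^\eta)=p_-[\Xi_-,H]$, together with $p_-[\Xi_+,H_+]=0$:
\[
F=-p_-[\Xi_-,H]+p_-[H_-,\Xi]+[\Xi_-,H_-]=-p_-[\Xi_-,H_+]-p_-[\Xi_+,H_-]-[\Xi_-,H_-]=-p_-[\Xi,H].
\]
Hence $[\rho_\xi,\rho_\eta]=X_F=-X_{f^{[\xi,\eta]}}=\rho_{[\xi,\eta]}$.

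In the algebraic setting, justify $X_v(\Ad_{g^{-1}}\xi)=-[v,\Ad_{g^{-1}}\xi]$ by differentiating $\Ad_{(gh)^{-1}}=\Ad_{h^{-1}}\Ad_{g^{-1}}$ at $h=e$ using $d\Ad_e=\ad$ (for instance via dual numbers). Do not use $e^{tv}$, which need not exist there.
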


\begin{definition}
\label{def:matched-pair-action-alg}
For an integrated matched pair as above, its \emph{matched-pair action Lie
algebroid} on $\mathsf P_-$ is the locally free sheaf
\begin{equation*}
T(\mathsf P_-, \mathfrak p_+)
:=
\cO_{\mathsf P_-}\otimes_{\kk}\mathfrak p_+.
\end{equation*}
Its anchor is $a\xi\mapsto a\rho_\xi$, and its bracket on local sections
is
\begin{equation}
[a\xi,b\eta] = ab[\xi,\eta] +a\rho_\xi(b)\eta -b\rho_\eta(a)\xi.
\label{eq:action-alg-bracket}
\end{equation}
\end{definition}
As in Ex.~\ref{ex:action-ext-alg}, the matched-pair action Lie algebroid has a canonical extended structure, denoted by $\cT(\mathsf P_-, \mathfrak p_+)$. We take $\Omega=\Omega_{\mathsf P_-}^1$ and let $\partial = \dd :\cO_{\mathsf P_-}\to\Omega_{\mathsf P_-}^1$ be the de Rham differential in the smooth setting and the K\"ahler differential in the algebraic setting.

We now specialize to the matched pair of the rational loop splitting. We use the group $G_-$ of $\dminus$ as considered in Sec.~\ref{sec:formal-neg-loop-group}. The projections $p_{\pm}$ will be denoted as $(-)_{\pm}$. For $\xi\in\dplus$ we define
\begin{align}
M_g(\xi) = \bigl(\Ad_{g^{-1}}\xi\bigr)_+, \quad \rho_\xi(g) = -(L_g)_*
\bigl(\Ad_{g^{-1}}\xi\bigr)_-.
\label{eq:rat-factor-action}
\end{align}

We do not invoke a general integration theorem for infinite-dimensional matched pairs, but can check directly the following:

\begin{lemma}
\label{lem:rat-factor-poly-closure}
For every $\xi\in\dplus$, the function $g\mapsto M_g(\xi)$ is polynomial with values in $\dplus$; equivalently, $M:\dplus\longrightarrow A\otimes\dplus$ defined by $M(\xi)(g):=M_g(\xi)$ is well defined.  

The map $\rho_\xi$ preserves $A=\cO(G_-)$ and hence defines a derivation of $A$.  Moreover,
\begin{equation*}
\rho:\dplus\longrightarrow\Der(A)
\end{equation*}
is a Lie algebra homomorphism.
\end{lemma}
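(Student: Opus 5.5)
Three things must be established: that $M_g(\xi)$ is polynomial in $g$ with values in $\dplus$; that $\rho_\xi$ preserves $A$; and that $\rho$ is a Lie homomorphism. The guiding observation is that, although $G_-$ is infinite-dimensional, every quantity we need depends on $g$ only through a finite truncation $G_-^{(N)}$. The problem therefore reduces to finite-dimensional computations that are compatible with truncation.

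\emph{Polynomiality.} Write $\xi=\sum_{k=0}^{d}\xi_k u^k$ and $g=\exp(X)$ with $X=\sum_{r\ge1}X_r u^{-r}$. Each $u^{-r}$ lowers $u$-degree by $r\geq1$, so the component of $\Ad_{g^{-1}}\xi=e^{-\ad X}\xi$ of nonnegative $u$-degree involves only $X_1,\dots,X_d$. It also involves only finitely many iterated brackets, namely at most $d$ of them. Hence $M_g(\xi)$ is a polynomial in the coordinates of the image of $g$ in $G_-^{(d)}$, with values in $\bigoplus_{k\le d}\g u^k$. This defines $M(\xi)\in A\otimes\dplus$. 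The same argument applies verbatim over an arbitrary commutative $\kk$-algebra $R$, which gives the functorial statement.

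\emph{$\rho_\xi$ is a derivation of $A$.} Set $\theta(g)=(\Ad_{g^{-1}}\xi)_-\in\dminus$. The right-hand side of \eqref{eq:rat-factor-action} is the left-invariant vector field in the direction $-\theta(g)$. Project to $G_-^{(N)}$: modulo $F^{N+1}\dminus$, the coefficient of $u^{-r}$ with $r\le N$ depends only on the image of $g$ in $G_-^{(N+d)}$, again polynomially. So on $\cO(G_-^{(N)})\subset A$ the field acts by $f\mapsto \sum_i \theta_i(g)\,(\tilde e_i f)$, where $\tilde e_i$ are left-invariant regular vector fields on the unipotent group $G_-^{(N)}$ and the $\theta_i\in\cO(G_-^{(N+d)})$. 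This is a derivation $\cO(G_-^{(N)})\to A$. These maps are compatible with the inclusions $\cO(G_-^{(N)})\hookrightarrow\cO(G_-^{(N+1)})$, because the projections $G_-^{(N+1)}\to G_-^{(N)}$ are homomorphisms and left-invariant fields push forward to left-invariant fields. They therefore assemble to $\rho_\xi\in\Der(A)$.

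\emph{Homomorphism property.} This is the main step. I would not reprove Majid's identity directly. Instead I would reduce to Prop.~\ref{prop:finite-factor-action} by truncating the matched pair itself. Fix $N$ and $d$, and consider the finite-dimensional Lie algebra
\[
\mathfrak p=\g[u,u^{-1}]\big/\bigl(u^{-N-1}\g[u^{-1}]+u^{D}\g[u]\bigr)
\]
for a suitable $D$. A naive quotient is not a Lie algebra, since the ideal is not closed under bracketing with positive powers of $u$. So I would instead use $\mathfrak p^{(N)}=\dloop/F^{N+1}\dminus$ regarded only as a $\dplus$-module, together with the identity
\[
\rho_{[\xi,\eta]}-[\rho_\xi,\rho_\eta]=0 .
\]
This identity is verified pointwise as an equality of $\dminus$-valued functions, following the standard computation. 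Differentiating the relation $\Ad_{g^{-1}}\xi=M_g(\xi)-\theta(g)$ along $\rho_\eta$ gives
\[
\rho_\eta(\Ad_{g^{-1}}\xi)=[\theta_\eta(g),\Ad_{g^{-1}}\xi],
\]
which is an identity of formal series that involves only finitely many coefficients at each $u$-degree. Taking $-$ projections and antisymmetrizing, the standard Majid computation yields
\[
d\theta_\xi(\rho_\eta)-d\theta_\eta(\rho_\xi)+[\theta_\xi,\theta_\eta]=-\theta_{[\xi,\eta]}
\]
(up to sign conventions), which is exactly the statement $[\rho_\xi,\rho_\eta]=\rho_{[\xi,\eta]}$ for left-invariant-frame expansions. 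Every identity used holds coefficientwise in $u^{-1}$. Each coefficient is a finite polynomial computation valid over all $R$. Equality in $\Der(A)$ then follows by testing on each $\cO(G_-^{(N)})$.

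The expected obstacle is keeping track of the bookkeeping that makes this formal infinite-dimensional computation legitimate. One must check that differentiating $\Ad_{g^{-1}}\xi$ along a vector field is well defined coefficientwise. One must also check that the projections $(-)_\pm$ commute with this differentiation, which holds because they are coefficient truncations.
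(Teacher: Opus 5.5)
Your proposal is correct and follows the paper's proof. The same degree bookkeeping gives polynomiality of $M_g(\xi)$ through $G_-^{(d)}$ and gives $\rho_\xi(\cO(G_-^{(N)}))\subset\cO(G_-^{(N+d)})$. Your coefficientwise left-invariant-frame identity $\rho_\eta(\theta_\xi)-\rho_\xi(\theta_\eta)+[\theta_\xi,\theta_\eta]=-\theta_{[\xi,\eta]}$ is exactly the Majid infinitesimal calculation that the paper invokes before passing to the direct limit. It holds on the nose, with no sign ambiguity, once you write $\Ad_{g^{-1}}\xi=M_g\xi+\theta_\xi$ rather than $M_g\xi-\theta_\xi$.

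You can delete the detour through truncated matched pairs. In particular, $\dloop/F^{N+1}\dminus$ is not a $\dplus$-module, since $[xu,yu^{-N-1}]=[x,y]u^{-N}\notin F^{N+1}\dminus$. Nothing in your actual argument depends on that claim.
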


\begin{proof}
Put $G_-^{(0)}:=\operatorname{Spec}\kk$ and $A_{\leq N}:=\cO(G_-^{(N)})$, so that $A=\varinjlim_NA_{\leq N}$.  Let $\xi(u)=\sum_{j=0}^{d}\xi_ju^j$ and set $g = \exp(B)$, where $B = \sum_{r \geq 1} B_r u^{-r}$. Then
\begin{equation*}
\Ad_{g^{-1}}\xi
=
\sum_{n\geq0}\frac{(-1)^n}{n!}
\sum_{j=0}^{d}
\sum_{r_1,\ldots,r_n\geq1}
\ad_{B_{r_1}}\cdots\ad_{B_{r_n}}(\xi_j)
u^{j-r_1-\cdots-r_n}.
\end{equation*}
A summand contributes to the coefficient of $u^k$, for $k\geq0$, only if $r_1+\cdots+r_n=j-k$. Thus $0\leq k\leq j\leq d$, and only finitely many compositions of $j-k$ occur.  It follows that $(\Ad_{g^{-1}}\xi)_+$ has degree at most
$d$ and that all of its coefficients are polynomials in
$B_1,\ldots,B_d$.  Hence $M(\xi)$ factors through $G_-^{(d)}$ and is a
regular $\dplus$-valued function, proving the first assertion.

Similarly, a summand contributes to the coefficient of $u^{-m}$ only if $r_1+\cdots+r_n=j+m$. For $m\leq N$, this coefficient is therefore a polynomial in $B_1,\ldots,B_{N+d}$.  Thus $\rho_\xi(A_{\leq N})\subset A_{\leq N+d}$. Taking the direct limit proves $\rho_\xi(A)\subset A$.  Finally, the
infinitesimal calculation underlying Prop.~\ref{prop:finite-factor-action} applies here, then passing to the direct limit gives $[\rho_\xi,\rho_\eta]=\rho_{[\xi,\eta]}$.
\end{proof}

\begin{definition} \label{def:rat-yangian-action-alg}  The \emph{Yangian action Lie algebroid} is the Lie algebroid
\begin{equation*}
T(G_-, \dplus) = \cO(G_-)\otimes\dplus,
\end{equation*}
with anchor $a\xi\mapsto a\rho_\xi$ and bracket \eqref{eq:action-alg-bracket}. We also define its associated extended Lie algebroid, denoted by $\cT(G_-, \dplus)$, with the module $\Omega_{G_-}^1$ of Kähler differentials.
\end{definition}

\subsection{Poisson cotangent algebroids and Manin triples}
\label{sec:poisson-cotangent-comp}
The preceding construction has an equivalent geometric description in terms of Poisson Lie structure.  Let $X$ be a smooth Poisson manifold over $\mathbb R$, or a smooth algebraic Poisson variety over $\kk$. If $\pi\in\Gamma(X,\wedge^2T_X)$ is the Poisson bivector, its cotangent Lie algebroid is
\begin{equation*}
T(X,\pi) := \bigl(\Omega_X^1,[\, ,\,]_\pi,\pi^\sharp \bigr), \qquad \pi^\sharp(\alpha):=\pi(\alpha,-),
\end{equation*}
where
\begin{equation*}
[\alpha,\beta]_\pi
=
\mathcal L_{\pi^\sharp\alpha}\beta
-\mathcal L_{\pi^\sharp\beta}\alpha
-\dd\bigl(\pi(\alpha,\beta)\bigr).
\end{equation*}
In particular,
$[\dd f,\dd g]_\pi=\dd\{f,g\}$. This construction also naturally produce an extended Lie algebroid that we denote by $\cT(X,\pi)$. When $X$ is moreover a Poisson Lie group, we can trivialize the cotangent bundle, and this construction can be shown to be equivalent to the construction of proceeding section.

Let $(\mathfrak p,\mathfrak p_+,\mathfrak p_-)$ be a Manin triple. Suppose its underlying matched pair satisfies the integration settings of Sec.~\ref{sec:bicrossed-action-algs}. $\kappa_{\mathfrak p}$ identifies $\mathfrak p_+\simeq\mathfrak p_-^*$.  For $\xi\in\mathfrak p_+$, let $\theta_\xi^R$ be the corresponding
right-invariant one-form on $\mathsf P_-$:
\begin{equation*}
(\theta_\xi^R)_g(v)
=
\kappa_{\mathfrak p}\left(
\xi,(R_{g^{-1}})_*v
\right).
\end{equation*}

Choose $\kappa_{\mathfrak p}$-dual bases $\{e_i\}\subset\mathfrak p_+$ and $\{e^i\}\subset\mathfrak p_-$. We can write the associated classical $r$-matrix as 
\begin{equation*}
r_{\mathfrak p}
=
\frac12\sum_i
\bigl(e_i\otimes e^i-e^i\otimes e_i\bigr)
\in \wedge^2\mathfrak p.
\end{equation*}
The standard multiplicative Poisson tensor on the double is
\begin{equation*}
\Pi_{\mathsf P}=r_{\mathfrak p}^L-r_{\mathfrak p}^R.
\end{equation*}
It induces the multiplicative Poisson tensor $\pi_{\mathfrak{p}}$ on $\mathsf P_-$
\cite{SemenovTianShansky,LuWeinstein}. 

The following identification is stated in \cite[Remark~4.9(2)]{EvensLuWeinstein1996}. We record it in our Manin-triple conventions, together with its extension to the canonical extended Lie algebroids.

\begin{theorem}
\label{thm:dressing-cotangent}
Under the preceding hypotheses, the map
\begin{equation*}
\jmath:T(\mathsf P_-, \mathfrak p_+)
\xrightarrow{\sim}T(\mathsf P_-, \pi_{\mathfrak p}),
\qquad
\jmath(a\xi)=-a\theta^R_\xi,
\end{equation*}
is an isomorphism of Lie algebroids. It also induces an isomorphism of their canonical (K\"{a}hler) extended Lie algebroids,
\begin{equation*}
\bigl( \mathrm{id}_{\mathcal O_{\mathsf P_-}}, \jmath, \mathrm{id}_{\Omega^1_{\mathsf P_-}} \bigr): \cT(\mathsf P_-, \mathfrak p_+) \xrightarrow{\sim} \cT(\mathsf P_-, \pi_{\mathfrak p}).
\end{equation*}
\end{theorem}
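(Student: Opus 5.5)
The plan is to check three things for $\jmath$: that it is an $\mathcal O$-linear bundle isomorphism, that it intertwines the anchors, and that it preserves brackets on the generating frame $1\otimes\mathfrak p_+$. Once these hold, the Leibniz rule and \eqref{eq:action-alg-bracket} extend compatibility to arbitrary sections. The statement about extended Lie algebroids should then follow formally from the Lie algebroid statement.

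\emph{Bundle isomorphism.} The map $\jmath$ is $\mathcal O_{\mathsf P_-}$-linear by definition. Since $\kappa_{\mathfrak p}$ identifies $\mathfrak p_+\cong\mathfrak p_-^*$, the right-invariant forms $\{\theta^R_{e_i}\}$ form a global frame of $\Omega^1_{\mathsf P_-}$. Hence $\jmath$ is an isomorphism of locally free modules.

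\emph{Anchors.} I need $\pi^\sharp(-\theta^R_\xi)=\rho_\xi$. For this I write $\pi_{\mathfrak p}$ on $\mathsf P_-$ using the standard formula for the Poisson structure induced on the factor $\mathsf P_-$ of the double. Right-translated to the identity, it is $(\pi_{\mathfrak p})_g(\theta^R_\xi,\theta^R_\eta)=\kappa_{\mathfrak p}\bigl(p_+\Ad_{g}\ldots\bigr)$, which yields
\[
\pi^\sharp(\theta^R_\xi)_g=(L_g)_*\,p_-\Ad_{g^{-1}}\xi .
\]
I will verify this by pairing both sides with $\theta^R_\eta$ and computing $r^L_{\mathfrak p}-r^R_{\mathfrak p}$ restricted to $\mathsf P_-$ in the dual bases. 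The computation uses isotropy of $\mathfrak p_\pm$ and the fact that the $\mathfrak p_-$-component of $\Ad_{g^{-1}}\xi$ pairs with $\mathfrak p_+$. This is the main sign-sensitive step: the sign conventions of $r_{\mathfrak p}$ and of $\rho$ must give exactly $-\theta^R$. Comparing with $\rho_\xi(g)=-(L_g)_*p_-\Ad_{g^{-1}}\xi$ then gives the anchor identity.

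\emph{Brackets.} Both sides are Lie algebroids with the anchor now matched, and both brackets satisfy Leibniz. It therefore suffices to show $[\theta^R_\xi,\theta^R_\eta]_\pi=-\theta^R_{[\xi,\eta]}$, given the sign in $\jmath$. For this I use the known fact, via multiplicativity of $\pi_{\mathfrak p}$, that right-invariant forms are closed under $[\,,]_\pi$. At the identity, $\pi_e=0$, and the bracket there is the linearization $\delta^*$, which is the Lie bracket on $\mathfrak p_-^*\cong\mathfrak p_+$. A more robust direct route is the formula
\[
[\alpha,\beta]_\pi=\iota_{\pi^\sharp\alpha}\dd\beta-\iota_{\pi^\sharp\beta}\dd\alpha+\dd\,\pi(\alpha,\beta).
\]
Here I insert the Maurer--Cartan equation $\dd\theta^R_\xi(v,w)=\pm\kappa_{\mathfrak p}(\xi,[v,w])$ for right-invariant vectors, together with the anchor formula. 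The $\mathfrak p_-$-projections then recombine, using invariance of $\kappa_{\mathfrak p}$, into $\kappa_{\mathfrak p}([\xi,\eta],\cdot)$. Alternatively, the anchor is injective on a dense open set when the action is locally free near $e$, but the direct computation is cleaner. In the algebraic setting the same identities hold as identities of regular forms, since all ingredients are regular.

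\emph{Extended structure.} Both canonical extensions use $\Omega=\Omega^1$ with $\partial=\dd$. The $T$-action is the Lie derivative along the anchor, and the pairing is contraction with the anchor. Since $\jmath$ intertwines anchors, $(\mathrm{id},\jmath,\mathrm{id})$ preserves the pairing and the action on $\Omega$, and the remaining axioms are inherited.
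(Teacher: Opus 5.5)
Your plan is the paper's proof: identify the modules via the right-invariant frame, prove the frame identities $\pi^\sharp(\theta^R_\xi)=(L_g)_*p_-\Ad_{g^{-1}}\xi=-\rho_\xi$ and $[\theta^R_\xi,\theta^R_\eta]_{\pi}=-\theta^R_{[\xi,\eta]}$, extend by Leibniz, and deduce the extended statement because both canonical structures are built from $\dd$, contraction and Lie derivative along the anchor. The paper merely asserts the two identities, so your multiplicativity-plus-linearization check is extra detail; note that with these conventions it gives $\dd_e\,\pi(\theta^R_\xi,\theta^R_\eta)=-\theta^R_{[\xi,\eta]}(e)$, not the bracket with a plus sign. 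Drop the aside about injectivity of the anchor: every $\rho_\xi$ vanishes at $e$, and $\rho$ can have a kernel (e.g.\ central elements), so that shortcut is unavailable in general.
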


\begin{proof}
Right translation and the Manin pairing identify the underlying
$\mathcal O_{\mathsf P_-}$-modules. In our conventions, the anchor and
invariant-form identities are
\begin{equation*}
\pi_{\mathfrak p}^{\sharp}(\theta^R_\xi)(g)
=
(L_g)_*p_-\operatorname{Ad}_{g^{-1}}\xi
=
-\rho_\xi(g).
\end{equation*}
\begin{equation}
\label{eq:manin-bracket}
[\theta^R_\xi,\theta^R_\eta]_{\pi_{\mathfrak p}}
=
-\theta^R_{[\xi,\eta]}.
\end{equation}
Thus $\jmath(\xi)=-\theta^R_\xi$ preserves anchors and brackets, and the Leibniz rule extends this to all sections. The same identities apply to regular forms in the algebraic setting. The canonical
extended structures use the same differential and the contraction and Lie derivative along the anchor, giving the isomorphism of extended Lie algebroids.
\end{proof}

Assume now that $(\g,\kappa)$ is quadratic: $\kappa$ is a nondegenerate invariant symmetric bilinear form. The residue pairing
\begin{equation*}
\langle X,Y\rangle = \Res_u\kappa(X(u),Y(u))
\end{equation*}
makes $(\dloop,\dplus,\dminus)$ the rational topological Manin triple. Its standard Yang's $r$-matrix is
\begin{equation*}
r_{\mathrm{Yang}}(u,v) = \iota_{u,v}\frac{\Omega_\g}{u-v},
\end{equation*}
where $\Omega_\g\in\g\otimes\g$ is the inverse tensor of $\kappa$ and $\iota_{u,v}$ denotes expansion in $u^{-1}$ with polynomial coefficients in $v$.   Denote by $\pi_{\mathrm{rat}}$ the multiplicative Poisson bivector on $G_-$ determined by $r_\mathrm{Yang}$. This can also be identified the standard Poisson structure on the first congruence subgroup, see \cite{KamnitzerWebsterWeekesYacobi} and \cite{IlinRybnikov}. For $\xi\in\dplus$, the right-invariant form and our sign convention are
\begin{equation*}
\theta_\xi^R = \Res_u\kappa\bigl(\xi(u),(\dd g)g^{-1}\bigr),
\qquad
\pi_{\mathrm{rat}}^\sharp(\theta_\xi^R)=-\rho_\xi,
\end{equation*}
We have
\begin{corollary}
\label{thm:action-cotangent-comp}
The map
\begin{equation}
\jmath: \mathcal{O}(G_-)\otimes\dplus \xrightarrow{\ \sim\ } \Omega_{G_-}^1,
\qquad
\jmath(a\xi)=-a\theta_\xi^R
\label{eq:action-cotangent-iso}
\end{equation}
is an isomorphism from the Yangian action Lie algebroid to the rational Poisson cotangent Lie algebroid, which also identifies their canonical (K\"{a}hler) extended Lie algebroids.
\end{corollary}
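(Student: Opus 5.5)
The plan is to reduce Cor.~\ref{thm:action-cotangent-comp} to Thm.~\ref{thm:dressing-cotangent}, applied to the rational Manin triple $(\dloop,\dplus,\dminus)$ with the residue pairing. Since $\dloop$ is infinite-dimensional and $G_-$ is only pro-affine, the theorem cannot be quoted directly. I will instead verify its three inputs by hand on the finite BCH quotients $G_-^{(N)}$ and then pass to the direct limit $A=\varinjlim_N A_{\le N}$, in the same way as in the proof of Lemma~\ref{lem:rat-factor-poly-closure}.

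\textbf{Step 1: $\jmath$ is an isomorphism of $A$-modules.} Write the Maurer--Cartan form $(\dd g)g^{-1}$ as a $\dminus$-valued one-form whose $u^{-r}$-coefficient lives on $G_-^{(r)}$. For $\xi=\sum_{j\le d}\xi_ju^j$, the form $\theta^R_\xi=\sum_j\kappa(\xi_j,((\dd g)g^{-1})_{j+1})$ is a finite sum, so it is a regular one-form. The right-invariant forms $\kappa(x,((\dd g)g^{-1})_{r})$, with $x$ running over a basis of $\g$ and $1\le r\le N$, form an $A_{\le N}$-basis of $\Omega^1_{G_-^{(N)}}$, because $G_-^{(N)}$ is a unipotent group. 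The residue pairing matches $\g u^{r-1}$ with $\g u^{-r}$ perfectly. Hence $\jmath$ sends the basis $1\otimes\g u^{r-1}$ of $A\otimes\dplus$ onto an $A$-basis of $\Omega^1_{G_-}=\varinjlim\Omega^1_{G_-^{(N)}}$, so it is bijective.

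\textbf{Step 2: compatibility with anchors.} This is the identity $\pi_{\mathrm{rat}}^\sharp(\theta^R_\xi)=-\rho_\xi$ displayed just before the statement. I would check it directly. The Poisson tensor $\pi_{\mathrm{rat}}$, induced from $r^L-r^R$ with $r=r_{\mathrm{Yang}}$, gives $\pi^\sharp(\theta^R_\xi)(g)=(L_g)_*(\Ad_{g^{-1}}\xi)_-$. This uses isotropy of $\dplus$ and $\dminus$, so that only the projection $p_-$ survives. Only finitely many coefficients contribute here, by the degree count in Lemma~\ref{lem:rat-factor-poly-closure}.

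\textbf{Step 3: compatibility with brackets.} It suffices to prove \eqref{eq:manin-bracket}, namely $[\theta^R_\xi,\theta^R_\eta]_{\pi}=-\theta^R_{[\xi,\eta]}$, on the generators. The general bracket compatibility then follows from the Leibniz rule, since both brackets satisfy the same Leibniz identity and the anchors agree by Step 2. To get \eqref{eq:manin-bracket} I expand the Koszul bracket:
\[
\mathcal L_{\pi^\sharp\theta_\xi}\theta_\eta-\mathcal L_{\pi^\sharp\theta_\eta}\theta_\xi-\dd\,\pi(\theta_\xi,\theta_\eta).
\]
Then I apply Cartan's formula, using that $\dd\theta^R_\xi$ is given by the Maurer--Cartan equation. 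Invariance of the residue pairing reduces the result to $-\theta^R_{[\xi,\eta]}$. This is the same infinitesimal computation as in the finite-dimensional case, and it is legitimate because each coefficient involves only finitely many $B_r$.

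\textbf{Step 4: extended structures.} The extended structures on both sides use $\Omega^1_{G_-}$, the same differential $\dd$, the contraction pairing, and the action by Lie derivative along the anchor. Since the anchors agree, the triple $(\id,\jmath,\id)$ preserves all of these data.

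I expect the main obstacle to be Step 3, together with fixing signs and conventions in Step 2. The risk is mainly bookkeeping: the sign of $r_{\mathrm{Yang}}$, left versus right trivialization, and the convention for $\pi^\sharp$. I would first settle the anchor identity on the truncation $N=1$, where $G_-^{(1)}$ is abelian, to fix the signs, and then run the general computation.
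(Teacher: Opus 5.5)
Your proposal is correct and follows the paper's route. The paper gives no separate proof here: the corollary is the specialization of Thm.~\ref{thm:dressing-cotangent} to the rational Manin triple $(\dloop,\dplus,\dminus)$, and that theorem's proof consists of exactly your steps, namely the module identification via right-invariant forms and the residue pairing, the anchor identity $\pi_{\mathrm{rat}}^\sharp(\theta^R_\xi)=-\rho_\xi$, the bracket identity \eqref{eq:manin-bracket} extended by the Leibniz rule, and the same $\dd$, contraction and Lie derivative for the extended structures. Your reduction to the finite quotients $G_-^{(N)}$ and your Koszul-bracket computation spell out what the paper covers by saying the same identities apply to regular forms in the algebraic setting; that computation does yield $-\theta^R_{[\xi,\eta]}$ using only the anchor identity, the Maurer--Cartan equation, invariance and isotropy.
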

\begin{remark}
\label{rem:dep-quadratic-form}
One may notice that the rational Poisson cotangent Lie algebroid depends on the choice of the invariant form $\kappa$ on $\g$, while the action Lie algebroid depends only on the loop splitting. In fact, the isomorphism $\jmath$ itself depends on $\kappa$ via the residue pairing, so this dependence compensates on both sides, allowing the action Lie algebroid to be defined without selecting a quadratic form.
\end{remark}

\subsection{The matrix realization for \texorpdfstring{$G=GL_N$}{G = GL(N)}}
\label{sec:matrix-rat-realization}

When specializing the Lie algebroid construction to $\g=\mathfrak{gl}_N$ equipped with the standard trace form $\kappa(x,y)=\Tr(xy)$, all abstract structures admit concrete matrix presentations that connect directly with the classical Yangian formalism.

The formal negative loop group $G_-$ consists of monic matrix series in $u^{-1}$:
\begin{equation*}
G_- = \left\{ T(u)=I+\sum_{r\geq1}T^{(r)}u^{-r} \ \middle|\ T^{(r)}\in\operatorname{Mat}_N(\kk) \right\}.
\end{equation*}
Writing $T(u)=\bigl(t_{ij}(u)\bigr)_{i,j=1}^N$ with $t_{ij}(u) = \delta_{ij}+\sum_{r\geq1}t_{ij}^{(r)}u^{-r}$, and setting $t_{ij}^{(0)}=\delta_{ij}$, the coordinate ring of $G_-$ is the free polynomial algebra
\begin{equation*}
A = \mathcal O(G_-) = \kk\bigl[t_{ij}^{(r)} \bigm| 1\leq i,j\leq N,\ r\geq1\bigr].
\end{equation*}
For a polynomial current $\xi(u)\in\mathfrak{gl}_N[u]$, the dressing map $M_T(\xi)$ and the infinitesimal dressing action $\rho_\xi(T)$ from \eqref{eq:rat-factor-action} take the matrix form
\begin{equation*}
	M_T(\xi) = \bigl(T^{-1}\xi T\bigr)_+,\quad
\rho_\xi(T) = T\bigl(T^{-1}\xi T\bigr)_+-\xi T.
\end{equation*}

Geometrically, the cotangent bundle of $G_-$ carries two natrual trivializations: as a pro-unipotent Lie group, its cotangent bundle $T^*G_-$ is trivialized by right-invariant one-forms; as a pro-affine space with coordinate functions $\{t_{ij}^{(r)}\}_{i,j,r}$, $T^*G_-$ is also trivialized by coordinate differentials $\dd t_{ij}^{(r)}$. The interplay between these two frames connects the dressing action to the Yangian Poisson bracket.

The right Maurer--Cartan form on $G_-$ and the corresponding right-invariant one-forms are
\begin{equation*}
\Theta^R=(\dd T)T^{-1},
\qquad
\theta_\xi^R
=
\Res_u\Tr\bigl(\xi(u)\Theta^R(u)\bigr).
\end{equation*}

We also have the flat coordinate one-forms $\mathsf{K}_\xi$ defined by
\begin{equation*}
(\mathsf{K}_\xi)_T(\delta T) = \Res_u\Tr\bigl(\xi(u)\delta T(u)\bigr).
\end{equation*}
Choosing the standard basis $\xi(u)=E_{ji}u^{r-1}$ selects precisely the one-form $\mathsf{K}_{E_{ji}u^{r-1}} = \dd t_{ij}^{(r)}$. The coordinate forms are related to the right-invariant forms as follows. Writing $T\xi = (T\xi)_+ + (T\xi)_-$, we have $\Tr\bigl((T\xi)_+\Theta^R\bigr) =  \Tr(\xi\dd T) - \Tr\bigl(T^{-1}(T\xi)_-\dd T\bigr) $. Since $(T\xi)_-$ and $\dd T$ each have pole order at least $u^{-1}$, the second term have its residue vanishes. Hence
\begin{equation*}
\mathsf{K}_\xi=\theta_{(T\xi)_+}^R,
\end{equation*}
where the map $\xi\mapsto(T\xi)_+$ has the inverse $\eta\mapsto(T^{-1}\eta)_+$.

Evaluating the anchor map on these coordinate one-forms yields the standard matrix expression for the Yangian Poisson structure. Since $\mathsf K_\xi=\dd f_\xi$ for the linear coordinate function $f_\xi(T)=\Res_u\Tr(\xi(u)T(u))$, the anchor map sends $\mathsf K_\xi$ to the Hamiltonian vector field of $f_\xi$. Via the relation $\mathsf K_\xi=\theta_{(T\xi)_+}^R$, this evaluates the Poisson bracket of $f_\xi$ with $T$ as
\begin{equation*}
\{f_\xi, T\}_{\pi_{\mathrm{rat}}}
= \pi_{\mathrm{rat},T}^{\sharp}(\mathsf K_\xi)
= -\rho_{(T\xi)_+}(T)
= (T\xi)_+T - T(\xi T)_+.
\end{equation*}
Specializing to the basis currents $\xi(u)=E_{ji}u^{r-1}$ (for which $f_\xi=t_{ij}^{(r)}$), and summing over $r\geq1$ directly yields
\begin{equation}
\label{eq:rat-RTT}
\{t_{ij}(u),t_{kl}(v)\}_{\pi_{\mathrm{rat}}}
= \frac{1}{u-v}(t_{kj}(u)t_{il}(v)-t_{kj}(v)t_{il}(u)).
\end{equation}
This is the Sklyanin Poisson--Lie bracket associated with the rational loop splitting \cite{SemenovTianShansky}, written in classical RTT form \cite{Shapiro}.

\section{Vertex extensions of cotangent Yangian}
\label{sec:yangian-vext}

In this section, we analyze vertex algebroid extensions of the cotangent Yangian Lie algebroid introduced in the previous section. Via the isomorphism of Cor.~\ref{thm:action-cotangent-comp}, we may equivalently work with the action Lie algebroid $\cT(G_-, \dplus)$, which is more convenient for computations. For most of the analysis, we first study extensions of a general action Lie algebroid $\cT(A, \mathfrak a)$ and then specialize to the Yangian case, which presents its own unique features.

\subsection{Action Lie algebroid extensions}
\label{sec:action-ext}
 
Let $A$ be a commutative $\kk$-algebra and let
$\rho:\mathfrak a\to\operatorname{Der}_{\kk}(A)$ be a Lie algebra
action.  Recall from Ex.~\ref{ex:action-ext-alg}
that we have the extended action Lie algebroid $\cT(A, \mathfrak a) = \bigl(A,T=A\otimes_{\kk}\mathfrak a,\Omega=\Omega^1_{A},\partial=\dd,\langle-,-\rangle\bigr)$. A vertex algebroid extension requires additional data $(\gamma,\chi,c)$ satisfying the identities (A1)--(A5) of Sec.~\ref{sec:valg-def}.  We can first eliminate the freedom in $\gamma$ by the change-of-splitting isomorphism of Prop.~\ref{prop:change-splitting}.

\begin{proposition}
\label{sec:action-levels-3form}
Every vertex algebroid extending $\cT(A, \mathfrak a)$ is isomorphic, by a change of splitting $\Theta: T \to \Omega$, to one satisfying
\begin{equation*}
	\gamma(a,\xi)=0, \qquad a\in A,\quad \xi\in\mathfrak a.
\end{equation*}
Moreover, the change of splitting can be chosen to vanish on the $\kk$-subspace $T_0 = 1\otimes\mathfrak a$. Consequently, it does not change the values of the pairing $\chi$ or of $c$ on $T_0$.
\end{proposition}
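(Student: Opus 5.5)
The plan is to construct $\Theta$ explicitly from the given $\gamma$ and then invoke Prop.~\ref{prop:change-splitting}. Since $T=A\otimes_{\kk}\mathfrak a$ is free over $A$ with $\kk$-basis-type generators $1\otimes\mathfrak a$, every element of $T$ can be written uniquely as a finite sum $\sum_i a_i\xi_i$ with $\xi_i$ running over a fixed basis of $\mathfrak a$. Hence a $\kk$-linear map $\Theta:T\to\Omega$ is determined freely by its values on $a\otimes\xi$, i.e.\ by a $\kk$-bilinear map $A\times\mathfrak a\to\Omega$. I would define
\[
\Theta(a\xi):=-\gamma(a,\xi),\qquad a\in A,\ \xi\in\mathfrak a,
\]
extended $\kk$-linearly (well defined because $A\otimes_{\kk}\mathfrak a$ is the tensor product and $\gamma$ is $\kk$-bilinear).

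Next I check the two required properties. First, on $T_0=1\otimes\mathfrak a$ we need $\Theta(\xi)=-\gamma(1,\xi)=0$. This follows from (A1): taking $a=b=1$ and $\tau=\xi$ gives $\gamma(1,\xi)=\gamma(1,\xi)-\gamma(1,\xi)-\xi(1)\partial 1-\xi(1)\partial 1$, so $\gamma(1,\xi)=-2\rho_\xi(1)\,\dd 1=0$ after using $\rho_\xi(1)=0$ and $\dd 1=0$; more directly (A1) with $a=1$ yields $\gamma(1,b\tau)=-\tau(1)\partial b-\tau(b)\partial 1=0$. Second, by \eqref{eq:GMS-change},
\[
\gamma^\Theta(a,\xi)=\gamma(a,\xi)+\Theta(a\xi)-a\Theta(\xi)=\gamma(a,\xi)-\gamma(a,\xi)-0=0 .
\]
So $\cA^\Theta$ has $\gamma^\Theta(a,\xi)=0$ for $\xi\in\mathfrak a$, and $\Phi_\Theta:\cA\to\cA^\Theta$ is an isomorphism by Prop.~\ref{prop:change-splitting}.

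Finally, for $\xi,\eta\in\mathfrak a$, the formulas \eqref{eq:GMS-change} give $\chi^\Theta(\xi,\eta)=\chi(\xi,\eta)+\langle\xi,\Theta(\eta)\rangle+\langle\eta,\Theta(\xi)\rangle=\chi(\xi,\eta)$ since $\Theta$ vanishes on $T_0$. For $c$, the terms $\tau\Theta(\nu)$, $\nu\Theta(\tau)$ and the $\partial\langle\cdot,\Theta(\cdot)\rangle$ terms vanish, leaving $\Theta([\xi,\eta])$; the bracket in the action algebroid of $1\otimes\xi$ and $1\otimes\eta$ is $1\otimes[\xi,\eta]\in T_0$ by \eqref{eq:action-alg-bracket} (the anchor terms involve $\rho(1)=0$), so $\Theta([\xi,\eta])=0$ and $c^\Theta=c$ on $T_0$. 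The only step requiring care is the well-definedness of $\Theta$ and the vanishing $\gamma(1,\xi)=0$; the rest is direct substitution.
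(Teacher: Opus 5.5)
Your proposal is correct and follows the paper's proof: you define $\Theta(a\xi)=-\gamma(a,\xi)$, apply Prop.~\ref{prop:change-splitting}, and use (A1) to get $\gamma(1,\xi)=0$, hence $\Theta|_{T_0}=0$. In the $c^\Theta$ formula you also note that $[\xi,\eta]=1\otimes[\xi,\eta]\in T_0$, so $\Theta([\xi,\eta])=0$; the paper leaves this step implicit.
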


\begin{proof}
For the given vertex algebroid $\cA$ extending $\cT(A, \mathfrak a)$, define the $\kk$-linear map $\Theta: A\otimes_{\kk}\mathfrak a\to\Omega^1_A$ by $\Theta(a\xi)=-\gamma(a,\xi)$ for $a\in A, \xi\in\mathfrak a$. Applying Prop.~\ref{prop:change-splitting} gives a vertex algebroid isomorphism $\cA \overset{\cong}{\longrightarrow} \cA^\Theta$. For $\cA^\Theta$ we have
\begin{equation*}
\gamma^\Theta(a,\xi) = \gamma(a,\xi)+\Theta(a\xi)-a\Theta(\xi)=0.
\end{equation*}

Identity~\eqref{eq:GMS-A1} gives $\gamma(1,\xi)=0$, so $\Theta(\xi)=0$, which implies that the $\chi$ and $c$-terms are unchanged for $\mathcal{A}^{\Theta}$.
\end{proof}

Although $\gamma(a,\xi)=0$ for $\xi\in\mathfrak a$, the full map $\gamma$ need not vanish: identity~\eqref{eq:GMS-A1} forces
\begin{equation*}
\gamma(a,b\xi)=-\rho_\xi(a)\,\dd b-\rho_\xi(b)\,\dd a,
\qquad a,b\in A,\quad\xi\in\mathfrak a.
\end{equation*}

With $\gamma$ satisfying the above condition, the vertex algebroid extension is determined by the following maps:
\begin{equation*}
\chi_0:\Sym^2\mathfrak a\longrightarrow A,
\qquad
c_0:\Lambda^2\mathfrak a\longrightarrow\Omega,
\end{equation*}
As a corollary of Lem.~\ref{lem:frame-reconstruct}, we have 
\begin{proposition}
\label{prop:action-ext-crit}
For $\gamma$ satisfying the above condition, the maps $\chi_0,c_0$ admit a vertex algebroid extension from $\cT(A, \mathfrak a)$ if and only if, for every $\xi,\eta,\zeta\in\mathfrak a$,
\begin{equation}
	\chi([\xi,\eta],\zeta)+\chi(\eta,[\xi,\zeta]) = 
\rho_\xi\chi(\eta,\zeta)
-\frac12\rho_\eta\chi(\xi,\zeta)
-\frac12\rho_\zeta\chi(\xi,\eta) +\iota_{\rho_\eta}c(\xi,\zeta)
+\iota_{\rho_\zeta}c(\xi,\eta),
\label{eq:norm-action-A4}
\end{equation}
and
\begin{align}
(d_{\mathrm{Lie}}c)(\xi,\eta,\zeta)
={}&-\frac12\dd\Bigl(
\chi([\xi,\eta],\zeta)
+\chi([\xi,\zeta],\eta)
-\chi([\eta,\zeta],\xi)
\notag\\
&\hspace{20mm}
-\rho_\xi\chi(\eta,\zeta)
+\rho_\eta\chi(\xi,\zeta)
-2\iota_{\rho_\zeta}c(\xi,\eta)
\Bigr).
\label{eq:norm-action-A5}
\end{align}
\end{proposition}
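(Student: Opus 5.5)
The plan is to apply Thm.~\ref{lem:frame-reconstruct} with $T_0=1\otimes\mathfrak a$, prescribing $\gamma_0=0$, $\chi_0$, $c_0$, and then check that its hypotheses translate into \eqref{eq:norm-action-A4} and \eqref{eq:norm-action-A5}.

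\emph{Uniqueness and the formulas.} The theorem says any extension satisfying (A1)--(A3) is given by \eqref{eq:frame-gamma}--\eqref{eq:frame-c}. With $\gamma_0=0$ these reduce to the expressions of Ex.~\ref{ex:affine-level-action-ext}, with $\kappa$ replaced by $\chi_0$ and an extra term $ab\,c_0(\xi,\eta)$ in $c$. In particular $\gamma$ has the forced form displayed before the statement.

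\emph{Well-definedness.} Since $T=A\otimes_\kk\mathfrak a$ is free over $A$ on any basis of $\mathfrak a$, an element of $T$ can be written as $\sum_i a_i\xi_i$ with $\xi_i$ running over a basis, and this presentation is unique. I would define $\gamma,\chi,c$ on such basis expressions by the displayed formulas and extend $\kk$-bilinearly. The only thing to check is that the formulas are $\kk$-multilinear in $(a,\xi)$ and $(b,\eta)$, so that they factor through $A\otimes\mathfrak a$. This is immediate, because each term is linear in $a$, in $b$, and in each of $\xi,\eta$; for $c$ this uses that $\rho$ and the bracket are linear.

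\emph{Reduction of (A4), (A5) on $T_0$.} In the action extended algebroid, the pairing is $\langle\tau,\omega\rangle=\iota_{\rho(\tau)}\omega$, the differential is $\partial=\dd$, and $\tau(f)=\rho_\tau f$ for $f\in A$. Substituting $\tau_1=\xi$, $\tau_2=\eta$, $\tau_3=\zeta\in\mathfrak a$ into (A4) and (A5) then gives exactly \eqref{eq:norm-action-A4} and \eqref{eq:norm-action-A5}. Here $\chi$ and $c$ evaluated on $T_0$ are $\chi_0$ and $c_0$.

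\emph{Necessity.} If a vertex algebroid extension exists, it satisfies (A4) and (A5), in particular on $T_0$, so both identities hold.

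\emph{Sufficiency.} If both identities hold, the final clause of Thm.~\ref{lem:frame-reconstruct} (GMS Thm.~1.9) gives a vertex algebroid. The main subtlety is that the theorem builds (A1)--(A3) into the construction, so (A1)--(A3) on all of $T$ follow without separate verification. I would briefly confirm that \eqref{eq:frame-gamma} with $\gamma_0=0$ indeed satisfies (A1), as a consistency check.
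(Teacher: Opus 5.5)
Your proposal is correct and follows the paper's route: the paper states this proposition simply as a corollary of the extension theorem (Thm.~\ref{lem:frame-reconstruct}), applied with $T_0=1\otimes\mathfrak a$ and $\gamma_0=0$, with (A4)--(A5) on $T_0$ becoming \eqref{eq:norm-action-A4}--\eqref{eq:norm-action-A5}. Your explicit well-definedness check, via $T=A\otimes_\kk\mathfrak a$ and multilinearity of the reconstruction formulas, supplies the one hypothesis of that theorem that the paper leaves implicit.
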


Now suppose we have a fixed choice of $\chi_0$, the extra choice of $c$ has a clean classification. Following \cite{GMS}, a \emph{vertex prealgebroid} is the datum
\begin{equation*}
\mathcal B=(\mathcal T,\gamma,\chi_0)
\end{equation*}
satisfying the vertex-algebroid axioms that do not involve $c$. We have the following classification from \cite{GMS}.

\begin{theorem}
\label{thm:GMS-prealg-class}
Let $\mathcal B$ be a vertex prealgebroid with underlying extended Lie algebroid $\mathcal T$. Assume $\mathcal B$ admits at least one vertex algebroid extension $\mathcal A_c = (\mathcal B, c)$.
\begin{enumerate}[label=\textup{(\roman*)}]
\item Any other map $c'$ completing $\mathcal B$ to a vertex algebroid has the form $c' = c + \alpha$ for a unique closed $3$-form $\alpha \in Z^3_{\mathrm{DR}}(\mathcal T)$, and conversely every such $\alpha$ defines a vertex algebroid $\mathcal A_{c+\alpha}$.
\item For $c' = c + \alpha$, the morphisms $\Phi: \mathcal A_c \to \mathcal A_{c'}$ acting as the identity on $A, T$, and $\Omega$ are precisely those of the form $\Phi = (\id_A, \id_T, \id_\Omega, h)$ with $h \in \Omega^2_{\mathrm{DR}}(\mathcal T)$ satisfying
\[
d_{\mathrm{DR}}h = \alpha.
\]
In particular, automorphisms of $\mathcal A_c$ correspond to closed $2$-forms via addition ($\Phi_{h_2}\circ\Phi_{h_1} = \Phi_{h_1+h_2}$), and the isomorphism classes of extensions of $\mathcal B$ form a torsor under $H^3_{\mathrm{DR}}(\mathcal T)$.
\end{enumerate}
\end{theorem}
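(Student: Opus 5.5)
The plan is to compare two completions $c$ and $c'$ of the same prealgebroid $\mathcal B$ directly through the axioms (A1)--(A5), and to compare morphisms through the conditions \eqref{eq:GMS-mor-gamma}--\eqref{eq:GMS-mor-c}. Everything except $c$ is shared, so each axiom or morphism condition becomes a linear condition on a difference. These conditions should match, one by one, the defining properties of $\Omega^\bullet_{\mathrm{DR}}(\mathcal T)$ and its differential $d_{\mathrm{DR}}$.

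\emph{Step 1: $\alpha$ is a $3$-form.} Put $\alpha=c'-c:\Lambda^2T\to\Omega$; it is skew and $\kk$-bilinear.
\begin{enumerate}[label=\textup{(\alph*)}]
\item (A1) and (A2) do not involve $c$.
\item In (A3), every term other than $c$ depends only on $\mathcal B$. Subtracting the two instances gives $\alpha(a\tau_1,\tau_2)=a\,\alpha(\tau_1,\tau_2)$. By skew-symmetry $\alpha$ is then $A$-bilinear, i.e.\ an $A$-linear map $\Lambda^2_AT\to\Omega$.
\item Subtracting the two instances of (A4) leaves
\[
\langle\tau_2,\alpha(\tau_1,\tau_3)\rangle+\langle\tau_3,\alpha(\tau_1,\tau_2)\rangle=0 .
\]
So $(\tau_1,\tau_2,\tau_3)\mapsto\langle\tau_1,\alpha(\tau_2,\tau_3)\rangle$ is skew in the last two slots and also under swapping the first two. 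Hence it is fully alternating, and $\alpha\in\Omega^3_{\mathrm{DR}}(\mathcal T)$.
\end{enumerate}

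\emph{Step 2: $\alpha$ is closed, and the converse.} Subtract the two instances of (A5). The $\chi$-terms cancel, leaving
\[
(d_{\mathrm{Lie}}\alpha)(\tau_1,\tau_2,\tau_3)=\partial\langle\tau_3,\alpha(\tau_1,\tau_2)\rangle .
\]
By the alternating property, $\langle\tau_3,\alpha(\tau_1,\tau_2)\rangle=\langle\tau_1,\alpha(\tau_2,\tau_3)\rangle$. So the displayed identity says exactly $d_{\mathrm{DR}}\alpha=0$.

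Conversely, given a closed $\alpha\in Z^3_{\mathrm{DR}}(\mathcal T)$, set $c'=c+\alpha$ and run these computations backwards. $A$-linearity restores (A3), the alternating property restores (A4), and closedness restores (A5). Uniqueness of $\alpha$ is trivial, since $\alpha=c'-c$. This proves (i).

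\emph{Step 3: morphisms fixing $A$, $T$, $\Omega$.} Substitute $g_A,g_T,g_\Omega=\id$ and $\gamma'=\gamma$, $\chi'=\chi$ into the three morphism conditions.
\begin{enumerate}[label=\textup{(\alph*)}]
\item \eqref{eq:GMS-mor-gamma} reduces to $h(a\tau)=a\,h(\tau)$, so $h$ is $A$-linear.
\item \eqref{eq:GMS-mor-pair} reduces to $\langle\tau_1,h(\tau_2)\rangle+\langle\tau_2,h(\tau_1)\rangle=0$. This is exactly the alternating condition, so $h\in\Omega^2_{\mathrm{DR}}(\mathcal T)$.
\item In \eqref{eq:GMS-mor-c}, use this skew-symmetry to merge the two $\tfrac12\partial$ terms into $\partial\langle\tau_1,h(\tau_2)\rangle$. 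The condition becomes
\[
c-c'=-\bigl(\tau_1 h(\tau_2)-\tau_2 h(\tau_1)-h([\tau_1,\tau_2])-\partial\langle\tau_1,h(\tau_2)\rangle\bigr)=-(d_{\mathrm{DR}}h)(\tau_1,\tau_2).
\]
That is, $\alpha=d_{\mathrm{DR}}h$.
\end{enumerate}
Each step reverses, so these $h$ are precisely the morphisms in (ii).

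\emph{Step 4: consequences.} The composition formula for morphisms with $g'_\Omega=\id$ and $g_T=\id$ gives $\Phi_{h_2}\circ\Phi_{h_1}=\Phi_{h_1+h_2}$. Taking $\alpha=0$ in Step 3 shows that automorphisms correspond to closed $2$-forms. For the torsor statement, every extension has the form $\mathcal A_{c+\alpha}$ by (i). A general isomorphism $\mathcal A_{c+\alpha}\to\mathcal A_{c+\beta}$ inducing the identity on the underlying extended Lie algebroid exists if and only if $\beta-\alpha$ is exact, by Step 3 applied to the base point $c+\alpha$. So isomorphism classes are in bijection with $H^3_{\mathrm{DR}}(\mathcal T)$, acting by translation.

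The main obstacle is bookkeeping rather than ideas. The one delicate point is matching the signs and the factors of $\tfrac12$ in (A5) and \eqref{eq:GMS-mor-c} against the convention $d_{\mathrm{DR}}h=d_{\mathrm{Lie}}h-\partial\langle\tau_1,h(\cdots)\rangle$. This requires using the alternating property to move $\tau_3$ into the first slot, and to combine the two half-terms. I would do this check explicitly before anything else.
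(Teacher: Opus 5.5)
Your argument is correct. The paper itself gives no proof of this statement; it is quoted from \cite{GMS}, so there is no in-text route to compare against. What you supply is a self-contained verification by subtracting the two instances of (A3)--(A5) and of the morphism conditions \eqref{eq:GMS-mor-gamma}--\eqref{eq:GMS-mor-c}, and the signs check out. The (A5) difference is $(d_{\mathrm{Lie}}\alpha)(\tau_1,\tau_2,\tau_3)=\partial\langle\tau_3,\alpha(\tau_1,\tau_2)\rangle$. The merged form of \eqref{eq:GMS-mor-c} is $c'-c=d_{\mathrm{DR}}h$, which agrees with the change-of-splitting formulas of Prop.~\ref{prop:change-splitting} if you take $h=-\Theta$.

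Two steps deserve one more line each in a write-up.

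First, put $P(\tau_1,\tau_2,\tau_3):=\langle\tau_1,\alpha(\tau_2,\tau_3)\rangle$. The (A4) difference literally says $P(\tau_2,\tau_1,\tau_3)=-P(\tau_3,\tau_1,\tau_2)$, not skewness in the first two slots. Combined with skewness of $\alpha$, it reads $P(\tau_2,\tau_1,\tau_3)=P(\tau_3,\tau_2,\tau_1)$, which is cyclic invariance $P(a,b,c)=P(c,a,b)$. Cyclic invariance together with skewness in the last two slots gives full alternation. Cyclic invariance is also exactly what you use in Step 2 to rewrite $\langle\tau_3,\alpha(\tau_1,\tau_2)\rangle$ as $\langle\tau_1,\alpha(\tau_2,\tau_3)\rangle$.

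Second, for the torsor claim, say explicitly that ``isomorphism classes'' means classes under morphisms that are the identity on $A$, $T$ and $\Omega$. Under arbitrary vertex-algebroid isomorphisms one in general only gets a quotient of this torsor. Also note that each $\Phi_h$ is automatically invertible, with inverse $\Phi_{-h}$, by the composition formula. Hence the morphisms classified in Step 3 really are isomorphisms, and the translation action of $H^3_{\mathrm{DR}}(\mathcal T)$ on classes is well defined, free and transitive.
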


A complete classification of vertex algebroid extensions of an action Lie algebroid is beyond the scope of this paper. In the following we consider two convenient classes of vertex algebroid extension of an action Lie algebroid: extension induced by an invariant bilinear form (level) $\kappa$ and a closed three-form class.

First we recall that the level $\kappa$ extension is already defined in example~\ref{ex:affine-level-action-ext}, by
\begin{equation}
\gamma(a,\xi)= 0,\quad \chi(\xi,\eta)=\kappa(\xi,\eta), \quad c(\xi,\eta)=0, \quad \text{for } a \in A, \xi,\eta\in\mathfrak a.
\label{eq:zero-action-ext}
\end{equation}
We denote this vertex algebroid $\cA_{\kappa}(A,\mathfrak a)$.

For the full skew map $c_{\kappa}$ of $\cA_{\kappa}$ and $\mathcal H\in Z^3_{\mathrm{DR}}(\mathcal T_\rho)$, define $\cA_{\kappa,\mathcal H}(A,\mathfrak a)$ to have the map $c =  c_{\kappa}+\mathcal H$ and with the maps $\gamma$ and $\chi$ unchanged. By Thm.~\ref{thm:GMS-prealg-class}, $\cA_{\kappa,\mathcal H}(A,\mathfrak a)$ is a vertex algebroid for any invariant level $\kappa$ and any closed three-form $\mathcal H$. We also denote $V_{\kappa,\mathcal{H}}(A, \mathfrak a) = \Uenv(\cA_{\kappa,\mathcal H}(A,\mathfrak a))$ the envelope vertex algebra.

By the PBW theorem $V_{\kappa,\mathcal{H}}(A, \mathfrak a)$ is generated by functions $a \in A$ and currents $\Lcur_\xi$ for $\xi \in \mathfrak a$. Their OPEs are:
\begin{equation}\label{eq:action-level-OPE}
  \begin{aligned}
    a(z)b(w)&\sim0,\\
\Lcur_\xi(z)a(w)&\sim \frac{\rho_\xi(a)(w)}{z-w},\\
\Lcur_\xi(z)\Lcur_\eta(w)&\sim \frac{\kappa(\xi,\eta)}{(z-w)^2} +\frac{ \Lcur_{[\xi,\eta]}(w)-\mathcal H(\xi,\eta)(w)}{z-w}.
\end{aligned}
\end{equation}

Oftentimes we have little knowledge of the (extended) Lie algebroid cohomology $H^\bullet_{\mathrm{DR}}(\mathcal T)$. Therefore we also consider the subclass obtained from algebraic closed three-forms $H\in\Omega^3_{A}$. Via the cochain map $K^\bullet: \Omega^\bullet_A \to \Omega^\bullet_{\mathrm{DR}}(\mathcal T)$, we define
\begin{equation*}
\beta_H(\tau,\nu)
:=
\iota_{\rho(\nu)}\iota_{\rho(\tau)}H
=K^3(H)(\tau,\nu).
\end{equation*}
Thus we obtain a family of vertex algebroids, denoted by $\cA_{\kappa,H}(A, \mathfrak a)$, parameterized by $H\in\Omega^3_{A}$. We emphasize that the induced map $K^{\bullet}: H^\bullet_{\mathrm{dR}}(A) \to H^\bullet_{\mathrm{DR}}(\mathcal{T})$ need not be injective or surjective in general.

For the Yangian action Lie algebroid $\cT(G_-, \dplus)$ of Sec.~\ref{sec:coYangian-alg}, the preceding constructions apply, but algebraic $3$-forms on $G_-$ give no new isomorphism classes of extensions. Indeed, using the Euler field $E_u$ of Lem.~\ref{lem:spectral-affine-vf}, one easily verifies that $H^j_{\mathrm{dR}}(A)=0$ for every $j>0$. Specifically, a closed homogeneous form $\omega$ of positive degree has Euler degree $N>0$, so Cartan's formula gives $\omega=\frac1N\dd(\iota_{E_u}\omega)$. 

Although one can still define extensions of $\cT(G_-, \dplus)$ using closed three-forms in $Z^3_{\mathrm{DR}}(\cT)$, unfortunately we do not know much about the extended Lie algebroid cohomology $H^3_{\mathrm{DR}}(\mathcal{T})$. Therefore, in this work, we focus on the level twisted family and write
\begin{equation*}
\widehat\kappa\in
\operatorname{Hom}_{\kk}
\bigl(\Sym^2_{\kk}\dplus,\kk\bigr)^{\dplus}
\end{equation*}
The hat distinguishes this form on $\dplus$ from a quadratic form $\kappa$ on $\g$. As from the Remark \ref{rem:dep-quadratic-form}, we do not need to require $\widehat\kappa$ to be induced from $\kappa$ that defines the Manin triple $(\mathfrak{l},\dplus,\dminus)$. 

\begin{definition}[Chiral cotangent Yangian]
\label{def:chiral-coYangian}
The level-$\widehat\kappa$ \emph{chiral cotangent Yangian} is
\begin{equation*}
\Ych_{\whkappa}(T^*\g)
:=
\Uenv\bigl(\cA_{\whkappa}(A,\dplus)\bigr),
\end{equation*}
We also write $\Ych(T^*\g):=\Ych_0(T^*\g)$ for the level $\widehat\kappa=0$ case.
\end{definition}

\subsection{Extensions from spectral translation}
\label{sec:cocycle-spectral-ext}
The discussion in the preceding subsection provides convenient classes of extensions of the cotangent Yangian Lie algebroid, but these do not exhaust all possibilities. In this subsection, we construct a further family of vertex algebroids, independent of the preceding level and three-form constructions.

Let $\mathsf V$ be a finite-dimensional trivial $\mathfrak a$-module with a symmetric form $K:\Sym^2_{\kk}\mathsf V\to\kk$. A Chevalley--Eilenberg cocycle $\lambda\in C^1(\mathfrak a,A\otimes\mathsf V)$ for the action
$\rho\otimes\id_{\mathsf V}$ is an element $\lambda \in \operatorname{Hom}(\mathfrak{a},A\otimes \mathsf V)$ satisfying
\begin{equation*}
\rho_\xi\lambda_\eta-\rho_\eta\lambda_\xi= \lambda_{[\xi,\eta]},
\end{equation*}
where we write $\lambda_\xi:=\lambda(\xi)$.

\begin{theorem}
\label{thm:cocycle-action-ext}
Let $\cA = (\cT(A, \mathfrak a), \gamma, \chi, c)$ be a vertex algebroid extension of the extended action Lie algebroid $\cT(A, \mathfrak a)$ satisfying $\gamma(a,\xi)=0$ for all $a\in A, \xi\in\mathfrak a$.
For every cocycle $\lambda$ as above, the data on $\mathfrak a$
\begin{equation}
  \label{eq:cocycle-added}
  \begin{aligned}
    \chi_\lambda(\xi,\eta) &=\chi(\xi,\eta)+K(\lambda_\xi,\lambda_\eta), \\
c_\lambda(\xi,\eta) &=c(\xi,\eta) +\frac12\left( K(\lambda_\xi,\dd\lambda_\eta) -K(\lambda_\eta,\dd\lambda_\xi)\right)
  \end{aligned}
\end{equation}
determine a unique vertex algebroid extension $\cA_\lambda = (\cT(A, \mathfrak a), \gamma, \chi_\lambda, c_\lambda)$. Applied to the level extension $\cA_\kappa(A,\mathfrak a)$, which has $\chi=\kappa$ and $c=0$ on $\mathfrak{a}$, we denote the resulting vertex algebroid by $\cA_{\kappa,\lambda} = \mathcal{A}_{\kappa,\lambda}(A,\mathfrak{a})$.
\end{theorem}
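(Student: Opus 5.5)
The plan is to apply Thm.~\ref{lem:frame-reconstruct} in the form of Prop.~\ref{prop:action-ext-crit}. The frame $T_0=1\otimes\mathfrak a$ generates $T=A\otimes_{\kk}\mathfrak a$ freely (after choosing a basis of $\mathfrak a$), so the extension formulas \eqref{eq:frame-gamma}--\eqref{eq:frame-c} are automatically well defined on $T$. Uniqueness is already part of that theorem. The map $\gamma$ is unchanged, so \eqref{eq:GMS-A1} still holds. It then remains to verify \eqref{eq:norm-action-A4} and \eqref{eq:norm-action-A5} for the new data $(\chi_\lambda,c_\lambda)$ on $\xi,\eta,\zeta\in\mathfrak a$.

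\textbf{Reduction to increments.} With $\gamma$, the bracket and the anchor fixed, both \eqref{eq:norm-action-A4} and \eqref{eq:norm-action-A5} are $\kk$-linear in the pair $(\chi|_{\mathfrak a},c|_{\mathfrak a})$. Since $\cA$ already satisfies them, it suffices to check that the increments
\[
\Delta\chi(\xi,\eta)=K(\lambda_\xi,\lambda_\eta),\qquad
\Delta c(\xi,\eta)=\tfrac12\bigl(K(\lambda_\xi,\dd\lambda_\eta)-K(\lambda_\eta,\dd\lambda_\xi)\bigr)
\]
satisfy the same identities on their own. Throughout I will use three facts:
\begin{itemize}[label=--]
\item $\mathsf V$ is a trivial module, so $\rho_\xi K(v,w)=K(\rho_\xi v,w)+K(v,\rho_\xi w)$ for $v,w\in A\otimes\mathsf V$;
\item $\iota_{\rho_\eta}\dd\lambda_\zeta=\rho_\eta\lambda_\zeta$;
\item the cocycle identity $\rho_\xi\lambda_\eta-\rho_\eta\lambda_\xi=\lambda_{[\xi,\eta]}$.
\end{itemize}

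\textbf{(A4).} Expanding the right-hand side of \eqref{eq:norm-action-A4} for the increments, the $\tfrac12$-terms coming from $\rho_\eta\Delta\chi(\xi,\zeta)$ and $\rho_\zeta\Delta\chi(\xi,\eta)$ combine with the contractions $\iota_{\rho_\eta}\Delta c(\xi,\zeta)$ and $\iota_{\rho_\zeta}\Delta c(\xi,\eta)$. What remains is
\[
K(\rho_\xi\lambda_\eta-\rho_\eta\lambda_\xi,\lambda_\zeta)+K(\lambda_\eta,\rho_\xi\lambda_\zeta-\rho_\zeta\lambda_\xi).
\]
By the cocycle identity this equals $K(\lambda_{[\xi,\eta]},\lambda_\zeta)+K(\lambda_\eta,\lambda_{[\xi,\zeta]})$, which is the left-hand side. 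I have checked this computation, and it closes.

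\textbf{(A5), the main obstacle.} First rewrite
\[
\Delta c(\xi,\eta)=K(\lambda_\xi,\dd\lambda_\eta)-\tfrac12\dd K(\lambda_\xi,\lambda_\eta),
\]
separating an exact part from a ``connection-like'' part. I then compute $d_{\mathrm{Lie}}$ of each piece using the $T$-action $\xi\cdot\omega=\mathcal L_{\rho_\xi}\omega$ on $\Omega^1_A$. Two facts are used here:
\begin{itemize}[label=--]
\item $\mathcal L_{\rho_\xi}\dd=\dd\rho_\xi$;
\item the cocycle identity, which converts $\dd\lambda_{[\xi,\eta]}$ into $\dd\rho_\xi\lambda_\eta-\dd\rho_\eta\lambda_\xi$.
\end{itemize}
The non-exact terms should cancel cyclically, leaving $d_{\mathrm{Lie}}\Delta c$ as $\dd$ of a function. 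That function must then be matched against $-\tfrac12\dd(\cdots)$ in \eqref{eq:norm-action-A5}, after using the cocycle identity again on $\Delta\chi([\xi,\eta],\zeta)$ and $\iota_{\rho_\zeta}\Delta c(\xi,\eta)$.

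I expect this bookkeeping to be the main obstacle. If the direct expansion becomes unwieldy, I will fall back on a structural argument. The idea is to realize the increments as those obtained by pulling back, along $\xi\mapsto\Lcur_\xi+\lambda_\xi\cdot b$, a commutative Heisenberg-type system attached to $(\mathsf V,K)$ adjoined to $U(\cA)$. The truncated operations of this realization reproduce $\chi_\lambda$ and $c_\lambda$ via \eqref{eq:GMS-OPE-dictionary}. Then (A4) and (A5) would hold automatically, as identities in an honest vertex algebra.
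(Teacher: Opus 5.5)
Your proposal is correct, but your main line differs from the paper's. You share the paper's reduction: with $\gamma$ fixed, (A4)--(A5) on the frame $1\otimes\mathfrak a$ are linear in $(\chi,c)$, so it suffices to show that the increments satisfy them.

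The paper never computes with the increments. It adjoins $\mathsf V$ to $\mathfrak a$ as a central summand acting trivially on $A$, takes the level extension $\cA_{\kappa\oplus K}(A,\mathfrak a\oplus\mathsf V)$, and restricts it to the graph subalgebroid spanned by $\xi+\lambda_\xi$. Closure of this subalgebroid under the bracket is exactly the cocycle identity. The explicit formulas \eqref{eq:action-ext} then give, on this frame, $\gamma=0$, $\chi=\kappa+K(\lambda,\lambda)$, and $c$ as in \eqref{eq:cocycle-added}. Taking $\kappa=0$, the increments therefore satisfy (A4)--(A5) for free. Your fallback is the same idea, transported to the vertex-algebra level.

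Your direct route is also sound. Your (A4) computation is right. The (A5) step you left open closes more easily than you expected: both sides vanish.
\begin{itemize}
\item By symmetry of $K$, in components with respect to a basis of $\mathsf V$ one has $2\Delta c=\sum_{i,j}K_{ij}\,\lambda^i\cup\dd\lambda^j$. This is a cup product of Chevalley--Eilenberg cocycles for the equivariant pairing $A\times\Omega^1_A\to\Omega^1_A$. Here $\dd\lambda$ is a cocycle because $\mathcal L_{\rho_\xi}\dd=\dd\rho_\xi$. Hence $d_{\mathrm{Lie}}\Delta c=0$.
\item The function inside $-\tfrac12\dd(\cdots)$ in \eqref{eq:norm-action-A5} vanishes identically once $\lambda_{[\xi,\eta]}$, $\lambda_{[\xi,\zeta]}$, $\lambda_{[\eta,\zeta]}$ are expanded by the cocycle identity.
\end{itemize}
So you do not need to split $\Delta c$ into non-alternating pieces.

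The trade-off is this. Your computation is elementary and shows explicitly that the $c$-increment is $d_{\mathrm{Lie}}$-closed. The paper's argument avoids all bookkeeping and explains where \eqref{eq:cocycle-added} comes from. It also yields the embedding $\cA_{\kappa,\lambda}\hookrightarrow\cA_{\kappa\oplus K}(A,\mathfrak a\oplus\mathsf V)$ as a by-product.
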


\begin{proof}
First consider the level extension $\cA = \cA_\kappa(A,\mathfrak a)$. We adjoin $\mathsf V$ to $\mathfrak a$ as a central Lie algebra summand acting trivially on $A$, and equip $\widetilde{\mathfrak a} := \mathfrak a \oplus \mathsf V$ with the invariant symmetric bilinear form $\widetilde\kappa := \kappa \oplus K$. In the associated level extension $\widetilde\cA := \cA_{\widetilde\kappa}(A,\widetilde{\mathfrak a})$ over $\cT(A, \widetilde{\mathfrak a})$, consider the $A$-linear graph map
\begin{equation*}
\Gamma_\lambda : A\otimes\mathfrak a \longrightarrow A\otimes\widetilde{\mathfrak a},
\qquad \Gamma_\lambda(\xi) = \xi + \lambda_\xi.
\end{equation*}
Because $\mathsf V$ is central and acts trivially on $A$, the cocycle condition implies that
\begin{equation*}
[\Gamma_\lambda(\xi), \Gamma_\lambda(\eta)]
= [\xi, \eta] + \rho_\xi\lambda_\eta - \rho_\eta\lambda_\xi
= [\xi, \eta] + \lambda_{[\xi,\eta]}
= \Gamma_\lambda([\xi, \eta]),
\end{equation*}
while the anchor $\rho(\Gamma_\lambda(\xi)) = \rho_\xi$ is unchanged. The image $\Gamma_\lambda(A\otimes\mathfrak a)$ and $\Omega_A^1$ therefore form an extended Lie subalgebroid of $\cT(A, \widetilde{\mathfrak a})$ canonically isomorphic to $\cT(A, \mathfrak a)$ via $\Gamma_\lambda$.

Restricting the vertex algebroid operations of $\widetilde\cA$ to this subalgebroid (or applying the reconstruction formulas of Thm.~\ref{lem:frame-reconstruct} on the generators $\Gamma_\lambda(\xi)$) yields $\gamma(a, \xi) = 0$ on $A\times\mathfrak a$, together with the data~\eqref{eq:cocycle-added} with $(\chi, c) = (\kappa, 0)$. In particular, $\cA_{\kappa,\lambda}$ is a well-defined vertex algebroid extension of $\cT(A, \mathfrak a)$, and
\begin{equation*}
(\id_A,\Gamma_\lambda,\id_{\Omega_A^1},0):
\cA_{\kappa,\lambda}\longrightarrow\widetilde\cA
\end{equation*}
is an embedding of vertex algebroids.

Now consider a general vertex algebroid extension $\cA = (\cT(A, \mathfrak a), \gamma, \chi, c)$ satisfying $\gamma(a,\xi)=0$ on $A\times\mathfrak a$. The extension identities~\eqref{eq:norm-action-A4}--\eqref{eq:norm-action-A5} of Prop.~\ref{prop:action-ext-crit} are linear in the data $(\chi, c)$. Taking $\kappa=0$ in the level construction shows that the cocycle perturbation terms
\begin{equation*}
\Delta\chi(\xi,\eta) = K(\lambda_\xi,\lambda_\eta),
\qquad
\Delta c(\xi,\eta) = \frac12\left(K(\lambda_\xi,\dd\lambda_\eta) - K(\lambda_\eta,\dd\lambda_\xi)\right)
\end{equation*}
themselves satisfy these extension identities. Therefore, adding them to the existing data $(\chi, c)$ of $\cA$ yields another solution $(\chi_\lambda, c_\lambda) = (\chi + \Delta\chi, c + \Delta c)$. Uniqueness of the resulting vertex algebroid extension $\cA_\lambda$ follows from Prop.~\ref{prop:action-ext-crit}.
\end{proof}

For a fixed extension $\cA$, cohomologous cocycles give isomorphic
extensions.
\begin{proposition}
Let $F\in A\otimes\mathsf V$ and
$\lambda'_\xi=\lambda_\xi+\rho_\xi F$.  Define the map $\Theta_F:T\to\Omega_A^1$ by
\begin{equation*}
\Theta_F(\xi)
=K\left(\lambda_\xi+\frac12\rho_\xi F, \partial F\right).
\end{equation*}
Then the vertex algebroids $\cA_{\lambda}$ and
$\cA_{\lambda'}$ are isomorphic via the change of splitting
\begin{equation*}
(\id_A,\id_T,\id_{\Omega_A^1},-\Theta_F):
\cA_\lambda\xrightarrow{\sim}\cA_{\lambda'} 
\end{equation*}
\end{proposition}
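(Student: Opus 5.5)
The plan is to show that the change of splitting of Prop.~\ref{prop:change-splitting} by $\Theta_F$ carries $\cA_\lambda$ exactly onto $\cA_{\lambda'}$, that is, $(\cA_\lambda)^{\Theta_F}=\cA_{\lambda'}$. The isomorphism $(\id_A,\id_T,\id_{\Omega_A^1},-\Theta_F)$ is then the $\Phi_{\Theta_F}$ of that proposition. The formula in the statement only prescribes $\Theta_F$ on $T_0=1\otimes\mathfrak a$. I would extend it $A$-linearly, $\Theta_F(a\xi):=a\,\Theta_F(\xi)$. With this choice the first formula of \eqref{eq:GMS-change} gives $\gamma^{\Theta_F}(a,\xi)=\gamma(a,\xi)=0$, so the normalization of Prop.~\ref{sec:action-levels-3form} is preserved.

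Both $(\cA_\lambda)^{\Theta_F}$ and $\cA_{\lambda'}$ are vertex algebroids over $\cT(A,\mathfrak a)$ with $\gamma=0$ on $A\times\mathfrak a$. By the uniqueness part of Thm.~\ref{lem:frame-reconstruct} (equivalently Prop.~\ref{prop:action-ext-crit}), it therefore suffices to compare $\chi$ and $c$ on pairs $\xi,\eta\in\mathfrak a$. Throughout I will use $\langle\xi,\dd F\rangle=\rho_\xi F$, and write $\partial=\dd$.

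\textbf{The pairing.} By \eqref{eq:GMS-change},
\[
\chi^{\Theta_F}(\xi,\eta)=\chi(\xi,\eta)+K(\lambda_\xi,\lambda_\eta)+K\bigl(\lambda_\eta+\tfrac12\rho_\eta F,\rho_\xi F\bigr)+K\bigl(\lambda_\xi+\tfrac12\rho_\xi F,\rho_\eta F\bigr).
\]
This collapses to $\chi(\xi,\eta)+K(\lambda'_\xi,\lambda'_\eta)$ because $K$ is symmetric and constant, and that is $\chi_{\lambda'}(\xi,\eta)$.

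\textbf{The skew map.} This is the step I expect to be the main obstacle, purely as bookkeeping. I need
\[
\Theta_F([\xi,\eta])-\rho_\xi\Theta_F(\eta)+\rho_\eta\Theta_F(\xi)+\tfrac12\dd\langle\xi,\Theta_F(\eta)\rangle-\tfrac12\dd\langle\eta,\Theta_F(\xi)\rangle
\]
to equal
\[
\tfrac12\bigl(K(\lambda'_\xi,\dd\lambda'_\eta)-K(\lambda'_\eta,\dd\lambda'_\xi)\bigr)-\tfrac12\bigl(K(\lambda_\xi,\dd\lambda_\eta)-K(\lambda_\eta,\dd\lambda_\xi)\bigr).
\]
The approach is as follows.
\begin{enumerate}
\item Expand the right-hand side by substituting $\lambda'=\lambda+\rho F$. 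This gives terms of the shapes $K(\lambda,\dd\rho F)$, $K(\rho F,\dd\lambda)$ and $K(\rho F,\dd\rho F)$.
\item On the left-hand side, use that $\rho_\xi$ acts on $\Omega^1_A$ by Lie derivative. In particular it commutes with $\dd$, so $\rho_\xi(K(x,\dd y))=K(\rho_\xi x,\dd y)+K(x,\dd\rho_\xi y)$.
\item Rewrite $\Theta_F([\xi,\eta])$ using the cocycle identity $\lambda_{[\xi,\eta]}=\rho_\xi\lambda_\eta-\rho_\eta\lambda_\xi$ together with $\rho_{[\xi,\eta]}F=\rho_\xi\rho_\eta F-\rho_\eta\rho_\xi F$.
\item Expand the exact terms $\tfrac12\dd K(\lambda_\eta+\tfrac12\rho_\eta F,\rho_\xi F)$ and its counterpart by the Leibniz rule.
\end{enumerate}
Collecting terms by type ($\lambda$–$F$ cross terms and $F$–$F$ terms), the terms containing $\dd F$ without a $\rho$ should cancel between $\Theta_F([\xi,\eta])$ and the Lie derivatives. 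What remains should match the expansion of the right-hand side. I would first check the purely quadratic $F$–$F$ terms, and then the cross terms.

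\textbf{Conclusion.} With both $\chi$ and $c$ agreeing on $\mathfrak a$, uniqueness gives $(\cA_\lambda)^{\Theta_F}=\cA_{\lambda'}$. Prop.~\ref{prop:change-splitting} then provides the stated isomorphism, which also extends to the universal vertex envelopes.
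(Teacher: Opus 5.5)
Your proposal is correct and follows the paper's proof essentially step for step. As in the paper, $A$-linearity of $\Theta_F$ preserves $\gamma=0$ on $A\times\mathfrak a$, the changes of $\chi$ and $c$ from Prop.~\ref{prop:change-splitting} are matched on $\mathfrak a$ using the cocycle identity and $\mathcal L_{\rho_\xi}\dd=\dd\rho_\xi$, uniqueness gives $\cA_\lambda^{\Theta_F}=\cA_{\lambda'}$, and $\Phi_{\Theta_F}$ is then the stated isomorphism. Your bookkeeping for $c$ does close: the bare $\dd F$ terms cancel exactly, and after the Leibniz expansion of the exact terms, the surviving $\lambda$--$\rho F$ cross terms and $\rho F$--$\rho F$ terms reproduce the expansion of the right-hand side.
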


\begin{proof}
Since $\Theta_F$ is $A$-linear, the change-of-splitting \eqref{eq:GMS-change} gives $\gamma^{\Theta_F}(a,\xi) = \gamma(a,\xi) = 0$ for $a\in A, \xi\in\mathfrak a$. 

Next, we evaluate the change in the pairing $\chi$. Using the definition of $\Theta_F$ and the duality pairing $\langle\xi, \omega\rangle = \iota_{\rho_\xi}\omega$, a straightforward contraction gives
\begin{equation*}
\iota_{\rho_\xi}\Theta_F(\eta) + \iota_{\rho_\eta}\Theta_F(\xi)
= K(\lambda_\xi + \rho_\xi F, \lambda_\eta + \rho_\eta F) - K(\lambda_\xi, \lambda_\eta).
\end{equation*}
By \eqref{eq:GMS-change}, this is precisely the difference $\chi_{\lambda'}(\xi,\eta) - \chi_\lambda(\xi,\eta)$ on $\mathfrak a$.

Similarly, evaluating the change-of-splitting \eqref{eq:GMS-change} for $c^{\Theta_F}$ and using the cocycle identity along with $\mathcal L_{\rho_\xi}\dd F = \dd(\rho_\xi F)$, a direct calculation shows that 
\begin{equation*}
c^{\Theta_F}(\xi,\eta) - c_\lambda(\xi,\eta)
= \frac12\left(K(\lambda'_\xi, \dd\lambda'_\eta) - K(\lambda'_\eta, \dd\lambda'_\xi)\right)
- \frac12\left(K(\lambda_\xi, \dd\lambda_\eta) - K(\lambda_\eta, \dd\lambda_\xi)\right).
\end{equation*}
Hence $c^{\Theta_F}(\xi,\eta) = c_{\lambda'}(\xi,\eta)$ on $\mathfrak a$. By the uniqueness statement of Prop.~\ref{prop:action-ext-crit}, the vertex algebroid data of $\cA_\lambda^{\Theta_F}$ and $\cA_{\lambda'}$ coincide, giving the identification
\begin{equation*}
\cA_{\lambda'}=\cA_\lambda^{\Theta_F}.
\end{equation*}
Prop.~\ref{prop:change-splitting} therefore yields the vertex algebroid isomorphism.
\end{proof}

Thus, for a fixed vertex algebroid extension $\cA$ of $\cT(A, \mathfrak a)$, the construction descends to a map from $H^1(\mathfrak a,A)\otimes\mathsf V$ to isomorphism classes of vertex algebroids extending $\cT(A, \mathfrak a)$. This map is not injective in general, as the quadratic dependence in~\eqref{eq:cocycle-added} gives $\cA_{-\lambda} = \cA_\lambda$.

We now specialize this construction to the Yangian case. We set $\mathsf V= \kk$ from now on. For $\xi \in\dplus$, define
\begin{equation}
\lambda_\xi(g)
:=
\Res_u\kappa\left(
\xi(u),(\Tu g)g^{-1}
\right),
\qquad g\in G_-.
\label{eq:spectral-log-cocycle}
\end{equation}
Here $\Tu$ differentiates the Yangian spectral coordinate. 

\begin{theorem}
\label{thm:spectral-trans-cocycle}
For every $\xi\in\dplus$, $\lambda_\xi \in A=\cO(G_-)$.  The assignment $\xi\mapsto\lambda_\xi$ is a
Chevalley one-cocycle, and defines a nonzero class in
$H^1(\dplus,A)$.
\end{theorem}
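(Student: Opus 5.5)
The plan is to treat the three assertions in order: polynomiality, the cocycle identity, and nontriviality of the class. Throughout I assume $(\g,\kappa)$ quadratic, since $\kappa$ enters \eqref{eq:spectral-log-cocycle}.

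\emph{Polynomiality.} Write $g=\exp(B)$ with $B=\sum_{r\ge1}B_ru^{-r}$, exactly as in the proof of Lem.~\ref{lem:rat-factor-poly-closure}. The series $a:=(\Tu g)g^{-1}$ lies in $u^{-2}\g[[u^{-1}]]$, and its coefficient of $u^{-m}$ is a polynomial in $B_1,\dots,B_{m-1}$; this follows from the BCH/derivative-of-exponential formula $a=\sum_n\frac{1}{(n+1)!}\ad_B^n(\Tu B)$. For $\xi=\sum_{j\le d}\xi_ju^j$, the residue $\Res_u\kappa(\xi,a)$ only involves the coefficients of $a$ at $u^{-j-1}$ with $j\le d$. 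Hence $\lambda_\xi$ factors through $G_-^{(d)}$ and lies in $A$.

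\emph{Cocycle identity.} I will compute $\rho_\xi$ on the $\g((u^{-1}))$-valued function $a$. Set $X=\Ad_{g^{-1}}\xi$. Since $gX=\xi g$, the dressing field \eqref{eq:rat-factor-action} can be rewritten as $\rho_\xi(g)=-gX_-=gX_+-\xi g$. Thus $\rho_\xi(g)g^{-1}=Y_\xi:=\Ad_g(M_g\xi)-\xi$, and differentiating gives
\[
\rho_\xi(a)=\Tu Y_\xi+[Y_\xi,a].
\]
Pairing with $\eta$ and taking residues, $\rho_\xi\lambda_\eta-\rho_\eta\lambda_\xi$ splits into the following pieces.
\begin{enumerate}[label=(\roman*)]
\item Terms with $-\xi$ in $Y_\xi$: here $\Res\kappa(\eta,\Tu\xi)=0$ since both are polynomial, and the brackets give $2\Res\kappa([\xi,\eta],a)=2\lambda_{[\xi,\eta]}$ by invariance.
\item Terms with $\Ad_g(M_g\xi)$: these I rewrite via invariance as $\Res\kappa\bigl(\Ad_{g^{-1}}\eta,\ g^{-1}\Tu(g\,M_g\xi\,g^{-1})g\bigr)$ and similar expressions. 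Using $g^{-1}\Tu(gZg^{-1})g=\Tu Z+[g^{-1}\Tu g,Z]$ and isotropy of $\dplus$ and $\dminus$ under the residue pairing, only mixed $\pm$ terms survive.
\end{enumerate}
The target is that the pieces in (ii) combine to $-\lambda_{[\xi,\eta]}$, using $\Res\kappa(X^\eta_+,\Tu X^\xi_+)=0$ and $\Tu(\Ad_{g^{-1}}\xi)=\Ad_{g^{-1}}(\Tu\xi)-[g^{-1}\Tu g,\Ad_{g^{-1}}\xi]$. This bookkeeping is the main computational obstacle. As a cross-check I would also use the conceptual form $\lambda_\xi=\iota_{D_u}\theta^R_\xi$ from Lem.~\ref{lem:spectral-affine-vf}. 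Combined with Cartan calculus, the Maurer--Cartan equation for $\theta^R$, the bracket \eqref{eq:manin-bracket}, and the relation $[D_u,\rho_\xi]=\rho_{\Tu\xi}$ (from equivariance of the splitting under spectral translation), this should reduce the identity to $\Res\kappa(\Tu\xi,\eta)$-type terms, which vanish.

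\emph{Nontriviality.} Suppose $\lambda_\xi=\rho_\xi F$ for some $F\in A$. Since $\rho_\xi(e)=0$, I expand $F$ around $e$ in the coordinates $B_r$ and compare linear parts. The linear part of $\rho_\xi F$ is $L_\xi F_1$, where $L_\xi$ is the linearization of $\rho_\xi$ at $e$ and $F_1$ is the linear part of $F$. From the proof of Lem.~\ref{lem:rat-factor-poly-closure}, $L_{xu^j}$ sends $B_m\mapsto \pm[B_{m+j},x]$ for $x\in\g$. For constant $x\in\g$ we have $\lambda_x=0$, since $a$ starts at $u^{-2}$, so $L_xF_1=0$. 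This forces $F_1=\sum_r\kappa(z_r,B_r)$ with each $z_r$ $\g$-invariant, i.e.\ central. Then
\[
L_{xu}F_1=\pm\textstyle\sum_r\kappa\bigl(z_r,[B_{r+1},x]\bigr)=\pm\sum_r\kappa\bigl([x,z_r],B_{r+1}\bigr)=0 .
\]
On the other hand, the linear part of $\lambda_{xu}$ is $-\kappa(x,B_1)$, which is nonzero for $x\ne0$ because $\kappa$ is nondegenerate. This contradiction shows the class in $H^1(\dplus,A)$ is nonzero. I expect this part to be routine once the linearization is written out; the cocycle computation is the real work.
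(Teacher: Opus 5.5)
Your proposal is correct, but each of the three parts follows a more hands-on route than the paper's.

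\textbf{How the paper argues.} The paper's proof is conceptual.
\begin{enumerate}
\item It writes $\lambda_\xi=\theta^R_\xi(D_u)$, so regularity is inherited from the vector field $D_u$ of Lem.~\ref{lem:spectral-affine-vf}.
\item The cocycle property comes from translation invariance of Yang's $r$-matrix, $\mathcal L_{D_u}\pi_{\mathrm{rat}}=0$. This makes $-\iota_{D_u}$ a closed one-cochain of the cotangent Lie algebroid, which pulls back along $\jmath$ to $\lambda$.
\item Nontriviality comes from the $\kk[t]$-point $g_t=\exp(t\mathsf h u^{-1})$. Along it $\rho_{\mathsf h u}$ vanishes identically, while $\lambda_{\mathsf h u}(g_t)=-t\kappa(\mathsf h,\mathsf h)\neq0$.
\end{enumerate}
This is short and explains where the cocycle comes from. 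However, it rests on the Poisson--Lie identification of Cor.~\ref{thm:action-cotangent-comp}, hence on nondegeneracy of $\kappa$ and on the sign conventions relating $\pi^\sharp_{\mathrm{rat}}$ and $\rho$.

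\textbf{What your route buys.} Your direct residue computation uses only invariance of $\kappa$ and the isotropy of $\dplus$ and $\dminus$. So it proves the cocycle identity for any invariant symmetric form on $\g$, not just a nondegenerate one. The cost is the bookkeeping in (ii), and that bookkeeping does close as you predict.

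\textbf{The missing ingredient in (ii).} The one tool you did not list is integration by parts, $\Res_u\Tu(\cdot)=0$. Once the $[g^{-1}\Tu g,\cdot\,]$ terms cancel, (ii) equals $\Res\kappa(X^\eta,\Tu X^\xi_+)-\Res\kappa(X^\xi,\Tu X^\eta_+)$. Move $\Tu$ across in the second term, and use that $(+,+)$ and $(-,-)$ pairings have no residue. The sum becomes $\Res\kappa(X^\eta,\Tu X^\xi)$. Your formula for $\Tu(\Ad_{g^{-1}}\xi)$ then turns this into $-\Res\kappa\bigl([X^\xi,X^\eta],g^{-1}\Tu g\bigr)=-\lambda_{[\xi,\eta]}$.

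\textbf{Nontriviality.} Your linearization argument is the infinitesimal version of the paper's curve. Indeed, $g_t$ is tangent to a $B_1$-direction, and the linearization $B\mapsto[B,xu]_-$ of $\rho_{xu}$ kills such directions. The argument can also be shortened: the preliminary step with constant $x$ is unnecessary. Since $L_{xu}$ sends $\kappa(z,B_m)$ to $\kappa(z,[B_{m+1},x])$, the linear part $L_{xu}F_1$ never involves $B_1$, whatever $F_1$ is. On the other hand, $\lambda_{xu}=-\kappa(x,B_1)$ holds exactly, because the $u^{-2}$-coefficient of $(\Tu g)g^{-1}$ is $-B_1$.
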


\begin{proof}
We consider the translation action $\sigma_s$ of Lem.~\ref{lem:spectral-affine-vf}, and the associated vector fields $D_u$.  Contracting $D_u$ with the right-invariant form $\theta^R_{\xi}$, for $\xi \in \dplus$, gives
\begin{equation*}
\theta_\xi^R(D_u)
=\Res_u\kappa\left(\xi(u),(\Tu g)g^{-1}\right)
=\lambda_\xi.
\end{equation*}
This proves $\lambda_\xi\in A$.

Yang's $r$-matrix is invariant under the translation action:
$\frac{\Omega_\g}{(u+s)-(v+s)}=\frac{\Omega_\g}{u-v}$. Thus
$\sigma_s$ preserves the Poisson tensor, and the vector field $D_u$ satisfies
\begin{equation*}
\mathcal L_{D_u}\pi_{\mathrm{rat}}=0.
\end{equation*}

Under the identification
$\Gamma(TG_-)=\operatorname{Hom}_A(\Omega_A^1,A)$, the vector field
$D_u$ gives the Lie-algebroid one-cochain $-\iota_{D_u}$. Its Lie algebroid cochain differential $d_{\mathrm{Lie}}(-\iota_{D_u}) = \mathcal L_{D_u}\pi_{\mathrm{rat}} = 0$. So this cochain is closed in the Lie-algebroid complex. Under the isomorphism \eqref{eq:action-cotangent-iso}, $\jmath(\xi)=-\theta_\xi^R$, and hence $(\jmath^*(-\iota_{D_u}))(\xi)=\lambda_\xi$. We conclude $d_{\mathrm{Lie}}\lambda = 0$ and $\lambda$ defines a class in $H^1(\dplus,A)$. 

Choose $\mathsf h\in\g$ with $\kappa(\mathsf h,\mathsf h)\ne0$, and set $g_t=\exp(t\mathsf h u^{-1})$ and $\xi_{\mathsf h}=\mathsf h u$. Then $\Ad_{g_t^{-1}}\xi_{\mathsf h}=\xi_{\mathsf h}$, so $\rho_{\xi_{\mathsf h}}(g_t)=0$, whereas $\lambda_{\xi_{\mathsf h}}(g_t)=-t\kappa(\mathsf h,\mathsf h)$. If $\lambda_\xi =\rho_\xi f$ for some $f\in A$, evaluation along this $\kk[t]$-point would give $\lambda_{\xi_{\mathsf h}}(g_t)=0$. This is a contradiction, so $[\lambda]\ne0$.
\end{proof}

Now we set $K(a,b):= ab$. Applying Thm.~\ref{thm:cocycle-action-ext} to the level zero vertex algebroid $\cA(A, \dplus)$ gives an extension $\cA_{\lambda}(A, \dplus)$ with the following data for $\xi,\eta\in\dplus$:
\begin{equation}\label{eq:spectral-ext}
    \chi_\lambda(\xi,\eta) = \lambda_\xi\lambda_\eta, \quad c_\lambda(\xi,\eta) =\frac12\left( \lambda_\xi\dd\lambda_\eta-\lambda_\eta\dd\lambda_\xi \right).
\end{equation}
\begin{theorem}
\label{thm:spectral-ext-param}
Let $\mathfrak g$ be a reductive Lie algebra with a non-trivial center $\mathfrak{z}$. Then $\cA_{\lambda}(A, \dplus)$ is not isomorphic to $\cA_{\whkappa,\mathcal{H}}(A, \dplus)$ for any level $\widehat{\kappa}$ and closed three form $\mathcal{H}$.
\end{theorem}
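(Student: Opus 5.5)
The plan is to find an invariant of the vertex algebroid that is visible on the kernel of the anchor. On that kernel the correction terms allowed by morphisms (the $h$-terms in \eqref{eq:GMS-mor-pair}) vanish. The central currents $\mathfrak z[u]\subset\dplus$ lie in this kernel. For $\zeta\in\mathfrak z$ and $k\geq0$ we have $\Ad_{g^{-1}}(\zeta u^k)=\zeta u^k$, so $(\Ad_{g^{-1}}\zeta u^k)_-=0$ and $\rho_{\zeta u^k}=0$ identically on $G_-$. The idea is that for $\cA_{\whkappa,\mathcal H}$ the pairing $\chi$ on such currents is the constant $\whkappa$. For $\cA_\lambda$ it is instead $\lambda_{\zeta u^k}^2$, and I will show this is a nonconstant function.

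\emph{Step 1 (computing $\lambda$ on central currents).} Write $g=\exp(B)$ with $B=\sum_{r\geq1}B_ru^{-r}$. Then $(\Tu g)g^{-1}=\Tu B+\sum_{n\geq1}c_n\,\ad_B^n(\Tu B)$, and every term with $n\geq1$ is a bracket. Since $\kappa(\zeta,[x,y])=\kappa([\zeta,x],y)=0$, only the linear term survives, giving
\[
\lambda_{\zeta u^k}(g)=\Res_u\kappa\bigl(\zeta u^k,\Tu B\bigr)=-k\,\kappa(\zeta,B_{k})\qquad(k\geq1).
\]
Here I use the earlier convention that $\kappa$ is nondegenerate; for reductive $\mathfrak g$ it is then nondegenerate on $\mathfrak z$, so some $\zeta$ makes $\kappa(\zeta,-)|_{\mathfrak z}\neq0$. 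Taking $k=1$, the function $\lambda_{\zeta u}$ is a nonzero linear coordinate on $G_-$. Hence $\chi_\lambda(\zeta u,\zeta u)=\lambda_{\zeta u}^2$ is a nonconstant quadratic function of Euler degree $2$.

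\emph{Step 2 (isomorphisms over the identity).} Suppose $(\id_A,\id_T,\id_\Omega,h):\cA_\lambda\to\cA_{\whkappa,\mathcal H}$ is an isomorphism. Apply \eqref{eq:GMS-mor-pair} with $\tau_1=\tau_2=\zeta u$. Since $\langle\zeta u,\omega\rangle=\iota_{\rho_{\zeta u}}\omega=0$ for every $\omega\in\Omega$, both $h$-terms vanish. This yields $\lambda_{\zeta u}^2=\whkappa(\zeta u,\zeta u)\in\kk$, contradicting Step~1. This settles isomorphism as extensions of $\cT(G_-,\dplus)$.

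\emph{Step 3 (general isomorphisms, the main obstacle).} Let $(g_A,g_T,g_\Omega,h)$ be an arbitrary isomorphism. Because $g_A$ is surjective and $g_T$ intertwines anchors, $g_T(\zeta u)$ lies in the anchor kernel $\mathcal K'\subset A\otimes\dplus$ of the target, so the $h$-terms again drop out. This leaves the identity
\[
g_A\bigl(\lambda_{\zeta u}^2\bigr)=\chi_{\whkappa}\bigl(g_T\zeta u,\,g_T\zeta u\bigr).
\]
I must show the right-hand side cannot be the square of a nonconstant function, and the obstacle is controlling $\chi_{\whkappa}$ on $\mathcal K'$. My first attempt is as follows. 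Write $\tau=\sum_ia_i\xi_i$ in $\mathcal K'$, so that $\sum_ia_i\rho_{\xi_i}=0$, and use \eqref{eq:frame-pairing}. Contracting the anchor terms with this relation should reduce the formula to
\[
\chi(\tau,\tau)=\sum_{i,j}a_ia_j\,\whkappa(\xi_i,\xi_j)-\sum_{i,j}\rho_{\xi_j}(a_i)\,\rho_{\xi_i}(a_j).
\]
Then I would use that $\mathcal K'$ is a bundle of Lie algebras containing $A\otimes\mathfrak z[u]$, and argue that $\chi$ restricted to $\mathcal K'\times\mathcal K'$ is $A$-bilinear by \eqref{eq:GMS-A2}, since the anchor vanishes on both arguments. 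On $\cA_{\whkappa,\mathcal H}$ this restriction extends $\whkappa$ $A$-linearly. On $\cA_\lambda$ it equals $\lambda\otimes\lambda$, a rank-one form of nonconstant coefficients on the central part. An isomorphism transports the first form to the second. Evaluated at the identity $e\in G_-$, where $\lambda_\xi(e)=0$, this forces the transported $\whkappa$ to vanish at $e$. Then a degree comparison with respect to the grading by $E_u$ of Lem.~\ref{lem:spectral-affine-vf} should give the contradiction. If this general case resists, I will state the theorem for isomorphisms of extensions, that is, the content of Step~2.
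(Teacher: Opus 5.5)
Your Steps 1 and 2 are correct, but they only rule out isomorphisms of the form $(\id_A,\id_T,\id_\Omega,h)$, i.e.\ isomorphisms of extensions of the fixed $\cT(G_-,\dplus)$. The theorem, and the paper's proof, concern arbitrary vertex-algebroid isomorphisms $(\phi_A,\phi_T,\phi_\Omega,h)$ with $\phi_A=F^*$ for some base map $F$. Falling back to Step 2 therefore proves a strictly weaker statement, and Step 3 as sketched does not close the gap, for three reasons.
\begin{enumerate}[label=\textup{(\roman*)}]
\item The claim that $\chi_{\whkappa,\mathcal H}$ restricted to the anchor kernel $\mathcal K'$ is the $A$-bilinear extension of $\whkappa$ is not justified. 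For $\tau=\sum_i a_i\xi_i\in\mathcal K'$, \eqref{eq:frame-pairing} gives $\chi(\tau,\tau)=\sum_{i,j}a_ia_j\whkappa(\xi_i,\xi_j)+\sum_{i,j}\rho_{\xi_j}(a_i)\rho_{\xi_i}(a_j)$; note the plus sign. The second term is $A$-bilinear on $\mathcal K'$ but need not vanish; it vanishes only at points where all anchor fields vanish. The same objection applies to ``$\chi_\lambda=\lambda\otimes\lambda$'' on the kernel.
\item The kernel is too small to carry the information you need. At $e$ its image in the fibre lies in $\mathfrak z[u]$, because the linear part of $\rho_\xi$ at $e$ is, up to sign, $y\mapsto[\xi,y]_-$ on $\dminus$, and this vanishes only for central $\xi$. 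So evaluating at $e$ cannot force $\whkappa=0$. Moreover, the single identity $\lambda_{\zeta u}(g_t)^2=\whkappa(v_t,v_t)$, with $v_t$ the fibre value of $\phi_T(\zeta u)$ at $F^{-1}(g_t)$, is not contradictory by itself, since $v_t$ moves with $t$.
\item A general $\phi_A$ need not respect the $E_u$-grading, so the proposed degree comparison is not available.
\end{enumerate}

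The missing idea is to work on full fibres at anchor-vanishing points rather than on the anchor kernel. Let $p$ be any point where every anchor field vanishes, for instance $g_t=\exp(t\zeta u^{-1})$ with $\zeta\in\mathfrak z$. There \eqref{eq:GMS-A2} gives $\chi(a\tau,\nu)(p)=a(p)\chi(\tau,\nu)(p)$ for all $\tau,\nu\in T$, so $\chi$ descends to a bilinear form on the whole fibre $T_p=T/\mathfrak m_pT\cong\dplus$. This form is $\whkappa$ for $\cA_{\whkappa,\mathcal H}$, and $(\xi,\eta)\mapsto\lambda_\xi(p)\lambda_\eta(p)$ for $\cA_\lambda$. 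Anchor compatibility shows that all target anchors vanish at $p_t:=F^{-1}(g_t)$. Hence the $h$-terms in \eqref{eq:GMS-mor-pair} drop out at $p_t$, and $\phi_T$ induces a full isometry $(T_{g_t},\chi_\lambda|_{g_t})\cong(T_{p_t},\whkappa)$. At $t=0$ the source form is zero, since $\lambda_\xi(e)=0$, and this forces $\whkappa=0$. At $t=1$, choose $\zeta$ with $\kappa(\zeta,\zeta)\neq0$, which is possible because $\kappa|_{\mathfrak z}$ is nondegenerate. Your Step 1 then gives $\chi_\lambda(\zeta u,\zeta u)(g_1)=\kappa(\zeta,\zeta)^2\neq0$, a contradiction. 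This is exactly the paper's argument, and your Step 1 computation slots directly into it.
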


\begin{proof}
As in the previous proof, we set $g_t = \exp(t\mathsf h u^{-1})$ and $\xi_{\mathsf h} = \mathsf h u$, but now we choose $\mathsf h \in \mathfrak z$ with $\kappa(\mathsf h, \mathsf h) \ne 0$. In this case, $\rho_{\xi}(g_t) = 0$ for any $\xi \in \dplus$. By axiom (A2) we have, for $a\in A$ and $\tau,\nu \in T:=A\otimes \dplus$,
\begin{equation*}
  \chi(a\tau,\nu)(g_t) = a(g_t)\chi(\tau,\nu)(g_t).
\end{equation*}
The same argument applies at any point $p$ where all anchor fields vanish. By symmetry, evaluation then defines a bilinear form $\chi_p$ on $T_p:=T/\mathfrak m_pT\simeq\dplus$.

Suppose that an isomorphism $(\phi_A,\phi_T,\phi_\Omega,h): \cA_{\lambda}(A, \dplus) \overset{\sim}{\to} \cA_{\whkappa,\mathcal{H}}(A, \dplus)$ exists. Write $F: G_- \to G_-$ for the base map such that $\phi_A = F^*$, and set $p_t:=F^{-1}(g_t)$. Anchor compatibility gives $\rho_\xi(p_t)=0$ for every $\xi\in\dplus$. Thus the contraction terms in~\eqref{eq:GMS-mor-pair} vanish at $p_t$, giving
\begin{equation*}
  \chi_\lambda(\tau,\nu)(g_t)
  = \chi_{\whkappa,\mathcal H}(\phi_T\tau,\phi_T\nu)(p_t).
\end{equation*}
Consequently, $\phi_T$ induces an isometry $(T_{g_t},\chi_\lambda|_{g_t})\xrightarrow{\sim}(T_{p_t},\chi_{\whkappa,\mathcal H}|_{p_t})$. Under $T_{p_t}\simeq\dplus$, $\chi_{\whkappa,\mathcal H}|_{p_t}$ is simply $\widehat\kappa$.

At $t=0$, we have $g_0=e$ and $\lambda_\xi(e)=0$ for every $\xi\in\dplus$, so $\chi_\lambda|_{e}$ is zero. The isometry therefore forces $\widehat\kappa=0$. At $t=1$, however,~\eqref{eq:spectral-log-cocycle} and~\eqref{eq:spectral-ext} give
\begin{equation*}
  \chi_\lambda(\xi_{\mathsf h},\xi_{\mathsf h})(g_1)
  =\lambda_{\xi_{\mathsf h}}(g_1)^2
  =\kappa(\mathsf h,\mathsf h)^2\ne0,
\end{equation*}
which is a contradiction. 
\end{proof}

We conjecture that the same non-isomorphism statement holds for nonzero semisimple $\mathfrak g$.

\subsection{Quasi-conformal structures}
\label{sec:qc}
Let $\cO_z$ denote the topological algebra $\kk[[z]]$, and let $\operatorname{Aut}(\cO_z)$ be the group of continuous automorphisms of $\cO_z$. Its Lie algebra is denoted by $\Der_0\cO_z := z\kk[[z]]\partial_z$. We also set 
\begin{equation*}
\Der\cO_z:=\kk[[z]]\partial_z, \qquad \Der_+\cO_z:=z^2\kk[[z]]\partial_z. 
\end{equation*}
We recall the definition of a quasi-conformal vertex algebra.
\begin{definition}
A $\mathbb{Z}$ (or $\mathbb{Z}_{\geq 0}$)-graded vertex algebra $V=\bigoplus_{n\in\mathbb Z}V_n$ is \emph{quasi-conformal} if it carries a continuous Lie algebra action of $\Der\cO_z$, denoted by $\{\mathscr L_n\}_{n\geq -1}$, such that
\begin{equation*}
\mathscr L_{-1}=\partial,
\qquad
\mathscr L_0|_{V_n}=n\,\id,
\end{equation*}
the operators in $\Der_+\cO_z$ act locally nilpotently, and
\begin{equation}
[\mathscr L_m,Y(v,z)]
=
\sum_{j=0}^{m+1}
\binom{m+1}{j}z^{m+1-j}
Y(\mathscr L_{j-1}v,z)
\label{eq:qc-covariance}
\end{equation}
for all $v\in V$ and $m\geq-1$. Here continuity means that sufficiently high modes annihilate each fixed vector, so formal vector fields act by finite sums on that vector. The restriction to $\Der_0\cO_z$ integrates to $\operatorname{Aut}(\cO_z)$, with the integral $\mathscr L_0$-grading giving the scaling action.
\end{definition}

By contrast, a vertex algebra is called \emph{conformal} if there exists $\omega \in V_2$ whose field
\begin{equation*}
Y(\omega,z)=\sum_{n\in\mathbb Z}L_n^\omega z^{-n-2}
\end{equation*}
satisfies the Virasoro OPE, with $L_{-1}^\omega=\omega_{(0)}=\partial$ and $L_0^\omega$ equal to the grading operator. The modes $L_n^\omega$ therefore endow $V$ with an internal quasi-conformal action, and $\omega$ is called the \emph{conformal vector}.

As we will see, the vertex algebras considered in this paper admit quasi-conformal structures (in fact, more than one in the case of $\Ych(T^*\g)$), but do not possess any conformal vector. 

The following criterion follows from the semi-conformal construction of \cite[Thm.~3.6]{MingLiSemiConformal} and the degree argument below.
\begin{theorem}
\label{thm:Li-qc-crit}
Let $\cA$ be a vertex algebroid, and write $V=\Uenv(\cA)$ with its nonnegative grading $V_0=A$ and $V_1=T\oplus\Omega$. Suppose a $\kk$-linear map $D:V_1\to A$ satisfies $D(\partial a)=0$ and 
\begin{equation*}
D(a_{(-1)}v)=aD(v)-a_{(0)}v, \quad  D(u_{(0)}v)=(Du)_{(0)}v+u_{(0)}Dv
\end{equation*}
for $a\in A$ and $u,v\in V_1$. Then $V$ has a unique quasi-conformal structure with $\mathscr L_0$ the given grading operator and $\mathscr L_1|_{V_1}=D$. Here $\mathscr L_{-1}=\partial$, $\mathscr L_1(A)=0$, and $\mathscr L_n(A\oplus V_1)=0$ for $n\geq2$.
\end{theorem}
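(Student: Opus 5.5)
The plan is to deduce the statement from \cite[Thm.~3.6]{MingLiSemiConformal} and then use a degree argument to pin down the remaining modes. First, translate the data $D$ into Li's semi-conformal framework. Li's theorem says that a derivation-type map on the weight-one generators of a vertex algebra generated by $V_0\oplus V_1$ extends to an action of $\mathscr L_1$. The two hypotheses on $D$ are exactly what this requires.
\begin{itemize}
\item $D(a_{(-1)}v)=aD(v)-a_{(0)}v$ is the commutator formula \eqref{eq:qc-covariance} for $m=1$ applied to the mode $a_{(-1)}$, using $\mathscr L_1 a=0$ and $\mathscr L_0 a=0$.
\item $D(u_{(0)}v)=(Du)_{(0)}v+u_{(0)}Dv$ is \eqref{eq:qc-covariance} for the mode $u_{(0)}$.
\item $D(\partial a)=0$ is the instance $[\mathscr L_1,\mathscr L_{-1}]a=2\mathscr L_0a=0$.
\end{itemize}
I would check that these identities, together with their consequences for the $(1)$-products (which follow by truncated skew-symmetry, since the $(1)$-products land in $A$, where $D$ is not applied), define a derivation $\mathscr L_1$ of the $1$-truncated vertex algebra $u(\cA)$ compatible with $\mathscr L_0,\mathscr L_{-1}$. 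The universal property of $\Uenv$, or Li's construction directly, then extends $\mathscr L_1$ to all of $V$ with \eqref{eq:qc-covariance} holding for $m=1$.

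Second, construct $\mathscr L_n$ for $n\ge2$. I would set $\mathscr L_n$ to vanish on the generators $A\oplus V_1$, which is forced on $V_0\oplus V_1$ because $\mathscr L_n$ lowers weight by $n$. Then define $\mathscr L_n$ on all of $V$ by \eqref{eq:qc-covariance} applied to monomials $v^1_{(k_1)}\cdots v^r_{(k_r)}\mathbf 1$ in the generators. Since $[\mathscr L_1,\mathscr L_n]=(1-n)\mathscr L_{n+1}$ must hold, it is cleaner to define $\mathscr L_2$ this way and then set $\mathscr L_{n+1}:=\frac{1}{1-n}[\mathscr L_1,\mathscr L_n]$. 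In that case the required fact is that $\mathscr L_2$ is well defined, i.e.\ compatible with the relations of $\Uenv(\cA)$.

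To prove this, I would use that $\mathscr L_2$ maps each generator to $0$ while the covariance formula for a generator $v$ of weight $\le1$ only involves $\mathscr L_{j-1}v$ for $j\le 2$. The resulting formula is a derivation-like operator on modes, given by the commutator with a combination of $(\mathscr L_0v)$- and $(\mathscr L_1 v)$-modes shifted in index. This operator is well defined on the free object and preserves the ideal of relations. The reason is that the relations are generated by the truncated products, and on those it reduces to identities already verified for $\mathscr L_1,\mathscr L_0$. Local nilpotence of $\Der_+\cO_z$ is then automatic from grading: $\mathscr L_n$ lowers weight by $n\ge 2$, and the grading is nonnegative.

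Third, verify the Lie relations $[\mathscr L_m,\mathscr L_n]=(m-n)\mathscr L_{m+n}$. Both sides satisfy the same covariance formula \eqref{eq:qc-covariance} with respect to all fields, and they agree on generators: on weights $\le 1$ they reduce to $D$, $\partial$, the grading, or $0$. Since $V$ is generated by $A\oplus V_1$, the two sides agree everywhere. Uniqueness follows by the same reasoning, because any quasi-conformal structure is determined by its values on generators via \eqref{eq:qc-covariance}, and those values are forced by the degree argument.

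I expect the main obstacle to be the well-definedness in the first step. Concretely, this means verifying that $D$ is compatible with the $(1)$-products, for instance $\tau_{(1)}\nu=\chi(\tau,\nu)$ against the derived identity for $D$. This is where Li's theorem is invoked rather than redone: I would match his hypotheses term by term and not attempt the envelope-level check by hand.
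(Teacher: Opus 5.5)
Your outline is correct, but it takes a longer route than the paper. The paper cites \cite[Thm.~3.6]{MingLiSemiConformal} for the \emph{whole} action of polynomial vector fields $\{\mathscr L_n\}_{n\geq-1}$ together with \eqref{eq:qc-covariance}. It argues only the one point not covered there: since $\mathscr L_nV_r\subseteq V_{r-n}=0$ for $n>r$, each formal vector field acts on a fixed vector by a finite sum. The action therefore extends continuously to $\Der\cO_z=\kk[[z]]\partial_z$, with $\Der_+\cO_z$ acting locally nilpotently.

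You instead use Li only to obtain $\mathscr L_1$, and then rebuild $\mathscr L_n$ ($n\geq2$) and the Witt relations by hand. This works. On the truncated relations, the checks for $\mathscr L_2$ reduce to weight vanishing. The defect $\Delta_{m,n}:=[\mathscr L_m,\mathscr L_n]-(m-n)\mathscr L_{m+n}$ satisfies $[\Delta_{m,n},Y(v,z)]=\sum_{i,j}\binom{m+1}{i}\binom{n+1}{j}z^{m+n+2-i-j}Y(\Delta_{i-1,j-1}v,z)$, so it vanishes once it vanishes on generators.

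Your route gives a self-contained treatment of the higher modes and an explicit uniqueness argument, which the paper leaves implicit. The paper's route is shorter, and it states the continuity needed to pass from $\kk[z]\partial_z$ to $\Der\cO_z$. You address this only as local nilpotence; note also that $\mathscr L_1$ lies in $\Der_+\cO_z$, not just the $\mathscr L_n$ with $n\geq2$.

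Two small corrections. First, the universal property of $\Uenv$ does not directly extend $\mathscr L_1$, because $\mathscr L_1$ is not a vertex-algebra derivation: its commutator with $Y(v,z)$ carries the extra terms $z^2\partial_zY(v,z)$ and $2z\Delta_vY(v,z)$. So you genuinely need Li's construction, or a check on the mode presentation as in Prop.~\ref{prop:mt-qc}. Second, the $(1)$-products are not an obstacle at all. For $\tau,\nu\in V_1$, $\mathscr L_1(\tau_{(1)}\nu)=0$ because $\chi(\tau,\nu)\in A$. Its covariance prediction $-\tau_{(2)}\nu+(D\tau)_{(1)}\nu+\tau_{(1)}D\nu$ also vanishes, since each term has weight $-1$.
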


\begin{proof}
The cited theorem constructs the action of polynomial vector fields and the identity \eqref{eq:qc-covariance}. Since $V$ is nonnegatively graded and $\deg(\mathscr L_n) = -n$, we have $\mathscr L_n V_r \subseteq V_{r-n} = 0$ for all $n > r$; hence the action extends continuously to $\Der\cO_z$ with $\Der_+\cO_z$ acting locally nilpotently.
\end{proof}

For any vertex algebroid $\cA_{\kappa,\mathcal H}(A,\mathfrak a)$ constructed in Sec.~\ref{sec:action-levels-3form} via the action $\rho:\mathfrak{a}\to\Der(A)$, define
\begin{equation*}
D:(A\otimes_\kk\mathfrak a)\oplus\Omega_A^1\longrightarrow A,
\qquad
D\left(\sum_i a_i\xi_i+\omega\right)
:=\sum_i\rho_{\xi_i}(a_i).
\end{equation*}
It is easy to check, for $\tau,\nu\in T$,
\begin{equation*}
D(a\tau)=aD(\tau)+\rho_\tau(a),\quad  D([\tau,\nu])=\rho_\tau(D\nu)-\rho_\nu(D\tau).
\end{equation*}
In the associated truncated vertex algebra $u(\cA_{\kappa, \mathcal{H}}(A,\mathfrak{a}))$, the products $a_{(-1)}\tau$ and $\tau_{(0)}\nu$ differ from $a\tau$ and $[\tau,\nu]$ only by one-forms.  Since $D$ kills one-forms and $a_{(0)}\tau=-\rho_\tau(a)$, thus we verify the hypotheses of Thm.~\ref{thm:Li-qc-crit}.

\begin{corollary}
\label{prop:Yangian-qc}
Every vertex algebra $V_{\kappa,\mathcal H}(A,\mathfrak a)$ constructed in Sec.~\ref{sec:action-levels-3form} has a quasi-conformal structure with its given nonnegative grading and $\mathscr L_1|_{V_1}=D$. In particular, this applies to $\Ych_{\whkappa}(T^*\g)$ at every affine level (and to the spectral-cocycle extensions of Sec.~\ref{sec:cocycle-spectral-ext}).
\end{corollary}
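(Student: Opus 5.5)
The corollary follows from Thm.~\ref{thm:Li-qc-crit} once we check that the map $D$ defined just above satisfies its hypotheses for each vertex algebroid $\cA_{\kappa,\mathcal H}(A,\mathfrak a)$ (and for $\cA_{\kappa,\lambda}$). We use the truncated operations \eqref{eq:GMS-reconstruct-m1}--\eqref{eq:GMS-reconstruct-1} on $V_1=T\oplus\Omega$, with the remaining products obtained from truncated skew-symmetry.

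\textbf{Step 1: the simple hypotheses.} First, $D(\partial a)=0$ holds because $\partial a\in\Omega$ and $D$ kills $\Omega$. Next, the formula $D(a\tau)=aD(\tau)+\rho_\tau(a)$ follows from the definition by expanding $a\tau=\sum_i aa_i\xi_i$. It is well defined because $T=A\otimes_\kk\mathfrak a$ is free on $\mathfrak a$.

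\textbf{Step 2: the $(-1)$-product identity.} For $v=\tau+\omega$ we have
\[
a_{(-1)}v=a\tau-\gamma(a,\tau)+a\omega,
\]
so $D(a_{(-1)}v)=aD(\tau)+\rho_\tau(a)$. Skew-symmetry in the truncated algebra gives $a_{(0)}\tau=-\tau_{(0)}a=-\rho_\tau(a)$, and $a_{(0)}\omega=0$. The two sides of $D(a_{(-1)}v)=aD(v)-a_{(0)}v$ therefore agree.

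\textbf{Step 3: the $(0)$-product identity.} This is the main computation, and we split it by cases on $u,v\in V_1$. For $\tau,\nu\in T$,
\[
\tau_{(0)}\nu=[\tau,\nu]+(\text{one-form}),
\]
so $D(\tau_{(0)}\nu)=D([\tau,\nu])$. Expanding with the Lie algebroid bracket \eqref{eq:action-alg-bracket} and using that $\rho$ is a Lie homomorphism gives $D([\tau,\nu])=\rho_\tau(D\nu)-\rho_\nu(D\tau)$. It remains to identify the right-hand side $(D\tau)_{(0)}\nu+\tau_{(0)}D\nu$:
\begin{itemize}
\item $\tau_{(0)}(D\nu)=\rho_\tau(D\nu)$;
\item $(D\tau)_{(0)}\nu=-\nu_{(0)}D\tau=-\rho_\nu(D\tau)$ by skew-symmetry for $A\times V_1$, where the correction term $\partial$ of a $(1)$-product vanishes since $a_{(1)}$ lowers the weight below zero.
\end{itemize}
The remaining cases are immediate. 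When $u=\tau$ and $v=\omega$, $D(\tau_{(0)}\omega)=0$ because $\tau(\omega)\in\Omega$, and $\tau_{(0)}D\omega=0$, $(D\tau)_{(0)}\omega=0$. The same vanishing holds for $u=\omega$ and for $u,v$ both in $\Omega$.

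\textbf{Obstacle and conclusion.} The main point requiring care is that $D$ is only $\kk$-linear and must be well defined despite the one-form corrections $\gamma$, $c$, $\tfrac12\partial\chi$, and $\mathcal H$. This works because all of these corrections lie in $\Omega$, where $D$ vanishes, so neither the level $\kappa$, the three-form $\mathcal H$, nor the cocycle corrections in $\chi_\lambda,c_\lambda$ affect the verification. Theorem~\ref{thm:Li-qc-crit} then gives the quasi-conformal structure. Specializing to $A=\cO(G_-)$ and $\mathfrak a=\dplus$ gives the result for $\Ych_{\whkappa}(T^*\g)$ and for the spectral-cocycle extensions.
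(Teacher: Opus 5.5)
Your proposal is correct and follows the paper's own argument: verify the hypotheses of Thm.~\ref{thm:Li-qc-crit} using $D(a\tau)=aD(\tau)+\rho_\tau(a)$, $D([\tau,\nu])=\rho_\tau(D\nu)-\rho_\nu(D\tau)$, $a_{(0)}\tau=-\rho_\tau(a)$, and the observation that every correction term (from $\gamma$, $c$, $\chi$, $\mathcal H$, or the cocycle data) is a one-form and hence killed by $D$. You spell out the case analysis on $V_1=T\oplus\Omega$ that the paper dismisses as "easy to check," but the route is the same.
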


In this paper, we will call the quasi-conformal structure given above the standard quasi-conformal structure.

The additional spectral grading on $\Ych_{\whkappa}(T^*\g)$ gives a second quasi-conformal structure, subject to the level condition below. Recall the dilation action $\delta_\lambda$ of Lem.~\ref{lem:spectral-affine-vf} and its infinitesimal generator $E_u$. On $A=\cO(G_-)$ it gives
\begin{equation*}
A=\bigoplus_{q\geq0}A_{[q]},
\qquad
A_{[q]}
=
\{a\in A:E_ua=qa\}.
\end{equation*}
Extend $E_u$ to the generators of $\mathcal{Y}^{ch}_{\whkappa}(T^*\mathfrak{g})$ by
\begin{equation*}
E_u(a)=qa\quad(a\in A_{[q]}),
\qquad
E_u(\Lcur_\xi)=\Lcur_{u\partial_u\xi}
\quad(\xi\in\dplus).
\end{equation*}
and require it to commute with vertex translation. The OPEs~\eqref{eq:action-level-OPE}, with $\mathcal H=0$, are homogeneous for this grading precisely when
\begin{equation}
\widehat\kappa(xu^m,yu^n)=0
\qquad \text{for }x,y\in\g,\quad m,n\geq0,\quad m+n>0.
\label{eq:mt-level-cond}
\end{equation}
In this case, we can write $\widehat\kappa(xu^m,yu^n)=\delta_{m0}\delta_{n0}\kappa(x,y)$. We abbreviate $\Ych_{\whkappa}$ to $\Ych_\kappa$ in this case. Throughout, $V_n$ denotes standard weight and $V_{[q]}$ denotes spectral degree.

\begin{proposition}
\label{prop:mt-qc}
Assume the level condition~\eqref{eq:mt-level-cond}. Then $\Ych_{\kappa}(T^*\g)$ admits a unique quasi-conformal structure $\{\mathscr L_n^{\mathrm{MT}}\}_{n\geq-1}$ in which the generators $a\in A_{[q]}$ and $\Lcur_{xu^m}$, for $x\in\g$ and $q,m\geq0$, are primary of respective weights
\begin{equation}
\Delta_a:=-q,
\qquad
\Delta_{\Lcur_{xu^m}}:=1-m.
\label{eq:mt-gen-weights}
\end{equation}
Equivalently, for each such generator $v$,
\begin{equation}
[\mathscr L_n^{\mathrm{MT}},Y(v,z)]
=
z^{n+1}\partial_zY(v,z)
+(n+1)\Delta_vz^nY(v,z),
\qquad n\geq-1.
\label{eq:mt-primary-cov}
\end{equation}
Its grading operator is $\mathscr L_0^{\mathrm{MT}}=\mathscr L_0-E_u$, where $\mathscr L_0$ is the standard grading operator of Cor.~\ref{prop:Yangian-qc}.
\end{proposition}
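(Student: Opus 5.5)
We build the new structure by twisting the standard quasi-conformal structure of Cor.~\ref{prop:Yangian-qc} by the spectral dilation, and then check that the twist has the required covariance and primarity. We use the spectral coordinate on $A$ and $\dplus$ rather than a generator-by-generator construction.

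\textbf{Step 1: $E_u$ is a derivation.} Condition~\eqref{eq:mt-level-cond} makes $E_u$ a derivation of all $n$-th products of $\Ych_\kappa(T^*\g)$ commuting with $\partial$. Dilation $\delta_\lambda$ acts on the Yangian action algebroid: it is a group automorphism of $G_-$, and the rational splitting is homogeneous, so $\rho_{u\partial_u\xi}$ matches $[E_u,\rho_\xi]$ up to sign conventions. With $\widehat\kappa$ of spectral degree zero and $\gamma=c=0$ on $\dplus$, the data $(\gamma,\chi,c)$ of~\eqref{eq:action-ext} are $E_u$-equivariant. Hence $E_u$, extended to $\Omega_A^1$ by Lie derivative, is an infinitesimal automorphism of the vertex algebroid $\cA_{\whkappa}(A,\dplus)$ with $h=0$. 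By the universal property of $\Uenv$ (Thm.~\ref{thm:GMS-ext-morphisms} at the infinitesimal level, or integrating to $\mathbb G_m$ and using functoriality), it extends to a derivation of the envelope commuting with $\partial$ and with all $\mathscr L_n$-relevant structure. By the PBW Theorem~\ref{thm:vertex-env-PBW}, $E_u$ is diagonalizable with integer eigenvalues.

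\textbf{Step 2: define the operators and check the Witt relations.} Set
\[
\mathscr L_n^{\mathrm{MT}}:=\mathscr L_n-(n+1)z^n\text{-twist},
\]
that is, $\mathscr L_{-1}^{\mathrm{MT}}=\partial$, $\mathscr L_0^{\mathrm{MT}}=\mathscr L_0-E_u$, and for $n\ge1$ define $\mathscr L_n^{\mathrm{MT}}$ as $\mathscr L_n$ corrected by the $E_u$-contribution. The natural guess, for a generator $v$ of spectral degree $q_v$, is that $\mathscr L_n^{\mathrm{MT}}$ acts on $Y(v,z)$ with conformal weight $\Delta_v=\Delta^{\mathrm{std}}_v-q_v$. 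Here $\Lcur_{xu^m}$ has degree $m$, which gives~\eqref{eq:mt-gen-weights}.

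The cleanest realization is to build $\mathscr L^{\mathrm{MT}}_n$ from the Li criterion (Thm.~\ref{thm:Li-qc-crit}) adapted to the $\mathbb Z$-grading. Alternatively, observe that $[\mathscr L_n,E_u]=0$, so one may put $\mathscr L_n^{\mathrm{MT}}=\mathscr L_n$ for $n\neq0$ and $\mathscr L_0^{\mathrm{MT}}=\mathscr L_0-E_u$. This is still a representation of $\Der\cO_z$, since $E_u$ is central in the span. The problem is that~\eqref{eq:qc-covariance} for $m\ge1$ acquires the terms $\binom{m+1}{1}z^m Y(\mathscr L_0 v)$ versus $Y(\mathscr L_0^{\mathrm{MT}}v)$. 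These differ by $(m+1)z^mY(E_uv,z)$, which must be absorbed by modifying $\mathscr L_m$.

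We therefore take
\[
\mathscr L_m^{\mathrm{MT}}=\mathscr L_m-(m+1)\,\mathcal E_m,
\]
where $\mathcal E_m$ is the $z^m$-moded derivation generated by $E_u$ through the field of a state $\epsilon\in V_1$ with $\epsilon_{(0)}=E_u$ on generators. The main obstacle is to realize $E_u$ as (close to) a zero-mode. I would first try $\epsilon=\sum$ of currents $\Lcur_\xi$ paired with functions implementing $E_u$. Since $E_u\notin\rho(\dplus)$ in general, this might fail. The fallback is to verify directly the hypotheses of Thm.~\ref{thm:Li-qc-crit} for the $\mathbb Z$-graded version: define $D^{\mathrm{MT}}$ on generators so that each is primary, then check the two derivation identities on generators using homogeneity of the OPEs~\eqref{eq:action-level-OPE}.

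\textbf{Step 3: primarity and uniqueness.} Primarity~\eqref{eq:mt-primary-cov} on the generators is imposed by construction. Local nilpotence of $\Der_+\cO_z$ still holds because $\mathscr L_n^{\mathrm{MT}}$ lowers the standard weight for $n\ge1$ (the $E_u$ part preserves it) and $V$ is $\mathbb Z_{\ge0}$-graded in standard weight. Uniqueness follows because the generators $a$, $\Lcur_\xi$ strongly generate $V$ (PBW). The covariance~\eqref{eq:qc-covariance} together with the prescribed action on generators then determines $\mathscr L_n^{\mathrm{MT}}$ on all normally ordered monomials by induction on length.

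Consistency, i.e.\ that these prescriptions respect the OPE relations, reduces to checking that the singular parts of~\eqref{eq:action-level-OPE} are compatible with the weights. This is exactly where~\eqref{eq:mt-level-cond} enters. The double pole $\kappa(x,y)/(z-w)^2$ requires $\Delta_{\Lcur_{x}}+\Delta_{\Lcur_{y}}=2$, which forces $m=n=0$. The anchor term requires $\rho_{xu^m}$ to raise spectral degree by $m$, which holds by homogeneity of the splitting.
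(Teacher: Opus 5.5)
\textbf{Verdict: there is a genuine gap, and it is in existence.} Your uniqueness argument and your local-nilpotence argument via standard weight agree with the paper. So does your diagnosis of where \eqref{eq:mt-level-cond} and the spectral homogeneity of $\rho$ enter. But none of the routes in Step~2 actually constructs the operators $\mathscr L_n^{\mathrm{MT}}$.
\begin{itemize}
\item \emph{First route (false as stated).} Keeping $\mathscr L_n$ for $n\neq0$ and replacing $\mathscr L_0$ by $\mathscr L_0-E_u$ is not a representation of $\Der\cO_z$, even though $E_u$ commutes with every $\mathscr L_n$. Indeed $[\mathscr L_1,\mathscr L_{-1}]=2\mathscr L_0\neq 2(\mathscr L_0-E_u)$: the zero mode is a commutator, so it cannot be shifted by a central element.
\item \emph{Second route (cannot work).} For $\epsilon\in V_1=T\oplus\Omega^1_A$, the restriction $\epsilon_{(0)}|_A$ is the anchor of the $T$-component of $\epsilon$. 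But $E_u$ is not in the $A$-span of the $\rho_\xi$. At $g_t=\exp(t\mathsf h u^{-1})$ with $\mathsf h$ central, every $\rho_\xi(g_t)$ vanishes while $E_u(g_t)\neq0$. For $\mathsf h$ in a Cartan subalgebra, the $\rho_\xi(g_t)$ miss the direction $(L_{g_t})_*(t\mathsf hu^{-1})$ of $E_u(g_t)$.
\item \emph{Fallback (no such theorem).} A ``$\mathbb Z$-graded version'' of Thm.~\ref{thm:Li-qc-crit} is not available. That criterion outputs a structure whose $\mathscr L_0$ is the standard grading, and it requires $D(\partial a)=0$. Here instead $\mathscr L_1^{\mathrm{MT}}\partial a=2\mathscr L_0^{\mathrm{MT}}a=-2qa\neq0$ for $a\in A_{[q]}$. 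Moreover, since all generators are to be primary, the would-be $D^{\mathrm{MT}}$ vanishes on them and carries no information. All the content lies in how $\mathscr L_n^{\mathrm{MT}}$ acts on composite states, and that is exactly what must be constructed.
\end{itemize}

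\textbf{What the paper does instead.} It defines $\mathscr L_n^{\mathrm{MT}}$ directly on words $v^1_{(j_1)}\cdots v^s_{(j_s)}\mathbf 1$ in the generators by the primary-field recursion
\[
\mathscr L_n^{\mathrm{MT}}(v_{(j)}w)=v_{(j)}\mathscr L_n^{\mathrm{MT}}w+\bigl((n+1)(\Delta_v-1)-j\bigr)v_{(n+j)}w,\qquad \mathscr L_n^{\mathrm{MT}}\mathbf 1=0 .
\]
It then proves this is well defined on $\Uenv$ by checking that it preserves a presentation:
\begin{itemize}
\item the mode commutators $[a_{(i)},b_{(j)}]=0$, $[(\Lcur_\xi)_{(i)},a_{(j)}]=(\rho_\xi a)_{(i+j)}$ and $[(\Lcur_\xi)_{(i)},(\Lcur_\eta)_{(j)}]=(\Lcur_{[\xi,\eta]})_{(i+j)}+i\kappa(\xi,\eta)\delta_{i+j,0}$, where $\Delta_{\rho_\xi a}=\Delta_{\Lcur_\xi}+\Delta_a-1$ and the level condition enter, as you say;
\item the relation $a_{(-1)}b=ab$ in $A$ and the vacuum relations, which your consistency paragraph omits.
\end{itemize}
The remaining steps follow from this:
\begin{itemize}
\item the Witt relations, by computing $[[\mathscr L_m^{\mathrm{MT}},\mathscr L_n^{\mathrm{MT}}],v_{(j)}]$ and using that both sides kill $\mathbf 1$;
\item $\mathscr L_0^{\mathrm{MT}}=\mathscr L_0-E_u$, by the same comparison of commutators with generating modes;
\item \eqref{eq:qc-covariance} on all states, passing from the generators by the Jacobi-identity closure argument of \cite{MingLiSemiConformal}.
\end{itemize}

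\textbf{Your $\mathcal E_m$ idea.} It is conceptually right: $(\mathscr L_m-\mathscr L_m^{\mathrm{MT}})/(m+1)$ is forced to satisfy $\mathcal E_m(v_{(j)}w)=v_{(j)}\mathcal E_mw+q_v\,v_{(m+j)}w$ on words in generators. However, it is not the mode of any state. It has to be defined by this recursion, and checking that it respects the relations of $\Uenv$ is the same verification the paper carries out.
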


\begin{proof}
Put $\mathcal{Y}:=\Ych_{\kappa}(T^*\g)$. We first construct an action of $\kk[z]\partial_z$ on $\mathcal{Y}$. The vertex algebra $\mathcal{Y}$ is generated by the elements $a\in A$ and $\Lcur_\xi$, $\xi\in\dplus$. Thus every state is represented by a linear combination of finite expressions $v^1_{(j_1)}\cdots v^s_{(j_s)}\mathbf1$ in homogeneous generators.

Assign to each homogeneous generator $v$ the number $\Delta_v$ in~\eqref{eq:mt-gen-weights}. On these expressions define $\mathscr L_n^{\mathrm{MT}}\mathbf1=0$ and, recursively,
\begin{equation}
\mathscr L_n^{\mathrm{MT}}(v_{(j)}w)
=
v_{(j)}\mathscr L_n^{\mathrm{MT}}w+
\bigl((n+1)(\Delta_v-1)-j\bigr)v_{(n+j)}w,
\qquad n\geq-1.
\label{eq:mt-primary-mode-action}
\end{equation}
We must show that this action is well defined, i.e., independent of the expression representing a state of $\mathcal{Y}$. The OPEs translate into the following mode relations:
\begin{equation*}
\begin{aligned}
&[a_{(i)},b_{(j)}]=0,\quad [(\Lcur_\xi)_{(i)},a_{(j)}] =(\rho_\xi a)_{(i+j)},\\
&[(\Lcur_\xi)_{(i)},(\Lcur_\eta)_{(j)}] =(\Lcur_{[\xi,\eta]})_{(i+j)}
+i\,\kappa(\xi,\eta)\delta_{i+j,0}\,\id.
\end{aligned}
\end{equation*}
Together with $1_A=\mathbf1$, $a_{(-1)}b=ab$, and the vacuum relations $v_{(j)}\mathbf1=0$ for $j\geq0$, these present the universal vertex envelope. We check that the operators $\{\mathscr L_n^{\mathrm{MT}}\}_{n\geq-1}$ preserve these defining relations.
For $\xi=xu^m$, we have $\rho_\xi(A_{[q]})\subset A_{[q+m]}$, and in particular $\Delta_{\rho_\xi a}=\Delta_{\Lcur_\xi}+\Delta_a-1$. Consequently, the mixed commutator relation is preserved, since
\begin{align*}
\bigl[\bigl[\mathscr L_n^{\mathrm{MT}},(\Lcur_\xi)_{(i)}\bigr],a_{(j)}\bigr]
+\bigl[(\Lcur_\xi)_{(i)},
\bigl[\mathscr L_n^{\mathrm{MT}},a_{(j)}\bigr]\bigr] & = \left((n+1)(\Delta_{\rho_\xi a}-1)-(i+j)\right)
(\rho_\xi a)_{(n+i+j)}\\
& = \bigl[\mathscr L_n^{\mathrm{MT}},(\rho_\xi a)_{(i+j)}\bigr].
\end{align*}
The same calculation applies to the $[(\Lcur_\xi)_{(i)},(\Lcur_\eta)_{(j)}]$ relation because $\Delta_{\Lcur_{[\xi,\eta]}} =\Delta_{\Lcur_\xi}+\Delta_{\Lcur_\eta}-1$ and by the level condition \eqref{eq:mt-level-cond}. For homogeneous elements $a,b$ and $n\geq1$, $\mathscr L_n^{\mathrm{MT}}(a_{(-1)}b-ab)=\bigl((n+1)(\Delta_a-1)+1\bigr)a_{(n-1)}b = 0$, since $a_{(n-1)}b = 0$ for $n\geq1$. The remaining relations are straightforward to verify.

Applying the recursion twice gives
\begin{equation*}
[[\mathscr L_m^{\mathrm{MT}},\mathscr L_n^{\mathrm{MT}}],v_{(j)}]
=(m-n)\bigl((m+n+1)(\Delta_v-1)-j\bigr)v_{(m+n+j)}.
\end{equation*}
This is the commutator of $(m-n)\mathscr L_{m+n}^{\mathrm{MT}}$ with
$v_{(j)}$.  Both operators kill the vacuum, so induction on the number
of modes proves
\begin{equation*}
[\mathscr L_m^{\mathrm{MT}},\mathscr L_n^{\mathrm{MT}}]
=(m-n)\mathscr L_{m+n}^{\mathrm{MT}}
\qquad(m,n\geq-1,\ m+n\geq-1).
\end{equation*}

We have therefore constructed an action of $\kk[z]\partial_z$ on $\mathcal{Y}$, with $\mathscr L_{-1}^{\mathrm{MT}}=\partial$. The operators $\mathscr L_0^{\mathrm{MT}}$ and $\mathscr L_0-E_u$ have the same commutators with every generating mode and both kill the vacuum, so they agree on $\mathcal{Y}$. In particular, $\mathscr L_0^{\mathrm{MT}}$ acts on $\mathcal{Y}_r\cap \mathcal{Y}_{[q]}$ by the integer $r-q$.

To extend the action to $\Der\cO_z=\kk[[z]]\partial_z$, use the standard nonnegative grading $\mathcal{Y}=\bigoplus_{r\geq0}\mathcal{Y}_r$. A mode $v_{(j)}$ has degree $\deg(v)-j-1$, so induction in~\eqref{eq:mt-primary-mode-action} gives $\mathscr L_n^{\mathrm{MT}}\mathcal{Y}_r\subset \mathcal{Y}_{r-n}$. Hence, for $w\in\bigoplus_{s=0}^r\mathcal{Y}_s$, an arbitrary formal vector field acts by the finite sum $\bigl(\sum_{n\geq-1}f_n\mathscr L_n^{\mathrm{MT}}\bigr)w=\sum_{n=-1}^r f_n\mathscr L_n^{\mathrm{MT}}w$. This defines a continuous action of $\Der\cO_z$. Every $D\in\Der_+\cO_z$ lowers standard degree by at least one, so $D^{r+1}w=0$.

We can also verify \eqref{eq:qc-covariance} on every state. Applying the recursion to $v=v_{(-1)}\mathbf1$ gives $\mathscr L_0^{\mathrm{MT}}v=\Delta_vv$ and $\mathscr L_n^{\mathrm{MT}}v=0$ for $n\geq1$ on each homogeneous generator.  Summing the mode commutators in \eqref{eq:mt-primary-mode-action} gives \eqref{eq:mt-primary-cov}. This is exactly
\begin{equation*}
[\mathscr L_m^{\mathrm{MT}},Y(v,z)]
=
\sum_{j=0}^{m+1}\binom{m+1}{j}z^{m+1-j}
Y(\mathscr L_{j-1}^{\mathrm{MT}}v,z)
\end{equation*}
for a generating state $v$.

To pass to all states, let $W\subset \mathcal{Y}$ be the subspace on which this identity holds for every $m\geq-1$. The vertex-algebra (Borcherds) Jacobi identity implies that $u,v\in W \implies u_{(j)}v\in W$ for every $j\in\mathbb Z$. This is the calculation in \cite[proof of Thm.~3.6, Eq.~(3.18)]{MingLiSemiConformal}, which uses only the Jacobi identity and the binomial theorem. Since $W$ contains the vacuum and the generators of $\mathcal{Y}$, it equals $\mathcal{Y}$. This proves \eqref{eq:qc-covariance} for every state. Finally, any quasi-conformal structure with the prescribed primary generator weights must satisfy the recursion~\eqref{eq:mt-primary-mode-action} and annihilate the vacuum, which proves uniqueness.
\end{proof} 

We call this the \emph{mini-twistor quasi-conformal structure}. For a nonzero homogeneous state $v$, write $\operatorname{wt}_{\mathrm{MT}}(v)$ for its $\mathscr L_0^{\mathrm{MT}}$-eigenvalue; thus $\operatorname{wt}_{\mathrm{MT}}(v)=r-q$ when $v\in \mathcal{Y}_r\cap \mathcal{Y}_{[q]}$.

\begin{remark}
\label{rem:qc-pencil}
More generally, for each $k\in\mathbb Z$, the affine combination
\begin{equation*}
\mathscr L_n^{(k)} := (1-k)\mathscr L_n + k\mathscr L_n^{\mathrm{MT}}, \qquad n\geq-1,
\end{equation*}
defines a quasi-conformal structure on $\Ych_\kappa(T^*\g)$ with grading operator $\mathscr L_0^{(k)}=\mathscr L_0-kE_u$. We will not, however, pursue this larger family in the present work.
\end{remark}

For $\kk=\bbC$, simple $\g$, and non-critical level $k\neq-h^\vee$, the currents $\Lcur_x$ ($x\in\g$) generate the subalgebra $V_k(\g) \subset \Ych_{\kappa}(T^*\g)$ carrying the Sugawara conformal vector \cite{FrenkelBenZvi}
\begin{equation*}
\omega_{\mathrm{Sug}} = \frac{1}{2(k+h^\vee)}\sum_i\normal{\Lcur_{x_i}\Lcur_{x^i}}.
\end{equation*}
While $\omega_{\mathrm{Sug}}$ is conformal for $V_k(\g)$, it does not act conformally on the full chiral cotangent Yangian. In fact, the following theorem rules out any state whose zero mode generates vertex translation.

\begin{theorem}
Let $V$ be either $\Ych_{\kappa}(T^*\g)$, or $V_{\kappa,\mathcal H}(A, \mathfrak p_+)$. No state $\omega\in V$ satisfies
\begin{equation*}
\omega_{(0)}=\partial.
\end{equation*}
Consequently, $V$ has no conformal vector. 
\end{theorem}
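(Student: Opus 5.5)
Suppose $\omega\in V$ satisfies $\omega_{(0)}=\partial$. The plan is to test this against a function $a\in A=V_0$ and use the fact that functions have regular OPE with one another. Since $V$ is $\mathbb Z_{\geq0}$-graded and $\omega_{(0)}$ has the same degree as $\partial$, which is one, we may replace $\omega$ by its weight-two component and assume $\omega\in V_2$.

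The first step uses skew-symmetry,
\[
a_{(0)}\omega=-\omega_{(0)}a+\sum_{j\geq1}\frac{(-1)^{j+1}}{j!}\partial^j(\omega_{(j)}a).
\]
Here $\omega_{(j)}a$ has weight $1-j$, so only $j=1$ survives. This gives
\[
\partial a = -a_{(0)}\omega+\partial(\omega_{(1)}a), \qquad \omega_{(1)}a\in A .
\]
The second step analyses $a_{(0)}$ on $V_2$, working through the PBW filtration of Thm.~\ref{thm:vertex-env-PBW}. The commutator relations $[a_{(0)},(\Lcur_\xi)_{(i)}]=-(\rho_\xi a)_{(i)}$ and $[a_{(0)},b_{(j)}]=0$ show that $a_{(0)}$ acts as a derivation of the normally ordered product. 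It kills $A$ and $\Omega$, and sends $T^{(i)}$ into the copies of $A$ and $\Omega$ of the appropriate weight. Concretely, $a_{(0)}\Lcur_\xi=-\rho_\xi(a)$, and $a_{(0)}\partial\Lcur_\xi$ lies in $A\,\dd A$ up to PBW-lower terms. Hence $a_{(0)}\omega$ is computed from the $T$-components of $\omega$. In weight one, $\Omega\cong T^*$-data enters only through contraction with anchors.

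The third step, which is the key one, compares the two sides of $\partial a = -a_{(0)}\omega+\partial(\omega_{(1)}a)$. Write $\omega=\sum_i f_i\,\partial\Lcur_{\xi_i}+\sum_{i,j}\normal{\Lcur_{\xi_i}\Lcur_{\xi_j}}g_{ij}+(\text{terms in }A\Omega,\ A\partial\Omega,\ \Omega\Omega,\ T\Omega)$. Then $a_{(0)}\omega$ and $\omega_{(1)}a$ are built entirely from the vector fields $\rho_{\xi}$ applied to $a$ and to coefficient functions. For example, $\omega_{(1)}a$ is a combination of terms $f\,\rho_\xi(a)$ and $g\,\rho_\xi\rho_\eta(a)$, possibly plus functions coming from $\Omega$-contractions that again involve anchors. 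The same holds for $a_{(0)}\omega$. It follows that, as a section of $\Omega$, the right-hand side at any point where all anchors $\rho_\xi$ vanish involves only $\dd$ of functions whose differential is built from $\rho$'s. The cleanest route is to evaluate at such a point: at $e\in G_-$ all dressing fields vanish, since $\rho_\xi(e)=-(\xi)_-=0$. I would then verify that every term on the right-hand side is a sum of expressions of the form $h\,\dd(\rho_\xi b)$ or $(\rho_\xi b)\,\dd h$, whose value at $e$ lies in the span of $\dd_e(\rho_\xi b)$. These spans lie in the image of the linearization of the anchor at $e$. That linearization is $\xi\mapsto(\xi)_-$-type data, and it is not all of $T^*_eG_-$. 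For the Yangian, $\rho$ vanishes identically on the central directions when $\mathfrak z\neq0$, and more generally one can check that $\dd_e(\rho_\xi b)$ misses some covector. Choosing $a$ to be a linear coordinate whose differential at $e$ lies outside this span gives $\dd_e a\neq$ the right-hand side, a contradiction. For $V_{\kappa,\mathcal H}(A,\mathfrak p_+)$ the same argument runs at the identity of $\mathsf P_-$, where $\rho$ vanishes.

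The main obstacle is this last step: the span of $\dd_e(\rho_\xi b)$ might conceivably be everything. If so, I would instead argue by a weight count using the spectral grading $E_u$, or the $u^{-1}$-filtration. Since $\rho_\xi$ raises pole order by at most $\deg\xi$ while $\partial a$ must reproduce all coordinates, one looks for a coordinate of low filtration degree that cannot be reached. The conclusion that there is no conformal vector is then immediate, because a conformal vector would satisfy $\omega_{(0)}=L_{-1}^\omega=\partial$.
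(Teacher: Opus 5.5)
There is a genuine gap in your third step, at the point you flagged as the main obstacle.

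\textbf{The key claim is false.} You assert that the covectors $\dd_e(\rho_\xi b)$ miss some direction of $\Omega^1_e:=\Omega^1_A/\mathfrak m_e\Omega^1_A$. This fails as soon as $\mathfrak z(\g)=0$.

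\begin{itemize}
\item Write $R_\xi(\overline{\dd b}):=\overline{\dd(\rho_\xi b)}$ for the linearized anchor at $e$.
\item For a constant current $x\in\g$ one has $\rho_x(g)=gx-xg$. So $R_x$ is, up to sign, the coadjoint action of $x$ on $\Omega^1_e\cong\g^\vee[u]$.
\item The images of the coadjoint action span the annihilator of $\mathfrak z(\g)$. When $\g$ is semisimple, that is all of $\Omega^1_e$.
\end{itemize}

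\textbf{No one-form-only test at $e$ can work.} For simple $\g$, take dual bases $\{x_i\},\{x^i\}$ for the Killing form. The state $\omega'=\sum_i\normal{\Lcur_{x_i}\Lcur_{x^i}}$, essentially the level-zero Sugawara vector, satisfies
\[
\omega'_{(0)}a=2\sum_i\rho_{x^i}(a)\Lcur_{x_i}+2\,\dd\Bigl(\sum_i\rho_{x_i}\rho_{x^i}a\Bigr).
\]
Here $\sum_iR_{x_i}R_{x^i}$ is the Casimir of the coadjoint representation, a nonzero scalar $c$. Hence $(2c)^{-1}\omega'$ has one-form part at $e$ equal to $\overline{\dd a}$ for every $a\in A$.

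Your fallback through the spectral grading cannot exclude this state either. It involves only currents of spectral degree zero. Moreover, a general $V_{\kappa,\mathcal H}(A,\mathfrak p_+)$ has no such grading at all. Your route can only succeed when the linearized anchors have a nonzero common kernel at $e$, for example when $\mathfrak z(\g)\neq0$.

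\textbf{The missing idea is the $T$-component.} Your equation $\partial a=-a_{(0)}\omega+\partial(\omega_{(1)}a)$ contains a constraint you discard. Both $\partial a$ and $\partial(\omega_{(1)}a)$ lie in $\Omega^1_A$, so the $T$-part of $a_{(0)}\omega$, equivalently of $\omega_{(0)}a$, must vanish. This is exactly what rules out $\omega'$, whose $T$-part $2\sum_i\rho_{x^i}(a)\Lcur_{x_i}$ is nonzero. The paper's proof uses this as follows.

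\begin{enumerate}
\item Work modulo $\partial V_1$, which is killed by the zero mode. Also set aside terms $b\,\normal{\Lcur_\xi\alpha}$ and $b\,\normal{\alpha\beta}$: they contribute no $T$-part, and their one-form contributions vanish at $e$.
\item Write the remaining quadratic part of $\omega$ as $\frac12\sum_{i,j}B^{ij}\normal{\Lcur_{\xi_i}\Lcur_{\xi_j}}$, with $B^{ij}=B^{ji}\in A$ and the $\xi_i$ linearly independent.
\item The $T$-part of $\omega_{(0)}a$ is then $\sum_{i,j}B^{ij}\rho_{\xi_i}(a)\xi_j$. Freeness of $T=A\otimes\mathfrak p_+$ gives $\sum_iB^{ij}\rho_{\xi_i}=0$, hence $\sum_iB^{ij}(e)R_{\xi_i}=0$ for every $j$.
\item The one-form part of $\omega_{(0)}a$ at $e$ equals $\frac12\sum_{i,j}B^{ij}(e)\bigl(R_{\xi_i}R_{\xi_j}+R_{\xi_j}R_{\xi_i}\bigr)\overline{\dd a}$. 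By the previous step this vanishes.
\item This contradicts $\overline{\dd a}\neq0$ for any $a$ with $\dd_ea\neq0$.
\end{enumerate}
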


\begin{proof}
Write $V=\bigoplus_{r\geq0}V_r$ for the standard grading.  A state has only finitely many homogeneous components, $(\omega_r)_{(0)}$ has degree $r-1$, and $\partial$ has degree one.  Hence it is enough to consider $\omega\in V_2$.

For both cases write $A=\mathcal O(\mathsf P_-)$ and $T=A\otimes\mathfrak p_+$, with $\mathsf P_-=G_-$ and $\mathfrak p_+=\dplus$ in the Yangian case.  Let $e\in \mathsf P_-$ be the identity and set
\begin{equation*}
\Omega_e^1:=\Omega_A^1/\mathfrak m_e\Omega_A^1,
\qquad
\mathfrak m_e:=\{a\in A:a(e)=0\},
\end{equation*}
and write $\overline\alpha$ for the class of $\alpha\in\Omega_A^1$. The dressing action fixes $e$, so $\rho_\xi(a)(e)=0$ for every $\xi\in\mathfrak p_+$ and $a\in A$.  It therefore induces an operator on $\Omega_e^1$, defined by $R_\xi(\overline{\dd a})
:=\overline{\dd(\rho_\xi a)}$.  Let $\operatorname{pr}_T:V_1=T\oplus\Omega_A^1\longrightarrow T$ be the canonical projection. We also use
\begin{equation*}
q_e:V_1=T\oplus\Omega_A^1\longrightarrow\Omega_e^1,
\qquad
q_e(\tau+\alpha)=\overline\alpha.
\end{equation*}

Suppose, for a contradiction, that $\omega_{(0)}=\partial$.  Then $\omega_{(0)}a=\dd a$ for every $a\in A$, and hence $\operatorname{pr}_T(\omega_{(0)}a)=0$.  The PBW theorem~\ref{thm:vertex-env-PBW} allows us to decompose:
\begin{equation*}
\omega=\omega_{TT}+\omega_{T\Omega}+\omega_{\Omega\Omega}+\partial\gamma, \qquad \gamma\in V_1,
\end{equation*}
where $\omega_{T\Omega}$ is a finite sum of terms $b_{(-1)}\normal{\Lcur_\xi\alpha}$ and $\omega_{\Omega\Omega}$ is a finite sum of terms $b_{(-1)}\normal{\alpha\beta}$, with $b\in A$, $\xi\in\mathfrak p_+$, and $\alpha,\beta\in\Omega_A^1$.  We may choose $\omega_{TT}$ to be of the form
\begin{equation*}
\omega_{TT}
=\frac12\sum_{i,j=1}^N
(B^{ij})_{(-1)}\normal{\Lcur_{\xi_i}\Lcur_{\xi_j}},
\end{equation*}
with $B^{ij}=B^{ji}$ and $\xi_1,\ldots,\xi_N\in\mathfrak p_+$ linearly independent. Non-symmetric part of $B^{ij}$ can be absorbed into the other three components. 

We can compute
\begin{equation*}
\bigl(\normal{\Lcur_\xi\Lcur_\eta}\bigr)_{(0)}a
=\rho_\eta(a)\Lcur_\xi+\rho_\xi(a)\Lcur_\eta
+\dd\bigl(\rho_\xi\rho_\eta(a)+\rho_\eta\rho_\xi(a)\bigr).
\end{equation*}
Consequently, $ \operatorname{pr}_T((\omega_{TT})_{(0)}a) =\sum_{i,j}B^{ij}\rho_{\xi_i}(a)\xi_j$. Moreover, $(\omega_{T\Omega})_{(0)}a\in\Omega_A^1$, $(\omega_{\Omega\Omega})_{(0)}a=0$ because one-forms have regular OPE with $A$, and $(\partial\gamma)_{(0)}=0$. It follows from $\operatorname{pr}_T(\omega_{(0)}a)=0$ and the freeness of
$T=A\otimes\mathfrak p_+$ that $\sum_iB^{ij}\rho_{\xi_i}=0$ for every $j$. Taking the K\"{a}hler differential and then projecting to $\Omega_e^1$ yields
\begin{equation*}
\sum_iB^{ij}(e)R_{\xi_i}(\overline{\dd a}) = 0.
\end{equation*}
Here the terms containing $\dd B^{ij}$ vanish because $\rho_{\xi_i}(a)(e)=0$.  Since the classes $\overline{\dd a}$ span $\Omega_e^1$, we obtain
\begin{equation*}
\sum_iB^{ij}(e)R_{\xi_i}=0
\qquad\text{for every }j.
\end{equation*}

We next apply $q_e$ to $\omega_{(0)}a$.  For any $b\in A$ we have
\begin{equation*}
q_e\!\left(
\bigl(b_{(-1)}\normal{\Lcur_\xi\Lcur_\eta}\bigr)_{(0)}a
\right)
=b(e)(R_\xi R_\eta+R_\eta R_\xi)\overline{\dd a}
+(\rho_\eta\rho_\xi(a))(e)\,\overline{\dd b}.
\end{equation*}
The last term is zero because $\rho_\eta(h)(e)=0$ for every $h\in A$. Therefore
\begin{equation*}
q_e((\omega_{TT})_{(0)}a)
=\frac12\sum_{i,j}B^{ij}(e)
(R_{\xi_i}R_{\xi_j}+R_{\xi_j}R_{\xi_i})\overline{\dd a}
=0,
\end{equation*}
For a term $b_{(-1)}\normal{\Lcur_\xi\alpha}$ occurring in
$\omega_{T\Omega}$, $q_e\!\left(
\bigl(b_{(-1)}\normal{\Lcur_\xi\alpha}\bigr)_{(0)}a
\right)
=b(e)\rho_\xi(a)(e)\overline\alpha=0$. Since $(\omega_{\Omega\Omega})_{(0)}a=0$ and
$(\partial\gamma)_{(0)}a=0$, we have $ q_e(\omega_{(0)}a)=0$ for every $a\in A$. On the other hand, $\omega_{(0)}=\partial$ gives $q_e(\omega_{(0)}a)=\overline{\dd a}$, contradiction. 
\end{proof}

\section{Vertex bialgebra structure}
\label{sec:ord-coprod}

The construction of vertex algebroids and their vertex envelopes depends essentially only on the underlying action Lie algebroid. When this action Lie algebroid arises from an integrated matched pair of Lie algebras, the group multiplication on the negative factor provides additional structure. At zero level, this lifts to a vertex bialgebra structure, while at nonzero levels it yields a comodule vertex algebra over the zero-level vertex bialgebra.  

\subsection{Vertex bialgebras and comodule vertex algebras}
\label{sec:local-vbialgs}

We first recall the notion of vertex bialgebra, i.e. a vertex algebra with a compatible coalgebra structure. The standard definition of a vertex bialgebra allows the underlying vertex algebra to be non-local \cite[Def.~4.1]{LiSmashProduct}.  We use the more restrictive version, where the underlying vertex algebra is local.

Recall that the ordinary tensor product of two vertex algebras is again a vertex algebra, with
\begin{equation*}
Y_{V_1\otimes V_2}(a_1\otimes a_2,z)(b_1\otimes b_2)
=
Y_{V_1}(a_1,z)b_1\otimes Y_{V_2}(a_2,z)b_2.
\end{equation*}

\begin{definition}
\label{def:graded-local-vbialg}
A \emph{local vertex bialgebra} is a (local) vertex algebra $V$ equipped with
vertex-algebra morphisms
\begin{equation*}
\Delta:V\longrightarrow V\otimes V,
\qquad
\epsilon:V\longrightarrow\kk,
\end{equation*}
such that
\begin{equation*}
(\Delta\otimes\id)\Delta
=(\id\otimes\Delta)\Delta,
\qquad
(\epsilon\otimes\id)\Delta
=\id
=(\id\otimes\epsilon)\Delta.
\end{equation*}
\end{definition}
All vertex bialgebras considered here will be $\mathbb{Z}$ or $\mathbb{Z}_{\geq 0}$-graded and we further require that all morphisms preserve the grading.

The notion of local vertex bialgebra is much less studied than vertex bialgebra, and such structure turns out to be very restrictive. Since $\ker\epsilon$ is a proper vertex ideal, a simple vertex bialgebra is necessarily the one-dimensional vacuum algebra. Moreover, a degree-preserving counit is typically incompatible with a nonzero central term. Connected cocommutative vertex bialgebra is classified in \cite{HanLiXiao}, and is shown to be canonically isomorphic to the universal vertex bialgebra generated by its primitive vertex Lie algebra. In this section we construct vertex bialgebra structures on the universal vertex envelope of the zero-level vertex algebroid $\cA_0(A,\mathfrak p_+)$ introduced in Sec.~\ref{sec:action-levels-3form}. The resulting vertex bialgebra is generally noncocommutative: its weight-zero coproduct is pullback by multiplication on $\mathsf P_-$. Thus the cocommutative classification just cited does not apply.

Though vertex envelope in nonzero levels studied in this paper does not have bialgebra structure, they instead give comodule vertex algebra of the level zero vertex bialgebra. Here we recall the definition of comodule vertex algebra.
\begin{definition}
\label{def:right-comodule-valg}
Let $V$ be a (local) vertex bialgebra.  A \emph{right $V$- comodule vertex algebra} is a vertex algebra $W$ equipped with a vertex-algebra morphism
\begin{equation*}
\delta^R:W\longrightarrow W\otimes V
\end{equation*}
satisfying
\begin{equation*}
(\delta^R\otimes\id)\delta^R
=(\id\otimes\Delta)\delta^R,
\qquad
(\id\otimes\epsilon)\delta^R=\id.
\end{equation*}
\end{definition}

\subsection{Product vertex algebroid}
\label{sec:valg-ext-prods}

Given two Lie algebroids \((T_1,\rho_1)\) and \((T_2,\rho_2)\) on spaces \(X_1\) and \(X_2\), respectively, we have a natural product Lie algebroid \(T_1\boxplus T_2\) over \(X_1\times X_2\), whose underlying vector bundle is the external direct sum \(\pi_1^*T_1\oplus\pi_2^*T_2\) and whose anchor is \(\pi_1^*\rho_1\oplus\pi_2^*\rho_2\). This notion naturally generalizes to extended Lie algebroids: given extended Lie algebroids \((T_i,\Omega_i,\partial_i,\langle-,-\rangle_i)\) on \(X_i\), we define $T_1\boxplus T_2$ as above, the underlying \(\Omega\)-module of their product to be the external direct sum $\Omega_1\boxplus\Omega_2:=\pi_1^*\Omega_1\oplus\pi_2^*\Omega_2,$ while the derivation, the action of \(T_1\boxplus T_2\) on \(\Omega_1\boxplus\Omega_2\), and the pairing between them are induced componentwise from the corresponding structures on the two factors, with all mixed terms vanishing on sections pulled back from different factors.

As a simple example, for smooth Poisson varieties $(X_i,\pi_i)$, $i=1,2$. We have a product Poisson structure $\pi_1\oplus\pi_2$ on $X_1\times X_2$ characterized by
\begin{equation*}
\{a_1\otimes a_2,b_1\otimes b_2\}
=
\{a_1,b_1\}_{\pi_1}\otimes a_2b_2
+
a_1b_1\otimes\{a_2,b_2\}_{\pi_2}.
\end{equation*}
For the standard cotangent Lie algebroid $T(X_i,\pi_i)$ associated with the Poisson varieties, we have identification
\begin{equation*}
  T(X_1\times X_2,\pi_1\oplus\pi_2)\cong T(X_1,\pi_1)\boxplus T(X_2,\pi_2).
\end{equation*}
Under the canonical decomposition of (K\"{a}hler or smooth) differential forms $\Omega^1_{X_1\times X_2} \cong \pi_1^*\Omega^1_{X_1}\oplus\pi_2^*\Omega^1_{X_2}$, we also have an identification of extended Poisson cotangent Lie algebroids
\begin{equation*}
\cT(X_1\times X_2,\pi_1\oplus\pi_2)
\cong
\cT(X_1,\pi_1)\boxplus\cT(X_2,\pi_2).
\end{equation*}

We have a similar product construction for vertex algebroids. Let 
\begin{equation*}
\cA_i=(\mathcal T_i,\gamma_i,\chi_i,c_i), \qquad i=1,2,
\end{equation*}
be vertex $A_i$-algebroids with underlying extended Lie algebroids $\mathcal T_i$. We define the vertex $A_{12} = A_1\otimes_{\kk}A_2$-algebroid $\cA_1\boxplus\cA_2 = (A_{12},T_{12},...)$ as follows.  The underlying extended Lie $A_{12}$-algebroid is $\cT_1\boxplus\cT_2$ defined as above. In particular $T_{12} = (T_1\otimes A_{2}) \oplus(A_1\otimes T_2)$. Let $T_{12,0}=T_1\otimes 1\oplus 1\otimes T_2$ be the $\kk$-submodule of $T_{12}$. It generates $T_{12}$ over $A_{12}$. Write $\tau_1^{(1)}=\tau_1\otimes1$ and $\tau_2^{(2)}=1\otimes\tau_2$. The vertex-algebroid structure on the submodule $T_{12,0}$ is given by
\begin{equation}
  \label{eq:ext-prod-gamma}
  \begin{aligned}
    \gamma_{12}(a_1\otimes a_2,\tau_1^{(1)})
&=\gamma_1(a_1,\tau_1)\otimes a_2
+\tau_1(a_1)\otimes\partial_2a_2,\\
\gamma_{12}(a_1\otimes a_2,\tau_2^{(2)})
&=a_1\otimes\gamma_2(a_2,\tau_2)
+\partial_1a_1\otimes\tau_2(a_2),
  \end{aligned}
\end{equation}
together with
\begin{equation}
  \label{eq:ext-prod-frame-data}
  \begin{aligned}
    \chi_{12}(\tau_1^{(1)},\nu_1^{(1)})
&=\chi_1(\tau_1,\nu_1)\otimes1,
&
c_{12}(\tau_1^{(1)},\nu_1^{(1)})
&=c_1(\tau_1,\nu_1)\otimes1,
\\
\chi_{12}(\tau_2^{(2)},\nu_2^{(2)})
&=1\otimes\chi_2(\tau_2,\nu_2),
&
c_{12}(\tau_2^{(2)},\nu_2^{(2)})
&=1\otimes c_2(\tau_2,\nu_2),
\\
\chi_{12}(\tau_1^{(1)},\tau_2^{(2)})
&=0,
&
c_{12}(\tau_1^{(1)},\tau_2^{(2)})
&=0.
\end{aligned}
\end{equation}
It is easy to check that these data satisfy the requirement of the extension theorem~\ref{lem:frame-reconstruct}. We therefore obtain a vertex algebroid, denoted by $\cA_1\boxplus\cA_2$.

\begin{proposition}
\label{prop:ext-prod-envelope}
There is a natural isomorphism of graded vertex algebras
\begin{equation*}
\Uenv(\cA_1\boxplus\cA_2)
\xrightarrow{\ \sim\ }
\Uenv(\cA_1)\otimes_{\kk}\Uenv(\cA_2).
\end{equation*}
\end{proposition}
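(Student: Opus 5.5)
We construct mutually inverse maps using the universal property of $\Uenv$ (left adjoint to truncation) in one direction, and a generation/PBW argument in the other.

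\emph{Forward map.} Put $W:=\Uenv(\cA_1)\otimes\Uenv(\cA_2)$, which is $\mathbb Z_{\geq0}$-graded with $W_0=A_1\otimes A_2=A_{12}$ and
\[
W_1=(\Uenv(\cA_1)_1\otimes A_2)\oplus(A_1\otimes\Uenv(\cA_2)_1).
\]
This identifies with $T_{12}\oplus(\Omega_1\boxplus\Omega_2)$, using $T_i\oplus\Omega_i=\Uenv(\cA_i)_1$ (injectivity of $u(\cA_i)\to t(\Uenv(\cA_i))$ follows from Thm.~\ref{thm:vertex-env-PBW} when $T_i,\Omega_i$ are free; in general we can argue directly). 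We take the splitting $s(\tau_1 a_2\oplus a_1\tau_2):=(\tau_1)_{(-1)}\otimes a_2+a_1\otimes\tau_2$ in the tensor-product vertex algebra. We then extract the vertex algebroid $\mathcal A(t(W),s)$ via the formulas $\gamma(a,\tau)=s(a\tau)-a_{(-1)}s(\tau)$, $\chi=s(\cdot)_{(1)}s(\cdot)$, $c=s([\cdot,\cdot])-[s,s]$ of \cite[Thm.~2.3]{GMS}, and compare it with $\cA_1\boxplus\cA_2$.

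By the uniqueness in Thm.~\ref{lem:frame-reconstruct}, it suffices to compare the data on the generating submodule $T_{12,0}$. We compute in the tensor product, where $Y(x\otimes y,z)=Y(x,z)\otimes Y(y,z)$.
\begin{itemize}
\item For $\gamma$: we need $(a_1\otimes a_2)_{(-1)}(\tau_1\otimes 1)=(a_1)_{(-1)}\tau_1\otimes a_2+(a_1)_{(0)}\tau_1\otimes (a_2)_{(-2)}\mathbf 1$, by the tensor-product product formula $(x\otimes y)_{(n)}=\sum_k x_{(k)}\otimes y_{(n-k-1)}$ applied to $(\tau_1\otimes\mathbf 1)$. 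This produces $-\gamma_1(a_1,\tau_1)\otimes a_2-\tau_1(a_1)\otimes\partial a_2$ after using $a_{(0)}\tau=-\tau(a)$, which matches \eqref{eq:ext-prod-gamma}.
\item For $\chi$ and $c$: the $(1)$- and $(0)$-products of $\tau_1\otimes1$ with $1\otimes\tau_2$ vanish, since $\mathbf1$ has regular OPE. Same-factor products reproduce $\chi_i\otimes1$ and $c_i\otimes1$, matching \eqref{eq:ext-prod-frame-data}.
\end{itemize}
Hence $u(\cA_1\boxplus\cA_2)\to t(W)$ is a morphism of $1$-truncated vertex algebras, and the adjunction gives a degree-preserving morphism $\Phi:\Uenv(\cA_1\boxplus\cA_2)\to W$.

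\emph{Inverse map.} The inclusions $u(\cA_i)\to u(\cA_1\boxplus\cA_2)$, given by $a_1\mapsto a_1\otimes1$, $\tau_1\mapsto\tau_1^{(1)}$, $\omega_1\mapsto\omega_1\otimes1$ (and similarly for $i=2$), preserve the truncated operations. This is immediate from the frame data, with $\gamma_{12}(a_1\otimes1,\tau_1^{(1)})=\gamma_1(a_1,\tau_1)\otimes1$. They therefore induce morphisms $\iota_i:\Uenv(\cA_i)\to\Uenv(\cA_1\boxplus\cA_2)$. Images of generators from different factors have vanishing singular OPE, since $\chi_{12}$, $c_{12}$, the mixed brackets and the anchors are zero across factors. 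Hence the images of $\iota_1$ and $\iota_2$ mutually commute (all nonnegative modes vanish). The standard universal property of the tensor product of vertex algebras then gives $\Psi:W\to\Uenv(\cA_1\boxplus\cA_2)$, $x\otimes y\mapsto x_{(-1)}\iota(y)$.

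\emph{Inverse check.} The composite $\Phi\Psi$ is the identity on the generators $A_i$, $T_i$, $\Omega_i$ of $W$, hence everywhere. The composite $\Psi\Phi$ is the identity on $A_{12}=A_1\otimes A_2$ and on $T_{12,0}\oplus\Omega$. Since $\Uenv(\cA_1\boxplus\cA_2)$ is generated by $A_{12}$ together with $T_{12,0}$ and $\Omega_1\boxplus\Omega_2$, with $A_{12}$-multiples of $T_{12,0}$ expressed through $a_{(-1)}\tau+\gamma(a,\tau)$, the composite $\Psi\Phi$ is also the identity. Naturality follows from functoriality of the adjunction.

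The main obstacle is the $\gamma$ computation in the forward step: the sign and the $\partial$-term produced by the mixed $(0)$-mode must match \eqref{eq:ext-prod-gamma} exactly. I would do this carefully first, working directly from the tensor-product Borcherds formula.
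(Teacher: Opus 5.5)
Your proposal is correct and follows the same route as the paper's proof. The forward map comes from the adjunction once $t(\Uenv(\cA_1)\otimes\Uenv(\cA_2))$ is identified with $u(\cA_1\boxplus\cA_2)$, the inverse comes from the mutually commuting images of the two factor inclusions, and both composites are checked on generators. Your explicit tensor-product computation of $(a_1\otimes a_2)_{(-1)}(\tau_1\otimes\mathbf 1)$ reproduces \eqref{eq:ext-prod-gamma}, including the mixed term $\tau_1(a_1)\otimes\partial a_2$. This fills in the truncation identification that the paper only asserts.
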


\begin{proof}
Write $U_i = \Uenv(\cA_i)$ and $U_{12} = \Uenv(\cA_1 \boxplus \cA_2)$. Recall that the universal envelope has the property that the $1$-truncation of $U_i$ is $t(U_i) \cong u(\cA_i)$, which is the split $1$-truncated vertex algebra associated to $\cA_i$. Thus we can check that the $1$-truncation of $U_1 \otimes U_2$ is $t(U_1 \otimes U_2) \cong u(\cA_1 \boxplus \cA_2)$. In other words, we have an isomorphism $u(\cA_1 \boxplus \cA_2) \xrightarrow{\sim} t(U_1 \otimes U_2)$. By the universal property of the vertex envelope, this induces a vertex algebra morphism
\begin{equation*}
\Phi \colon U_{12} \longrightarrow U_1 \otimes U_2.
\end{equation*}

Conversely, the natural inclusions of the two factors $\cA_1 \hookrightarrow \cA_1 \boxplus \cA_2$ and $\cA_2 \hookrightarrow \cA_1 \boxplus \cA_2$ induce morphisms of vertex algebras $j_i \colon U_i \longrightarrow U_{12}$. By the definition of the product vertex algebroid, all nonnegative vertex operations between generators coming from the two factors vanish in the $1$-truncated vertex algebra $t(U_{12})$. By degree considerations, these are the only potentially nonzero mixed nonnegative vertex products between such generators in $U_{12}$. In other words, the images of $j_1$ and $j_2$ mutually strongly commute as vertex subalgebras of $U_{12}$. This implies that we have a vertex algebra morphism
\begin{equation*}
  \Psi \colon U_1 \otimes U_2 \longrightarrow U_{12}.
\end{equation*}
The composition $\Phi \circ \Psi$ is the identity when restricted to the two commuting subalgebras $U_1 \otimes 1$ and $1 \otimes U_2$, which generate $U_1 \otimes U_2$. Hence $\Phi \circ \Psi = \id_{U_1 \otimes U_2}$. On the other hand, the composition $\Psi \circ \Phi$ is the identity on the $1$-truncation $t(U_{12})$, and since $U_{12}$ is defined by the universal envelope of $t(U_{12}) \cong u(\cA_1 \boxplus \cA_2)$, we conclude that $\Psi \circ \Phi = \id_{U_{12}}$. This proves the isomorphism.
\end{proof}

\subsection{Vertex coproduct from group multiplication}
\label{sec:mult-factor-alg}

Throughout this subsection, $\mathfrak p=\mathfrak p_+\bowtie\mathfrak p_-$ is either a finite-dimensional matched pair with a compatible algebraic integration of $\mathfrak p_-$ to $\mathsf P_-$ as in Sec.~\ref{sec:bicrossed-action-algs}, or the rational loop splitting.  Put $A=\cO(\mathsf P_-)$. For $g\in\mathsf P_-$ and $\xi\in\mathfrak p_+$, write
\begin{equation*}
M_g\xi=p_+\Ad_{g^{-1}}\xi,
\end{equation*}
Thus $M_{g_1g_2}=M_{g_2}M_{g_1}$, which follows because $\Ad_{g_2^{-1}}$ preserves $\mathfrak p_-$. In the rational case, Lem.~\ref{lem:rat-factor-poly-closure} ensures that $g\mapsto M_g\xi$ has polynomial coefficients and that $\rho_\xi$ preserves $A$.

In this section we show that the group structure on $\mathsf P_-$ gives rise to a coproduct on the level-zero vertex algebra constructed in Sec.~\ref{sec:action-levels-3form}. We suppress the fixed arguments $(A,\mathfrak p_+)$ and write
\begin{equation*}
\cA:=\cA_0(A,\mathfrak p_+),
\qquad
V:=\Uenv(\cA)
\end{equation*}

Let $m:\mathsf P_-\times\mathsf P_-\longrightarrow\mathsf P_-$ be the
group multiplication morphism. We first construct a morphism of extended Lie algebroids
\begin{equation*}
 (\Delta_A,\Delta_T,\Delta_\Omega): \cT \longrightarrow\cT\boxplus\cT.
\end{equation*}
The three components are defined by
\begin{equation}
  \Delta_A(f) =m^*f, \quad \Delta_T(\xi) =\xi^{(1)}+\bigl(M_{g_1}\xi\bigr)^{(2)},\quad \Delta_\Omega(\dd f)=\dd(m^*f),
\label{eq:ord-prod-comp}
\end{equation}
where $(M_{g_1}\xi)^{(2)}$ denotes the current in the second tensor factor with coefficients evaluated at $g_1 \in \mathsf P_-$ in the first tensor factor.

We also define the counit morphism $\epsilon_{\cT}: \cT \to \cT_{\mathrm{pt}}$, where $\cT_{\mathrm{pt}}$ is the extended algebroid $(\kk,0,0,0,0)$ over the point, which behaves like $0$ under the external sum $\boxplus$. Let $e \in \mathsf P_-$ be the group identity. We define $\epsilon_{\mathcal{T}}$ by
\begin{equation*}
\epsilon_A(f)=f(e), \qquad \epsilon_T=0, \qquad \epsilon_\Omega=0.
\end{equation*}

The following proposition is a immediate consequence of the standard infinitesimal matched-pair identities, see e.g. \cite{Majid1990}.

\begin{proposition}
  \label{prop:mult-factor-alg}
The triple $\Delta_{\cT} = (\Delta_A,\Delta_T,\Delta_\Omega)$ is a morphism of extended Lie algebroids. It further satisfies the coassociative identity:
\begin{equation*}
  (\Delta_{\cT}\boxplus \mathrm{id})\Delta_{\cT} = (\mathrm{id}\boxplus \Delta_{\cT})\Delta_{\cT}.
\end{equation*}
together with counit identity
\begin{equation*}
  (\epsilon_{\cT}\boxplus \mathrm{id})\Delta_{\cT} = (\mathrm{id}\boxplus \epsilon_{\cT})\Delta_{\cT} = \mathrm{id}.
\end{equation*}
\end{proposition}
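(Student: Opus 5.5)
The plan is to reduce every claim to statements about the dressing vector fields on $\mathsf P_-$ and the cocycle identity $M_{g_1g_2}=M_{g_2}M_{g_1}$. The morphism property will come from the key identity
\begin{equation*}
\rho_\xi\bigl(m^*f\bigr)(g_1,g_2)
=\bigl(\rho_\xi\otimes 1\bigr)(m^*f)(g_1,g_2)
+\bigl(1\otimes\rho_{M_{g_1}\xi}\bigr)(m^*f)(g_1,g_2),
\end{equation*}
that is, $m_*$ intertwines the anchor of $\Delta_T(\xi)$ with $\rho_\xi$. For a morphism of extended action Lie algebroids, the discussion after Cor.~\ref{cor:frame-mor-crit} says it suffices to check two things on $T_0=1\otimes\mathfrak p_+$. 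The first is anchor compatibility $\rho'_{\Delta_T\xi}(\Delta_A a)=\Delta_A(\rho_\xi a)$. The second is bracket compatibility $[\Delta_T\xi,\Delta_T\eta]=\Delta_T[\xi,\eta]$. Granting these, $\Delta_T$ extends $A$-linearly along $\Delta_A$, and $\Delta_\Omega(a\,\dd b)=m^*a\,\dd m^*b$ is then automatically compatible with $\partial$, the pairing and the $T$-action. This works because in the Kähler extended action algebroid all these structures are induced from the anchor and $\dd$.

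\emph{Anchor identity.} I will differentiate the factorization in $\mathfrak p=\mathfrak p_+\bowtie\mathfrak p_-$. Write $\Ad_{g^{-1}}\xi=M_g\xi+(\Ad_{g^{-1}}\xi)_-$, so that $\rho_\xi(g)=-(L_g)_*(\Ad_{g^{-1}}\xi)_-$. For $g=g_1g_2$ we have $\Ad_{g_2^{-1}}\Ad_{g_1^{-1}}\xi=\Ad_{g_2^{-1}}M_{g_1}\xi+\Ad_{g_2^{-1}}(\Ad_{g_1^{-1}}\xi)_-$. The second summand lies in $\mathfrak p_-$. Taking $\mathfrak p_-$-parts and pushing forward by $L_{g_1g_2}$ gives
\begin{equation*}
\rho_\xi(g_1g_2)=(R_{g_2})_*\rho_\xi(g_1)+(L_{g_1})_*\rho_{M_{g_1}\xi}(g_2),
\end{equation*}
which is exactly $m_*$ of the vector $(\rho_\xi(g_1),\rho_{M_{g_1}\xi}(g_2))$. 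In the rational case, Lem.~\ref{lem:rat-factor-poly-closure} guarantees everything is polynomial and the identity holds coefficientwise in each truncation $G_-^{(N)}$.

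\emph{Bracket identity.} Using the action-algebroid bracket \eqref{eq:action-alg-bracket} on $\cT\boxplus\cT$, the components are as follows. The $(1,1)$ component is $[\xi,\eta]^{(1)}$. The mixed terms vanish. The $(2,2)$ component is
\begin{equation*}
[M_{g_1}\xi,M_{g_1}\eta]^{(2)}+\bigl(\rho_\xi M_{g_1}\eta-\rho_\eta M_{g_1}\xi\bigr)^{(2)},
\end{equation*}
where $\rho_\xi$ acts on the $g_1$-coefficients. I therefore need the infinitesimal matched-pair identity
\begin{equation*}
M_g[\xi,\eta]=[M_g\xi,M_g\eta]+\rho_\xi(M\eta)(g)-\rho_\eta(M\xi)(g).
\end{equation*}
I will derive it by differentiating $M_{g\exp(t v)}=M_{\exp(tv)}M_g$ in the direction $v=\rho_\xi(g)$. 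At $g=e$ this is the statement that $\phi_-$ is compatible with brackets (Majid's identities). The general case then follows by translating with the cocycle $M_{gh}=M_hM_g$.

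I expect this identity to be the main obstacle. It requires careful sign bookkeeping with $\rho_\xi=-(L_g)_*(\cdot)_-$, and it needs the derivative of $g\mapsto M_g\eta$ along $\rho_\xi$ to be computed as $-[(\Ad_{g^{-1}}\xi)_-,\Ad_{g^{-1}}\eta]_+$. My first approach will be to compute directly: $\frac{d}{dt}\Ad_{(g e^{tv})^{-1}}\eta=-[v,\Ad_{g^{-1}}\eta]$, then project and use $[X_-,Y_-]_+=0$ and $[X_+,Y_+]_-=0$.

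Coassociativity and the counit identities are checked on generators. For the coassociativity of $\Delta_A$ this is associativity of $m$. For $T_0$, both sides give $\xi^{(1)}+(M_{g_1}\xi)^{(2)}+(M_{g_2}M_{g_1}\xi)^{(3)}$ on one side and $\xi^{(1)}+(M_{g_1g_2}\xi)^{(3)}$-type terms on the other, and these agree by $M_{g_1g_2}=M_{g_2}M_{g_1}$. For $\Omega$ it follows from $\Delta_A$. For the counit, $\epsilon_T=0$ kills the first or second summand, and $M_e=\id$ and $m(e,g)=g$ give the identity.
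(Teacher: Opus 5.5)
Your proposal is correct and follows essentially the same route as the paper. Both reduce the morphism property to the two matched-pair identities \eqref{eq:matched-pair-mult} and \eqref{eq:matched-pair-bracket} (the paper cites them from Majid; you derive them), extend by semilinearity and K\"ahler pullback, and obtain coassociativity and the counit identities from associativity of $m$, $M_{g_1g_2}=M_{g_2}M_{g_1}$ and $M_e=\id$; you should also note $\rho_\xi(e)=0$, which is what makes $\epsilon_{\cT}$ itself a morphism.

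One sign needs fixing when you write out the bracket identity. Since $(L_{g^{-1}})_*\rho_\xi(g)=-(\Ad_{g^{-1}}\xi)_-$, the derivative of $g\mapsto M_g\eta$ along $\rho_\xi$ is $+[X_-,Y]_+=[X_-,Y_+]_+$, where $X=\Ad_{g^{-1}}\xi$ and $Y=\Ad_{g^{-1}}\eta$; with your minus sign the identity fails. The identity is then simply $M_g[\xi,\eta]=[X,Y]_+=[X_+,Y_+]+[X_-,Y_+]_++[X_+,Y_-]_+$, which holds pointwise in $g$ because $\Ad_{g^{-1}}$ is a Lie automorphism and $\mathfrak p_\pm$ are subalgebras, so no reduction to $g=e$ or translation by the cocycle is needed.
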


\begin{proof}
In our conventions, the standard matched-pair identities from \cite[Theorem~3.1 and Lemma~3.2]{Majid1990} are
\begin{align}
&\rho_\xi(g_1g_2)
=(R_{g_2})_*\rho_\xi(g_1)
 +(L_{g_1})_*\rho_{M_{g_1}\xi}(g_2),
\label{eq:matched-pair-mult}\\
&\rho_\xi(M_g\eta)-\rho_\eta(M_g\xi)
 +[M_g\xi,M_g\eta]
=M_g[\xi,\eta].
\label{eq:matched-pair-bracket}
\end{align}
Although those results are stated for smooth integrations, the same identities hold in the algebraic setting of Section~3.2 verbatim.

The anchor of $\Delta_T(\xi)$ is $\widetilde\rho_\xi(g_1,g_2) = \bigl(\rho_\xi(g_1),\rho_{M_{g_1}\xi}(g_2)\bigr)$. Thus \eqref{eq:matched-pair-mult} says that the anchor vector field $\widetilde\rho_\xi$ is $m$-related to $\rho_\xi$ (i.e., $\widetilde\rho_\xi \circ m^* = m^* \circ \rho_\xi$), and \eqref{eq:matched-pair-bracket} says that $[\Delta_T(\xi),\Delta_T(\eta)]=\Delta_T([\xi,\eta])$.
By the discussion following Corollary~2.8, semilinearity and the Leibniz rule extend these compatibilities to $T$, and pullback on K\"ahler differentials gives the extended morphism. Finally, associativity of $m$ and $M_{g_1g_2}=M_{g_2}M_{g_1}$ give coassociativity of $\Delta$. The identities $M_e=\mathrm{id}$ and $\rho_\xi(e)=0$ give the stated counit and its identities.
\end{proof}

When the matched pair comes from a Manin triple, this coproduct has a cotangent Lie algebroid interpretation provided by the dressing--cotangent isomorphism of Thm.~\ref{thm:dressing-cotangent}. Let $F:(X_1,\pi_1)\to(X_2,\pi_2)$ be a Poisson morphism. By the naturality of the cotangent Lie algebroid under Poisson morphisms \cite[Theorem~3.8 and Addendum~3.8.4]{Huebschmann1990}, the pullback on functions and one-forms satisfies:
\begin{equation*}
[F^*\alpha,F^*\beta]_{\pi_1} = F^*[\alpha,\beta]_{\pi_2},
\qquad
\pi_1^\sharp(F^*\alpha)(F^*a) = F^*\bigl(\pi_2^\sharp(\alpha)(a)\bigr).
\end{equation*}
Furthermore, the anchor identity implies compatibility with contraction $\iota_{\pi_1^\sharp(F^*\alpha)}F^*\omega = F^*(\iota_{\pi_2^\sharp(\alpha)}\omega)$, and since pullback commutes with the de Rham differential, Cartan's formula ensures compatibility with Lie derivatives. Consequently,
\[
(F^*,F^*,F^*):
\cT(X_2,\pi_2)\longrightarrow
\cT(X_1,\pi_1)
\]
is a morphism of extended Lie algebroids, where the three components act on functions, on the cotangent Lie algebroid, and on the one-form module, respectively.

Apply this to the multiplicative Poisson structure $\pi_{\mathfrak p}$ on $\mathsf P_-$. By definition of multiplicativity, $m:
(\mathsf P_-\times \mathsf P_-,\pi_{\mathfrak p}\oplus\pi_{\mathfrak p})
\longrightarrow
(\mathsf P_-,\pi_{\mathfrak p})$ is Poisson. Hence, using the identification $\cT(\mathsf P_-\times \mathsf P_-, \pi_{\mathfrak p}\oplus\pi_{\mathfrak p})\cong \cT(\mathsf P_-,\pi_{\mathfrak p}) \boxplus \cT(\mathsf P_-,\pi_{\mathfrak p})$, pullback by $m$ defines an extended Lie algebroid morphism
\[
m^*:
\cT(\mathsf P_-,\pi_{\mathfrak p})
\longrightarrow
\cT(\mathsf P_-,\pi_{\mathfrak p})
\boxplus
\cT(\mathsf P_-,\pi_{\mathfrak p}).
\]
Similarly, evaluation at the identity defines the counit, since
$\pi_{\mathfrak p}(e)=0$. Under the dressing--cotangent isomorphism $\jmath$ of Thm.~\ref{thm:dressing-cotangent}, this coproduct agrees with the matched-pair coproduct of Prop.~\ref{prop:mult-factor-alg}. Indeed, the right Maurer--Cartan identity gives
\begin{equation*}
m^*\theta^R_\xi
=
(\theta^R_\xi)^{(1)}
+
\bigl(\theta^R_{M_{g_1}\xi}\bigr)^{(2)},
\end{equation*}
and therefore $(\jmath\boxplus\jmath)\Delta_T = m^*\jmath$. Together with $\Delta_A=m^*$ on functions and $\Delta_\Omega=m^*$ on K\"ahler differentials, this identifies the two morphisms of extended Lie algebroids.

We now move forward to define the coproduct on the level-zero vertex algebroid $\cA$ and its enveloping vertex algebra $V$. We write
\begin{equation*}
\cA_{\mathrm{pt}}:=(\cT_{\mathrm{pt}},0,0,0)
\end{equation*}
for the vertex algebroid of a point.

\begin{theorem}
\label{thm:zero-algebroid-coprod}
The extended Lie algebroid coproduct $\Delta_{\cT}$ admits a vertex-algebroid lift
\[
(\Delta_A,\Delta_T,\Delta_\Omega,h):
\cA \longrightarrow\cA\boxplus \cA
\]
satisfying $h|_{\mathfrak p_+}=0$.
The counit lifts to the vertex-algebroid morphism
\[
(\epsilon_A,0,0,0):
\cA \longrightarrow\cA_{\mathrm{pt}}.
\]
They satisfy the coassociativity and counit identities. 
\end{theorem}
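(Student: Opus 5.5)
The plan is to apply Cor.~\ref{cor:frame-mor-crit} with $h_0=0$ on $\mathfrak p_+$. Prop.~\ref{prop:mult-factor-alg} already gives a morphism of extended Lie algebroids $\Delta_{\cT}$, so the corollary produces the unique extension
\[
h(a\xi)=-\gamma_{12}\bigl(\Delta_A(a),\Delta_T\xi\bigr)+\Delta_\Omega\gamma(a,\xi)=-\gamma_{12}\bigl(m^*a,\Delta_T\xi\bigr),
\]
using $\gamma(a,\xi)=0$ at level zero. It then suffices to verify \eqref{eq:GMS-mor-pair} and \eqref{eq:GMS-mor-c} for $\xi,\eta\in\mathfrak p_+$.

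For the pairing identity, at level zero we have $\chi(\xi,\eta)=0$ and $h(\xi)=0$. The condition therefore reduces to
\[
\chi_{12}\bigl(\xi^{(1)}+(M_{g_1}\xi)^{(2)},\,\eta^{(1)}+(M_{g_1}\eta)^{(2)}\bigr)=0.
\]
The $(1)$--$(1)$ term vanishes, and so does the mixed term by \eqref{eq:ext-prod-frame-data}. The $(2)$--$(2)$ term is a pairing of sections $\sum_i a_i\otimes \zeta_i$ with $a_i\in A_1$ and $\zeta_i\in\mathfrak p_+$. I would compute it with \eqref{eq:frame-pairing} in the product algebroid. The coefficients $a_i$ come from the first factor, and they are killed by second-factor anchors. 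They are not killed by first-factor anchors, but only second-factor anchors appear here. The $\gamma_{12}$ contributions are $\gamma_{12}(a\otimes 1,\zeta^{(2)})=\partial_1 a\otimes \zeta(1)=0$ by \eqref{eq:ext-prod-gamma}, since the anchor of $\zeta^{(2)}$ kills $a\otimes1$. Hence every term vanishes and \eqref{eq:GMS-mor-pair} holds.

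For \eqref{eq:GMS-mor-c}, the left side is $0$ since $c(\xi,\eta)=0$. On the right, $h(\xi)=h(\eta)=0$ and $h([\xi,\eta])=0$ because $[\xi,\eta]\in\mathfrak p_+$. It remains to show
\[
c_{12}\bigl(\Delta_T\xi,\Delta_T\eta\bigr)=0.
\]
I expect this to be the main computation. I would expand bilinearly. The $(1)$--$(1)$ part is $0$. For the mixed parts $c_{12}(\xi^{(1)},(a\zeta)^{(2)})$, I would use \eqref{eq:frame-c} with $a=1$ on the first slot, $b=a_i\otimes1$, $\tau_2=\zeta^{(2)}$. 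This leaves terms like $\gamma_{12}(b,[\xi^{(1)},\zeta^{(2)}])=0$, $\gamma_0(\tau_1(b),\tau_2)$, and $-\tau_1(\gamma_0(b,\tau_2))$. Since $\tau_2(b)=0$, the $\gamma$ terms reduce through \eqref{eq:ext-prod-gamma} to $\partial_1(\cdot)\otimes\zeta(1)=0$. The remaining terms are of the form $\frac12\chi_0(\cdot)(\dots)$ and $\frac12(\tau_1(b)\partial\tau_2(a)-\dots)$, and these vanish because $\tau_2$ kills first-factor functions and $\chi_0=0$.

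For the $(2)$--$(2)$ part, with $M_{g_1}\xi=\sum a_i\zeta_i$ and $M_{g_1}\eta=\sum b_j\zeta_j'$, the coefficients are constants for the second-factor algebroid. The formula \eqref{eq:frame-c} then collapses to
\[
\textstyle\sum a_ib_j\,\bigl(1\otimes c(\zeta_i,\zeta'_j)\bigr)+\gamma_{12}\bigl(a_ib_j,[\zeta_i,\zeta_j']^{(2)}\bigr)=0.
\]
The tricky point is the $\gamma(ab,[\tau_1,\tau_2])$ term and the $\chi_0(a\partial b-b\partial a)$ term, where $\partial$ is the \emph{total} differential. The first is again $0$ by \eqref{eq:ext-prod-gamma}, and the second carries $\chi_0=0$. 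If cross terms survive, I would instead argue structurally: the data restricted to the subalgebroid generated by first-factor coefficients times second-factor currents should be $\cA_1$-constant, and at level zero such constant-coefficient pieces carry no $c$.

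Finally, coassociativity and counit. Both sides of $(\Delta\boxplus\id)\Delta$ and $(\id\boxplus\Delta)\Delta$ are vertex-algebroid morphisms with the same extended-Lie-algebroid part by Prop.~\ref{prop:mult-factor-alg}. Their $h$-components are computed by the composition rule $g'_\Omega h+h'g_T$, and both vanish on $\mathfrak p_+$: this uses $h|_{\mathfrak p_+}=0$, and that $h$ on $\Delta_T\xi$ in the product is built from $h$ on constants. The uniqueness part of Thm.~\ref{thm:GMS-ext-morphisms} then forces them to agree. The counit $(\epsilon_A,0,0,0)$ satisfies the morphism conditions trivially because the target data vanish; the counit identities follow in the same way from uniqueness.
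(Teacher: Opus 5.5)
Your proposal is correct and follows the paper's proof. You apply Cor.~\ref{cor:frame-mor-crit} with $h|_{\mathfrak p_+}=0$, reduce \eqref{eq:GMS-mor-pair} and \eqref{eq:GMS-mor-c} to the vanishing of $\chi_{12}$ and $c_{12}$ on $\mathfrak p_+^{(1)}+(A\otimes1)\mathfrak p_+^{(2)}$ at level zero, and get coassociativity and the counit identities from uniqueness once the composite $h$-components vanish on $\mathfrak p_+$. One small imprecision: because $(M_{g_1}\xi)^{(2)}$ carries first-factor coefficients, the mixed $\chi_{12}$ term and the composite $h$-components do not follow from \eqref{eq:ext-prod-frame-data} and $h|_{\mathfrak p_+}=0$ alone; they vanish via \eqref{eq:GMS-A2} and the $\gamma$-terms of \eqref{eq:GMS-mor-gamma}, using $\gamma_{12}(b\otimes1,\zeta^{(2)})=0$, just as in your other cases.
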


\begin{proof}
By Prop.~\ref{prop:mult-factor-alg}, $(\Delta_A,\Delta_T,\Delta_\Omega)$ is a morphism of
extended Lie algebroids. Cor.~\ref{cor:frame-mor-crit} gives a unique $h$ satisfying~\eqref{eq:GMS-mor-gamma} and $h|_{\mathfrak p_+}=0$. The level-zero condition gives $\chi(\xi,\eta)=c(\xi,\eta)=0$. Thus conditions~\eqref{eq:GMS-mor-pair} and~\eqref{eq:GMS-mor-c} reduce to
\begin{equation}
\label{eq:zero-coprod}
  \chi_{12}(\Delta_T\xi,\Delta_T\eta)=0,
  \qquad
  c_{12}(\Delta_T\xi,\Delta_T\eta)=0,
  \qquad \xi,\eta\in\mathfrak p_+.
\end{equation}

At level zero, \eqref{eq:zero-action-ext}, \eqref{eq:ext-prod-gamma}, and \eqref{eq:ext-prod-frame-data}, together with the reconstruction
formulas~\eqref{eq:frame-pairing}--\eqref{eq:frame-c}, give
\[
  \chi_{12}(x,y)=0,
  \qquad
  c_{12}(x,y)=0,
  \qquad
  x,y\in
  \mathfrak p_+^{(1)}+(A\otimes1)\mathfrak p_+^{(2)}.
\]
Since $\Delta_T\xi=\xi^{(1)}+(M_{g_1}\xi)^{(2)}$, this proves \eqref{eq:zero-coprod}. Cor.~\ref{cor:frame-mor-crit} therefore gives
the vertex-algebroid morphism
\[
  (\Delta_A,\Delta_T,\Delta_\Omega,h):
  \cA\longrightarrow\cA\boxplus\cA.
\]

For the counit, Prop.~\ref{prop:mult-factor-alg} gives the underlying morphism $(\epsilon_A,0,0)$ of extended Lie algebroids. The map $h_\epsilon:T\to0$ vanishes identically because the target one-form module is zero. Conditions~\eqref{eq:GMS-mor-gamma}, \eqref{eq:GMS-mor-pair}, and~\eqref{eq:GMS-mor-c} are then automatic. Hence Cor.~\ref{cor:frame-mor-crit} gives $(\epsilon_A,0,0,0):\cA\longrightarrow\cA_{\mathrm{pt}}$.

It remains to check the coalgebra identities. On the generating submodule $T_{12,0}=T\otimes1\oplus1\otimes T$, the $h$ components of $\Delta\boxplus\mathrm{id}$ and $\mathrm{id}\boxplus\Delta$ are given by $h_{\Delta\boxplus\mathrm{id}}(\tau^{(1)}+\nu^{(2)})=h(\tau)\otimes1$ and $h_{\mathrm{id}\boxplus\Delta}(\tau^{(1)}+\nu^{(2)})=1\otimes h(\nu)$. By composition formula, their composites with $\Delta$ evaluated on $\xi\in\mathfrak p_+$ are
\[
  h_{(\Delta\boxplus\mathrm{id})\circ\Delta}(\xi)
  =
  (\Delta_\Omega\boxplus\mathrm{id}_\Omega)h(\xi) + h_{\Delta\boxplus\mathrm{id}}(\Delta_T\xi),
  \qquad
  h_{(\mathrm{id}\boxplus\Delta)\circ\Delta}(\xi)
  =
  (\mathrm{id}_\Omega\boxplus\Delta_\Omega)h(\xi) + h_{\mathrm{id}\boxplus\Delta}(\Delta_T\xi).
\]
Since $h|_{\mathfrak p_+}=0$ and $\Delta_T\xi=\xi^{(1)}+(M_{g_1}\xi)^{(2)}$, Eqs.~\eqref{eq:GMS-mor-gamma} and~\eqref{eq:ext-prod-gamma} show that both expressions vanish, so $h_{(\Delta\boxplus\mathrm{id})\circ\Delta}|_{\mathfrak p_+}=0$ and $h_{(\mathrm{id}\boxplus\Delta)\circ\Delta}|_{\mathfrak p_+}=0$. Because their underlying extended Lie algebroid morphisms agree by Prop.~\ref{prop:mult-factor-alg}, uniqueness in Cor.~\ref{cor:frame-mor-crit} yields
\[
  (\Delta\boxplus\mathrm{id})\circ\Delta
  =
  (\mathrm{id}\boxplus\Delta)\circ\Delta.
\]
The same argument with $h_\epsilon=0$ gives $(\epsilon\boxplus\mathrm{id})\circ\Delta=\mathrm{id}=(\mathrm{id}\boxplus\epsilon)\circ\Delta$.
\end{proof}

By the universal-envelope adjunction, we immediately obtain the coproduct and counit maps on the vertex envelope $V$ of $\cA$
\begin{corollary}
\label{cor:zero-bialg}
The morphisms of Thm.~\ref{thm:zero-algebroid-coprod} induce graded vertex-algebra morphisms
\[
\Delta:V\longrightarrow V\otimes V, \qquad \epsilon:V\longrightarrow \kk.
\]

Consequently, $(V,\Delta,\epsilon)$ is a graded local vertex bialgebra.
\end{corollary}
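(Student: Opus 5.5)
The plan is to push the algebroid-level statements of Thm.~\ref{thm:zero-algebroid-coprod} through the functor $u$ and the envelope adjunction, and then use uniqueness of extensions from the $1$-truncation to transport the coalgebra identities.

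\textbf{Step 1: construct $\Delta$ and $\epsilon$.} The vertex-algebroid morphism $(\Delta_A,\Delta_T,\Delta_\Omega,h):\cA\to\cA\boxplus\cA$ gives a morphism of $1$-truncated vertex algebras
\[
u(\cA)\to u(\cA\boxplus\cA),\qquad (\tau,\eta)\mapsto\bigl(\Delta_T\tau,\ \Delta_\Omega\eta+h(\tau)\bigr),
\]
together with $\Delta_A$ in weight zero. Composing with the inclusion $u(\cA\boxplus\cA)\cong t(\Uenv(\cA\boxplus\cA))$ and with the isomorphism $\Uenv(\cA\boxplus\cA)\cong V\otimes V$ of Prop.~\ref{prop:ext-prod-envelope} yields a morphism $u(\cA)\to t(V\otimes V)$. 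By the adjunction $\operatorname{Hom}_{\mathrm{VA}}(\Uenv(\cA),W)\cong\operatorname{Hom}_{\mathrm{VA}_{\le1}}(u(\cA),t(W))$, this is a unique degree-preserving morphism $\Delta:V\to V\otimes V$. Similarly, $(\epsilon_A,0,0,0)$ gives $u(\cA)\to u(\cA_{\mathrm{pt}})=\kk$ (concentrated in weight $0$), and hence $\epsilon:V\to\kk=\Uenv(\cA_{\mathrm{pt}})$.

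\textbf{Step 2: coassociativity and counit.} Both $(\Delta\otimes\id)\Delta$ and $(\id\otimes\Delta)\Delta$ are graded vertex-algebra morphisms $V\to V^{\otimes3}$. By the adjunction, each is determined by its restriction to $u(\cA)$. Functoriality of the construction in Step 1 gives two identifications:
\begin{itemize}
\item under $\Uenv(\cA\boxplus\cA\boxplus\cA)\cong V^{\otimes3}$, obtained by applying Prop.~\ref{prop:ext-prod-envelope} twice, the map $\Delta\otimes\id$ is the envelope of $\Delta\boxplus\id$;
\item likewise, $\id\otimes\Delta$ is the envelope of $\id\boxplus\Delta$.
\end{itemize}
Moreover, $u$ respects composition of vertex-algebroid morphisms: composing $(\tau,\eta)\mapsto(g_T\tau,g_\Omega\eta+h(\tau))$ maps reproduces exactly the composition rule $h'' = g'_\Omega h+h'g_T$. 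The equality $(\Delta\boxplus\id)\circ\Delta=(\id\boxplus\Delta)\circ\Delta$ from Thm.~\ref{thm:zero-algebroid-coprod} therefore yields equal $1$-truncated maps, and hence equal vertex-algebra maps. The counit identities follow by the same argument, using $\cA_{\mathrm{pt}}\boxplus\cA\cong\cA$ and $\Uenv(\cA_{\mathrm{pt}})=\kk$.

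\textbf{Main obstacle.} The delicate point is the naturality claim: that the isomorphism of Prop.~\ref{prop:ext-prod-envelope} intertwines $\Uenv(f\boxplus g)$ with $\Uenv(f)\otimes\Uenv(g)$. Here $f\boxplus g$ should be taken on generators $T_{12,0}$ as $f$ on the first factor and $g$ on the second, with $h$-component $h_f\otimes1+1\otimes h_g$, as used in the proof of the theorem. To establish this, I would check the identity only on weight $\le1$ generators, namely $A_1\otimes A_2$, $T_1\otimes1$, $1\otimes T_2$ and the one-forms. On these generators both sides act factorwise, and since vertex-algebra morphisms are determined by their values on generators, this suffices.

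\textbf{Conclusion.} Finally, $V\otimes V$ is a local vertex algebra, and $\Delta$ and $\epsilon$ preserve the grading because they come from degree-preserving adjunction maps. Hence $(V,\Delta,\epsilon)$ is a graded local vertex bialgebra in the sense of Def.~\ref{def:graded-local-vbialg}.
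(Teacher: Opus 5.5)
Your proposal is correct and follows the paper's route. The paper obtains $\Delta$ and $\epsilon$ directly from the algebroid morphisms of Thm.~\ref{thm:zero-algebroid-coprod} via the envelope adjunction and Prop.~\ref{prop:ext-prod-envelope}, and transports coassociativity and counitality from the algebroid-level identities as you do; your check, on weight-$\le1$ generators, that the product isomorphism is natural fills in a step the paper leaves implicit.
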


Finally, we discuss the compatibility of the coproduct with the quasi-conformal structure.

\begin{theorem}
\label{thm:zero-coprod-qc}
For the standard nonnegative quasi-conformal structure on $V(\mathfrak{p}_+,\mathsf{P}_-)$, the coproduct and counit satisfy
\begin{equation*}
\Delta \mathscr L_n = (\mathscr L_n\otimes\id+\id\otimes\mathscr L_n)\Delta, \qquad \epsilon\mathscr L_n=0, \qquad n\geq-1.
\end{equation*}

For the chiral cotangent Yangian, the same maps also intertwine the minitwistor  quasi-conformal structure:
\begin{equation}
\Delta \mathscr L_n^{\mathrm{MT}}
=
(\mathscr L_n^{\mathrm{MT}}\otimes\id
+\id\otimes\mathscr L_n^{\mathrm{MT}})\Delta,
\qquad
\epsilon\mathscr L_n^{\mathrm{MT}}=0,
\qquad n\geq-1.
\label{eq:mt-qc-coprod}
\end{equation}
\end{theorem}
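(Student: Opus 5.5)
The plan is to reduce both identities to a check on generators, using the fact that both sides are derivation-like with respect to the vertex operations. On $V\otimes V$, with the tensor-product quasi-conformal structure $\mathscr L_n^{\otimes}:=\mathscr L_n\otimes\id+\id\otimes\mathscr L_n$, the covariance identity \eqref{eq:qc-covariance} holds for $V\otimes V$ because it holds in each factor. Since $\Delta$ is a vertex algebra morphism, the operators $\Delta\mathscr L_n$ and $\mathscr L_n^{\otimes}\Delta$ (maps $V\to V\otimes V$) obey the same recursion along modes. Concretely, for $v$ a generator and $w$ arbitrary,
\[
X_n(v_{(j)}w)=\sum_{i=0}^{n+1}\tbinom{n+1}{i}\bigl(\Delta(\mathscr L_{i-1}v)\bigr)_{(n+j-i+1)}\Delta w+(\Delta v)_{(j)}X_n w
\]
for $X_n=\Delta\mathscr L_n$, and the same formula holds for $X_n=\mathscr L^{\otimes}_n\Delta$ provided $\Delta(\mathscr L_{i-1}v)=\mathscr L^{\otimes}_{i-1}\Delta v$ for all generators $v$ and $i\ge0$. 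Both $X_n$ kill the vacuum, so induction on the number of modes reduces the theorem to checking $\Delta\mathscr L_k v=\mathscr L_k^{\otimes}\Delta v$ for $k\ge-1$ on generators $v\in A$ and $v=\Lcur_\xi$. The case $k=-1$ is automatic, since $\Delta$ commutes with $\partial$. The counit statement is handled the same way with the trivial structure on $\kk$: $\epsilon$ is a morphism to $\kk$, and $\epsilon\mathscr L_n$ vanishes on generators by degree, since $\epsilon$ is nonzero only in weight zero and $\mathscr L_n$ for $n\ge0$ kills $A$ or lands in $\kk\cdot$ weight-$0$ only via $\mathscr L_1\Lcur_\xi=D(\xi)=0$.

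For the standard structure, $k=0$ holds because $\Delta$ is graded. For $k\ge2$ both sides vanish on generators by degree. For $k=1$ on $A$ both sides vanish. The substantive check is $\Delta(D\,\Lcur_\xi)=\mathscr L_1^{\otimes}\Delta\Lcur_\xi$. We have $D\Lcur_\xi=\rho_\xi(1)=0$, so we must show that $\mathscr L_1^{\otimes}$ kills $\Delta(\Lcur_\xi)=\Lcur_\xi\otimes1+\Lcur^{(2)}_{M_{g_1}\xi}$. The second term is, in the product algebroid, an element $a^{(1)}_{(-1)}\Lcur^{(2)}_\eta$ (summed) with $a=$ coefficient functions of $M_{g_1}\xi$ in the first factor, plus the one-form correction $\gamma_{12}$ term that $h|_{\mathfrak p_+}=0$ and the $A$-linearity fix. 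Applying $\mathscr L_1^{\otimes}$ gives $a\,D(\eta)$ plus the cross term $-a^{(1)}_{(0)}\Lcur_\eta^{(2)}=0$, since the factors commute, and $D$ kills one-forms. Thus the result is $0$.

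For the mini-twistor structure, the same reduction applies. Here $k=0$ amounts to $\Delta$ commuting with $E_u$, which follows since dilation $\delta_\lambda$ is a group automorphism of $G_-$ and $M_{\delta_\lambda g}(\delta_\lambda\xi)=\delta_\lambda M_g\xi$, so $m^*$ and $\Delta_T$ are $E_u$-equivariant. For $k\ge1$, generators are primary, so we need $\mathscr L^{\mathrm{MT},\otimes}_k$ to kill $\Delta v$. For $v\in A$, $\Delta v=m^*v$ lies in the weight-zero part $A\otimes A$, where $\mathscr L_k^{\mathrm{MT}}$ acts as $0$ for $k\ge1$, because each $a\otimes b=a_{(-1)}b$ recursion yields terms $a_{(k-1)}b=0$. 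The \emph{main obstacle} is $v=\Lcur_{xu^m}$. Writing $M_{g_1}(xu^m)=\sum_{j\le m}a_j\,y_ju^{j}$ with $a_j\in A_{[m-j]}$ by dilation equivariance, $\Delta v$ is a sum of normally ordered products $(a_j)_{(-1)}\Lcur_{y_ju^j}$ across factors. Applying \eqref{eq:mt-primary-mode-action} gives $\mathscr L_k^{\mathrm{MT}}$ of such a product as a multiple of $(a_j)_{(k-1)}\Lcur_{y_ju^j}$, which vanishes for $k\ge1$ since they sit in different tensor factors (for $k=1$, the zero mode of $a_j^{(1)}$ on a second-factor current is $0$). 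The remaining task is to confirm that the $\gamma_{12}$ one-form correction hidden in $\Delta_T$, i.e. the difference between $a\tau$ and $a_{(-1)}\tau$, is of the form $\tau(a)\partial b$. That correction is zero for cross-factor terms by \eqref{eq:ext-prod-gamma}, since $\Lcur^{(2)}$ acts trivially on first-factor functions. It thus contributes nothing, which closes the check.
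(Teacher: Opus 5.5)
Your proposal is correct and follows essentially the same route as the paper. You check the identities on the generators $a\in A$ and $\Lcur_\xi$: gradedness resp.\ $E_u$-equivariance of $m^*$ and $M$ for $n=0$, and for $n\geq1$ the vanishing of $\mathscr L_1$ on, resp.\ primarity of, $\Lcur_\xi\otimes\mathbf 1+\sum_i a_i\otimes\Lcur_{\xi_i}$. You then propagate to all of $V$ by the covariance recursion, which you spell out in more detail than the paper, including the useful check that $\gamma_{12}$ contributes no hidden one-form term to $\Delta(\Lcur_\xi)$.

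One small slip concerns the mini-twistor counit at $n=0$. Your phrase ``$\mathscr L_n$ for $n\geq0$ kills $A$'' is false there, since $\mathscr L_0^{\mathrm{MT}}a=-qa$ for $a\in A_{[q]}$. You therefore also need $\epsilon(A_{[q]})=0$ for $q>0$, i.e.\ that positive-spectral-degree functions vanish at the dilation-fixed identity $e$; the paper states this explicitly.
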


\begin{proof}
For the standard quasi-conformal structure, compatibility with $\mathscr L_{-1}=\partial$ and with $\mathscr L_0$ follows because $\Delta$ and $\epsilon$ are graded vertex-algebra morphisms. Positive $\mathscr L_n$ modes annihilate $a \in A$, and $\mathscr L_1\Lcur_\xi=D(\xi)=0$.  If
$M_{g_1}\xi=\sum_i a_i\xi_i$, then
\begin{equation*}
(\mathscr L_1\otimes\id+\id\otimes\mathscr L_1)
\left(
\Lcur_\xi^{(1)}
+\sum_i a_i^{(1)}\Lcur_{\xi_i}^{(2)}
\right)=0,
\end{equation*}
because $\mathscr L_1a_i=0=\mathscr L_1\Lcur_{\xi_i}$.  The operators $\mathscr L_n$, $n\geq2$, vanish on the $\leq 1$ truncation. Quasi-conformal covariance therefore extends the identities from these generators to $V$.

For the minitwistor action, multiplication and the matched-pair map $M$ are equivariant for the spectral dilations $E_u$.  Hence
\begin{equation*}
\Delta E_u = (E_u\otimes\id+\id\otimes E_u)\Delta.
\end{equation*}
Together with
$\mathscr L_0^{\mathrm{MT}}=\mathscr L_0-E_u$, this proves the degree-zero identity.  Every homogeneous coordinate function and every current $\Lcur_\xi$ with homogeneous $\xi$ is minitwistor-primary.  The expression $\Lcur_\xi^{(1)} +\sum_i a_i^{(1)}\Lcur_{\xi_i}^{(2)}$ is primary as well. Thus $\Delta$ sends every primary generator to a primary state of the same weight. By induction, \eqref{eq:mt-primary-cov} gives \eqref{eq:mt-qc-coprod} on all of $V$.

For the counit, $A_{[0]}=\kk$, every positive-degree function vanishes at the dilation-fixed identity, and every current maps to zero.
\end{proof}

\subsection{Level-additive coproducts and vertex comodules}
\label{sec:ord-affine-levels}

Now we extend the coproduct construction to vertex algebroids of non-zero levels. Let $\kappa_0,\kappa_1,\kappa_2$ be invariant bilinear pairings and suppose there is a vertex-algebroid morphism
\begin{equation*}
\cA_{\kappa_0}
\longrightarrow
\cA_{\kappa_1}\boxplus\cA_{\kappa_2}
\end{equation*}
whose underlying extended-algebroid components are \eqref{eq:ord-prod-comp}. By Cor.~\ref{cor:frame-mor-crit}, any such lift is determined by the component $h$ on $\mathfrak p_+$ and must satisfy the corresponding identities. Let $\widetilde\rho_\xi$ be the anchor of $\Delta_T\xi$
from \eqref{eq:ord-prod-comp}.  The identity \eqref{eq:GMS-mor-pair} reads
\begin{equation*}
\kappa_0(\xi,\eta)
=\kappa_1(\xi,\eta)
+\kappa_2(M_{g_1}\xi,M_{g_1}\eta)
+\iota_{\widetilde\rho_\xi}h(\eta)
+\iota_{\widetilde\rho_\eta}h(\xi).
\end{equation*}
At $(e,e)$, one has $M_e=\id$ and
$\widetilde\rho_\xi(e,e)=0$, since $\rho_\xi(e)=0$.  Hence $\kappa_0=\kappa_1+\kappa_2$. 

The same identity also shows why the strict choice $h=0$ generally fails: $M_{g_1}$ need not preserve $\kappa_2$. The following extension condition produces a correction satisfying this identity; 

\begin{definition}
\label{def:iso-extendable-level}
An invariant level $\kappa$ on $\mathfrak p_+$ is \emph{isotropically extendable} if there is a symmetric bilinear form $\kappa^{\mathrm d}$ on $\mathfrak p$ that is invariant under the Lie algebra $\mathfrak p$ and under the integrated adjoint action of $\mathsf P_-$, and such that
\begin{equation*}
\kappa^{\mathrm d}|_{\mathfrak p_+\times\mathfrak p_+}
=\kappa,
\qquad
\kappa^{\mathrm d}|_{\mathfrak p_-\times\mathfrak p_-}=0.
\end{equation*}
\end{definition}

Let $\theta^L\in\Omega^1(\mathsf P_-,\mathfrak p_-)$ be the left Maurer--Cartan form on $\mathsf P_-$. Suppose $\kappa_2$ is isotropically extendable and has an extension $\kappa_2^{\mathrm d}$. First define the one-form on $\mathsf P_-$
\begin{equation*}
\beta_{\kappa_2}(\xi)
=
-\kappa_2^{\mathrm d}\bigl(M_g\xi,\theta^L\bigr)
\in\Omega^1_{\mathsf P_-}.
\end{equation*}
Equivalently, the $\Ad$-invariance of $\kappa_2^{\mathrm d}$ and the isotropy of $\mathfrak p_-$ give $\beta_{\kappa_2}(\xi) =-\kappa_2^{\mathrm d}(\xi,\theta^R)$, where $\theta^R=\Ad_g(\theta^L)$ is the right Maurer--Cartan form on $\mathsf P_-$. On the subspace $\mathfrak p_+$, we specify the map
\begin{equation*}
h_{\kappa_2}(\xi)
=\operatorname{pr}_1^*\beta_{\kappa_2}(\xi)
\in\Omega^1_{\mathsf P_-\times\mathsf P_-}.
\end{equation*}
By Cor.~\ref{cor:frame-mor-crit}, any vertex-algebroid lift of $(\Delta_A,\Delta_T,\Delta_\Omega)$ has its component $h: T\to\Omega^1_{\mathsf P_-\times\mathsf P_-}$ uniquely determined on $T=A\otimes\mathfrak p_+$ by \eqref{eq:frame-h-ext}, which here simplifies to 
\begin{equation*}
h_{\kappa_2}(a\xi)
=
m^*(a)h_{\kappa_2}(\xi)
-\gamma_{12}\bigl(m^*(a),\,\xi^{(1)}+(M_{g_1}\xi)^{(2)}\bigr),
\end{equation*}
where $\gamma_{12}$ is given by \eqref{eq:ext-prod-gamma} on $\cA_{\kappa_1}\boxplus\cA_{\kappa_2}$. 

According to Cor.~\ref{cor:frame-mor-crit}, $(\Delta_A,\Delta_T,\Delta_\Omega,h_{\kappa_2})$ defines a vertex-algebroid morphism if and only if $h_{\kappa_2}$ satisfies identities~\eqref{eq:GMS-mor-pair} and~\eqref{eq:GMS-mor-c} on the subspace $\mathfrak p_+$. For $\xi,\eta\in\mathfrak p_+$, these read
\begin{align}
\kappa_0(\xi,\eta)
&=
\kappa_1(\xi,\eta)
+\kappa_2(M_{g_1}\xi,M_{g_1}\eta)
+\iota_{\widetilde\rho_\xi}h_{\kappa_2}(\eta)
+\iota_{\widetilde\rho_\eta}h_{\kappa_2}(\xi),
\label{eq:level-add-pair-id}\\
c_{12}(\Delta_T\xi,\Delta_T\eta)
&+\frac12\dd\langle\Delta_T\xi,h_{\kappa_2}(\eta)\rangle
-\frac12\dd\langle\Delta_T\eta,h_{\kappa_2}(\xi)\rangle
\notag\\
&-\widetilde\rho_\xi\bigl(h_{\kappa_2}(\eta)\bigr)
+\widetilde\rho_\eta\bigl(h_{\kappa_2}(\xi)\bigr)
+h_{\kappa_2}([\xi,\eta])
=0.
\label{eq:level-add-c-id}
\end{align}
When these conditions are satisfied, we obtain a vertex algebroid morphism $\cA_{\kappa_0} \to \cA_{\kappa_1}\boxplus\cA_{\kappa_2}$, with the induced morphism on vertex envelopes acts on the generating currents $\Lcur_\xi$ by
\begin{equation}
\Delta_{\kappa_1,\kappa_2}(\Lcur_\xi) = \Lcur_\xi^{(1)} +\Lcur_{M_{g_1}\xi}^{(2)} +h_{\kappa_2}(\xi).
\label{eq:level-add-currents}
\end{equation}

\begin{theorem}
\label{thm:level-add-coprod}
\begin{enumerate}[label=\textup{(\arabic*)}]
\item Let $\kappa_0,\kappa_1,$ and $\kappa_2$ be invariant levels such that
$\kappa_0=\kappa_1+\kappa_2$, and suppose that $\kappa_2$ is
isotropically extendable.  Fix an extension
$\kappa_2^{\mathrm d}$ as in
Def.~\ref{def:iso-extendable-level}.  Then
\begin{equation*}
\Delta_{\kappa_1,\kappa_2}
:=
(\Delta_A,\Delta_T,\Delta_\Omega,h_{\kappa_2}):
\cA_{\kappa_0}
\longrightarrow
\cA_{\kappa_1}\boxplus\cA_{\kappa_2}
\end{equation*}
is a vertex-algebroid morphism.  Consequently, passing to vertex envelopes yields a vertex-algebra morphism
\begin{equation*}
\Delta_{\kappa_1,\kappa_2}: V_{\kappa_0}
\longrightarrow
V_{\kappa_1}\otimes V_{\kappa_2}.
\end{equation*}

\item Let $\kappa_1,\kappa_2,$ and $\kappa_3$ be invariant levels. Suppose that $\kappa_2$, $\kappa_3$, and $\kappa_2+\kappa_3$ have chosen isotropic extensions satisfying
\begin{equation*}
(\kappa_2+\kappa_3)^{\mathrm d} = \kappa_2^{\mathrm d}+\kappa_3^{\mathrm d}.
\end{equation*}
Then the coproduct morphisms satisfy the coassociativity relation
\begin{equation}
(\Delta_{\kappa_1,\kappa_2}\boxplus\id)\circ\Delta_{\kappa_1+\kappa_2,\kappa_3}
=
(\id\boxplus\Delta_{\kappa_2,\kappa_3})\circ\Delta_{\kappa_1,\kappa_2+\kappa_3}
\label{eq:colored-coassoc-alg}
\end{equation}
as morphisms of vertex algebroids from $\cA_{\kappa_1+\kappa_2+\kappa_3}$ to $\cA_{\kappa_1}\boxplus\cA_{\kappa_2}\boxplus\cA_{\kappa_3}$, and consequently the coassociativity identity of vertex-algebra homomorphisms from $V_{\kappa_1+\kappa_2+\kappa_3}$ to $V_{\kappa_1}\otimes V_{\kappa_2}\otimes V_{\kappa_3}$.
\end{enumerate}
\end{theorem}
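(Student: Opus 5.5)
By Cor.~\ref{cor:frame-mor-crit} and Prop.~\ref{prop:mult-factor-alg}, the underlying triple $(\Delta_A,\Delta_T,\Delta_\Omega)$ is already an extended Lie algebroid morphism. The map $h_{\kappa_2}$ on $T$ is the unique extension of its values on $\mathfrak p_+$ satisfying \eqref{eq:GMS-mor-gamma}. Part (1) therefore reduces to checking \eqref{eq:level-add-pair-id} and \eqref{eq:level-add-c-id} for $\xi,\eta\in\mathfrak p_+$. The vertex-algebra morphism then follows from the envelope adjunction together with Prop.~\ref{prop:ext-prod-envelope}.

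\emph{The pairing identity \eqref{eq:level-add-pair-id}.} Since $h_{\kappa_2}(\eta)=\operatorname{pr}_1^*\beta_{\kappa_2}(\eta)$, only the first component $\rho_\xi(g_1)$ of $\widetilde\rho_\xi$ contributes to the contraction. Using the form $\beta_{\kappa_2}(\eta)=-\kappa_2^{\mathrm d}(\eta,\theta^R)$, I compute $\theta^R(\rho_\xi(g))=-\Ad_g p_-\Ad_{g^{-1}}\xi$. Write $\Ad_{g^{-1}}\xi=M_g\xi+(\Ad_{g^{-1}}\xi)_-$. By $\Ad$-invariance of $\kappa_2^{\mathrm d}$,
\[
\iota_{\rho_\xi}\beta_{\kappa_2}(\eta)=\kappa_2^{\mathrm d}\bigl(\Ad_{g^{-1}}\eta,(\Ad_{g^{-1}}\xi)_-\bigr)=\kappa_2^{\mathrm d}\bigl(M_g\eta,(\Ad_{g^{-1}}\xi)_-\bigr),
\]
where the last step uses isotropy of $\mathfrak p_-$. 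Symmetrize in $\xi,\eta$ and expand
\[
\kappa_2(\xi,\eta)=\kappa_2^{\mathrm d}(\Ad_{g^{-1}}\xi,\Ad_{g^{-1}}\eta)
\]
into $\pm$ parts; the $--$ term vanishes by isotropy. This gives
\[
\kappa_2(M_g\xi,M_g\eta)+\iota_{\rho_\xi}\beta(\eta)+\iota_{\rho_\eta}\beta(\xi)=\kappa_2(\xi,\eta).
\]
Hence \eqref{eq:level-add-pair-id} holds exactly when $\kappa_0=\kappa_1+\kappa_2$.

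\emph{The $c$-identity \eqref{eq:level-add-c-id}.} This is the main obstacle. First I compute $c_{12}(\Delta_T\xi,\Delta_T\eta)$ from the reconstruction formula \eqref{eq:frame-c}, applied to $\xi^{(1)}$ and $a_i^{(1)}\xi_i^{(2)}$ with $M_{g_1}\xi=\sum_i a_i\xi_i$. The contributions are:
\begin{itemize}
\item the $\kappa_1$ terms vanish since $c_0=0$ and $\gamma_0=0$;
\item the mixed terms are governed by \eqref{eq:ext-prod-gamma};
\item the $\kappa_2$ terms give $\tfrac12\kappa_2(\xi_i,\xi_j)(a_i\,\dd a_j-a_j\,\dd a_i)$, i.e.\ $\tfrac12\bigl(\kappa_2(M\xi,\dd M\eta)-\kappa_2(M\eta,\dd M\xi)\bigr)$ pulled back from the first factor;
\item the remaining terms come from the anchor acting on coefficients.
\end{itemize}
On the other side, every term involving $h_{\kappa_2}$ is a one-form pulled back along $\operatorname{pr}_1$. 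Both sides thus become identities of one-forms on the first factor $\mathsf P_-$ alone, for which I use the matched-pair identities \eqref{eq:matched-pair-mult} and \eqref{eq:matched-pair-bracket}.

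The cleanest route is to imitate the graph trick of Thm.~\ref{thm:cocycle-action-ext}. View $\beta_{\kappa_2}$ as coming from the $\mathfrak p$-valued cocycle $g\mapsto\Ad_{g^{-1}}\xi$ paired via $\kappa_2^{\mathrm d}$. Concretely, combine Cartan's formula $\mathcal L_{\rho_\xi}=\dd\iota_{\rho_\xi}+\iota_{\rho_\xi}\dd$ with the Maurer--Cartan equation $\dd\theta^R=\tfrac12[\theta^R,\theta^R]$. Then $\mathcal L_{\rho_\xi}\beta(\eta)-\mathcal L_{\rho_\eta}\beta(\xi)-\beta([\xi,\eta])$ reduces to $\dd$ of the contractions computed above, plus the quadratic term $\kappa_2(M\xi,\dd M\eta)$-type expressions. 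Ad-invariance and isotropy of $\kappa_2^{\mathrm d}$ then cancel these against $c_{12}$.

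Part (2): by the uniqueness in Cor.~\ref{cor:frame-mor-crit}, and since both underlying extended morphisms agree by Prop.~\ref{prop:mult-factor-alg}, it suffices to compare the composite $h$-components on $\xi\in\mathfrak p_+$ using the composition formula. The left side gives
\[
(\Delta_\Omega\boxplus\id)\operatorname{pr}_1^*\beta_{\kappa_3}(\xi)+h_{\Delta\boxplus\id}(\Delta_T\xi),
\]
and the second term contributes $\operatorname{pr}_1^*\beta_{\kappa_2}(\xi)$. Here the $\gamma$-correction from the coefficients of $M_{g_1}\xi$ is handled as in Thm.~\ref{thm:zero-algebroid-coprod}. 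The right side gives $\operatorname{pr}_1^*\beta_{\kappa_2+\kappa_3}(\xi)$ plus the $\kappa_3$-part $1\otimes h_{\kappa_3}$ applied to $M_{g_1}\xi$. Matching the two uses $m^*\theta^R=\theta^{R(1)}+\Ad_{g_1}\theta^{R(2)}$ together with $(\kappa_2+\kappa_3)^{\mathrm d}=\kappa_2^{\mathrm d}+\kappa_3^{\mathrm d}$.
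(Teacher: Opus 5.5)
Your proposal is correct and follows essentially the same route as the paper. You reduce via Cor.~\ref{cor:frame-mor-crit} to the pairing and $c$-identities on $\mathfrak p_+$, verify them using $\Ad$-invariance and isotropy of $\kappa_2^{\mathrm d}$ together with the Maurer--Cartan equation (you work with $\theta^R$ where the paper uses $\theta^L$, which is equivalent), and for (2) compare the composite $h$-components on $\mathfrak p_+$ using multiplicativity of the Maurer--Cartan form, additivity of the chosen extensions, and uniqueness.

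The one point you leave implicit is that the mixed and ``anchor on coefficients'' contributions to $c_{12}(\Delta_T\xi,\Delta_T\eta)$ all vanish, because second-factor anchors kill first-factor functions and $\gamma_{12}(a\otimes1,\tau^{(2)})=0$. Hence $c_{12}(\Delta_T\xi,\Delta_T\eta)=\tfrac12\bigl(\kappa_2(M_g\xi,\dd M_g\eta)-\kappa_2(M_g\eta,\dd M_g\xi)\bigr)$ exactly, which is what your final cancellation requires.
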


\begin{proof}
(1) It remains only to verify the identities \eqref{eq:level-add-pair-id} and \eqref{eq:level-add-c-id} for $\xi,\eta\in\mathfrak p_+$. In the next calculation all one-forms and contractions occur on the first factor of $\mathsf P_-\times\mathsf P_-$, and the corresponding pullbacks are suppressed.  Set $a_\xi=M_g\xi$ and $b_\xi=p_-\Ad_{g^{-1}}\xi$. Since $\iota_{\rho_\eta}\theta^L=-b_\eta$, one has
\begin{equation*}
\iota_{\rho_\eta}h_{\kappa_2}(\xi)
=\kappa_2^{\mathrm d}(a_\xi,b_\eta).
\end{equation*}
Invariance and the isotropy of $\mathfrak p_-$ give
\begin{equation*}  \kappa_2(\xi,\eta) =\kappa_2(a_\xi,a_\eta) + \kappa_2^{\mathrm d}(a_\xi,b_\eta) +\kappa_2^{\mathrm d}(b_\xi,a_\eta).
\end{equation*}
Together we verify \eqref{eq:level-add-pair-id}:
\begin{equation*}
  \kappa_1(\xi,\eta) +\kappa_2(a_\xi,a_\eta) +\iota_{\rho_\xi}h_{\kappa_2}(\eta) +\iota_{\rho_\eta}h_{\kappa_2}(\xi) =\kappa_0(\xi,\eta).
\end{equation*}

It remains to verify \eqref{eq:level-add-c-id} for $\xi,\eta\in\mathfrak p_+$. Differentiating $\Ad_{g^{-1}}\xi=a_\xi+b_\xi$ yields
\begin{equation*}
\dd(a_\xi+b_\xi)=-[\theta^L,a_\xi+b_\xi].
\end{equation*}
With the contration identity $\iota_{\rho_\xi}\theta^L=-b_\xi$, we get $\rho_{\xi}(\theta^L) = -db_{\xi} - [\theta^L,b_{\xi}] = da_{\xi} + [\theta^L,a_{\xi}]$.

By the definition of the external product~\eqref{eq:ext-prod-frame-data} and the reconstruction \eqref{eq:frame-c}, the operation $c_{12}$ on $\cA_1\boxplus\cA_2$ evaluates to
\begin{equation*}
c_{12}(\Delta_T\xi,\Delta_T\eta) = \frac{1}{2}(\kappa_2(a_\xi,\dd a_\eta) - \kappa_2(a_\eta,\dd a_\xi)). 
\end{equation*}
Using the matched-pair bracket identity~\eqref{eq:matched-pair-bracket}, we reduce the combination $\widetilde\rho_\xi\bigl(h_{\kappa_2}(\eta)\bigr) - \widetilde\rho_\eta\bigl(h_{\kappa_2}(\xi)\bigr)
- h_{\kappa_2}([\xi,\eta])$ to 
\begin{equation*}
  2c_{12}(\Delta_T\xi,\Delta_T\eta) - \kappa_2^{\mathrm d}(\theta^L,[a_\xi,a_\eta]).
\end{equation*}
On the other hand $\langle\Delta_T\xi, h_{\kappa_2}(\eta)\rangle = \kappa_2^{\mathrm d}(a_\eta,b_\xi) $, so we have 
\begin{equation*}
  \frac{1}{2}\dd (\langle\Delta_T\xi, h_{\kappa_2}(\eta)\rangle + \langle\Delta_T\eta, h_{\kappa_2}(\xi)\rangle ) = c_{12}(\Delta_T\xi,\Delta_T\eta) - \kappa_2^{\mathrm d}(\theta^L,[a_\xi,a_\eta])
\end{equation*}
Combining the above two equations proves \eqref{eq:level-add-c-id}.

(2) For $\xi\in\mathfrak p_+$, the left Maurer--Cartan form on $\mathsf P_-\times\mathsf P_-$ satisfies $\theta^L_{g_1g_2} =\Ad_{g_2^{-1}}\theta_1^L+\theta_2^L$. Contracting with $M_{g_1g_2}\xi$ and using the $\Ad$-invariance of $\kappa^{\mathrm d}$ yields the identity
\begin{equation}
m^*\beta_{\kappa}(\xi)
=\operatorname{pr}_1^*\beta_{\kappa}(\xi)
+\operatorname{pr}_2^*\beta_{\kappa}(M_{g_1}\xi).
\label{eq:h-mult-cocycle}
\end{equation}

Now consider the iterated composites in \eqref{eq:colored-coassoc-alg}. By Prop.~\ref{prop:mult-factor-alg}, their underlying extended Lie algebroid morphisms agree. Let $m_{12}(g_1,g_2,g_3)=g_1g_2$. By the composition formula, for $\xi\in\mathfrak p_+$, the one-form components of the left- and right-associated composites are, respectively,
\begin{align*}
h_{(\Delta_{\kappa_1,\kappa_2}\boxplus\id)\circ\Delta_{\kappa_1+\kappa_2,\kappa_3}}(\xi)
&=
\operatorname{pr}_1^*\beta_{\kappa_2}(\xi)
+m_{12}^*\beta_{\kappa_3}(\xi),
\notag\\
h_{(\id\boxplus\Delta_{\kappa_2,\kappa_3})\circ\Delta_{\kappa_1,\kappa_2+\kappa_3}}(\xi)
&=
\operatorname{pr}_1^*\beta_{\kappa_2}(\xi)
+\operatorname{pr}_1^*\beta_{\kappa_3}(\xi)
+\operatorname{pr}_2^*
  \beta_{\kappa_3}(M_{g_1}\xi).
\end{align*}
These two expressions agree by the cocycle identity~\eqref{eq:h-mult-cocycle}. Thus the two composites have identical underlying extended Lie algebroid morphisms and identical maps $h$ on $\mathfrak p_+$. Uniqueness in Cor.~\ref{cor:frame-mor-crit} implies the vertex-algebroid identity~\eqref{eq:colored-coassoc-alg}.
\end{proof}

\begin{remark}
Although in our notation $\Delta_{\kappa_1,\kappa_2}$ only depends on the levels, it should be kept in mind that in our construction it also depends on the choice of isotropic extension $\kappa_2^{\mathrm d}$ of $\kappa_2$.
\end{remark}

Now we can specialize this construction to $\kappa_0=\kappa_1=\kappa$ and $\kappa_2=0$, with $0^{\mathrm d}=0$. In this case we have:
\begin{corollary}
\label{prop:canon-affine-coact}
For every invariant level $\kappa$, the vertex-algebra homomorphism
\begin{equation*}
\delta_{\kappa}^R:=\Delta_{\kappa,0}:
V_{\kappa}\longrightarrow V_{\kappa}\otimes V.
\end{equation*}
equips $V_{\kappa}$ as a right comodule vertex algebra over the zero-level vertex bialgebra $V$.
\end{corollary}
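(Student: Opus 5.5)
The plan is to obtain the corollary as the special case $\kappa_1=\kappa$, $\kappa_2=0$, $\kappa_3=0$ of Thm.~\ref{thm:level-add-coprod}, combined with the zero-level bialgebra structure of Thm.~\ref{thm:zero-algebroid-coprod} and Cor.~\ref{cor:zero-bialg}.

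First I check that $\delta^R_\kappa$ is a vertex-algebra morphism. The zero level is isotropically extendable via $0^{\mathrm d}=0$, which is trivially invariant under $\mathfrak p$ and under $\Ad$ of $\mathsf P_-$, and vanishes on $\mathfrak p_-\times\mathfrak p_-$. Hence part (1) of Thm.~\ref{thm:level-add-coprod}, applied with $\kappa_0=\kappa$, $\kappa_1=\kappa$, $\kappa_2=0$, gives a vertex-algebroid morphism $\cA_\kappa\to\cA_\kappa\boxplus\cA_0$. Passing to envelopes and using Prop.~\ref{prop:ext-prod-envelope} yields a vertex-algebra morphism $V_\kappa\to V_\kappa\otimes V$. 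Since $\beta_0=0$, we have $h_0|_{\mathfrak p_+}=0$, so \eqref{eq:level-add-currents} reduces to $\Lcur_\xi\mapsto\Lcur_\xi^{(1)}+\Lcur^{(2)}_{M_{g_1}\xi}$ and $f\mapsto m^*f$.

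Next comes coassociativity, $(\delta^R_\kappa\otimes\id)\delta^R_\kappa=(\id\otimes\Delta)\delta^R_\kappa$. I apply part (2) of Thm.~\ref{thm:level-add-coprod} with $(\kappa_1,\kappa_2,\kappa_3)=(\kappa,0,0)$ and all extensions zero, so the additivity hypothesis $0^{\mathrm d}+0^{\mathrm d}=0^{\mathrm d}$ holds. The left side of \eqref{eq:colored-coassoc-alg} is then $(\Delta_{\kappa,0}\boxplus\id)\circ\Delta_{\kappa,0}$. The right side is $(\id\boxplus\Delta_{0,0})\circ\Delta_{\kappa,0}$.

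The one point needing care is identifying $\Delta_{0,0}$ with the coproduct $\Delta$ of Thm.~\ref{thm:zero-algebroid-coprod}. Both morphisms have the same underlying extended-algebroid morphism \eqref{eq:ord-prod-comp}, and both have $h|_{\mathfrak p_+}=0$. The uniqueness in Cor.~\ref{cor:frame-mor-crit} therefore makes them equal. Functoriality of the envelope, together with Prop.~\ref{prop:ext-prod-envelope} for the triple product, then transfers the algebroid identity to vertex algebras.

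For the counit, I would show $(\id\boxplus\epsilon)\circ\Delta_{\kappa,0}=\id_{\cA_\kappa}$ as vertex-algebroid morphisms $\cA_\kappa\to\cA_\kappa\boxplus\cA_{\mathrm{pt}}\cong\cA_\kappa$. On the underlying extended algebroids this is the counit identity of Prop.~\ref{prop:mult-factor-alg}. For the $h$-component, the composition formula gives $(\id\boxplus 0)h_0+h_{\id\boxplus\epsilon}(\Delta_T\xi)$ on $\xi\in\mathfrak p_+$. The first term vanishes because $h_0=0$ there. The second vanishes because $\epsilon$ has zero $h$-component and the $\gamma$-correction in \eqref{eq:frame-h-ext} is killed by evaluation at $e$, where $\rho_\xi(e)=0$ and $M_e=\id$. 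Uniqueness in Cor.~\ref{cor:frame-mor-crit} then identifies the composite with $\id$, and applying the envelope gives $(\id\otimes\epsilon)\delta^R_\kappa=\id$.

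I expect the main obstacle to be purely bookkeeping: making sure the $h$-components vanish on $\mathfrak p_+$ in each composite, so that the uniqueness statement applies.
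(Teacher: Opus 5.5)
Your proposal is correct and follows the paper. The paper obtains the corollary by specializing Thm.~\ref{thm:level-add-coprod} to $\kappa_0=\kappa_1=\kappa$, $\kappa_2=0$ with $0^{\mathrm d}=0$; your two extra checks, which identify $\Delta_{0,0}$ with the zero-level coproduct via uniqueness in Cor.~\ref{cor:frame-mor-crit} and verify the counit axiom through the vanishing $h$-component of $(\mathrm{id}\boxplus\epsilon)\circ\Delta_{\kappa,0}$, supply details the paper leaves implicit.
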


For simple $\g$ over $\mathbb C$ with Killing form $\kappa$, every invariant symmetric bilinear form on $\g[u]$ has the form
\begin{equation*}
\widehat\kappa_\varphi(xf,yg)
=
\kappa(x,y)\varphi(fg),
\qquad x,y\in\g,\quad f,g\in\kk[u],
\end{equation*}
for a unique linear functional $\varphi\in(\kk[u])^\vee$ (cf.~\cite[Thm.~2]{ZusmanovichCurrentForms}). 

We extend this form to $\g((u^{-1}))$ by
\begin{equation*}
\widehat\kappa_\varphi^{\mathrm d}(X,Y)
=\varphi\!\left(\bigl(\kappa(X(u),Y(u))\bigr)_+\right),
\qquad
X,Y\in\g((u^{-1})).
\end{equation*}
This symmetric bilinear form is $\g((u^{-1}))$- and $G_-$-invariant because the ad-invariance identity $\kappa([Z,X],Y)+\kappa(X,[Z,Y])=0$ holds pointwise in $u$ as Laurent series. It restricts to $\widehat\kappa_\varphi$ on $\g[u]$ and vanishes identically on $u^{-1}\g[[u^{-1}]]\times u^{-1}\g[[u^{-1}]]$. Thus, when $\g$ is simple, every invariant level on $\g[u]$ is canonically isotropically extendable.

Under this isotropic extension, the one-form component $h_{\whkappa_\varphi}(\xi)$ of Thm.~\ref{thm:level-add-coprod} is
\begin{equation*}
h_{\whkappa_\varphi}(\xi)
=
-\widehat\kappa_\varphi^{\mathrm d}\bigl(M_{g_1}\xi, \theta^L_{g_1}\bigr).
\end{equation*}
Because $\deg_u(M_{g_1}\xi) \leq \deg_u\xi$, we can check that the expression is a finite sum, and $h_{\whkappa_\varphi}(\xi)$ belongs to the algebraic K\"ahler module.

If $\g=\bigoplus_{i=1}^r\g_i$ is semisimple, every invariant symmetric bilinear form on $\g[u]$ decomposes as an orthogonal direct sum $\sum_{i=1}^r\widehat\kappa_{\varphi_i}$ over the simple ideals, and $\sum_{i=1}^r\widehat\kappa_{\varphi_i}^{\mathrm d}$ supplies an isotropic extension. If $\g=\g_{ss}\oplus\mathfrak z$ is reductive (such as $\mathfrak{gl}_N$), invariance forces $\widehat\kappa(\g_{ss}[u],\mathfrak z[u])=0$. On the center $\mathfrak z[u]$, every symmetric bilinear form $\widehat\kappa_{\mathfrak z}$ is invariant since $[\mathfrak z,\cdot]=0$. Setting $(\widehat\kappa_{\mathfrak z})^{\mathrm d}(Z,W):=\widehat\kappa_{\mathfrak z}(Z_+,W_+)$ gives an extension to $\mathfrak z((u^{-1}))$ that is trivially $\g((u^{-1}))$- and $G_-$-invariant and vanishes on $u^{-1}\mathfrak z[[u^{-1}]]\times u^{-1}\mathfrak z[[u^{-1}]]$. Thus every invariant level on a reductive Lie algebra is isotropically extendable. Consequently, because the assignment $\whkappa \mapsto \whkappa^{\mathrm d}$ is linear, the additivity condition $(\whkappa_2+\whkappa_3)^{\mathrm d} = \whkappa_2^{\mathrm d} + \whkappa_3^{\mathrm d}$ holds automatically. Thus, for any reductive Lie algebra $\g$, the chiral cotangent Yangians admit a coassociative level-colored coproduct
\begin{equation*}
\Delta_{\whkappa_1,\whkappa_2}:
\Ych_{\whkappa_1+\whkappa_2}(T^*\g)
\longrightarrow
\Ych_{\whkappa_1}(T^*\g)\otimes\Ych_{\whkappa_2}(T^*\g),
\end{equation*}
equipping $\Ych_{\whkappa}(T^*\g)$ with a canonical right coaction $\delta_{\whkappa}^R := \Delta_{\whkappa,0}: \Ych_{\whkappa}(T^*\g) \to \Ych_{\whkappa}(T^*\g) \otimes \Ych(T^*\g)$ over the level-zero chiral cotangent Yangian.

\subsection{Zhu reduction and the cotangent Yangian}
\label{sec:ord-Zhu-bialg}

In this subsection we compute the Zhu algebra of $\Ych_{\whkappa}(T^*\g)$, and compare it with the cotangent Yangian of Abedin--Niu \cite{AbedinNiu}. We first recall the definition of Zhu algebra of a vertex algebra \cite{DeSoleKacZhu}. Let $V=\bigoplus_{n\geq0}V_n$ be a nonnegatively graded vertex algebra. For a homogeneous $v\in V$ of weight
$\Delta_v$, define
\begin{equation*}
  \begin{aligned}
    v*w
&:=\operatorname{Res}_z
\frac{(1+z)^{\Delta_v}}{z}Y(v,z)w
=\sum_{j\geq0}\binom{\Delta_v}{j}v_{(j-1)}w,
\\
v\circ w
&:=\operatorname{Res}_z
\frac{(1+z)^{\Delta_v}}{z^2}Y(v,z)w
=\sum_{j\geq0}\binom{\Delta_v}{j}v_{(j-2)}w.
  \end{aligned}
\end{equation*}
Let $O(V)$ be the span of the $\circ$ products and set
\begin{equation*}
\Zhu(V):=V/O(V).
\end{equation*}
It is equipped with the associative product induced by the $*$-product. We write $[v]$ for the class of $v$. The standard Zhu identities give $ [\partial v]=-\Delta_v[v]$.

Recall that the Lie algebroid enveloping algebra $U_A(T)$ of a Lie $A$-algebroid $T$ is generated by $A$ and $T$, with relations $\tau a-a\tau=\rho_\tau(a)$, $\tau\nu-\nu\tau=[\tau,\nu]_T$, and $a\tau=(a\tau)_T$, for $a\in A$ and $\tau,\nu\in T$. The following result of Arakawa--Chebotarov--Malikov \cite[Thm.~3.1(2)]{ArakawaChebotarovMalikov} allows us to identify the Zhu algebra of a vertex algebroid with the enveloping algebra of its underlying Lie algebroid
\begin{theorem}\label{thm:Zhu-GMS-LR}
Let $\cA=(\mathcal T,\gamma,\chi,c)$ be a vertex algebroid over a field $\kk$ of
characteristic zero.
Assume that $\Omega=A\,\partial A$ and that $T$ and $\Omega$ are free
$A$-modules.  Then there is a canonical algebra isomorphism
\begin{equation*}
{
U_A(T)
\xrightarrow{\sim}
\Zhu\bigl(\Uenv(\cA)\bigr).}
\end{equation*}
\end{theorem}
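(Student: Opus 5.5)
This theorem is quoted from Arakawa--Chebotarov--Malikov, so the proof I have in mind reconstructs the standard argument rather than inventing a new one. The plan has three steps: build a map $U_A(T)\to\Zhu(\Uenv(\cA))$ on generators, show it is surjective, and prove injectivity with the PBW theorem~\ref{thm:vertex-env-PBW}.

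\textbf{Step 1: the map.} Send $a\mapsto[a]$ and $\tau\mapsto[s(\tau)]$, where $s$ is the splitting $T\hookrightarrow V_1=T\oplus\Omega$. I then check the three defining relations of $U_A(T)$ in the Zhu algebra.
\begin{itemize}
\item For $a,b\in A$, $\Delta_a=0$ gives $a*b=a_{(-1)}b=ab$. This product is commutative modulo $O(V)$.
\item For $\tau\in T$, $\tau*a-a*\tau\equiv\tau_{(0)}a=\rho_\tau(a)$, using the standard commutator formula $[v]*[w]-[w]*[v]=\sum_{j\ge0}\binom{\Delta_v-1}{j}[v_{(j)}w]$.
\item For $\tau,\nu\in T$ the same formula gives $[\tau]*[\nu]-[\nu]*[\tau]=[\tau_{(0)}\nu]$. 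By \eqref{eq:GMS-reconstruct-z2} this equals $[\tau,\nu]-c(\tau,\nu)+\tfrac12\partial\chi(\tau,\nu)$.
\item For the $A$-module relation, $a*\tau=a_{(-1)}\tau=a\tau-\gamma(a,\tau)$ by \eqref{eq:GMS-reconstruct-m1}.
\end{itemize}
The correction terms in the last two items are one-forms, so the key lemma is that every one-form has zero class: $[\omega]=0$ for $\omega\in\Omega$. Since $\Omega=A\partial A$, it suffices to treat $\omega=a_{(-1)}\partial b$. Here $[\partial b]=-\Delta_b[b]=0$. Also $a\circ w=a_{(-2)}w$, so $a_{(-1)}\partial b\equiv a*\partial b$ modulo $O(V)$ up to $\partial$-terms. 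I would verify the precise identity $[a_{(-1)}\partial b]=[a]*[\partial b]=0$, using that $a_{(j)}\partial b=0$ for $j\ge0$ (regular OPE). Once this lemma holds, the map is a well-defined algebra homomorphism.

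\textbf{Step 2: surjectivity.} The classes of $A$ and $T$ generate $\Zhu(V)$. The standard Zhu argument gives, for a generator $v$ and $n\ge1$, $[v_{(-n-1)}w]\in$ span of $[v_{(-1)}w]$ and lower terms, and $[v_{(-1)}w]=[v]*[w]-\sum_{j\ge1}\binom{\Delta_v}{j}[v_{(j-1)}w]$. An induction on PBW length and weight therefore reduces every monomial to $*$-products of generators. Generators from $\Omega$ die by the lemma of Step 1.

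\textbf{Step 3: injectivity (the main obstacle).} I would filter both sides. On $U_A(T)$, the PBW filtration gives $\gr U_A(T)\cong\Sym_A T$, since $T$ is free. On $\Zhu(V)$, I would induce a filtration from $F$ (the number of $T$-letters). I expect $\gr_F$ of $\Zhu(V)$ to be a quotient of the $C_2$-type algebra $V/C_2$-like quotient, computed from the PBW description $\gr_H V\cong\Sym_A(\bigoplus T^{(i)}\oplus\Omega^{(i)})$. Killing all translates $T^{(i)},\Omega^{(i)}$ for $i\ge2$, together with $\Omega^{(1)}$, leaves $\Sym_A T$, so the graded map $\Sym_A T\to\gr\Zhu(V)$ is surjective. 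To get injectivity, I would construct a representation detecting $U_A(T)$: let $\Zhu(V)$ act on the top-degree space of a suitable module, namely the induced $V$-module whose lowest component is $U_A(T)$ itself. That module would be built as the envelope of the $\cA$-module $U_A(T)$ via the GMS-type induction. The faithful action of $U_A(T)$ on itself then forces injectivity. Producing this induced module, and checking that the Zhu action on its top level recovers left multiplication, is the step I expect to be hardest. It is where freeness of $T$ and $\Omega$ is used in earnest.
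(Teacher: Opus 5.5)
The paper does not prove this statement; it quotes it from Arakawa--Chebotarov--Malikov. Judged on their own, your Steps~1 and~2 are sound. The relation checks are correct. The one-form lemma is exactly where $\Omega=A\partial A$ enters, and it holds exactly rather than ``up to $\partial$-terms'': since $\Delta_a=0$ one has $a*\partial b=a_{(-1)}\partial b$, while $\partial b=b\circ\mathbf 1\in O(V)$ and $O(V)$ is a two-sided $*$-ideal. Surjectivity is the usual strong-generation induction. The associated-graded paragraph of Step~3 only bounds $\Zhu(V)$ from above, so it reproves surjectivity and can be dropped.

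The genuine gap is injectivity, which is the whole content of the theorem. Step~3 names the module that would detect $U_A(T)$ but does not construct it, and there is no ready-made ``GMS-type induction'' to cite. The vertex envelope is only the vacuum case: its top is $A$, on which $U_A(T)$ acts through the anchor, not faithfully. The Zhu/Dong--Li--Mason induction functor takes a $\Zhu(V)$-module as input, so feeding it $U_A(T)$ presupposes that $U_A(T)$ already acts through $\Zhu(V)$ --- which is what you are trying to prove. What must be shown is that a module built directly from the algebroid data has degree-zero part equal to all of $U_A(T)$, not a proper quotient.

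Here is one way to do it.
\begin{enumerate}
\item Let $\mathfrak L$ be the Borcherds Lie algebra of modes of $u(\cA)$, graded by $\deg v_{(n)}=\deg v-n-1$. Let $\mathfrak L_{<0}$ act on $M:=U_A(T)$ by zero, and let $\mathfrak L_0$ act by $a_{(-1)}\mapsto a$, $\tau_{(0)}\mapsto\tau$, $\omega_{(0)}\mapsto 0$. This is a Lie map, since $[\tau_{(0)},a_{(-1)}]=(\tau_{(0)}a)_{(-1)}$, $[\tau_{(0)},\nu_{(0)}]=(\tau_{(0)}\nu)_{(0)}$, and one-form parts go to zero.
\item Put $\mathcal I:=U(\mathfrak L)\otimes_{U(\mathfrak L_{\le 0})}M$, so $\mathcal I_0=M$ by PBW.
\item Let $\mathcal R$ be the span of the modes of $R^{a,v}(z):=Y(a_{(-1)}v,z)-\normal{Y(a,z)Y(v,z)}$ for $a\in A$, $v\in A\oplus T\oplus\Omega$.
\end{enumerate}
Two facts then hold.
\begin{enumerate}
\item[(i)] The degree-zero mode of $R^{a,v}$ acts on $M$ as $(a_{(-1)}v)_{(\deg v-1)}-a_{(-1)}v_{(\deg v-1)}$, which vanishes by the defining relations of $U_A(T)$.
\item[(ii)] The truncated Borcherds identities $x_{(j)}(a_{(-1)}v)=a_{(-1)}x_{(j)}v+(x_{(0)}a)_{(j-1)}v$, which are (A1)--(A3) in disguise, give $[x_{(m)},R^{a,v}(z)]=\sum_j\binom{m}{j}z^{m-j}R^{a,x_{(j)}v}(z)+z^mR^{x_{(0)}a,v}(z)$. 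Hence $[\mathfrak L,\mathcal R]\subseteq\mathcal R$.
\end{enumerate}
By (ii), $\mathcal R\mathcal I$ is a submodule. Now take $r\in\mathcal R$ of degree $-k$, $y\in\mathfrak L_{>0}$, $Y'\in U(\mathfrak L_{>0})$ with $yY'$ of degree $k$, and $m\in M$. The identity $r\,yY'm=y\,(rY'm)+[r,y]\,Y'm$, together with $\mathcal I_{<0}=0$ and (i), gives $(\mathcal R\mathcal I)_0=0$ by induction on $k$.

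The universal property of $\Uenv(\cA)$ then makes $\mathcal I/\mathcal R\mathcal I$ a graded $\Uenv(\cA)$-module with top $U_A(T)$, on which $o(a)$ and $o(\tau)$ act by left multiplication. Zhu's theorem then gives the faithful representation you want. As far as I can see, this step rests on the vertex-algebroid identities, not on freeness of $T$ and $\Omega$, contrary to your expectation.
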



Let $\mathfrak a$ act on $A$ through $\rho:\mathfrak a\to\Der(A)$. For the action Lie algebroid $T=A\otimes_{\kk}\mathfrak a$, one has
\begin{equation*}
U_A(T)\cong A\#_\rho U(\mathfrak a).
\end{equation*}
The smash product has underlying vector space $A\otimes U(\mathfrak a)$, and the defining cross relation is $[1\#\xi,a\#1]=\rho_\xi(a)\#1$, for $a\in A$, $\xi\in\mathfrak a$. For $\mathfrak a=\dplus=\g[u]$ and $A=\cO(G_-)$, with the matched-pair action $\rho$, this gives:
\begin{corollary}
    We have an isomorphism of associative algebras:
    \begin{equation}
\Zhu(\Ych_{\whkappa}(T^*\g))\cong\cO(G_-)\#_\rho U(\g[u])
\label{eq:Zhu-action-smash}
\end{equation}
\end{corollary}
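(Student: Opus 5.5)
The plan is to apply Thm.~\ref{thm:Zhu-GMS-LR} to $\cA=\cA_{\whkappa}(A,\dplus)$ with $A=\cO(G_-)$. Then we identify the resulting algebra $U_A(T)$ for the action algebroid $T=A\otimes\dplus$ with the smash product $A\#_\rho U(\g[u])$. Three things are needed: the hypotheses of Thm.~\ref{thm:Zhu-GMS-LR}, the identification $U_A(A\otimes\mathfrak a)\cong A\#_\rho U(\mathfrak a)$, and the specialization.

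First, the hypotheses. Since $A\cong\Sym(\g^*[u])$ is a polynomial algebra in countably many variables (for $\g=\mathfrak{gl}_N$, the $t^{(r)}_{ij}$), the module of K\"ahler differentials $\Omega^1_A$ is free on the differentials $\dd x_\alpha$ of the coordinates. In particular $\Omega=A\,\dd A$. The module $T=A\otimes_\kk\dplus$ is free by construction. The extension $\cA_{\whkappa}$ is a genuine vertex algebroid over $\kk$ of characteristic zero, by Ex.~\ref{ex:affine-level-action-ext} and Thm.~\ref{lem:frame-reconstruct}, using Lem.~\ref{lem:rat-factor-poly-closure} to know that $\rho$ is a Lie action by derivations. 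One point to address is that $A$ is infinitely generated, whereas the cited theorem is stated for general $A$ with free modules. I expect no finiteness to be used, but I would remark that freeness is all that enters the PBW input (Thm.~\ref{thm:vertex-env-PBW}).

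Second, the identification $U_A(T)\cong A\#_\rho U(\mathfrak a)$. Define $\Phi\colon U_A(T)\to A\#_\rho U(\mathfrak a)$ on generators by $a\mapsto a\#1$ and $a\xi\mapsto a\#\xi$. We check that the three defining relations hold. The relation $\tau a-a\tau=\rho_\tau(a)$ follows from the cross relation together with $(b\#1)(1\#\xi)=b\#\xi$. The bracket relation follows from \eqref{eq:action-alg-bracket}. The relation $a\cdot\tau=(a\tau)_T$ holds by construction.

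For the inverse, send $a\#\xi_1\cdots\xi_k$ to $a\,\xi_1\cdots\xi_k$. This is well defined because $\xi\mapsto 1\otimes\xi$ is a Lie map $\mathfrak a\to U_A(T)$, so it extends to $U(\mathfrak a)$, and the smash product is the universal algebra receiving $A$ and $U(\mathfrak a)$ subject to the cross relation. The two maps are mutually inverse on generators. Bijectivity can also be confirmed via the Rinehart PBW theorem, since $T$ is free: $\gr U_A(T)\cong\Sym_A T\cong A\otimes\Sym\mathfrak a$, which matches $\gr(A\otimes U(\mathfrak a))$.

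Finally, compose the isomorphism of Thm.~\ref{thm:Zhu-GMS-LR} with $\Phi$ to obtain \eqref{eq:Zhu-action-smash}. The level $\whkappa$ plays no role, since $U_A(T)$ depends only on the Lie algebroid. The main obstacle I anticipate is the first step, namely verifying that the cited Arakawa--Chebotarov--Malikov theorem applies to the infinitely generated polynomial algebra $A$. If needed, I would reduce to it by writing $\Zhu$ as a filtered colimit over the truncations $A_{\le N}$. This fails to be a colimit of subalgebroids, because $\rho_\xi$ raises $N$. I would therefore instead argue directly that the proof of Thm.~\ref{thm:Zhu-GMS-LR} uses only the freeness of $T$ and $\Omega$ and the identity $\Omega=A\,\partial A$.
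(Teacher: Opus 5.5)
Your proposal is correct and follows the paper's route: the paper obtains the corollary directly by applying Thm.~\ref{thm:Zhu-GMS-LR} to $\cA_{\whkappa}(A,\dplus)$ and then using the identification $U_A(A\otimes\mathfrak a)\cong A\#_\rho U(\mathfrak a)$ for action algebroids, with the level playing no role. Your additional checks fill in details the paper leaves implicit: freeness of $\Omega^1_A$ for the polynomial algebra $\cO(G_-)$, the identity $\Omega=A\,\dd A$, and the explicit mutually inverse maps for the smash-product identification.
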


Zhu reduction passes from a nonnegatively graded vertex algebra to an associative algebra.  It is functorial for grading-preserving vertex-algebra morphisms, and $\Zhu(V\otimes W)\cong\Zhu(V)\otimes\Zhu(W)$ as associative algebras \cite[Thm.~4.1]{MilasTensorProduct}.  Consequently, a graded local vertex bialgebra $(V,\Delta,\epsilon)$ gives an ordinary bialgebra $\Zhu(V)$: the maps induced by $\Delta$ and $\epsilon$ are its coproduct and counit, and their identities follow from the corresponding vertex-bialgebra identities.

We finally compare this Zhu algebra with the cotangent Yangian of Abedin--Niu.  Assume $\kk=\bbC$ and that $\g$ is complex simple.  For Yang's rational $r$-matrix, their formal cotangent Yangian is the topological smash product
\begin{equation*}
Y_\hbar(T^*\g)
:=
U(\g[[u]])[[\hbar]]
\mathbin{\#_{\bbC[[\hbar]]}}
\Sym(\g^*[[u]])[[\hbar]].
\end{equation*}
In this notation the two factors are already loop-adically completed:
\begin{equation*}
U(\g[[u]])
:=\varprojlim_N U\bigl(\g[u]/u^N\g[u]\bigr),
\qquad
\Sym(\g^*[[u]])
:=\varprojlim_N\Sym\bigl(\g^*[u]/u^N\g^*[u]\bigr).
\end{equation*}
The tensor product in $Y_\hbar(T^*\g)$ is completed for this loop topology, and $[[\hbar]]$ denotes the separate $\hbar$-adic completion \cite[Eqs.~(2.15)--(2.17), Thm.~3.5, and Sec.~3.2.1]{AbedinNiu}. The action comes from the same matched pair as in Sec.~\ref{sec:bicrossed-action-algs}.

There is a dense subalgebra $Y_\hbar^\circ(T^*\g)\subset Y_\hbar(T^*\g)$, generated over $\bbC[[\hbar]]$ by $(T^*\g)[u]$.  It is a Hopf subalgebra with generator formulas polynomial in $\hbar$
\cite[Prop.~3.9]{AbedinNiu}.  Their graded evaluation defines $Y_1^\circ(T^*\g)$ \cite[Secs.~3.1.4 and 3.2.1]{AbedinNiu}. Since both smash products come from the same matched pair, \eqref{eq:Zhu-action-smash} gives, at level zero, the algebra isomorphism
\begin{equation}
\Phi_1:
\Zhu\bigl(\Ych(T^*\g)\bigr)
\xrightarrow{\sim}
\cO(G_-)\#_\rho U(\g[u])
\xrightarrow{\sim}
Y_1^\circ(T^*\g).
\end{equation}

\begin{corollary}
\label{cor:Zhu-dense-bialg-comp}
Assume that $\kk=\bbC$ and that $\g$ is complex simple.  At level zero, the algebra isomorphism $\Phi_1$ is an isomorphism of bialgebras
\begin{equation*}
{
\Phi_1:
\Zhu\bigl(\Ych(T^*\g)\bigr)
\xrightarrow{\sim}
Y_1^\circ(T^*\g).}
\end{equation*}
\end{corollary}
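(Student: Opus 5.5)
Since $\Phi_1$ is already an algebra isomorphism, the plan is to show that it intertwines the coproducts and the counits. The source bialgebra structure on $\Zhu\bigl(\Ych(T^*\g)\bigr)$ is the one induced from Cor.~\ref{cor:zero-bialg}, using functoriality of Zhu reduction and $\Zhu(V\otimes V)\cong\Zhu(V)\otimes\Zhu(V)$. Both sides are generated as algebras by the images of $A=\cO(G_-)$ and of $\g[u]$, and coproducts and counits are algebra maps. It therefore suffices to compare $(\Phi_1\otimes\Phi_1)\circ\Delta_{\Zhu}$ with $\Delta_{Y}\circ\Phi_1$, and $\epsilon_Y\Phi_1$ with $\epsilon_{\Zhu}$, on the generators $[f]$ for $f\in A$ and $[\Lcur_\xi]$ for $\xi\in\g[u]$.

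First I compute the induced Zhu coproduct on generators. For $f\in A$, the class $[f]$ maps to $[m^*f]$. Under the Zhu tensor isomorphism, an element $\sum f_i'\otimes f_i''$ of $A\otimes A$ gives $\sum [f_i']\otimes[f_i'']$, because the weight-zero $*$-product on $A$ is ordinary multiplication. So $\Delta[f]$ is the classical pullback $m^*f$, regarded in $\cO(G_-)\otimes\cO(G_-)$.

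For currents I use $\Delta(\Lcur_\xi)=\Lcur_\xi\otimes 1+\Lcur^{(2)}_{M_{g_1}\xi}$. Here the second term is the $(-1)$-product, computed in $V\otimes V$, of $a_i\otimes 1$ with $1\otimes \Lcur_{\xi_i}$, where $M_{g_1}\xi=\sum_i a_i\xi_i$. The one-form correction $h$ vanishes on $\g[u]$ by Thm.~\ref{thm:zero-algebroid-coprod}. The point to verify is that $(a\otimes 1)_{(-1)}(1\otimes\Lcur_\eta)$ has Zhu class $[a]\otimes[\Lcur_\eta]$. This holds because the two tensor factors strongly commute. Hence $(a\otimes1)*(1\otimes\Lcur_\eta)=(a\otimes 1)_{(-1)}(1\otimes \Lcur_\eta)$ exactly: the weight of $a$ is zero, so only the $j=0$ term of the $*$-product survives. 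It follows that $\Delta[\Lcur_\xi]=[\Lcur_\xi]\otimes1+\sum_i[a_i]\otimes[\Lcur_{\xi_i}]$. The counit is $[f]\mapsto f(e)$ and $[\Lcur_\xi]\mapsto0$.

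Next I compare these formulas with the Abedin--Niu coproduct on $Y^\circ_1(T^*\g)$ restricted to generators, via \cite[Prop.~3.9]{AbedinNiu}. On $\cO(G_-)$ their coproduct is dual to the group multiplication of $G_-$. On $\g[u]$, after the graded evaluation $\hbar=1$, it has the form $\xi\mapsto\xi\otimes1+\sum_i a_i\otimes\xi_i$, with coefficients given by the same dressing matrix $M$, since both structures come from the same matched pair $\g((u^{-1}))=\g[u]\bowtie\dminus$. The main obstacle lies in this step. Their conventions may use left dressing, a different ordering of the tensor factors, or $\Ad_g$ in place of $\Ad_{g^{-1}}$. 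It may then be necessary to compose $\Phi_1$ with the antipode, or with an inversion/flip on $G_-$, to match. I would first check directly in the $GL_N$-type matrix formula from their paper that $\Delta(\xi)=\xi\otimes1+\bigl(T^{-1}\xi T\bigr)_+$ in the appropriate leg, by comparing with $M_T(\xi)=(T^{-1}\xi T)_+$ from Sec.~\ref{sec:matrix-rat-realization}.

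Once the generator formulas agree, the claim follows from multiplicativity. The two algebra maps $(\Phi_1\otimes\Phi_1)\Delta_{\Zhu}$ and $\Delta_Y\Phi_1$ agree on a generating set, and likewise the two counit maps agree on generators. Hence $\Phi_1$ is a bialgebra isomorphism.
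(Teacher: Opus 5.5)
Your proposal is correct and follows the paper's proof: compute the induced Zhu coproduct and counit on the generators $[f]$ and $[\Lcur_\xi]$, getting $m^*f$, $\xi^{(1)}+(M_{g_1}\xi)^{(2)}$, $f(e)$ and $0$; match these with the Abedin--Niu matched-pair formulas on generators; and conclude by multiplicativity. Your check that the weight-zero $*$-product gives $[(a\otimes1)_{(-1)}(1\otimes\Lcur_\eta)]=[a]\otimes[\Lcur_\eta]$ supplies a detail the paper leaves implicit, and your convention worry does not arise, since both coproducts are built from the same matched pair and dressing map $M_g\xi=(\Ad_{g^{-1}}\xi)_+$; no antipode or flip is needed, and inserting one would change the map $\Phi_1$ that the statement is about.
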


\begin{proof}
Applying the preceding observation to the coproduct and counit of Cor.~\ref{cor:zero-bialg} gives $\Zhu(\Ych(T^*\g))$ a bialgebra structure.  For $f\in A$ and $\xi\in\g[u]$, its coproduct and counit are
\begin{equation*}
  \Delta_{\mathrm{Zhu}}(f)=m^*f, \quad \Delta_{\mathrm{Zhu}}(\xi) =\xi^{(1)}+\bigl(M_{g_1}\xi\bigr)^{(2)},\quad  \epsilon_{\mathrm{Zhu}}(f)=f(e),\quad \epsilon_{\mathrm{Zhu}}(\xi)=0.
\end{equation*}
These agree on generators with the matched-pair coproduct and counit defining the bialgebra structure on $Y_1^\circ(T^*\g)$ in \cite[Thm.~3.5, Prop.~3.9]{AbedinNiu}. Hence $\Phi_1$ is an isomorphism of bialgebras.
\end{proof}

\subsection{The matrix vertex bialgebra for \texorpdfstring{$G=GL_N$}{G = GL(N)}}
\label{sec:matrix-vbialg}

Continuing Sec.~\ref{sec:matrix-rat-realization}, when $\g = \mathfrak{gl}_N$, the vertex algebra and coalgebra structures of the chiral cotangent Yangian $\Ych(T^*\mathfrak{gl}_N)$ specialize to concrete matrix formulas. 

The strong generators of $\Ych(T^*\mathfrak{gl}_N)$ consist of the polynomial coordinate functions $t_{ij}^{(r)} \in \mathcal O(G_-)$ ($1\leq i,j\leq N$, $r\geq1$) organized into the matrix series
\begin{equation*}
T(u,z) = \bigl(t_{ij}(u,z)\bigr)_{i,j=1}^N,
\qquad
t_{ij}(u,z) = \delta_{ij} + \sum_{r\geq 1} t_{ij}^{(r)}(z) u^{-r},
\end{equation*}
and the Lie algebra currents $\Lcur_\xi \in Y$ for $\xi(u)\in\mathfrak{gl}_N[u]$, spanned by the basis currents $\Lcur_{E_{ij}u^r}$.

We identify $\Omega_A^1$ with its image in the vertex envelope. Under this identification, vertex translation is identified with the algebraic de Rham differential, i.e. $\partial f = \dd f$ for $f\in A$.

For a general invariant level $\whkappa$ on $\mathfrak{gl}_N[u]$, the defining OPE of $\Ych_{\whkappa}(T^*\mathfrak{gl}_N)$ read
\begin{align*}
T(u_1,z) T(u_2,w) &\sim 0, \\
\Lcur_\xi(z) T(u,w) &\sim \frac{\rho_\xi(T)(u,w)}{z-w} = \frac{T(u,w)\bigl(T(u,w)^{-1}\xi(u) T(u,w)\bigr)_+ - \xi(u) T(u,w)}{z-w}, \\
\Lcur_\xi(z) \Lcur_\eta(w) &\sim \frac{\whkappa(\xi,\eta)}{(z-w)^2} + \frac{\Lcur_{[\xi,\eta]}(w)}{z-w}. 
\end{align*}
At level zero, the double pole drops out, giving the loop Lie algebra OPE $\Lcur_\xi(z) \Lcur_\eta(w) \sim \frac{\Lcur_{[\xi,\eta]}(w)}{z-w}$.

The zero-level vertex coproduct $\Delta$ is determined on the generators by group multiplication and the dressing action. Group multiplication $T(u) \mapsto T_1(u)T_2(u)$ gives
\begin{equation*}
\Delta\bigl(t_{ij}(u,z)\bigr) = \sum_{a=1}^N t_{ia}(u,z) \otimes t_{aj}(u,z),
\end{equation*}
with counit $\epsilon\bigl(T(u,z)\bigr) = I$. On the currents $\Lcur_\xi$, the matrix dressing action $M_T(\xi) = (T^{-1}\xi T)_+$ gives
\begin{equation*}
\Delta(\Lcur_\xi) = \Lcur_\xi \otimes 1 + \Lcur_{(T_1^{-1}\xi T_1)_+}^{(2)},
\qquad
\epsilon(\Lcur_\xi) = 0.
\end{equation*}
Expanding $(T^{-1}\xi T)_+ = \sum_{i,j=1}^N \sum_{m\geq 0} C_{ij,m}(T) E_{ij}u^m \in \mathcal O(G_-) \otimes \mathfrak{gl}_N[u]$, the term $\Lcur_{(T_1^{-1}\xi T_1)_+}^{(2)}$ can be written as $\sum_{i,j,m} C_{ij,m}(T) \otimes \Lcur_{E_{ij}u^m}$. It is primitive for constant $\xi$, while for the degree-one basis currents $\xi = E_{ij}u$ one has $(T^{-1}E_{ij}uT)_+ = E_{ij}u + [E_{ij}, T^{(1)}]$, yielding
\begin{equation*}
\Delta(\Lcur_{E_{ij}u}) = \Lcur_{E_{ij}u} \otimes 1 + 1 \otimes \Lcur_{E_{ij}u} + \sum_{k=1}^N \bigl(t_{jk}^{(1)} \otimes \Lcur_{E_{ik}} - t_{ki}^{(1)} \otimes \Lcur_{E_{kj}}\bigr).
\end{equation*}

Alternatively, we use the basis $\mathsf K_\xi:=\Lcur_{(T\xi)_+}$ that corresponds to the coordinate one-forms of Sec.~\ref{sec:matrix-rat-realization}. Their $T$-dependent coefficients give additional terms in the OPEs. Writing
\begin{equation*}
(T\xi)_+=\sum_{i,j=1}^N\sum_{m\geq0}B^\xi_{ij,m}(T)E_{ij}u^m,
\end{equation*}
and abbreviating $\alpha=(i,j,m)$, $e_\alpha=E_{ij}u^m$, $B^\xi_\alpha=B^\xi_{ij,m}(T)$, and $\rho_\alpha=\rho_{e_\alpha}$, we have
\begin{equation*}
\mathsf K_\xi(z)=\sum_\alpha :B^\xi_\alpha(T(z))\Lcur_{e_\alpha}(z):.
\end{equation*}
Using \eqref{eq:GMS-OPE-dictionary}, the singular OPEs read
\begin{align*}
\mathsf{K}_\xi(z)\,T(u,w) &\sim \frac{T(u,w)\bigl(\xi(u) T(u,w)\bigr)_+ - \bigl(T(u,w)\xi(u)\bigr)_+ T(u,w)}{z-w},
\\
\mathsf{K}_\xi(z)\,\mathsf{K}_\eta(w) &\sim \frac{\chi(\mathsf{K}_\xi,\mathsf{K}_\eta)(w)}{(z-w)^2}+\frac{\bigl([\mathsf K_\xi,\mathsf K_\eta]-c(\mathsf K_\xi,\mathsf K_\eta)+\frac12\partial\chi(\mathsf K_\xi,\mathsf K_\eta)\bigr)(w)}{z-w}.
\end{align*}
The Lie algebroid bracket $[\mathsf{K}_\xi,\mathsf{K}_\eta]$ corresponds to the Poisson cotangent bracket $[\mathsf{K}_\xi,\mathsf{K}_\eta]_{\pi_{\mathrm{rat}}} = \dd\{f_\xi,f_\eta\}_{\pi_{\mathrm{rat}}}$, where $f_\xi(T)=\Res_u\Tr(\xi(u)T(u))$ and $\{f_\xi,f_\eta\}_{\pi_{\mathrm{rat}}}$ is the classical RTT bracket \eqref{eq:rat-RTT}. Applying~\eqref{eq:action-ext} to the expansion of $(T\xi)_+$, or directly computing the OPE in the $\Lcur$ basis, gives the following correction terms\footnote{These non-trivial corrections are specific to the chiral setting. In the cotangent Yangian of Abedin and Niu \cite{AbedinNiu}, the change of basis produces no such terms, and the cotangent generators satisfy the exact Koszul bracket $[\mathsf{K}_\xi,\mathsf{K}_\eta] = \dd\{f_\xi,f_\eta\}_{\pi_{\mathrm{rat}}}$.}:
\begin{equation*}
  \chi(\mathsf K_\xi,\mathsf K_\eta)
=\whkappa\bigl((T\xi)_+,(T\eta)_+\bigr)-\sum_{\alpha,\beta}\Bigl[
\rho_\alpha(B^\eta_\beta)\rho_\beta(B^\xi_\alpha)
+B^\xi_\alpha\,\rho_\beta\rho_\alpha(B^\eta_\beta)
+B^\eta_\beta\,\rho_\alpha\rho_\beta(B^\xi_\alpha)\Bigr].
\end{equation*}
\begin{equation*}
  \begin{aligned}
    c(\mathsf K_\xi,\mathsf K_\eta)
={}\frac{1}{2}&\Bigl\{\whkappa\bigl((T\xi)_+,\partial(T\eta)_+\bigr)
+\sum_{\alpha,\beta}\Bigl[
\rho_\alpha(B^\eta_\beta)\,\partial\rho_\beta(B^\xi_\alpha)\\
&\qquad+\partial\!\left(B^\eta_\beta\,\rho_\alpha\rho_\beta(B^\xi_\alpha)\right)
-2\rho_{[e_\alpha,e_\beta]}(B^\xi_\alpha)\,\partial B^\eta_\beta
\Bigr]\Bigr\}-(\xi\leftrightarrow\eta).
  \end{aligned}
\end{equation*}
All sums are finite for polynomial $\xi,\eta$.

For example, at level zero we have
\begin{align*}
\chi(\mathsf K_{E_{ij}u},\mathsf K_{E_{kl}})
&=\delta_{il}\!\left(Nt_{jk}^{(1)}-\delta_{jk}\Tr T^{(1)}\right)
+\delta_{ij}t_{lk}^{(1)}-\delta_{kl}t_{ji}^{(1)},\\
c(\mathsf K_{E_{ij}u},\mathsf K_{E_{kl}})
&=-\frac12\partial\chi(\mathsf K_{E_{ij}u},\mathsf K_{E_{kl}}).
\end{align*}

The same $T$-dependent change of basis also affects the zero-level coproduct. The $h$ component of this coproduct satisfies $h(\Lcur_{\xi})= 0 $, but does not vanish on $\mathsf{K}_{\xi}$. We have
\begin{equation*}
\Delta(\mathsf K_\xi)
=\mathsf K^{(1)}_{(T_2\xi)_+}
+\mathsf K^{(2)}_{(\xi T_1)_+}
+h\bigl((T\xi)_+\bigr),
\qquad
\epsilon(\mathsf K_\xi)=0.
\end{equation*}
Using \eqref{eq:frame-h-ext} we can write $h\bigl((T\xi)_+\bigr) =-\sum_\alpha\gamma_{12}\!\left(m^*B_\alpha^\xi,\Delta_T e_\alpha\right)$. Put $K^{(r)}=(\mathsf K_{E_{ji}u^{r-1}})_{i,j=1}^N$ and $K(u,z)=\sum_{r\geq1}K^{(r)}(z)u^{-r}$. We have
\begin{equation*}
\Delta K(u)=K_1(u)T_2(u)+T_1(u)K_2(u)-\mathcal E(u),
\qquad
\epsilon(K(u))=0,
\end{equation*}
where $\mathcal E(u)$ can be computed by \eqref{eq:ext-prod-gamma} and give, for $\alpha=(k,l,m)$,
\begin{equation*}
\mathcal E(u)=
\sum_{\alpha=(k,l,m)}u^{-m-1}E_{lk}
\left[
\bigl(\rho_\alpha T_1(u)\bigr)\partial T_2(u)
+\partial T_1(u)\bigl(\rho_{M_{T_1}(e_\alpha)}T_2(u)\bigr)
\right].
\end{equation*}

\section{Sphere conformal blocks}
\label{sec:sphere-blocks}

Let $X$ be a smooth curve and let $\mathrm{Aut}_X\to X$ be the bundle of formal local coordinate charts of $X$. For a vertex algebra $V$, a quasi-conformal structure is exactly the coordinate-change datum needed to globalize $V$. The integrated $\operatorname{Aut}\cO$-action on a quasi-conformal vertex algebra $V$ defines the associated \emph{vertex-algebra bundle}
\begin{equation*}
\mathscr{V} :=\mathrm{Aut}_X\mathbin{\times_{\operatorname{Aut}\cO}}V,
\end{equation*}
which carries a canonical flat connection $\nabla$
\cite[Secs.~6.2--6.6]{FrenkelBenZvi}. 

Choose distinct marked points $\mathbf x=(x_1,\ldots,x_n)$ with $n\geq1$, set $\Sigma:=X\setminus\{x_1,\ldots,x_n\}$, and let $M_i$ be $V$-modules whose quasi-conformal actions integrate to $\operatorname{Aut}\cO$.  We consider the Lie algebra
\begin{equation*}
U_\Sigma(\mathscr{V} ) := \frac{\Gamma(\Sigma,\mathscr{V} \otimes\Omega_X^1)}{\nabla\Gamma(\Sigma,\mathscr{V})}.
\end{equation*}
If a section representing a class in $U_\Sigma(\mathscr V)$ has local Laurent expansion $f(z_i)a dz_i$ at $x_i$, its action on the $i$th tensor factor is
\begin{equation}
\operatorname{Res}_{z_i=0}
f(z_i)Y_{M_i}(a,z_i)\,dz_i.
\label{eq:global-sec-local-action}
\end{equation}
where $Y_{M_i}$ is the module vertex operator.  This expression can be easily seen to factor through $U_\Sigma(\mathscr V)$.  Summing over the marked points gives the diagonal action on
$M_{\mathbf x}:=M_1\otimes\cdots\otimes M_n$
\cite[Thm.~9.2.6 and Def.~10.1.6]{FrenkelBenZvi}. We follow notation of \cite[Def.~10.1.6]{FrenkelBenZvi} and define the coinvariants and conformal blocks by
\begin{align*}
H_V(X,\mathbf x,M_1,\ldots,M_n)
&:=
M_{\mathbf x}/U_\Sigma(\mathscr{V} )M_{\mathbf x}.
\\
C_V(X,\mathbf x,M_1,\ldots,M_n)
&:=
\operatorname{Hom}_{U_\Sigma(\mathscr{V})}(M_{\mathbf x},\kk).
\end{align*}
Thus a block is precisely a linear functional vanishing on $U_\Sigma(\mathscr V)M_{\mathbf x}$.

In this section we specialize to the case $X=\mathbb P^1$, with one marked point placed at $0$,
and the insertion $M_1=V$. Put $\mathfrak g_{\mathrm{out}}(V) := U_{\mathbb P^1\setminus\{0\}}(\mathscr{V})$. Then we can write sphere one-point coinvariants and conformal blocks as
\begin{equation*}
H(\mathbb P^1,0,V) = V/\mathfrak g_{\mathrm{out}}(V)V, \quad C(\mathbb P^1,0,V) =\operatorname{Hom}_{\mathfrak g_{\mathrm{out}}(V)}(V,\kk).
\end{equation*}
For chiral cotangent Yangian, we will study its conformal blocks for both of the quasi-conformal structures constructed in Sec.~\ref{sec:qc}.

\subsection{Sphere one-point coinvariants}
\label{sec:1pt-coinvariants}

We first give a general description of sphere one-point coinvariants for a nonnegatively graded quasi-conformal vertex algebra. For conformal vertex algebras with finite-dimensional graded pieces, this description follows from propagation of vacua and the identification of two-point blocks with invariant bilinear forms \cite[Thm.~10.3.1 and Sec.~10.4]{FrenkelBenZvi}; the latter are classified in \cite[Thm.~1]{RoitmanInvariantForms}. We give a direct proof without assuming a conformal vector or finite-dimensional graded pieces.

\begin{proposition}
\label{prop:1pt-coinvariants}
Let $V=\bigoplus_{n\geq0}V_n$ be a quasi-conformal vertex algebra, and let $q_V:V_0\to H(\mathbb P^1,0,V)$ be inclusion followed by the $\mathfrak g_{\mathrm{out}}(V)V$ quotient.  Then $q_V$ induces a canonical isomorphism
\begin{equation*}
\overline q_V:
V_0/\mathscr L_1V_1
\xrightarrow{\sim}
H(\mathbb P^1,0,V)
\end{equation*}
\end{proposition}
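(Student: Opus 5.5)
We first make $\mathfrak g_{\mathrm{out}}(V)$ explicit. On $\mathbb P^1\setminus\{0\}$ with coordinate $z$ at $0$, the bundle $\mathscr V$ is trivialized by the quasi-conformal structure, and sections of $\mathscr V\otimes\Omega^1$ that are regular away from $0$, including at $\infty$, correspond to the transformation rule under $w=1/z$. Concretely, for $a\in V_n$ with the $\mathscr L_k$ ($k\geq1$) acting nilpotently, the acting operators are spanned by the modes
\[
\operatorname{Res}_z z^{m}\,Y(a,z)\,dz=a_{(m)},\qquad 0\le m\le 2n-2,
\]
corrected by lower-order terms coming from $\mathscr L_k a$ with $k\ge1$. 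For primary $a$ these are exactly the modes $a_{(m)}$ with $m\le 2\Delta_a-2$ and $m\geq 0$ (up to the $\nabla$-quotient, whose residues vanish). In general we will use the $\operatorname{Aut}\mathcal O$ transition at $\infty$, $e^{\sum_{k\ge1}v_k\mathscr L_k}$, to write the action of a global section as $\sum_{m} a_{(m)}$ plus modes of $\mathscr L_k a$ of lower weight. The modes include $a_{(m)}$ for all $m\ge0$, so $\mathfrak g_{\mathrm{out}}(V)V$ contains $V_{(\geq0)}V$ restricted to weight. The key consequence is that the image contains everything of positive weight in a suitable sense, and is graded.

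\textbf{Step 1 (surjectivity).} We show $V_n\subset V_0+\mathfrak g_{\mathrm{out}}V$ for $n\ge1$ by induction on $n$. Any $v\in V_n$ equals $v_{(-1)}\mathbf 1$. Using the global section $z^{-?}$, we note that the sections regular at $\infty$ allow $a_{(m)}$ only with $m\ge0$, which cannot produce $v$ from $\mathbf1$. So instead we use the grading: the operators from $\mathfrak g_{\mathrm{out}}$ have degree $\deg a-m-1\le \deg a-1$, and the operator $a_{(2n-2)}$ plus corrections has degree $1-n$. We therefore use the \emph{opposite} mechanism. Since $\mathscr L_{-1}=\partial$ corresponds to the global vector field and $\nabla$-exactness gives $[\partial v]\equiv$ lower-order terms, we aim to show $\partial V\subset \mathfrak g_{\mathrm{out}}V$ and $a_{(-1)}b$ reduces modulo $\mathfrak g_{\mathrm{out}}V$ to terms built from $a_{(m)}b$ with $m\ge0$, via the residue at $\infty$ of $z^{-1}Y(a,z)dz$ transformed to $w$. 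This would express any $v=a_{(-1)}b$ with $\deg a\geq1$ as a combination of $\mathscr L_k$-corrected $a_{(j)}b$ for $j\ge0$, all of strictly smaller weight than $v$ or of lower PBW length. Induction then pushes everything into $V_0$.

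\textbf{Step 2 (kernel).} We show that $V_0\cap\mathfrak g_{\mathrm{out}}V=\mathscr L_1V_1$. The inclusion $\supseteq$ follows from the degree-one case: for $a\in V_1$, the section $a\,dz$ (weight one, with $m=0$ allowed) acts as $a_{(0)}$, corrected at $\infty$ by $\mathscr L_1a$. Acting on $\mathbf1$ gives $\mathscr L_1 a$ up to sign. For $\subseteq$, we construct a functional. Given $\phi\in(V_0/\mathscr L_1V_1)^\vee$, we define $\lambda_\phi(v)=\phi(\pi_0 v')$, where $v'$ is the reduction from Step~1. We then show it is well defined and $\mathfrak g_{\mathrm{out}}$-invariant. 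Equivalently, we build the invariant functional as the vacuum-projection $\langle \phi,\, e^{\mathscr L_1}\text{-twisted }Y(v,1)\mathbf1\rangle$ restricted to weight zero. In other words, we set $\lambda(v)=\phi\big((\text{weight-}0\text{ part of }e^{\mathscr L_1}v)\big)$ and check invariance using \eqref{eq:qc-covariance}. This mimics the Li/Roitman invariant-form construction without needing a conformal vector.

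The main obstacle is Step 2: verifying that this functional kills all of $\mathfrak g_{\mathrm{out}}(V)V$, which requires the transformation formula \eqref{eq:qc-covariance} for $\mathscr L_1$ under $z\mapsto 1/z$ and careful bookkeeping of the nilpotent $\mathscr L_{k\ge1}$ corrections. I would first test it on $v=a_{(-1)}b$ and then use the Borcherds identity to extend to all of $V$.
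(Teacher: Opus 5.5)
Your proposal has a genuine gap, and it starts with the description of $\mathfrak g_{\mathrm{out}}(V)$, which is backwards. Trivialize $\mathscr V$ on $\mathbb P^1\setminus\{0\}$ by $w=z^{-1}$. Then $\mathfrak g_{\mathrm{out}}(V)$ is spanned by the classes of $a\,w^m\,dw$ with $a\in V_d$ and $m\ge0$, and such a section acts by $-a^o_m$, where $a^o_m=(-1)^d\sum_{j\ge0}\frac1{j!}(\mathscr L_1^ja)_{(2d-m-j-2)}$. For primary $a$ this is the single mode $a_{(k)}$ with $k=2d-2-m$, so the allowed indices are all $k\le 2d-2$, unbounded below.

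The window $0\le k\le 2d-2$ you wrote is the one for sections regular on all of $\mathbb P^1$. Your later claim that every non-negative mode is outgoing is false, since a mode $a_{(k)}$ with $k>2d-2$ comes from a section with a pole at $\infty$. If it were true, then for $\Ych_{\whkappa}(T^*\g)$ at nonzero level the state $(\Lcur_\xi)_{(1)}\Lcur_\eta=\whkappa(\xi,\eta)\mathbf1$ would lie in $\mathfrak g_{\mathrm{out}}(V)V\cap V_0$. But $\mathscr L_1V_1=\rho(\dplus)A$ consists of functions vanishing at $e$, contradicting the statement.

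This misidentification derails Step 1: outgoing operators \emph{can} produce $v$ from $\mathbf1$. For $a\in V_r$ with $r>0$ one has the exact identity $a_{(-1)}=(-1)^r\sum_{j=0}^r\frac1{j!}(\mathscr L_1^ja)^o_{2r-j-1}$. All its indices satisfy $2r-j-1\ge r-1\ge0$, so $a=a_{(-1)}\mathbf1$ lies in $N:=\mathfrak g_{\mathrm{out}}(V)V$ directly, with no induction. Your substitute mechanism fails as stated. The corrections $(\mathscr L_1^ja)_{(-j-1)}b$ to $a_{(-1)}b$ are negative modes, and they have the same weight as $v$, not smaller.

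Step 2 leaves the essential inclusion $N\cap V_0\subseteq\mathscr L_1V_1$ unproven. Your functional $v\mapsto\phi\bigl(\text{weight-zero part of }e^{\mathscr L_1}v\bigr)$ is just $\phi(\pi_0v)$, with $\pi_0$ the projection to $V_0$. Indeed, for $v\in V_n$ with $n\ge1$ the weight-zero part is $\mathscr L_1^nv/n!\in\mathscr L_1V_1$, which $\phi$ kills. So its invariance is literally equivalent to the inclusion you want. Calling it ``the main obstacle'' defers all the content, and the alternative definition via ``the reduction from Step 1'' is circular for the same reason.

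The missing idea is the commutator identity $[\mathscr L_1,a^o_{m+1}]=(m+1)a^o_m$. It follows from \eqref{eq:qc-covariance} in the form $[\mathscr L_1,b_{(k)}]=(\mathscr L_1b)_{(k)}+(2\delta-k-2)b_{(k+1)}$ for $b\in V_\delta$. You also need the grading $a^o_mV_s\subseteq V_{s+m+1-d}$, so that $N$ is graded and $N\cap V_0$ is spanned by weight-zero generators. Then for a weight-zero generator $a^o_mv$ with $v\in V_s$,
\[
a^o_mv=\tfrac{1}{m+1}\bigl(\mathscr L_1(a^o_{m+1}v)-a^o_{m+1}\mathscr L_1v\bigr),\qquad a^o_{m+1}v\in V_1 .
\]
The first term lies in $\mathscr L_1V_1$, and the second is handled by induction on $s$, starting from $\mathscr L_1V_0=0$. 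Your easy inclusion $\mathscr L_1V_1\subseteq N$, via $a^o_0\mathbf1=-\mathscr L_1a$ for $a\in V_1$, is fine.
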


\begin{proof}
Put $N:=\mathfrak g_{\mathrm{out}}(V)V$.  It suffices to show that $N$ is graded, contains every $V_r$ with $r>0$, and satisfies $N\cap V_0=\mathscr L_1V_1$.

We first describe $\mathfrak g_{\mathrm{out}}(V)$. The coordinate $w=z^{-1}$ trivializes $\mathscr V$ on $\Sigma=\mathbb P^1\setminus\{0\}=\mathbb A^1_w$, so $\Gamma(\Sigma,\mathscr V\otimes\Omega_\Sigma^1)
=V\otimes\kk[w]\,dw$. Thus the classes of $a\,w^m\,dw$, with $a\in V_d$ and $m\geq0$, span the outgoing Lie algebra $\mathfrak{g}_{\mathrm{out}}(V)$. Under coordinate inversion, this section has local expansion
\begin{equation*}
(-z^2)^{\mathscr L_0}e^{-z\mathscr L_1}a z^{-m}(-z^{-2})\dd z = (-1)^{d+1}\sum_{j=0}^{d} \frac{(\mathscr L_1^ja)z^{2d-m-j-2}}{j!}\dd z;
\end{equation*}
see \cite[Eq.~(10.4.5) and p.~179, Eq.~(10.4.8)]{FrenkelBenZvi}. By the residue action \eqref{eq:global-sec-local-action}, the section therefore acts as $-a_m^o$, where
\begin{equation}
a_m^o:=(-1)^d\sum_{j=0}^{d}
\frac{(\mathscr L_1^ja)_{(2d-m-j-2)}}{j!},
\qquad a\in V_d,\quad m\geq0,
\label{eq:1pt-outgoing-op}
\end{equation}
is the mode of the opposite vertex operator as in \cite[Eq.~(2.57)]{HuangLepowskyZhangI}.
Consequently,
\begin{equation*}
N=\operatorname{span}_{\kk}
\{a_m^ov:a,v\text{ homogeneous},\ m\geq0\},
\qquad
a_m^oV_s\subseteq V_{s+m+1-d}.
\end{equation*}
The weight formula follows termwise from
\eqref{eq:qc-covariance}; in particular, $N$ is graded.

For $a\in V_r$ with $r>0$, the definition gives
\begin{equation*}
a_{(-1)}
=(-1)^r\sum_{j=0}^{r}\frac{1}{j!}
(\mathscr L_1^ja)_{2r-j-1}^o.
\end{equation*}
Indeed, after expanding the right-hand side, the coefficient of $(\mathscr L_1^ta)_{(-t-1)}$ is $1$ for $t=0$ and $0$ otherwise. Applying the above expression for $a_{(-1)}$ to the vacuum therefore proves that $a\in N$, and hence $V_r\subseteq N$ for $r>0$.

It remains to compute $N\cap V_0$.  Quasi-conformal covariance gives
\begin{equation*}
[\mathscr L_1,b_{(k)}] =(\mathscr L_1b)_{(k)}+(2\delta-k-2)b_{(k+1)}, \qquad b\in V_\delta.
\end{equation*}
Substituting this in \eqref{eq:1pt-outgoing-op} yields
\begin{equation*}
[\mathscr L_1,a_{m+1}^o]=(m+1)a_m^o.
\end{equation*}
If $v\in V_s$ and $a_m^ov$ has weight zero, then $a_{m+1}^ov\in V_1$, so
\begin{equation*}
a_m^ov
=\frac{\mathscr L_1\bigl(a_{m+1}^ov\bigr)}{m+1}
-\frac{a_{m+1}^o\mathscr L_1v}{m+1}
\equiv-\frac{a_{m+1}^o\mathscr L_1v}{m+1}
\pmod{\mathscr L_1V_1}.
\end{equation*}
Induction on $s$, starting with $\mathscr L_1V_0=0$, shows that every weight-zero generator of $N$ belongs to $\mathscr L_1V_1$. Conversely, for $a\in V_1$ the vacuum identities give $a_0^o\mathbf1=-\mathscr L_1a$, so $\mathscr L_1V_1\subseteq N$. Thus $N\cap V_0=\mathscr L_1V_1$. Therefore we proved $\overline q_V$ is an isomorphism.
\end{proof}

Let $\rho:\mathfrak a\to\Der(A)$ be an action as in
Subsection~\ref{sec:action-levels-3form}, and set
\begin{equation*}
\rho(\mathfrak a)A
:=
\operatorname{span}_{\kk}
\bigl\{
\rho_\xi(f):\xi\in\mathfrak a,\ f\in A
\bigr\}.
\end{equation*}

\begin{proposition}
\label{prop:1pt-action-coinv}
For every $\kappa\in\operatorname{Hom}_{\kk}(\Sym^2_{\kk}\mathfrak a,\kk)^{\mathfrak a}$ and $\mathcal H\in Z^3_{\mathrm{DR}}(\cT(A, \mathfrak a))$, put
$V_{\kappa,\mathcal H}:=V_{\kappa,\mathcal H}(A, \mathfrak a)$.  We have canonical identifications
\begin{equation*}
H\bigl(\mathbb P^1,0,V_{\kappa,\mathcal H}\bigr)
\cong
A/\rho(\mathfrak a)A
\end{equation*}
and
\begin{equation*}
C\bigl(\mathbb P^1,0,V_{\kappa,\mathcal H}\bigr)
\cong
\left\{
\Lambda\in A^\vee:
\Lambda(\rho_\xi f)=0
\text{ for all }\xi\in\mathfrak a,\ f\in A
\right\}.
\end{equation*}
In particular, these vector spaces are independent of $\kappa$ and
$\mathcal H$.
\end{proposition}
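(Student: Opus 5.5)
The plan is to reduce everything to Prop.~\ref{prop:1pt-coinvariants}. By Cor.~\ref{prop:Yangian-qc}, $V_{\kappa,\mathcal H}$ carries the standard quasi-conformal structure with $V_0=A$, $V_1=(A\otimes\mathfrak a)\oplus\Omega^1_A$ and $\mathscr L_1|_{V_1}=D$. Prop.~\ref{prop:1pt-coinvariants} therefore gives a canonical isomorphism
\[
A/D(V_1)\xrightarrow{\ \sim\ }H(\mathbb P^1,0,V_{\kappa,\mathcal H}).
\]
It then suffices to show that $D(V_1)=\rho(\mathfrak a)A$.

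To compute this image, recall that $D$ vanishes on $\Omega^1_A$ and that
\[
D\Bigl(\sum_i a_i\xi_i\Bigr)=\sum_i\rho_{\xi_i}(a_i).
\]
So the image consists exactly of the finite sums $\sum_i\rho_{\xi_i}(a_i)$, which is the span $\rho(\mathfrak a)A$.

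One point needs care: the identification of $V_1$ with $T\oplus\Omega$ uses the splitting, and the element $a_{(-1)}\Lcur_\xi$ equals $a\xi-\gamma(a,\xi)$. Since $\gamma$ takes values in one-forms, which $D$ kills, the image of $D$ is insensitive to this. The formula for $D$ involves neither $\kappa$ nor $\mathcal H$, and this is the source of the claimed independence.

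For the conformal blocks, a block is a linear functional on $V$ vanishing on $N=\mathfrak g_{\mathrm{out}}(V)V$. Blocks are therefore the dual of the coinvariants:
\[
C\cong H^\vee\cong\bigl(A/\rho(\mathfrak a)A\bigr)^\vee.
\]
The proof of Prop.~\ref{prop:1pt-coinvariants} shows that $N\supseteq\bigoplus_{r>0}V_r$ and $N\cap V_0=\mathscr L_1V_1$. Hence a block is determined by its restriction $\Lambda\in A^\vee$, and this restriction must vanish on every $\rho_\xi f$. Conversely, any such $\Lambda$, extended by zero on positive weights, kills $N$. This holds because $N$ is graded, so $N=(N\cap V_0)\oplus\bigoplus_{r>0}V_r$.

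I expect no real obstacle. The only subtle step is confirming that $\mathscr L_1V_1$ is computed correctly on the full weight-one space, including the one-form summand and the $\gamma$-correction. This is handled by the hypothesis $D(\partial a)=0$ together with the fact that $D$ kills all of $\Omega^1_A$.
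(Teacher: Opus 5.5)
Your proposal is correct and follows essentially the same route as the paper: use Cor.~\ref{prop:Yangian-qc} to get $\mathscr L_1|_{V_1}=D$, note that $D$ kills $\Omega^1_A$ so that $\mathscr L_1V_1=\rho(\mathfrak a)A$ with no dependence on $\kappa$ or $\mathcal H$, and then apply Prop.~\ref{prop:1pt-coinvariants}, dualizing to obtain the blocks. Your remarks on the $\gamma$-correction and on extending $\Lambda$ by zero on positive weights simply spell out steps the paper leaves implicit.
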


\begin{proof}
By Cor.~\ref{prop:Yangian-qc},
\begin{equation*}
(V_{\kappa,\mathcal H})_0=A,
\qquad
(V_{\kappa,\mathcal H})_1
=(A\otimes\mathfrak a)\oplus\Omega_A^1,
\end{equation*}
and
\begin{equation*}
\mathscr L_1\left(\sum_i a_i\xi_i+\omega\right)
=D\left(\sum_i a_i\xi_i+\omega\right)
=\sum_i\rho_{\xi_i}(a_i).
\end{equation*}
Consequently,
$\mathscr L_1(V_{\kappa,\mathcal H})_1=\rho(\mathfrak a)A$.
The two assertions now follow from the Prop.~\ref{prop:1pt-coinvariants}.
\end{proof}

For an algebraically integrated matched pair $\mathfrak p=\mathfrak p_+\bowtie\mathfrak p_-$ with $\mathsf P_-$ affine, the proposition applies to $A=\cO(\mathsf P_-)$, $\mathfrak a=\mathfrak p_+$ automatically.  For the cotangent Yangian action of Lem.~\ref{lem:rat-factor-poly-closure}, take $A=\cO(G_-)$ and $\mathfrak a=\dplus=\g[u]$; the proposition also applies directly. We use the abbreviations
\begin{equation*}
\mathscr Q_{\whkappa}
:=
H(\mathbb P^1,0,\Ych_{\whkappa}(T^*\g)),
\qquad
\mathscr B_{\whkappa}
:=
C(\mathbb P^1,0,\Ych_{\whkappa}(T^*\g))
=\mathscr Q_{\whkappa}^{\vee}.
\end{equation*}
For reductive $\g$ over $\bbC$, let $\mathfrak z:=\mathfrak z(\g)$ be the center of $\g$, and define the central subgroup
\begin{equation*}
Z_-:=
\exp\bigl(u^{-1}\mathfrak z[[u^{-1}]]\bigr)
\subset G_-.
\end{equation*}

\begin{corollary}
\label{thm:reductive-1pt-blocks}
For every level, restriction of functions from $G_-$ to $Z_-$ induces canonical isomorphisms
\begin{equation*}
\mathscr Q_{\whkappa}
\cong\cO(Z_-),
\qquad
\mathscr B_{\whkappa}
=\mathscr Q_{\whkappa}^{\vee}
\cong\cO(Z_-)^\vee,
\end{equation*}
\end{corollary}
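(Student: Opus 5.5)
By Prop.~\ref{prop:1pt-action-coinv}, for every level we have $\mathscr Q_{\whkappa}\cong A/\rho(\dplus)A$ with $A=\cO(G_-)$. The second isomorphism follows by dualizing, so the task is to prove that restriction $r:A\to\cO(Z_-)$ is surjective with kernel exactly $\rho(\dplus)A$. Surjectivity is clear: $Z_-\subset G_-$ is a closed subgroup, cut out by the vanishing of the $\g_{ss}$-components of the BCH coordinates $B_r$. Since $\g$ is reductive, $\g=\g_{ss}\oplus\mathfrak z$, and $G_-\cong G_-^{ss}\times Z_-$ as pro-algebraic groups with $Z_-$ central. Accordingly
\[
A\cong\cO(G^{ss}_-)\otimes\cO(Z_-).
\]

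First I check $\rho(\dplus)A\subset\ker r$. For $g\in Z_-$ we have $\Ad_{g^{-1}}\xi=\xi$, so $\rho_\xi(g)=-(L_g)_*(\xi)_-=0$. Hence every anchor field vanishes along $Z_-$ (the same computation as in the proof of Thm.~\ref{thm:spectral-ext-param}), and therefore $(\rho_\xi f)|_{Z_-}=0$.

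For the reverse inclusion I use the product decomposition. The dressing action respects it: $\mathfrak z[u]$ acts trivially, since it is central and $(\Ad_{g^{-1}}\xi)_-=0$ for $\xi\in\mathfrak z[u]$. Moreover $\g_{ss}[u]$ acts only on the $G^{ss}_-$ factor, because $\Ad$ of the central factor is trivial and the splitting is compatible with $\g=\g_{ss}\oplus\mathfrak z$. Thus
\[
\rho(\dplus)A=\bigl(\rho(\g_{ss}[u])\cO(G_-^{ss})\bigr)\otimes\cO(Z_-).
\]
It therefore suffices to show $\rho(\g_{ss}[u])\cO(G^{ss}_-)=\mathfrak m_e$, the augmentation ideal of the semisimple factor; that is, every function on $G^{ss}_-$ vanishing at $e$ lies in the span of derivatives $\rho_\xi f$. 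The spans only need to be taken over the simple ideals together with their interactions, which reduces to $\g$ semisimple.

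The main obstacle is this last statement for semisimple $\g$. My plan is to use the grading by $E_u$. By Lem.~\ref{lem:rat-factor-poly-closure}, $\rho_{xu^m}$ raises spectral degree by $m$, and $A=\bigoplus_q A_{[q]}$ with $A_{[0]}=\kk$. I will show by induction on $q\geq1$ that $A_{[q]}\subset\rho(\dplus)A$. The key step is the leading-term computation: modulo higher polynomial degree in the coordinates, $\rho_{xu^m}$ acts on the coordinate $B_r$ like the linear map $B_{r-m}\mapsto\pm[x,B_{r-m}]$ for $r>m$, and like $x$ itself placed in degree $m$ when $r=m$ (from $(\Ad_{g^{-1}}xu^m)_-$ at first order, the term $[B_{r},x]u^{m-r}$). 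So $\rho_{xu^m}(1\cdot\text{monomial})$ produces linear coordinate functions. More efficiently, the element $xu^{m}$ with $m=r-1$ gives
\[
\rho_{xu^{r-1}}(B_1^\ast\text{-coordinate})\sim\kappa(x,[\cdot,\cdot])\text{ terms},
\]
and since $[\g,\g]=\g$, every linear coordinate $\ell\circ B_r$ is a derivative modulo higher-order terms.

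I then run a double induction on (spectral degree, polynomial degree). Writing a monomial as $\ell(B_r)\cdot P$, I express $\ell(B_r)$ as $\rho_\xi(\ell'(B_{r'}))$ plus corrections, and use
\[
\rho_\xi(\ell' P)=\ell(B_r)P+\ell'\rho_\xi P+(\text{higher}).
\]
The remaining terms have the same spectral degree but can be arranged to be of lower filtered complexity. If this induction becomes messy, my fallback is to identify $A/\rho A$ with $H_0$ of the cotangent Lie algebroid and use the Poisson interpretation: $\rho(\dplus)A$ is the span of the Hamiltonian derivatives $\{f,g\}$, i.e.\ the Poisson-central quotient. For semisimple $\g$ the symplectic leaves through generic points are dense, and $\operatorname{gr}$ of the Yangian Poisson algebra is $\Sym(\g[u])$ with its Lie--Poisson structure, whose bracket span is everything because $[\g[u],\g[u]]=\g[u]$. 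A filtration argument then lifts this to $\{A,A\}+\kk=A$ at the graded level, giving $\rho(\dplus)A=\mathfrak m_e$.
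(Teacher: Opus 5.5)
Your reductions are sound. Every dressing field vanishes along $Z_-$, so $\rho(\dplus)A\subset\ker r$. The splitting $G_-\cong G_-^{ss}\times Z_-$ gives $\rho(\dplus)A=\bigl(\rho(\g_{ss}[u])\cO(G_-^{ss})\bigr)\otimes\cO(Z_-)$.

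The remaining claim, $\rho(\g[u])\cO(G_-)=\mathfrak m_e$ for semisimple $\g$, is the entire content of the statement, and here your argument has a genuine gap. Any argument that tracks only leading terms must fail, because the associated graded version of the claim is false.

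In exponential coordinates $g=\exp(B)$, the lowest-order part of $\rho_\xi$ is the linear field $B\mapsto-[\xi,B]_-$. So on $\gr_{\mathfrak m_e}A\cong\Sym(\g[u])$ (via the residue pairing) the dressing action is the adjoint action of $\g[u]$; equivalently, the linearized bracket is Lie--Poisson. Its coinvariants are large. For any Lie algebra $L$ the span of Lie--Poisson brackets is $L\cdot\Sym L$. The identity $[\g[u],\g[u]]=\g[u]$ only handles polynomial degree one, and every invariant symmetric form on $\g[u]$ gives a functional on $\Sym^2(\g[u])$ that kills the leading-order image.

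Concretely, take spectral degree $2$ and polynomial degree $2$. The functions there are quadratics in $B_1$, and the only leading-order contributions come from $\rho_x$, $x\in\g$, acting on such quadratics (exactly, by conjugation). So the Killing quadratic $\Tr(\ad_{B_1}^2)$ is not the leading term of any $\rho_\xi f$. It lies in $\rho(\dplus)A$ only through a subleading term. For $\ell\in\g^\vee$ one computes $\rho_{xu}(\ell\circ B_1)=\ell([B_2,x])-\tfrac12\ell([B_1,[B_1,x]])$. Hence for dual bases $\{x_a\}$ of $\g$ and $\{\varepsilon^a\}$ of $\g^\vee$,
$\sum_a\rho_{x_au}(\varepsilon^a\circ B_1)=\Tr(\ad_{B_2})-\tfrac12\Tr(\ad_{B_1}^2)=-\tfrac12\Tr(\ad_{B_1}^2)$,
after the linear leading terms cancel.

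Your step ``the remaining terms \dots\ can be arranged to be of lower filtered complexity'' is exactly where this difficulty is hidden. One needs such exact cancellations in every bidegree, and nothing in your plan produces them. The fallback fails for the same reason; in addition, identifying $\rho(\dplus)A$ with $\{A,A\}$ is unjustified, since $\theta^R_\xi$ is not exact. Your leading-term description is also off: $\rho_{xu^m}(\ell\circ B_r)=\ell([B_{r+m},x])+\dots$ raises spectral degree by $m$ and never produces constants.

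The missing ingredient is non-classical, and the paper obtains it by duality. Under the restricted pairing $A\otimes U(u^{-1}\g[u^{-1}])\to\kk$, $\rho_\xi$ is minus the transpose of the action of $\xi\in\g[u]$ on the level-zero vacuum module $V_0(\g)\cong U(u^{-1}\g[u^{-1}])$. Since graded pieces are finite-dimensional, $\rho(\g[u])A=\mathfrak m_e$ is equivalent to $V_0(\g)^{\g[u]}=\kk\mathbf 1$. The paper quotes this from the noncritical-level triviality of the center, applied to the simple factors, with $\mathfrak z[u]$ acting trivially on the central factor.

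You can also prove it directly. Level $0$ is noncritical, so the Sugawara operator $L_0$ exists and acts as the degree operator on $V_0(\g)$, and each term of $L_0$ has its rightmost factor in $\g[u]$. Transposing, the spectral Euler operator $E_u$ on $A$ becomes a sum of operators of the form $\rho_\xi\circ(\cdots)$, so $q\,A_{[q]}\subset\rho(\dplus)A$ for every $q$. Semisimplicity enters through the nonvanishing Casimir eigenvalue on the adjoint representation, just as in the coefficient of $\Tr(\ad_{B_1}^2)$ above. This Casimir identity is what your induction would have to reproduce.
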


\begin{proof}
We analyze the quotient by passing to its graded dual. Using the spectral grading, PBW identifies the coordinate ring $A=\cO(G_-)$ with the graded restricted dual of $U(u^{-1}\g[u^{-1}])$, giving a non-degenerate restricted pairing
\begin{equation*}
\langle-,-\rangle:
A\otimes U(u^{-1}\g[u^{-1}])
\longrightarrow\bbC.
\end{equation*}
Under this pairing, the dressing action of $\xi\in\dplus=\g[u]$ on $A$ has as its negative transpose the standard positive-mode action of $\xi$ on $U(u^{-1}\g[u^{-1}])$:
\begin{equation}
\langle\rho_\xi f,Y\rangle
=-\langle f,\xi\mathbin{\cdot}Y\rangle,
\qquad
\xi\in\dplus,\quad f\in A,\quad Y\in U(u^{-1}\g[u^{-1}]).
\label{eq:rat-dressing-transpose}
\end{equation}
Here $\xi\mathbin{\cdot}Y$ is the induced $\g[u]$-action under the PBW identification
\begin{equation*}
V_0(\g)
=U(\g[u,u^{-1}])\otimes_{U(\g[u])}\bbC\mathbf1
\cong
U(u^{-1}\g[u^{-1}]),
\end{equation*}
where $\bbC\mathbf1$ is the trivial $\g[u]$-module.
On generators $y\in u^{-1}\g[u^{-1}]$, this gives $\xi\mathbin{\cdot}y=([\xi,y])_-$.
Consequently, the annihilator of $\rho(\dplus)A$ under the restricted pairing identifies with the positive-current invariant subspace:
\begin{equation*}
\bigoplus_{q\geq0}(\mathscr Q_{\whkappa})_{[q]}^\vee
\cong
\bigl(A/\rho(\dplus)A\bigr)_{\mathrm{res}}^\vee
\cong
U(u^{-1}\g[u^{-1}])^{\dplus}.
\end{equation*}

Identifying $U(u^{-1}\g[u^{-1}])$ with the level-zero affine vacuum module $V_0(\g)$, the noncritical-center result \cite[Sec.~9]{FrenkelWakimoto}, applied to the simple factors of $\g$, gives
\begin{equation*}
U(u^{-1}\g[u^{-1}])^{\g[u]}
=
U\bigl(u^{-1}\mathfrak z[u^{-1}]\bigr),
\end{equation*}
since $\mathfrak z[u]$ acts trivially on the central factor.

Under $\langle-,-\rangle$, the inclusion $U(u^{-1}\mathfrak z[u^{-1}])\hookrightarrow U(u^{-1}\g[u^{-1}])$ is the transpose of the restriction of functions $\cO(G_-)\to\cO(Z_-)$.
Since each graded piece is finite-dimensional, dualizing back identifies $\mathscr Q_{\whkappa}$ with $\cO(Z_-)$, whence $\mathscr B_{\whkappa}\cong\cO(Z_-)^\vee$.
\end{proof}
If moreover $\g$ is semisimple, then $\mathscr Q_{\whkappa}\cong\bbC$, $\mathscr B_{\whkappa}\cong\bbC$, and the unique block is evaluation at the identity.

There is also a natural geometric description of the coinvariants in terms of the Poisson structure on $G_-$. As recalled above, the Poisson tensor $\pi_{\mathrm{rat}}$ makes $\Omega^1_{G_-}$ a Lie algebroid. We use $\cO_{G_-}$ as a right module over this Lie algebroid, equipped with the flat right connection determined by
$$
f\mathbin{\cdot}(a\alpha)
=-\pi_{\mathrm{rat}}^\sharp(\alpha)(fa),
\qquad
f,a\in A,
$$
for every right-invariant one-form $\alpha$.  The right-invariant bracket identity \eqref{eq:manin-bracket} shows that this connection is flat.  Since $G_-$ is affine, we write $H_\bullet^{\mathrm{LR}}(\Omega^1_{G_-};\cO_{G_-})$ for the standard Lie algebroid homology (also called Lie--Rinehart homology) with this right-module structure \cite{HuebschmannBV}.

\begin{corollary}
\label{cor:1pt-coinv-LR-homology}
For every level there is a canonical identification
\begin{equation*}
\mathscr Q_{\whkappa}
\cong
H_0^{\mathrm{LR}}(\Omega^1_{G_-};\cO_{G_-}).
\end{equation*}
\end{corollary}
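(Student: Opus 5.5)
By Prop.~\ref{prop:1pt-action-coinv}, applied to $A=\cO(G_-)$ and $\mathfrak a=\dplus$, we already have $\mathscr Q_{\whkappa}\cong A/\rho(\dplus)A$ for every level. The plan is therefore to identify the zeroth Lie--Rinehart homology $H_0^{\mathrm{LR}}(\Omega^1_{G_-};\cO_{G_-})$ with the same quotient. Recall that for a Lie algebroid $L$ over $A$ and a right $L$-module $M$, the standard Chevalley--Eilenberg--Rinehart chain complex is $M\otimes_A\Lambda^\bullet_A L$. In degree one the differential is $m\otimes\alpha\mapsto m\cdot\alpha$. Hence $H_0^{\mathrm{LR}}=M/M\cdot L$, where $M\cdot L$ is the $\kk$-span of $m\cdot\alpha$ for $m\in M$ and $\alpha\in L$. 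With $M=A$ and $L=\Omega^1_{G_-}$, we need to show that this span equals $\rho(\dplus)A$.

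To compute $A\cdot\Omega^1_{G_-}$, we use the $A$-basis of right-invariant forms supplied by Cor.~\ref{thm:action-cotangent-comp}: every one-form is a finite sum $\sum_i a_i\theta^R_{\xi_i}$ with $\xi_i\in\dplus$. By the defining formula of the right connection, together with $\pi_{\mathrm{rat}}^\sharp(\theta^R_\xi)=-\rho_\xi$, we get
\begin{equation*}
f\cdot(a\theta^R_\xi)=-\pi_{\mathrm{rat}}^\sharp(\theta^R_\xi)(fa)=\rho_\xi(fa).
\end{equation*}
As $f$ and $a$ range over $A$ (take $a=1$), these elements span exactly $\rho(\dplus)A$. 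It follows that $A\cdot\Omega^1_{G_-}=\rho(\dplus)A$, and so $H_0^{\mathrm{LR}}\cong A/\rho(\dplus)A\cong\mathscr Q_{\whkappa}$.

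The point needing care is consistency. The defining formula is stated on elements $a\alpha$ with $\alpha$ right-invariant, so we must check that it is well defined on $\Omega^1_{G_-}$, i.e.\ compatible with the $A$-module structure, and that it is a genuine right module structure. Right-module axioms require $f\cdot(a\alpha)=(fa)\cdot\alpha$ and $(fa)\cdot\alpha=f\cdot(a\alpha)$ together with the Leibniz rule $(fb)\cdot\alpha=(f\cdot\alpha)b-f\,\pi^\sharp(\alpha)(b)$. Both follow from $(fa)\cdot\theta^R_\xi=\rho_\xi(fa)$ and the derivation property of $\rho_\xi$. Since $\{\theta^R_\xi\}$ is a free basis, there is no ambiguity. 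Flatness is the statement $[\rho_\xi,\rho_\eta]=\rho_{[\xi,\eta]}$ (Lem.~\ref{lem:rat-factor-poly-closure}), matched with \eqref{eq:manin-bracket}; the sign in \eqref{eq:manin-bracket} compensates the sign in $\jmath$.

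Finally, canonicity means independence of the chosen level and basis. Both quotients are quotients of $A$ by the same subspace, and the identification is induced by the identity map on $A$, composed with $\overline q_V$ from Prop.~\ref{prop:1pt-coinvariants}.
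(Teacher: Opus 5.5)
Your proposal is correct and follows the paper's own argument: you reduce to $A/\rho(\dplus)A$ via Prop.~\ref{prop:1pt-action-coinv}, then show that the degree-one Lie--Rinehart differential has image exactly $\rho(\dplus)A$ by evaluating it on the right-invariant frame. The paper writes this as $d_1(a\otimes\jmath(\xi))=a\cdot\jmath(\xi)=-\rho_\xi(a)$ with $\jmath(\xi)=-\theta^R_\xi$, which is equivalent to your $f\cdot\theta^R_\xi=\rho_\xi(f)$; your extra checks of well-definedness and flatness of the right connection agree with the paper (which takes them as given before the corollary), though you state the axiom $f\cdot(a\alpha)=(fa)\cdot\alpha$ twice.
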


\begin{proof}
For $\xi\in\dplus$, one has $\pi_{\mathrm{rat}}^\sharp(\jmath(\xi))=\rho_\xi$.  Thus the degree-one differential satisfies
$$
d_1\bigl(a\otimes\jmath(\xi)\bigr) =a\mathbin{\cdot}\jmath(\xi) =-\rho_\xi(a).
$$
Its image is therefore $\rho(\dplus)A$, and hence
$$
H_0^{\mathrm{LR}}(\Omega^1_{G_-};\cO_{G_-}) \cong A/\rho(\dplus)A,
$$
\end{proof}

\subsection{Convolution products on one-point blocks}
\label{sec:block-conv}
In this subsection, we examine the algebraic consequences of the coproduct for coinvariants and conformal blocks. We first show that sphere one-point coinvariants commute with tensor products. Consequently, any quasi-conformally equivariant coproduct descends to a coalgebra structure on the coinvariants, inducing an associative convolution product on the space of conformal blocks.

\begin{lemma}
\label{lem:block-ext-prod-descent}
Let $V,W$ be quasi-conformal vertex algebras, and equip $V\otimes W$ with the diagonal quasi-conformal structure. The quotient maps induce a canonical isomorphism
\begin{equation*}
H(\mathbb P^1,0,V\otimes W)
\cong H(\mathbb P^1,0,V)\otimes H(\mathbb P^1,0,W).
\end{equation*}
\end{lemma}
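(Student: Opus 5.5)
The plan is to reduce everything to weight zero using Prop.~\ref{prop:1pt-coinvariants}, applied three times: to $V$, to $W$, and to $V\otimes W$. For this to make sense, $V\otimes W$ with the diagonal action $\mathscr L_n^{V\otimes W}=\mathscr L_n\otimes\id+\id\otimes\mathscr L_n$ must itself be a nonnegatively graded quasi-conformal vertex algebra. I would first check this directly. The bracket relations hold termwise. Continuity and local nilpotence of $\Der_+\cO_z$ follow from the grading, since $\mathscr L_n$ lowers total degree by $n$. Covariance \eqref{eq:qc-covariance} for $Y(a\otimes b,z)=Y(a,z)\otimes Y(b,z)$ follows from the Leibniz rule together with the binomial identity $\binom{m+1}{j}=\sum_{i+k=j}\binom{?}{}$-type splitting. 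Concretely, the right-hand side of \eqref{eq:qc-covariance} is obtained by writing $z^{m+1}$-weighted Taylor expansions, $\sum_j\binom{m+1}{j}z^{m+1-j}\mathscr L_{j-1}$ as the action of the vector field $z^{m+1}\partial_z$ in the shifted coordinate, and this is compatible with tensor products because the diagonal action is a Lie algebra action.

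Granting this, the weight-zero piece of the tensor product is $(V\otimes W)_0=V_0\otimes W_0$, and its weight-one piece is $(V\otimes W)_1=V_1\otimes W_0\oplus V_0\otimes W_1$. Since $\mathscr L_1$ kills $V_0$ and $W_0$ (its degree is $-1$), we get
\begin{equation*}
\mathscr L_1^{V\otimes W}(V\otimes W)_1=\mathscr L_1V_1\otimes W_0+V_0\otimes\mathscr L_1W_1 .
\end{equation*}
The elementary linear-algebra identity
\begin{equation*}
(V_0\otimes W_0)/(X\otimes W_0+V_0\otimes Y)\cong V_0/X\otimes W_0/Y
\end{equation*}
then gives $H(\mathbb P^1,0,V\otimes W)\cong H(\mathbb P^1,0,V)\otimes H(\mathbb P^1,0,W)$ through the isomorphisms $\overline q$.

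It remains to check that this isomorphism is the one induced by the quotient maps. For that I would verify that the map $V\otimes W\to H(V)\otimes H(W)$, $v\otimes w\mapsto[v]\otimes[w]$, kills $\mathfrak g_{\mathrm{out}}(V\otimes W)(V\otimes W)$. On weight zero the map agrees with $q_V\otimes q_W$. Every positive-weight element of $V\otimes W$ is a sum of $v\otimes w$ with $v$ or $w$ of positive weight, and such an element lies in $N_V\otimes W+V\otimes N_W$ because $V_r\subseteq N_V$ for $r>0$. Hence the canonical quotient map factors and coincides with the isomorphism above.

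The main obstacle is the first step: confirming that the diagonal structure satisfies the quasi-conformal axioms, and in particular \eqref{eq:qc-covariance}. If a direct binomial check becomes messy, I would instead verify it on pure tensors of generators and propagate it using the closure argument for $u_{(j)}v$ cited from \cite{MingLiSemiConformal} in the proof of Prop.~\ref{prop:mt-qc}.
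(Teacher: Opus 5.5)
Your reduction to weight zero is valid when $V$ and $W$ are $\mathbb Z_{\geq0}$-graded, but the lemma is stated for arbitrary quasi-conformal vertex algebras, and it is used in that generality. The paper's definition allows $\mathbb Z$-gradings, and through Prop.~\ref{prop:block-ord-descent} the lemma is applied in Sec.~\ref{sec:mt-blocks} to the mini-twistor structure on $\mathcal Y=\Ych(T^*\g)$. There $a\in A_{[q]}$ has $\mathscr L_0^{\mathrm{MT}}$-weight $-q$, so weights are unbounded below and Prop.~\ref{prop:1pt-coinvariants} is unavailable. Each step of your argument then breaks:
\begin{enumerate}[label=(\roman*)]
\item $(V\otimes W)_0=\bigoplus_nV_n\otimes W_{-n}$ rather than $V_0\otimes W_0$.
\item $\mathscr L_1$ need not kill weight zero. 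For $a\in A_{[1]}$, $\partial a$ has MT weight $0$, but $\mathscr L_1^{\mathrm{MT}}\partial a=2\mathscr L_0^{\mathrm{MT}}a=-2a$.
\item The proof that $V_r\subseteq N_V$ for $r>0$ rests on $\mathscr L_1^ja=0$ for $j>r$. Without that, opposite modes of negative index, which are not outgoing, enter the formula for $a_{(-1)}$.
\end{enumerate}
In fact the conclusion of Prop.~\ref{prop:1pt-coinvariants} fails in this setting. For reductive $\g$, Lem.~\ref{lem:mt-normal-form} shows that $\sum_i(\Lcur_{x_iu^2})_{(-3)}(\Lcur_{x^iu^2})_{(-3)}\mathbf1$, with $\{x_i\},\{x^i\}$ dual bases for an invariant form, has MT weight $2$ and a nonzero class in the coinvariants.

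What is missing is a description of $N_{V\otimes W}$ that never uses a lower bound on weights. The paper extends the opposite modes \eqref{eq:1pt-outgoing-op} to all indices $r\in\mathbb Z$; the sums over $j$ are finite by local nilpotence of $\mathscr L_1$. On $V\otimes W$ the diagonal operators $e^{-z\mathscr L_1}$ and $(-z^2)^{\mathscr L_0}$ are tensor products of the corresponding operators on $V$ and $W$. Hence the opposite modes factorize as $(a\otimes b)^o_m(v\otimes w)=\sum_{p+q=m-1}a^o_pv\otimes b^o_qw$, a finite sum because $a^o_pv=0$ for $p\ll0$. For $m\geq0$, the constraint $p+q\geq-1$ forces $p\geq0$ or $q\geq0$, so $N_{V\otimes W}\subseteq N_V\otimes W+V\otimes N_W$. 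Taking $b=\mathbf1$ or $a=\mathbf1$ and using $\mathbf1^o_r=\delta_{r,-1}\id$ gives the reverse inclusion. This equality of subspaces is exactly the form in which Prop.~\ref{prop:block-ord-descent} uses the lemma.

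Separately, the step you singled out as the main obstacle is not one. Since $Y(a\otimes b,z)=Y(a,z)\otimes Y(b,z)$ and the right-hand side of \eqref{eq:qc-covariance} is linear in $\mathscr L_{j-1}v$, covariance for the diagonal action follows at once from covariance on each factor. No binomial splitting is needed, and the statement already presupposes this structure.
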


\begin{proof}
Write $N_V:=\mathfrak g_{\mathrm{out}}(V)V$, and similarly for $W$ and $V\otimes W$. For homogeneous $a\in V_d$, extend the opposite modes in~\eqref{eq:1pt-outgoing-op} to all integer indices by
\begin{equation*}
a_r^o:=(-1)^d\sum_{j\geq0}
\frac{(\mathscr L_1^ja)_{(2d-r-j-2)}}{j!},
\qquad r\in\mathbb Z.
\end{equation*}
The sum is finite by local nilpotence of $\mathscr L_1$. Since the $\mathscr{L}_1$ action on $V\otimes W$ is diagonal, for homogeneous $a\in V$, $b\in W$ one has
\begin{equation*}
(a\otimes b)_m^o(v\otimes w)
=\sum_{\substack{p,q\in\mathbb Z\\p+q=m-1}}
a_p^ov\otimes b_q^ow,
\qquad m\geq0.
\end{equation*}
The vertex-algebra axiom: $c_{(n)}v=0$ for sufficiently large $n$ (with $c,v$ fixed) implies $a_p^ov=0$ for sufficiently negative $p$, and similarly for $b_q^ow$, so the sum is finite. Although either component index may be negative, $p+q=m-1\geq-1$ implies $p\geq0$ or $q\geq0$. Hence $N_{V\otimes W}\subseteq N_V\otimes W+V\otimes N_W$. For the reverse inclusion, the vacuum identities give $\mathbf1_{-1}^o=\id$ and $\mathbf1_r^o=0$ for $r\ne-1$; taking $b=\mathbf1$ or $a=\mathbf1$ in the tensor-product identity therefore gives
\begin{equation*}
N_{V\otimes W}=N_V\otimes W+V\otimes N_W,
\end{equation*}
which gives the asserted quotient isomorphism.
\end{proof}

\begin{proposition}
\label{prop:block-ord-descent}
Let $(V,\Delta,\epsilon)$ be a vertex bialgebra equipped with a quasi-conformal structure such that
\begin{equation*}
\Delta\mathscr L_n
=(\mathscr L_n\otimes\id+\id\otimes\mathscr L_n)\Delta,
\qquad
\epsilon\mathscr L_n=0,
\qquad n\geq-1,
\end{equation*}
Then $\Delta$ and $\epsilon$ induce a counital coalgebra structure on $H(\mathbb P^1,0,V)$. Its full dual $C(\mathbb P^1,0,V)$ is an associative unital algebra under
\begin{equation*}
(\lambda\star\mu)(v)
=(\lambda\otimes\mu)\Delta(v),
\qquad \mathbf1=\epsilon,
\end{equation*}
where blocks are viewed as functionals on $V$ annihilating $\mathfrak g_{\mathrm{out}}(V)V$.
\end{proposition}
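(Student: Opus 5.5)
The plan is to show that $\Delta$ and $\epsilon$ carry $N_V:=\mathfrak g_{\mathrm{out}}(V)V$ into the corresponding outgoing subspaces. Once that holds, they descend to the coinvariants, and the remaining identities are inherited from $V$. Concretely, I will prove
\begin{equation*}
\Delta(N_V)\subseteq N_{V\otimes V}=N_V\otimes V+V\otimes N_V,
\qquad
\epsilon(N_V)=0,
\end{equation*}
where the equality on the right is Lem.~\ref{lem:block-ext-prod-descent}, and $V\otimes V$ carries the diagonal quasi-conformal structure.

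\textbf{Step 1: $\Delta$ intertwines opposite modes.} Let $a\in V_d$ be homogeneous. By the proof of Prop.~\ref{prop:1pt-coinvariants}, $N_V$ is spanned by the vectors $a_m^o v$ with $m\geq0$, where $a_m^o$ is the explicit combination \eqref{eq:1pt-outgoing-op} of the ordinary modes $(\mathscr L_1^ja)_{(k)}$. Since $\Delta$ is a vertex-algebra morphism, $\Delta(b_{(k)}v)=\Delta(b)_{(k)}\Delta(v)$. The hypothesis gives $\Delta\mathscr L_1=(\mathscr L_1\otimes\id+\id\otimes\mathscr L_1)\Delta$, and the case $n=0$ shows that $\Delta$ preserves the grading. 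Hence $\Delta(a)$ is a sum of homogeneous tensors of total weight $d$, and
\begin{equation*}
\Delta(\mathscr L_1^ja)=\mathscr L_1^{j}\Delta(a)
\end{equation*}
for the diagonal $\mathscr L_1$. Substituting into \eqref{eq:1pt-outgoing-op} gives $\Delta(a_m^ov)=\Delta(a)_m^o\Delta(v)$, with the opposite mode on the right computed in $V\otimes V$. This lies in $N_{V\otimes V}$. A cleaner alternative is to note that $\Delta$ induces a morphism of vertex-algebra bundles on $\mathbb P^1\setminus\{0\}$ compatible with $\nabla$, since it is $\operatorname{Aut}\mathcal O$-equivariant. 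It therefore maps $\mathfrak g_{\mathrm{out}}(V)$ into $\mathfrak g_{\mathrm{out}}(V\otimes V)$ compatibly with the residue actions \eqref{eq:global-sec-local-action}. I will use the explicit mode computation, since it avoids the integration issue.

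\textbf{Step 2: the counit.} Regard $\kk$ as the trivial quasi-conformal vertex algebra with all $\mathscr L_n$ equal to $0$. For the same reason as in Step 1, $\epsilon(a_m^ov)=\epsilon(a)_m^o\epsilon(v)$. For $\kk$, the only nonzero opposite mode is $\mathbf1^o_{-1}=\id$, which has negative index, so $\epsilon(a_m^ov)=0$ for $m\geq0$. More directly, $\epsilon$ is graded and $\epsilon\mathscr L_1=0$, so $\epsilon$ kills $\bigoplus_{r>0}V_r$ and $\mathscr L_1V_1$. By Prop.~\ref{prop:1pt-coinvariants} these two subspaces span $N_V$.

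\textbf{Step 3: the coalgebra and the convolution algebra.} By Steps 1 and 2, $\Delta$ and $\epsilon$ descend to maps
\begin{equation*}
H(\mathbb P^1,0,V)\to H(\mathbb P^1,0,V)^{\otimes2},
\qquad
H(\mathbb P^1,0,V)\to\kk,
\end{equation*}
using the identification of Lem.~\ref{lem:block-ext-prod-descent}. Coassociativity and counitality follow from those on $V$, because the quotient maps are surjective and natural for iterated tensor products. Dually, $C(\mathbb P^1,0,V)=H(\mathbb P^1,0,V)^\vee$, and the full dual of any coalgebra is an associative unital algebra under convolution, with unit the counit. Under the identification of blocks with functionals on $V$ that vanish on $N_V$, this product is $(\lambda\star\mu)(v)=(\lambda\otimes\mu)\Delta(v)$. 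It is well defined because $\lambda\otimes\mu$ vanishes on $N_V\otimes V+V\otimes N_V$.

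The main obstacle is Step 1: verifying that the opposite modes, which involve $\mathscr L_1$-corrections, are carried to the opposite modes of the diagonal structure on $V\otimes V$. This is exactly where the hypothesis on $\Delta\mathscr L_n$ enters. Only $n=0$ and $n=1$ are needed here, because \eqref{eq:1pt-outgoing-op} involves only $\mathscr L_0$-weights and powers of $\mathscr L_1$.
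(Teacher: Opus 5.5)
Your proposal is correct and matches the paper's proof. The paper likewise obtains $\Delta(a^o_mv)=\Delta(a)^o_m\Delta(v)\in N\otimes V+V\otimes N$ from the morphism property and Lem.~\ref{lem:block-ext-prod-descent}, gets $\epsilon(N)=0$ because the outgoing operators on $\kk$ vanish, and then dualizes. One caveat: your ``more direct'' counit argument through Prop.~\ref{prop:1pt-coinvariants} only works when $V$ is nonnegatively graded, so for $\mathbb Z$-graded structures such as the mini-twistor one you should keep the opposite-mode argument, taking the sum in \eqref{eq:1pt-outgoing-op} over all $j\geq0$ (it is finite because $\mathscr L_1$ is locally nilpotent).
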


\begin{proof}
Put $N:=\mathfrak g_{\mathrm{out}}(V)V$. Vertex bialgebra axiom and Lem.~\ref{lem:block-ext-prod-descent} give
\begin{equation*}
\Delta(a_m^ov)
=(\Delta(a))_m^o\Delta(v)
\in N\otimes V+V\otimes N,
\qquad m\geq0.
\end{equation*}
Since the outgoing operators on $\kk$ vanish, we obtain the coideal conditions
\begin{equation*}
\Delta(N)\subseteq N\otimes V+V\otimes N,
\qquad \epsilon(N)=0.
\end{equation*}
Thus $V/N$ inherits the coproduct and counit. Dualizing gives the associative product and unit on $C(\mathbb P^1,0,V)$.
\end{proof}

More generally, the same argument associates to any vertex-algebra morphism $f:U\to V\otimes W$ that is compatible with the quasi-conformal structures a bilinear map
\begin{equation*}
C(\mathbb P^1,0,V)\otimes C(\mathbb P^1,0,W) \longrightarrow C(\mathbb P^1,0,U), \qquad \lambda\otimes\mu\longmapsto(\lambda\otimes\mu)\circ f.
\end{equation*}

We now apply Prop.~\ref{prop:block-ord-descent} to the standard quasi-conformal structure on $\Ych(T^*\g)$, using Thm.~\ref{thm:zero-coprod-qc}. Assume that $\kk=\bbC$ and $\g$ is reductive. On coordinate functions $a \in A$, we have $\Delta=m^*$.  Restricting both group variables to $Z_-$ gives $m_{Z_-}^*$, and the counit restricts to evaluation at the identity. Thus we have
\begin{corollary}
\label{prop:ord-block-conv}
Under $\mathscr Q_0\cong\cO(Z_-)$ from Cor.~\ref{thm:reductive-1pt-blocks}, the descended coproduct is
\begin{equation*}
\overline\Delta =m_{Z_-}^*:
\cO(Z_-)\longrightarrow\cO(Z_-)\otimes\cO(Z_-),
\end{equation*}
where $m_{Z_-}:Z_-\times Z_-\to Z_-$ is multiplication.
\end{corollary}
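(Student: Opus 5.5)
The plan is to compute the descended coproduct on representatives in weight zero, and then transport it through the isomorphism of Cor.~\ref{thm:reductive-1pt-blocks}.

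\emph{Descent of the coproduct.} Prop.~\ref{prop:block-ord-descent}, combined with Thm.~\ref{thm:zero-coprod-qc}, already provides a coproduct $\overline\Delta$ on $\mathscr Q_0=H(\mathbb P^1,0,\Ych(T^*\g))$. It is defined by choosing a representative $v$ of a class, applying $\Delta$, and reducing modulo $N\otimes V+V\otimes N$, where $N=\mathfrak g_{\mathrm{out}}(V)V$. By Prop.~\ref{prop:1pt-coinvariants}, every class has a representative $f\in A=V_0$. Since $\Delta$ preserves the grading, $\Delta(f)=m^*f$ lies in $V_0\otimes V_0=A\otimes A$. Lem.~\ref{lem:block-ext-prod-descent} identifies the coinvariants of $V\otimes V$ with $\mathscr Q_0\otimes\mathscr Q_0$, and in weight zero this becomes $A/\rho(\dplus)A\otimes A/\rho(\dplus)A$. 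Therefore $\overline\Delta[f]$ is the class of $m^*f$ in this tensor product. Likewise, $\overline\epsilon[f]=f(e)$.

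\emph{Transport to $Z_-$.} Next I transport this through the restriction isomorphism $r:A/\rho(\dplus)A\xrightarrow{\sim}\cO(Z_-)$ of Cor.~\ref{thm:reductive-1pt-blocks}. I need the identity
\[
(r\otimes r)(m^*f)=m_{Z_-}^*(r f).
\]
This holds because $Z_-\subset G_-$ is a subgroup, so $m\circ(\iota\times\iota)=\iota\circ m_{Z_-}$, where $\iota:Z_-\to G_-$ is the inclusion.

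The one point needing care is that $r\otimes r$ really identifies the quotient $(A/\rho A)^{\otimes 2}$ with $\cO(Z_-)^{\otimes 2}$. Its kernel is $\rho A\otimes A+A\otimes\rho A$, and this is exactly $\ker(r\otimes r)$, because $\ker(r\otimes r)=\ker r\otimes A+A\otimes\ker r$ for linear maps over a field. The counit matches because $e\in Z_-$, so evaluation at $e$ factors through restriction.

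\emph{Main obstacle.} The only subtle step is justifying that weight-zero representatives suffice on the tensor side. For this I use that $N_{V\otimes V}\cap(V\otimes V)_0$ equals the weight-zero part of $N\otimes V+V\otimes N$, which is $\rho A\otimes A+A\otimes\rho A$. This follows from gradedness of $N$ (Prop.~\ref{prop:1pt-coinvariants}) together with Lem.~\ref{lem:block-ext-prod-descent}.
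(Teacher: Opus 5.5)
Your proposal is correct and follows the same route as the paper: descend $\Delta$ via Prop.~\ref{prop:block-ord-descent} and Thm.~\ref{thm:zero-coprod-qc}, use $\Delta=m^*$ on weight-zero representatives, and restrict both group variables to the subgroup $Z_-$ to get $m_{Z_-}^*$. You additionally spell out steps the paper leaves implicit: the gradedness of $N$, the identification $\ker(r\otimes r)=\rho A\otimes A+A\otimes\rho A$, and the identity $m\circ(\iota\times\iota)=\iota\circ m_{Z_-}$.
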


Consequently, $\mathscr B\cong\cO(Z_-)^\vee$ is a commutative convolution algebra with product
\begin{equation*}
(\Lambda_1\star\Lambda_2)(f) = (\Lambda_1\otimes\Lambda_2)(m_{Z_-}^*f),
\qquad
\mathbf1_{\mathscr B} = \operatorname{ev}_e.
\end{equation*}
At non-zero levels, the maps of Thm.~\ref{thm:level-add-coprod} also intertwine the standard quasi-conformal structures. The preceding construction therefore gives a level-colored convolution
\begin{equation*}
\star_{\whkappa_1,\whkappa_2}: \mathscr B_{\whkappa_1}\otimes\mathscr B_{\whkappa_2}\longrightarrow\mathscr B_{\whkappa_1+\whkappa_2}.
\end{equation*}
Under the identification $\mathscr B_{\whkappa}\cong\cO(Z_-)^\vee$ of Cor.~\ref{thm:reductive-1pt-blocks}, this operation is again given by $m_{Z_-}^*$, since the correction $h_{\whkappa_2}$ in \eqref{eq:level-add-currents} has no weight-zero contribution.

\subsection{The algebra of mini-twistor blocks}
\label{sec:mt-blocks}

We abbreviate the level-zero chiral cotangent Yangian by $\mathcal Y:=\Ych(T^*\g)$.  Under the mini-twistor quasi-conformal structure, we study the algebra of sphere one-point blocks.  Write
\begin{equation*}
\mathscr B^{\mathrm{MT}}
:=C(\mathbb P^1,0,\mathcal Y)
=\left(\mathcal Y/
\mathfrak g_{\mathrm{out}}(\mathcal Y)\mathcal Y\right)^\vee.
\end{equation*}
By Thm.~\ref{thm:zero-coprod-qc} and Prop.~\ref{prop:block-ord-descent}, the coproduct defines an associative unital product on $\mathscr B^{\mathrm{MT}}$ by convolution.

Recall that the minitwistor space $\mathbb{MT}$ is defined to be $\operatorname{Tot}_{\mathbb P^1}\cO(2)$. We denote the projection map $p:\mathbb{MT} \to \mathbb P^1$, and $\Sigma_0 \cong \mathbb{P}^1$ the zero section. Let $U_0,U_\infty$ be the standard affine charts of $\mathbb P^1$, we define $\widetilde{U}_i=p^{-1}(U_i)$ for $i=0,\infty$ and $\widetilde{U}_{01}=\widetilde{U}_0\cap \widetilde{U}_\infty$. Write $\underline{\g}=\g\otimes\cO_{\mathrm{MT}}$, we denote sections of $\underline{\g}$ on the overlap and on the two charts as
\begin{equation*}
\mathfrak D_+=\Gamma(\widetilde U_{01},\underline\g),
\qquad
\mathfrak D_+^i=\Gamma(\widetilde U_i,\underline\g),
\quad i=0,\infty.
\end{equation*}
Let $ \mathcal P_{\Sigma_0} :=\cO_{\mathrm{MT}}(*\Sigma_0)/\cO_{\mathrm{MT}} $ be the sheaf of principal parts along $\Sigma_0$. Then define
\begin{equation*}
\mathfrak D_- :=\Gamma(\widetilde U_{01},\underline\g\otimes\mathcal P_{\Sigma_0}),
\qquad
\mathfrak D_-^i :=\Gamma(\widetilde U_i,\underline\g\otimes\mathcal P_{\Sigma_0}),
\quad i=0,\infty,
\end{equation*}
We can choose coordinates $(t,u)$ and $(z,w)$ on $\widetilde{U}_0$ and $\widetilde{U}_\infty$, related by $z=t^{-1}, w=ut^{-2}$. In these coordinates we can identify  
\begin{equation*}
\begin{aligned}
&\mathfrak D_+=\g[u,t,t^{-1}], \quad  \mathfrak D_+^{0}=\g[u,t], \quad \mathfrak D_+^{\infty}=\g[w,z]\\
&\mathfrak D_-=u^{-1}\g[u^{-1},t,t^{-1}],  \quad \mathfrak D_-^{0}=u^{-1}\g[u^{-1},t], \quad \mathfrak D_-^{\infty}=w^{-1}\g[w^{-1},z].
\end{aligned}
\end{equation*}

We also define $\mathfrak D :=\Gamma(\widetilde U_{01}\setminus\Sigma_0,\underline\g) =\g[u,u^{-1},t,t^{-1}] =\mathfrak D_+\oplus\mathfrak D_-$.
Give $\mathfrak D$ the bidegree
\begin{equation*}
\operatorname{bideg}(xu^mt^k):=(m,-k),
\qquad m,k\in\mathbb Z,
\end{equation*}
which extend additively to enveloping algebras. A nonzero homogeneous element of $\mathfrak D_\pm$ of bidegree $(a,b)$ belongs to $\mathfrak D_\pm^0$ exactly when $b\leq0$, and to $\mathfrak D_\pm^\infty$ exactly when $b\geq2a$, as $xu^at^{-b}=xw^az^{b-2a}$. 

When writing these degrees in the subscript, we let $(-)_{[q]}$ denote the spectral $u$-degree. This is compatible with our previous notation for the vertex algebra, where we denote
\begin{equation*}
\mathcal Y_{[q],h}:=\mathcal Y_{[q]}\cap\mathcal Y_h,
\qquad
\mathcal Y=\bigoplus_{q,h\geq0}\mathcal Y_{[q],h},
\end{equation*}
For a nonzero homogeneous $v\in\mathcal Y_{[q],h}$, one has $\operatorname{wt}_{\mathrm{MT}}(v)=h-q$. Thus a block supported on $\mathcal Y_{[q]}$ has spectral degree $-q$, and we define
\begin{equation*}
\mathscr B_{[-q]}^{\mathrm{MT}}
:=\left\{\lambda\in\mathscr B^{\mathrm{MT}}:
\lambda(\mathcal Y_{[q']})=0\text{ for }q'\ne q\right\},
\qquad q\geq0.
\end{equation*}
The full block space is therefore
\begin{equation*}
\mathscr B^{\mathrm{MT}}
=\prod_{q\geq0}\mathscr B_{[-q]}^{\mathrm{MT}}.
\end{equation*}
We give it the spectral topology, whose neighborhoods of zero are the subspaces $\prod_{q\geq N}\mathscr B_{[-q]}^{\mathrm{MT}}$.
Since $\Delta$ preserves spectral degree, the convolution product satisfies
\begin{equation*}
(\lambda\star\mu)_{[-q]}
=\sum_{i+j=q}\lambda_{[-i]}\star\mu_{[-j]}.
\end{equation*}
Each component is a finite sum, so the product is continuous in the spectral topology.

The decomposition $\mathfrak D=\mathfrak D_+\oplus\mathfrak D_-$ defines a matched pair of Lie algebras. Write $U(-)$ for enveloping algebras, with primitive coproduct $\Delta$, counit $\epsilon$, and antipode $S$. The two PBW projections $p_\pm:U(\mathfrak D)\to U(\mathfrak D_\pm)$ are characterized by $p_+(XY)=X\epsilon(Y)$ and $p_-(YX)=Y\epsilon(X)$ for $X\in U(\mathfrak D_+)$ and $Y\in U(\mathfrak D_-)$. For $U(\mathfrak D^i_-)$, we write $\widehat{(-)}$ for completion in spectral degree: $\widehat{U(\mathfrak D_-^i)} = \prod_{d\geq0}U(\mathfrak D_-^i)_{[-d]}$, and similarly for $\mathfrak D_-$.

We use the construction of Koppinen smash product for a matched pair of Lie algebras, which is reviewed in Appendix~\ref{app:kop} (Prop.~\ref{prop:kop-matched-pair}).
\begin{definition}
\label{def:alg-K}
For the matched pair $\mathfrak D = \mathfrak D_+ \oplus \mathfrak D_-$, the algebra $\mathcal K$ is the completion
\begin{equation*}
\mathcal K := \prod_{q\geq0}\mathcal K_{[-q]}
= \operatorname{Hom}_{\mathbb C}\bigl(U(\mathfrak D_+),\widehat{U(\mathfrak D_-)}\bigr),
\qquad
\mathcal K_{[-q]} := \bigoplus_{p+d=q} \operatorname{Hom}_{\mathbb C}\bigl(U(\mathfrak D_+)_{[p]},U(\mathfrak D_-)_{[-d]}\bigr),
\end{equation*}
equipped with the product $(F\diamond G)_{[-q]} := \sum_{i+j=q}F_{[-i]}\diamond G_{[-j]}$, where for homogeneous $F,G$ and $X\in U(\mathfrak D_+)$,
\begin{equation}
(F\diamond G)(X) := \sum_{(X)} Y_{(2)} G\bigl(X_{(2)}\triangleleft Y_{(1)}\bigr),
\qquad
Y=F(X_{(1)}),\quad X\triangleleft Y:=p_+(S(Y)X).
\label{eq:mt-kop-prod}
\end{equation}
\end{definition}

Since $U(\mathfrak D_+)_{[p]}\triangleleft U(\mathfrak D_-)_{[-e]}\subseteq U(\mathfrak D_+)_{[p-e]}$, the product is graded: $\mathcal K_{[-q]}\diamond\mathcal K_{[-q']}\subseteq\mathcal K_{[-(q+q')]}$, so each $(F\diamond G)_{[-q]}$ is a finite sum, making $\mathcal K$ an associative topological algebra with unit $X\mapsto\epsilon(X)1$.

The opposite PBW ordering defines for each $F\in\mathcal K$ the reordered map $F^R\in\mathcal K$,
\begin{equation}
F^R(X) := \sum_{(X)}p_-\bigl(S(X_{(1)})F(X_{(2)})\bigr), \qquad X\in U(\mathfrak D_+).
\label{eq:mt-reorder-map}
\end{equation}
By Lem.~\ref{lem:kop-opp-PBW}, the assignment $F\mapsto F^R$ is a degree-preserving algebra isomorphism with $(F\diamond G)^R = F^R\diamond_R G^R$ and inverse $F(X)=\sum_{(X)}p_-\bigl(X_{(1)}F^R(X_{(2)})\bigr)$.

\begin{propositiondefinition}
\label{prop:mt-1chart-algs}
Set
\begin{equation}
\begin{aligned}
\mathcal K(U_\infty)
&:=\{F\in\mathcal K:
\operatorname{im}F\subseteq\widehat{U(\mathfrak D_-^\infty)}\},\\
\mathcal K(U_0)
&:=\{F\in\mathcal K:
\operatorname{im}F^R\subseteq\widehat{U(\mathfrak D_-^0)}\},\\
\mathcal K(U_0,U_\infty)
&:=\mathcal K(U_0)\cap\mathcal K(U_\infty).
\end{aligned}
\label{eq:mt-1chart-algs}
\end{equation}
These are closed unital subalgebras of $\mathcal K$.
\end{propositiondefinition}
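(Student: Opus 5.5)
We prove closedness, unitality and multiplicativity separately for $\mathcal K(U_\infty)$ and $\mathcal K(U_0)$; the intersection then inherits all three properties.

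\emph{Closedness and unit.} The conditions $\operatorname{im}F\subseteq\widehat{U(\mathfrak D_-^\infty)}$ and $\operatorname{im}F^R\subseteq\widehat{U(\mathfrak D_-^0)}$ are imposed degree by degree. The map $F\mapsto F^R$ is degree-preserving, so each condition is a conjunction of conditions on the components $F_{[-q]}$. Hence both subspaces are closed in the spectral topology. The unit $X\mapsto\epsilon(X)1$ lies in $\mathcal K(U_\infty)$. We also have $\epsilon^R(X)=\sum p_-(S(X_{(1)})\epsilon(X_{(2)}))=p_-(S(X))=\epsilon(X)1$, so the unit lies in $\mathcal K(U_0)$ as well.

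\emph{Multiplicativity of $\mathcal K(U_\infty)$.} By continuity and gradedness it suffices to treat homogeneous $F,G$. In formula \eqref{eq:mt-kop-prod}, $Y=F(X_{(1)})\in U(\mathfrak D_-^\infty)$ by hypothesis. Since $U(\mathfrak D_-^\infty)$ is a sub-Hopf algebra of $U(\mathfrak D_-)$ ($\mathfrak D_-^\infty$ is a Lie subalgebra), we get $Y_{(1)}\otimes Y_{(2)}\in U(\mathfrak D_-^\infty)^{\otimes 2}$. We also have $G(X_{(2)}\triangleleft Y_{(1)})\in U(\mathfrak D_-^\infty)$ by hypothesis on $G$, whatever the argument. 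So each summand $Y_{(2)}G(\cdots)$ is a product in the subalgebra $U(\mathfrak D_-^\infty)$. This step is purely formal and needs no information about the right action.

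\emph{Multiplicativity of $\mathcal K(U_0)$.} We transport the problem via the reordering isomorphism. By Lem.~\ref{lem:kop-opp-PBW}, $(F\diamond G)^R=F^R\diamond_R G^R$, where $\diamond_R$ is the Koppinen product for the opposite PBW ordering. The reversed ordering is equivalent to the matched pair with the roles of left and right swapped, i.e.\ the left action $Y\triangleright X$ of $U(\mathfrak D_-)$ on $U(\mathfrak D_+)$. We expect $\diamond_R$ to have the mirror shape
\[
(F^R\diamond_R G^R)(X)=\sum F^R\bigl(Y'_{(1)}\triangleright X_{(1)}\bigr)\,Y'_{(2)}\quad\text{or}\quad\sum G^R(\cdots)\,F^R(\cdots),
\]
namely a product of values of $F^R$ and $G^R$ and coproduct components of such values, with arbitrary arguments in $U(\mathfrak D_+)$. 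Given this shape, the same sub-Hopf-algebra argument, now with $\mathfrak D_-^0\subset\mathfrak D_-$, shows that $\operatorname{im}(F\diamond G)^R\subseteq\widehat{U(\mathfrak D_-^0)}$.

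The main obstacle is extracting from Lem.~\ref{lem:kop-opp-PBW} and Appendix~\ref{app:kop} the explicit form of $\diamond_R$, and checking that it has the "outputs only multiply outputs" structure. If the lemma only gives the abstract identity, we will instead verify it directly. We write $F^R(X)=\sum p_-(S(X_{(1)})F(X_{(2)}))$, expand $(F\diamond G)^R$ using the characterization of $p_\pm$ and the relation $YX=\sum (Y_{(1)}\triangleright X_{(1)})(Y_{(2)}\triangleleft X_{(2)})$ in $U(\mathfrak D)$, and reorganize until only $F^R$- and $G^R$-values appear as $U(\mathfrak D_-)$-factors. Throughout, we will keep track of the fact that all sums are finite in each spectral degree, so that the completions cause no trouble.
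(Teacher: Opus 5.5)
Your proposal is correct and follows the paper's proof. The $\mathcal K(U_\infty)$ case uses the sub-Hopf-algebra property of $U(\mathfrak D_-^\infty)$ in the Koppinen formula, the $\mathcal K(U_0)$ case transports the same argument through $(F\diamond G)^R=F^R\diamond_R G^R$, and the unit and closedness are handled degreewise exactly as you do. The step you flag as the main obstacle is already settled in Lem.~\ref{lem:kop-opp-PBW}, which defines $\diamond_R$ explicitly by $(F\diamond_R G)(X)=\sum_{(X),(Y)}Y_{(2)}G(X_{(2)}\triangleleft_R Y_{(1)})$ with $Y=F(X_{(1)})$. This is the same shape as $\diamond$, not a mirrored one, so your output-only argument with $\mathfrak D_-^0$ applies verbatim.
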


\begin{proof}
If $F,G\in\mathcal K(U_\infty)$, then in \eqref{eq:mt-kop-prod}, $Y=F(X_{(1)})\in\widehat{U(\mathfrak D_-^\infty)}$ implies $Y_{(2)}\in\widehat{U(\mathfrak D_-^\infty)}$, while $G(\dots)\in\operatorname{im}G\subseteq\widehat{U(\mathfrak D_-^\infty)}$ regardless of input. We have $Y_{(2)} G(X_{(2)}\triangleleft Y_{(1)}) \in \widehat{U(\mathfrak D_-^\infty)}$ so $F\diamond G\in\mathcal K(U_\infty)$.  Similarly, for $F,G\in\mathcal K(U_0)$, applying the same reasoning to $(F\diamond G)^R = F^R\diamond_R G^R$ shows that $\operatorname{im}(F\diamond G)^R\subseteq\widehat{U(\mathfrak D_-^0)}$, so $F\diamond G\in\mathcal K(U_0)$. The unit $\mathbf 1(X)=\epsilon(X)1$ satisfies $\mathbf 1^R=\mathbf 1$ with image $\mathbb C\cdot 1$, and both image conditions are degree-wise closed in the spectral topology.
\end{proof}

For $\xi_i\in\mathfrak D_+^i$, define
\begin{equation}
(\xi_0\cdot_0F)(X):=F(X\xi_0), \qquad (\xi_\infty\cdot_\infty F)^R(X):=-F^R(\xi_\infty X).
\label{eq:mt-chart-actions}
\end{equation}

\begin{propositiondefinition}
\label{prop:mt-kop-subspace}
The assignments \eqref{eq:mt-chart-actions} define commuting continuous actions of $\mathfrak D_+^0$ and $\mathfrak D_+^\infty$ on $\mathcal K(U_0,U_\infty)$. The invariant subspace
\begin{equation*}
\mathcal R_{\mathrm{MT}}
:=\mathcal K(U_0,U_\infty)^{\mathfrak D_+^0\oplus\mathfrak D_+^\infty}
\end{equation*}
is a closed subspace of $\mathcal K$, characterized equivalently by the two-chart vanishing conditions
\begin{equation}
F\bigl(U(\mathfrak D_+)\mathfrak D_+^0\bigr)=0, \qquad F^R\bigl(\mathfrak D_+^\infty U(\mathfrak D_+)\bigr)=0.
\label{eq:mt-2chart-conds}
\end{equation}
\end{propositiondefinition}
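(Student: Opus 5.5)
The plan is to verify four things in order: that each action preserves $\mathcal K(U_0,U_\infty)$, that each is a Lie action, that the two actions commute, and that invariance is equivalent to \eqref{eq:mt-2chart-conds} with closedness following from that description.

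\textbf{Preservation.} The $\mathfrak D_+^0$-action $\xi_0\cdot_0F$ is defined through $F$. Its image lies in $\operatorname{im}F$, so $\mathcal K(U_\infty)$ is preserved immediately. For $\mathcal K(U_0)$ I would compute $(\xi_0\cdot_0F)^R$ using \eqref{eq:mt-reorder-map}. Since $\Delta(X\xi_0)=\Delta(X)(\xi_0\otimes1+1\otimes\xi_0)$, one gets
\[
(\xi_0\cdot_0F)^R(X)=F^R(X\xi_0)-\sum_{(X)} p_-\bigl(\xi_0 S(X_{(1)})F(X_{(2)})\bigr).
\]
Writing $\xi_0 Z=Z\xi_0+[\xi_0,Z]$, the second term becomes $\mathfrak D_+^0$ acting on $F^R$ by the matched-pair left action $\xi_0\triangleright(-)$ induced through $p_-$. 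The remaining step is to show that $\triangleright$ by $\mathfrak D_+^0=\g[u,t]$ preserves $\widehat{U(\mathfrak D_-^0)}$. This holds because brackets of $\g[u,t]$ with $u^{-1}\g[u^{-1},t]$, followed by projection to negative $u$-powers, contain no negative $t$-powers.

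Symmetrically, $\xi_\infty\cdot_\infty$ is defined through $F^R$, so $\mathcal K(U_0)$ is preserved for free. Converting back via the inverse formula $F(X)=\sum p_-(X_{(1)}F^R(X_{(2)}))$ produces a twisted action of $\mathfrak D_+^\infty$ on $U(\mathfrak D_-)$, and I would check that it preserves $\widehat{U(\mathfrak D_-^\infty)}$ by the same argument in the $(z,w)$ chart. Continuity holds because both actions are homogeneous in spectral degree: $\xi u^m$ shifts the degree by $m$.

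\textbf{Lie action and commutation.} That $\cdot_0$ is a Lie action is formal: it is the right-regular module structure on $U(\mathfrak D_+)$ transposed. The sign in $\cdot_\infty$ makes it the transpose of left multiplication with the correct sign. The two actions commute because right and left multiplications on $U(\mathfrak D_+)$ commute, \emph{provided} both are expressed on the same side. This is the obstacle: $\cdot_0$ is a condition on $F$ and $\cdot_\infty$ is a condition on $F^R$. To resolve it, I would show that on $F^R$ the action $\cdot_0$ becomes $(\xi_0\cdot_0F)^R(X)=F^R(X\xi_0)-\xi_0\triangleright F^R(X)$, as derived above. I would then check that the extra term $\xi_0\triangleright$, which acts on the target, commutes with precomposition by left multiplication by $\xi_\infty$, which acts on the source. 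Since the former acts only on values and the latter only on arguments, they commute. Hence the bracket reduces to $[R_{\xi_0},L_{\xi_\infty}]=0$.

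\textbf{Equivalence and closedness.} Invariance under $\cdot_0$ means $F(X\xi_0)=0$ for all $X$ and $\xi_0$, that is, $F(U(\mathfrak D_+)\mathfrak D_+^0)=0$. Invariance under $\cdot_\infty$ means $F^R(\xi_\infty X)=0$, which is the second condition in \eqref{eq:mt-2chart-conds}. Both are evaluation conditions and therefore degreewise closed. Together with Prop.-Def.~\ref{prop:mt-1chart-algs}, this shows that $\mathcal R_{\mathrm{MT}}$ is closed in $\mathcal K$.
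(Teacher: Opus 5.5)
Your proposal is correct and takes essentially the paper's route. You rewrite each chart action in the opposite PBW ordering and use that $\mathfrak D_+^i\oplus\mathfrak D_-^i$ is a Lie subalgebra, so $p_-(\xi_i\,\cdot\,)$ preserves $\widehat{U(\mathfrak D_-^i)}$. You compare the two composites for commutation on the $F^R$ side, where the paper works on the $F$ side and finds that both equal $-F(\xi_\infty X\xi_0)+p_-\bigl(\xi_\infty F(X\xi_0)\bigr)$. You then read invariance off as the vanishing conditions, and you also check the Lie-action property, which the paper leaves implicit.

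One slip: since $S(X_{(1)}\xi_0)=-\xi_0S(X_{(1)})$, the correct formula is $(\xi_0\cdot_0F)^R(X)=F^R(X\xi_0)+p_-\bigl(\xi_0F^R(X)\bigr)$, with a plus sign. Nothing in your preservation or commutation argument depends on that sign.
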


\begin{proof}
The equivalence between invariance under \eqref{eq:mt-chart-actions} and the vanishing conditions \eqref{eq:mt-2chart-conds} is immediate from the definitions. To see that the actions preserve $\mathcal K(U_0,U_\infty)$, express each action in the opposite PBW ordering:
\begin{equation*}
(\xi_0\cdot_0F)^R(X) = F^R(X\xi_0)+p_-\bigl(\xi_0F^R(X)\bigr),
\qquad
(\xi_\infty\cdot_\infty F)(X) = -F(\xi_\infty X)+p_-\bigl(\xi_\infty F(X)\bigr).
\end{equation*}
Since $\mathfrak D^i = \mathfrak D_+^i\oplus\mathfrak D_-^i$ is a Lie subalgebra of $\mathfrak D$, the bracket $[\mathfrak D_+^i, \mathfrak D_-^i]\subseteq \mathfrak D^i$ ensures that $p_-(\xi_i U(\mathfrak D_-^i))\subseteq \widehat{U(\mathfrak D_-^i)}$. Hence $\xi_0\cdot_0$ preserves $\mathcal{K}(U_0)$ and $\xi_\infty\cdot_\infty$ preserves $\mathcal{K}(U_\infty)$, so both operators preserve $\mathcal K(U_0,U_\infty)$.

A homogeneous element $\xi_i$ of spectral degree $m$ maps $\mathcal K_{[-q]}$ to $\mathcal K_{[-(q-m)]}$ (or to zero if $m>q$), so both actions are continuous in the spectral product topology. Finally, applying the two actions in either order yields $-F(\xi_\infty X\xi_0)+p_-(\xi_\infty F(X\xi_0))$, proving that the actions commute. Since the conditions in \eqref{eq:mt-2chart-conds} are closed in each spectral degree, $\mathcal R_{\mathrm{MT}}$ is a closed subspace of $\mathcal K$.
\end{proof}

\begin{theorem}
\label{thm:mt-kop-comp}
The coproduct $\Delta$ and counit $\epsilon$ on $\mathcal Y$ make the space of sphere one-point conformal blocks $\mathscr B^{\mathrm{MT}}$ an associative topological algebra with product and unit
\begin{equation*}
(\lambda\star\mu)(v) := (\lambda\otimes\mu)\Delta(v),
\qquad
\mathbf1_{\mathscr B} = \epsilon.
\end{equation*}
Moreover, the invariant subspace $\mathcal R_{\mathrm{MT}}$ is a closed unital subalgebra of $(\mathcal K,\diamond)$, and there is a canonical isomorphism of topological algebras
\begin{equation*}
\mathscr B^{\mathrm{MT}}\xrightarrow{\sim}\mathcal R_{\mathrm{MT}},
\qquad
\lambda\longmapsto F_\lambda,
\qquad
F_{\lambda\star\mu}=F_\lambda\diamond F_\mu.
\end{equation*}
\end{theorem}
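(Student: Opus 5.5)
The first assertion is immediate from Thm.~\ref{thm:zero-coprod-qc} and Prop.~\ref{prop:block-ord-descent}. Continuity of $\star$ for the spectral topology follows because $\Delta$ preserves the spectral grading, as noted before Def.~\ref{def:alg-K}. The real content is the identification with $\mathcal R_{\mathrm{MT}}$. I would build it in three steps. First, describe $\mathcal Y$ as a module. Second, rewrite the outgoing algebra $\mathfrak g_{\mathrm{out}}(\mathcal Y)$ for the mini-twistor structure in terms of $\mathfrak D$. Third, transport the coproduct.

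\emph{Step 1 (a PBW model of $\mathcal Y$).} By Thm.~\ref{thm:vertex-env-PBW} and the level-zero OPEs, the modes of the currents form the loop algebra $\g[u]\otimes\kk[t,t^{-1}]=\mathfrak D_+$, with $t$ the vertex coordinate. The functions in $A$ pair with $U(u^{-1}\g[u^{-1}])$ as in the proof of Cor.~\ref{thm:reductive-1pt-blocks}. Their modes, including translates, should be dual to $U(\mathfrak D_-)$, where $\mathfrak D_-=u^{-1}\g[u^{-1},t,t^{-1}]$. Concretely, I expect a nondegenerate graded pairing between $\mathcal Y$ and a suitable quotient of $U(\mathfrak D_+)\otimes\widehat{U(\mathfrak D_-)}^{\vee}$. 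Each $\lambda\in\mathcal Y^\vee$ should then correspond to a map $F_\lambda:U(\mathfrak D_+)\to\widehat{U(\mathfrak D_-)}$. This map is obtained by evaluating $\lambda$ on current monomials $X\cdot a$ and regarding the resulting functional of $a\in A$ (with its modes) as an element of $\widehat{U(\mathfrak D_-)}$. The dressing action on $A$ is the negative transpose of $\xi\cdot Y=([\xi,Y])_-$, by \eqref{eq:rat-dressing-transpose}. Hence the matched-pair action $\triangleleft$ is exactly what appears when a current is commuted past a function. This gives the formula for $F_\lambda$ together with the vanishing condition that encodes the vacuum relations $v_{(j)}\mathbf 1=0$ for $j\geq0$. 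That condition is $F(U(\mathfrak D_+)\mathfrak D_+^0)=0$: the nonnegative $t$-modes of currents kill the vacuum. The corresponding image condition puts $\operatorname{im}F^R$ into $\widehat{U(\mathfrak D_-^0)}$, since functions have no negative-mode obstruction at $0$. So mode-level blocks land in $\mathcal K(U_0)$ with the $\mathfrak D_+^0$-invariance.

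\emph{Step 2 (outgoing invariance).} Next I would use the opposite modes \eqref{eq:1pt-outgoing-op} with the mini-twistor weights \eqref{eq:mt-gen-weights}. For a primary generator $\Lcur_{xu^m}$ of weight $1-m$, the coordinate change $z=t^{-1}$ turns $a^o$ into the modes $xu^mt^{k}$ with $xw^az^{b-2a}$ regular at $\infty$. These are exactly the elements of $\mathfrak D_+^\infty$; this is why the bidegree criterion $b\geq 2a$ appears. For functions of spectral degree $q$ (weight $-q$), the same computation forces the image to lie in $\widehat{U(\mathfrak D_-^\infty)}$. Because the outgoing operators act on the left, the condition comes out in the opposite PBW ordering, namely $F^R(\mathfrak D_+^\infty U(\mathfrak D_+))=0$ from \eqref{eq:mt-2chart-conds}. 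Since the outgoing algebra is generated by these primary generators, $\lambda\mapsto F_\lambda$ is a bijection from $\mathscr B^{\mathrm{MT}}$ onto $\mathcal R_{\mathrm{MT}}$, using Prop.-Def.~\ref{prop:mt-kop-subspace}. It is a homeomorphism because it preserves spectral degree degreewise.

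\emph{Step 3 (multiplicativity).} Finally I would compute $\Delta$ on the PBW model. On functions $\Delta=m^*$, whose dual is multiplication in $U(\mathfrak D_-)$ and produces the factor $Y_{(2)}$. On currents $\Delta(\Lcur_\xi)=\Lcur_\xi\otimes1+\Lcur^{(2)}_{M_{g_1}\xi}$. Dualizing $M_{g_1}$, which is polynomial by Lem.~\ref{lem:rat-factor-poly-closure}, gives the right action $X\triangleleft Y_{(1)}=p_+(S(Y_{(1)})X)$. The split $X_{(1)},X_{(2)}$ comes from the primitive part. Together these yield \eqref{eq:mt-kop-prod}, so $F_{\lambda\star\mu}=F_\lambda\diamond F_\mu$ and $F_\epsilon=\mathbf 1$. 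That $\mathcal R_{\mathrm{MT}}$ is a closed unital subalgebra then follows from the isomorphism, or directly from Prop.-Def.~\ref{prop:mt-1chart-algs}. The main obstacle I expect is Step 2. There one must check that the mini-twistor opposite modes, which mix $\mathscr L_1^{\mathrm{MT}}$-descendants, really reproduce the chart $\widetilde U_\infty$ in the reordered form $F^R$. I would first do this on the generators $xu^mt^k$ and single functions, and then extend using that $\mathfrak D_+^\infty\oplus\mathfrak D_-^\infty$ is a Lie subalgebra.
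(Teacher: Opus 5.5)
Your three-step outline follows the paper's architecture. Blocks are read off the mode algebra modulo vacuum and outgoing relations, and $F_\lambda$ and $F_\lambda^R$ come from the two orderings $\lambda(fX\mathbf1)$ and $\lambda(Xf\mathbf1)$. The vacuum relations ($\mathfrak D_+^0$ and $E_0$) give the $U_0$-conditions, and the outgoing generator modes ($\mathfrak D_+^\infty$ and $E_\infty$) give the $U_\infty$-conditions. Finally, $\Delta=m^*$ together with $\Delta(\Lcur_\xi)$ produces $Y_{(2)}$ and $\triangleleft$.

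\textbf{Main gap: the outgoing inclusion in Step~2.} The genuine gap is exactly where you conclude bijectivity. Surjectivity onto $\mathcal R_{\mathrm{MT}}$ requires that the functional built from $F\in\mathcal R_{\mathrm{MT}}$ annihilate all of $\mathfrak g_{\mathrm{out}}(\mathcal Y)\mathcal Y$. That is the hard inclusion $\mathfrak g_{\mathrm{out}}(\mathcal Y)\mathcal Y\subseteq E_\infty\mathcal Y+\mathfrak D_+^\infty\mathcal Y$ of Lem.~\ref{lem:mt-outgoing-gens}.
\begin{itemize}
\item Your justification, that the outgoing algebra is generated by the primary generators, is not available as a Lie-algebra statement. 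Brackets of generator outgoing modes only yield outgoing modes of states built by $(j)$-products with $j\geq0$; the outgoing modes of the $a_\ell$ even commute. But $\mathfrak g_{\mathrm{out}}(\mathcal Y)$ contains outgoing modes of all states, including normally ordered products such as $a_{\ell_1}a_{\ell_2}$ or $\normal{a\Lcur_\xi}$, and $\partial$-descendants.
\item The tool you propose for extending beyond generators does not address this. That $\mathfrak D_+^\infty\oplus\mathfrak D_-^\infty$ is a Lie subalgebra only gives $[\mathfrak D_+^\infty,E_\infty]\mathcal Y\subseteq E_\infty\mathcal Y$, i.e.\ that the chart actions preserve $\mathcal K(U_\infty)$.
\item What is needed is the pair of opposite-mode identities $(\partial a)^o_n=-na^o_{n-1}$ and $(a_{(-1)}b)^o_n=\sum_{j<0}b^o_{n-j-1}a^o_j+\sum_{j\geq0}a^o_jb^o_{n-j-1}$. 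Every term has a leftmost factor of nonnegative opposite index. Induction on strong generation then places each $c^o_n$, $n\geq0$, in the right ideal generated by $E_\infty$ and $\mathfrak D_+^\infty$.
\item This also removes your worry about $\mathscr L_1^{\mathrm{MT}}$-descendants: they never have to be tracked.
\end{itemize}

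\textbf{Further steps asserted rather than argued.}
\begin{enumerate}
\item \emph{Definition of $F_\lambda$.} $F_\lambda(X)$ lies in $\widehat{U(\mathfrak D_-)}$, which is completed only in spectral degree. This holds only because a block obeys both the vacuum bound $r_i\geq-h$ and the outgoing bound $r_i\leq 2d_i$ on the coordinate modes $\ell_it^{-r_i-1}$. For a general $\lambda\in\mathcal Y^\vee$, as in your Step~1, the sum over $t$-degrees is infinite. So $F_\lambda$ cannot be defined before the outgoing condition is imposed.
\item \emph{The converse direction.} You must show that $h\otimes X\mapsto\langle h,F(X)\rangle$ vanishes on the kernel of $h\otimes X\mapsto\iota_-(h)\iota_+(X)\mathbf1$ on $U(\mathfrak D_-)^*\otimes U(\mathfrak D_+)$. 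That kernel is the closed span of $h\otimes X\xi_0$ and $\sum_{(X)}g(X_{(1)}\triangleright b)\otimes X_{(2)}$ with $b\in E_0$. This is where $\operatorname{im}F^R\subseteq\widehat{U(\mathfrak D_-^0)}$ is actually used, via the extension of $\iota_-$ and the reordering formulas of Prop.~\ref{prop:mt-mode-realization}. Your ``suitable quotient'' leaves all of this open.
\item \emph{Step~3 on words.} The generator computation does not by itself give \eqref{eq:mt-kop-prod} on words. You need induction on PBW length using $(X'X)\triangleleft Z=\sum_{(Z)}(X'\triangleleft Z_{(1)})(X\triangleleft Z_{(2)})-X\triangleleft p_-(X'Z)$ from Lem.~\ref{lem:kop-current-word}. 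Its correction term is matched by the commutator $[\iota_+(X'),\iota_-(h)]=\iota_-(X'\triangleright h)$ produced when the first-factor current is reordered past functions.
\end{enumerate}
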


\subsection{Proof of the main theorem}
\label{sec:mt-comp-proof}

This subsection is devoted to the proof of Thm.~\ref{thm:mt-kop-comp}. The statement of the theorem and the strategy of its proof are inspired by the computations of Costello and Paquette \cite{CostelloPaquetteCelestial} for a different chiral algebra, where conformal blocks are computed via two-chart \v{C}ech descriptions on twistor space. Here, we adapt their two-chart description to the present vertex algebra using its associative mode algebra. This lets us express the coordinate-invariance conditions and compute the convolution product.

First, we realize the space of sphere one-point conformal blocks $\mathscr B^{\mathrm{MT}}$ as the dual of a quotient of the completed associative mode algebra $\widetilde U(\mathcal Y)$ by vacuum and outgoing relations (Prop.~\ref{prop:mt-vac-quotient}). Second, using the PBW basis of $\mathcal Y$, we represent each block $\lambda \in \mathscr B^{\mathrm{MT}}$ as a generating map $F_\lambda \in \mathcal K = \operatorname{Hom}_{\mathbb C}(U(\mathfrak D_+), \widehat{U(\mathfrak D_-)})$, and show that the two-chart coordinate invariance corresponds precisely to the invariant subspace $\mathcal R_{\mathrm{MT}} = \mathcal K(U_0, U_\infty)^{\mathfrak D_+^0 \oplus \mathfrak D_+^\infty}$. Finally, we compare the vertex coproduct $\Delta$ with the Koppinen smash product $\diamond$, establishing that $F_{\lambda \star \mu} = F_\lambda \diamond F_\mu$, which proves that $\mathcal R_{\mathrm{MT}}$ is a closed subalgebra of $(\mathcal K, \diamond)$ and that $\lambda \mapsto F_\lambda$ is an isomorphism of topological algebras.

We begin with the mode-algebra of a vertex algebra. Following \cite[Secs.~4.1--4.3]{FrenkelBenZvi}, the Lie algebra of modes of $\mathcal Y$ is the quotient $\operatorname{Lie}(\mathcal Y) := (\mathcal Y \otimes \kk [t^{\pm}]/\operatorname{Im}(\partial\otimes \mathrm{id} + \mathrm{id} \otimes \partial_t))$. The class of $a\otimes t^k$ is denoted $[a\otimes t^k]$, which acts on $\mathcal Y$ as the $k$-th Fourier mode $a_{(k)}$ of the field $Y(a,z) = \sum_{k\in\mathbb Z}a_{(k)}z^{-k-1}$.

The completed associative algebra of modes $\widetilde U(\mathcal Y)$ is the completion of the universal enveloping algebra $U(\operatorname{Lie}(\mathcal Y))$ (with neighborhoods of zero given by left ideals generated by modes of sufficiently high degree), quotiented by the relations ensuring that the generating fields $\mathsf Y[a,z] := \sum_{k\in\mathbb Z}[a\otimes t^k]z^{-k-1}$ satisfy the vertex-algebra axioms. These include
\begin{equation}
[\mathbf1\otimes t^{-1}]=1,
\qquad
\mathsf Y[a_{(-1)}b,z]
=:\!\mathsf Y[a,z]\mathsf Y[b,z]\!:,
\quad a,b\in\mathcal Y.
\label{eq:FBZ-completed-mode-alg}
\end{equation}
The first identifies the $(-1)$-mode of the vacuum with the algebra unit, and the second expresses each mode of $a_{(-1)}b$ as a sum of products of modes of $a$ and $b$.

Combining the vacuum-module realization $\mathcal Y \cong \widetilde U(\mathcal Y)/I_{\mathrm{vac}}$ \cite[Sec.~4.3]{FrenkelBenZvi} with the definition of coinvariants yields the presentation of one-point conformal blocks as a quotient of the mode algebra:

\begin{proposition}
\label{prop:mt-vac-quotient}
Let $\operatorname{ev}_{\mathbf1}:\widetilde U(\mathcal Y) \longrightarrow\mathcal Y$, $P\longmapsto P\mathbf1$, be the vacuum-evaluation map, and set $I_{\mathrm{vac}}:=\ker(\operatorname{ev}_{\mathbf1})$. Then $\operatorname{ev}_{\mathbf1}$ induces a vector-space isomorphism
\begin{equation}
\frac{\widetilde U(\mathcal Y)}
{I_{\mathrm{vac}}+
\mathfrak g_{\mathrm{out}}(\mathcal Y)\widetilde U(\mathcal Y)}
\xrightarrow{\sim}
\frac{\mathcal Y}{\mathfrak g_{\mathrm{out}}(\mathcal Y)\mathcal Y},
\label{eq:mt-vac-quotient}
\end{equation}
and hence
\begin{equation}
\mathscr B^{\mathrm{MT}}
\cong
\left(
\frac{\widetilde U(\mathcal Y)}
{I_{\mathrm{vac}}+
\mathfrak g_{\mathrm{out}}(\mathcal Y)\widetilde U(\mathcal Y)}
\right)^\vee.
\label{eq:mt-mode-block-quotient}
\end{equation}
Under this identification, a block $\lambda\in\mathscr B^{\mathrm{MT}}$ corresponds to the linear functional on $\widetilde U(\mathcal Y)$ given by $P\longmapsto\lambda(P\mathbf1)$, $P\in\widetilde U(\mathcal Y)$.
\end{proposition}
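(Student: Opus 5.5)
The plan is to show that $\operatorname{ev}_{\mathbf1}$ is surjective and then identify the preimage of $\mathfrak g_{\mathrm{out}}(\mathcal Y)\mathcal Y$. Surjectivity is immediate: every $v\in\mathcal Y$ equals $v_{(-1)}\mathbf1$, so $v=\operatorname{ev}_{\mathbf1}([v\otimes t^{-1}])$. Next I would invoke the vacuum-module realization of \cite[Sec.~4.3]{FrenkelBenZvi}, which says that $\operatorname{ev}_{\mathbf1}$ induces an isomorphism $\widetilde U(\mathcal Y)/I_{\mathrm{vac}}\cong\mathcal Y$ of left $\widetilde U(\mathcal Y)$-modules. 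One point needs care here: the continuity of the action of the completed algebra on $\mathcal Y$. It holds because each fixed vector is annihilated by all sufficiently high modes, so the action of a convergent series applied to $\mathbf1$ reduces to a finite sum.

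The key step is to understand how $\mathfrak g_{\mathrm{out}}(\mathcal Y)$ sits inside $\widetilde U(\mathcal Y)$. By the computation in the proof of Prop.~\ref{prop:1pt-coinvariants}, now carried out for the mini-twistor quasi-conformal structure, each class $a\,w^m\,dw$ acts on $\mathcal Y$ through the finite combination $-a^o_m$ of modes $(\mathscr L_1^{\mathrm{MT},j}a)_{(k)}$. The sum is finite because $\mathscr L_1^{\mathrm{MT}}$ acts locally nilpotently, since it lowers the standard degree. This gives a Lie algebra map $\mathfrak g_{\mathrm{out}}(\mathcal Y)\to\operatorname{Lie}(\mathcal Y)\to\widetilde U(\mathcal Y)$ that is compatible with the action on $\mathcal Y$, so $\operatorname{ev}_{\mathbf1}$ is a map of $\mathfrak g_{\mathrm{out}}$-modules. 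The argument does not use the specific weights of the structure: Prop.~\ref{prop:1pt-coinvariants} only needs covariance~\eqref{eq:qc-covariance}. The structure is not nonnegatively graded by $\mathscr L_0^{\mathrm{MT}}$, but I only need the expression of the outgoing action in terms of modes, and that expression is a general fact from \cite{FrenkelBenZvi}. I expect the main obstacle to be making this identification precise, namely checking that the residue action \eqref{eq:global-sec-local-action} factors through the image of $\operatorname{Lie}(\mathcal Y)$.

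With this in hand, the isomorphism is formal. Since $\operatorname{ev}_{\mathbf1}$ is a module map, $\operatorname{ev}_{\mathbf1}\bigl(\mathfrak g_{\mathrm{out}}(\mathcal Y)\widetilde U(\mathcal Y)\bigr)=\mathfrak g_{\mathrm{out}}(\mathcal Y)\,\operatorname{ev}_{\mathbf1}(\widetilde U(\mathcal Y))=\mathfrak g_{\mathrm{out}}(\mathcal Y)\mathcal Y$, where the last equality uses surjectivity. It follows that the preimage of $\mathfrak g_{\mathrm{out}}(\mathcal Y)\mathcal Y$ under $\operatorname{ev}_{\mathbf1}$ is exactly $I_{\mathrm{vac}}+\mathfrak g_{\mathrm{out}}(\mathcal Y)\widetilde U(\mathcal Y)$. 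Hence $\operatorname{ev}_{\mathbf1}$ descends to the isomorphism \eqref{eq:mt-vac-quotient}. Dualizing gives \eqref{eq:mt-mode-block-quotient}, since $\mathscr B^{\mathrm{MT}}$ is by definition the full dual of the coinvariants. Composing a block $\lambda$ with the quotient map $\operatorname{ev}_{\mathbf1}$ produces the functional $P\mapsto\lambda(P\mathbf1)$, which is the description claimed in the proposition.
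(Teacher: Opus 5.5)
Your proposal is correct and follows the same route as the paper. You get surjectivity from the creation axiom, then use the module-map property $\operatorname{ev}_{\mathbf1}(xQ)=x\operatorname{ev}_{\mathbf1}(Q)$ to identify the preimage of $\mathfrak g_{\mathrm{out}}(\mathcal Y)\mathcal Y$ as $I_{\mathrm{vac}}+\mathfrak g_{\mathrm{out}}(\mathcal Y)\widetilde U(\mathcal Y)$, and finally dualize. Your extra step, realizing $\mathfrak g_{\mathrm{out}}(\mathcal Y)$ inside $\widetilde U(\mathcal Y)$ by finite opposite-mode combinations (finite by local nilpotence of $\mathscr L_1^{\mathrm{MT}}$), is left implicit in the paper's proof; it is the same identification the paper uses in Lem.~\ref{lem:mt-outgoing-gens}.
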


\begin{proof}
By the creation axiom, $\operatorname{ev}_{\mathbf1}([a\otimes t^{-1}]) = a_{(-1)}\mathbf1 = a$, so $\operatorname{ev}_{\mathbf1}$ is surjective. Because $\operatorname{ev}_{\mathbf1}(xQ) = x(Q\mathbf1) = x\operatorname{ev}_{\mathbf1}(Q)$ for $x\in\mathfrak g_{\mathrm{out}}(\mathcal Y)$ and $Q\in\widetilde U(\mathcal Y)$, the preimage of $\mathfrak g_{\mathrm{out}}(\mathcal Y)\mathcal Y$ is $I_{\mathrm{vac}} + \mathfrak g_{\mathrm{out}}(\mathcal Y)\widetilde U(\mathcal Y)$. The quotient isomorphism \eqref{eq:mt-vac-quotient} follows, and dualizing yields \eqref{eq:mt-mode-block-quotient}.
\end{proof}

\begin{remark}
Although $\widetilde U(\mathcal Y)$ is a topological algebra whose natural dual is the continuous dual, the algebraic dual $(-)^\vee$ in \eqref{eq:mt-mode-block-quotient} requires no topological restriction. Indeed, the left ideal generated by the modes that annihilate the vacuum vector $\mathbf1$ is open in the mode topology \cite[Sec.~3.2.3]{FrenkelLoopGroups} and contained in $I_{\mathrm{vac}}$. Consequently, the quotient \eqref{eq:mt-vac-quotient} inherits the discrete topology, so its continuous dual coincides with its full algebraic dual.
\end{remark}

For a (bi)graded vector space $V$, we will use the notation $V^*$ for the graded dual:
\begin{equation*}
V^*:=\bigoplus_{p,r}(V_{[-p],-r})^\vee,
\qquad
(V^*)_{[p],r}=(V_{[-p],-r})^\vee;
\end{equation*}
while for a singly graded space, we omit the second grading in the above formula. The residue pairing identifies $\dminus^*\cong\g^\vee[u]\,du$ so that $(\g u^{-d})^\vee$ is identified with $\g^\vee u^{d-1}du$. We extended the residue pairing on $u$ to a double residue pairing involving $t$, which identifies $\mathfrak D_-^* = \dminus^*[t,t^{-1}]dt$. We give $\mathfrak D_-^*$ the dual bidegree. Equivalently, giving $du$ and $dt$ the bidegrees $(1,0)$ and $(0,-1)$ respectively so the pairing has bidegree $(0,0)$. We will also omit the symbol $du,dt$ when there is no ambiguity. 

We now describe the generators of $\widetilde U(\mathcal Y)$ explicitly. For each $\ell\in\dminus^*$, let $a_\ell \in \mathcal O(G_-) \cong \operatorname{Sym}(\dminus^*)$ be the corresponding linear function. We have algebra homomorphisms
\begin{equation*}
\begin{aligned}
\iota_-:\operatorname{Sym}(\mathfrak D_-^*) &\longrightarrow\widetilde U(\mathcal Y),&
\ell t^k &\longmapsto [a_\ell\otimes t^k],\\
\iota_+:U(\mathfrak D_+) &\longrightarrow\widetilde U(\mathcal Y),&
\xi t^k &\longmapsto [\Lcur_\xi\otimes t^k].
\end{aligned}
\end{equation*}
For $f\in\operatorname{Sym}(\mathfrak D_-^*)$ and $X\in U(\mathfrak D_+)$, we write $fX := \iota_-(f)\iota_+(X)$ and $Xf := \iota_+(X)\iota_-(f)$. Since the states $a_\ell$ and $\Lcur_\xi$ strongly generate $\mathcal Y$, the images of $\iota_\pm$ topologically generate $\widetilde U(\mathcal Y)$. On $\mathcal Y$, the classes $[a_\ell\otimes t^k]$ and $[\Lcur_\xi\otimes t^k]$ act as $(a_\ell)_{(k)}$ and $(\Lcur_\xi)_{(k)}$, respectively.

Under the evaluation map $\operatorname{ev}_{\mathbf1}$, non-negative modes act trivially on the vacuum vector: $\iota_-(\ell t^k)\mathbf1 = \iota_+(\xi t^k)\mathbf1 = 0$ for all $k\geq0$. Identifying $(\mathfrak D_-^0)^* = t^{-1}\dminus^*[t^{-1}] \subset \mathfrak D_-^*$, the vertex-envelope PBW isomorphism gives:
\begin{equation}
\operatorname{Sym}((\mathfrak D_-^0)^*)\otimes U(t^{-1}\dplus[t^{-1}]) \xrightarrow{\sim} \mathcal Y,
\qquad f\otimes X\longmapsto fX\mathbf1.
\label{eq:mt-coord-current-PBW}
\end{equation}

To formulate the vanishing conditions on conformal blocks, let $E_i := (\mathfrak{D}_-^i)^\perp \subset \mathfrak D_-^*$ be the annihilator of $\mathfrak{D}_-^i$ under the residue pairing for $i \in \{0,\infty\}$. Explicitly,
\begin{equation*}
E_0=\bigoplus_{\substack{d\geq1\\r<0}}(\mathfrak D_-^*)_{[d],r},
\qquad
E_\infty=\bigoplus_{\substack{d\geq1\\r>2d}}(\mathfrak D_-^*)_{[d],r}.
\end{equation*}
Under this decomposition, $E_0$ and $\mathfrak D_+^0$ are precisely the non-negative modes that annihilate the vacuum vector $\mathbf1$. Conversely, the space $\mathfrak g_{\mathrm{out}}(\mathcal Y)\mathcal Y$ is characterized as follows:

\begin{lemma}
\label{lem:mt-outgoing-gens}
For the mini-twistor quasi-conformal structure,
\begin{equation*}
\mathfrak g_{\mathrm{out}}(\mathcal Y)\mathcal Y
=E_\infty\mathcal Y+\mathfrak D_+^\infty\mathcal Y.
\end{equation*}
\end{lemma}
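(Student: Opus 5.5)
The plan is to compute $\mathfrak g_{\mathrm{out}}(\mathcal Y)\mathcal Y$ through the opposite modes of the proof of Prop.~\ref{prop:1pt-coinvariants}, now for the mini-twistor structure $\{\mathscr L_n^{\mathrm{MT}}\}$. That proof shows $\mathfrak g_{\mathrm{out}}(\mathcal Y)\mathcal Y=\operatorname{span}\{a^o_mv: m\geq0\}$, where $a_m^o$ is given by \eqref{eq:1pt-outgoing-op} with $d$ replaced by $\operatorname{wt}_{\mathrm{MT}}(a)$ and $\mathscr L_1$ replaced by $\mathscr L_1^{\mathrm{MT}}$. The only inputs used there are quasi-conformal covariance and local nilpotence of $\mathscr L_1^{\mathrm{MT}}$, and both hold by Prop.~\ref{prop:mt-qc}. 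The nonnegativity of the grading is not needed for the formula itself.

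\emph{Step 1: generators.} By Prop.~\ref{prop:mt-qc}, the generators are $\mathscr L^{\mathrm{MT}}$-primary, so the sum collapses to $a_m^o=(-1)^{\Delta_a}a_{(2\Delta_a-m-2)}$.

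For a current $\Lcur_{xu^p}$ we have $\Delta=1-p$, and its outgoing mode is $\pm(\Lcur_{xu^p})_{(-2p-m)}$. This mode corresponds to $xu^pt^{-2p-m}\in\mathfrak D_+$, of bidegree $(p,2p+m)$. As $m$ ranges over $\mathbb Z_{\geq0}$, these bidegrees are exactly those with $b\geq 2a$, that is, exactly $\mathfrak D_+^\infty$.

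For a linear coordinate $a_\ell$ with $\ell$ dual to $\g u^{-q}$ we have $\Delta=-q$, and its outgoing mode is $\pm(a_\ell)_{(-2q-m-2)}$. Using the bidegrees assigned to $du,dt$, I will check that the resulting elements $\ell t^{k}$ are precisely those of dual bidegree $(d,r)$ with $r>2d$, that is, exactly $E_\infty$. The shift by $2$ accounts for the $dt$ and $du$ factors.

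Together these give the inclusion $E_\infty\mathcal Y+\mathfrak D_+^\infty\mathcal Y\subseteq\mathfrak g_{\mathrm{out}}(\mathcal Y)\mathcal Y$.

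\emph{Step 2: reverse inclusion.} Put $N':=E_\infty\mathcal Y+\mathfrak D_+^\infty\mathcal Y$ and let $S\subset\mathcal Y$ be the set of states $a$ with $a^o_m\mathcal Y\subseteq N'$ for all $m\geq0$. Then $S$ contains $\mathbf1$, since $\mathbf1^o_m=0$ for $m\geq0$, and it contains the strong generators by Step 1. From \eqref{eq:qc-covariance} one gets $(\partial a)^o_m=c\,a^o_{m-1}$ with $c$ a scalar, with $(\partial a)^o_0$ a multiple of $a^o_{-1}$. The delicate term is $m=0$, and I would handle it together with the normal-ordered products below.

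The key step is closure of $S$ under $a_{(-1)}b$ for $a$ a generator. I will express the opposite field of $\normal{ab}$ in the coordinate $w=z^{-1}$ as the normal-ordered product of the opposite fields $Y^o(a,w)$ and $Y^o(b,w)$. This gives
\[
(a_{(-1)}b)^o_m=\sum_{p+q=m-1}\normal{a^o_p\,b^o_q}+\text{(correction terms)},
\]
where the correction terms come from the coordinate change and are opposite modes of $a_{(j)}b$ with $j\geq0$. Those states have strictly smaller PBW length, or standard weight, and are handled by induction. In each product one index is $\geq0$, as in Lem.~\ref{lem:block-ext-prod-descent}. When the nonnegative index sits on the left, the term lies in $N'$. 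When it sits on the right, I will commute it to the left. The commutator $[a^o_p,b^o_q]$ is a finite combination of opposite modes of $a_{(j)}b$, $j\geq0$, again of lower PBW filtration, so induction on the filtration $F$ of Thm.~\ref{thm:vertex-env-PBW} combined with standard weight closes the argument. Since $S$ contains the generators, the vacuum, and is closed under these operations, $S=\mathcal Y$, and hence $\mathfrak g_{\mathrm{out}}(\mathcal Y)\mathcal Y\subseteq N'$.

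I expect the main obstacle to be Step 2. Specifically, the difficulty is making precise the normal-ordering identity for opposite modes of non-primary composite states, where $\mathscr L_1^{\mathrm{MT}}$ acts nontrivially, and ensuring that reordering commutators genuinely lower the induction parameter. Commutators of $\Lcur$ with coordinate functions produce $\rho_\xi a$, which has the same PBW length but higher spectral degree. The induction must therefore be run on the PBW filtration $F$, counting $T$-letters, rather than on spectral degree. For any pure function-times-function terms, regularity of the $A$--$A$ OPE makes all commutators vanish. If the direct normal-ordering route proves messy, my fallback is to show that $N'$ is stable under the $\mathfrak D_+^\infty$ action, using that $\mathfrak D_+^\infty\oplus\mathfrak D_-^\infty$ is a Lie subalgebra, and under $E_\infty$. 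I would then deduce that the Lie subalgebra generated by the generator outgoing modes already yields all of $\mathfrak g_{\mathrm{out}}(\mathcal Y)\mathcal Y$.
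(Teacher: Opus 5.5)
Your Step~1 is exactly the paper's proof of $E_\infty\mathcal Y+\mathfrak D_+^\infty\mathcal Y\subseteq\mathfrak g_{\mathrm{out}}(\mathcal Y)\mathcal Y$. The skeleton of Step~2 (induction along a strong-generation presentation, using that one index in each summand is nonnegative) is also the paper's. The gap is the identity Step~2 rests on, which you leave open and partly misdescribe.

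The paper uses the exact opposite-mode identities of Huang--Lepowsky--Zhang. These hold for arbitrary, non-primary states, because $Y^o(v,x)=Y\bigl(e^{x\mathscr L_1^{\mathrm{MT}}}(-x^{-2})^{\mathscr L_0^{\mathrm{MT}}}v,x^{-1}\bigr)$ satisfies the opposite Jacobi identity:
\[
(\partial a)^o_n=-n\,a^o_{n-1},
\qquad
(a_{(-1)}b)^o_n=\sum_{j<0}b^o_{n-j-1}a^o_j+\sum_{j\geq0}a^o_j\,b^o_{n-j-1}.
\]
There are no correction terms, and the normal ordering is \emph{reversed}. It must be, since $c^o_n$ is of creation type for $n\gg0$ and of annihilation type for $n\ll0$. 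Hence for $n\geq0$ the leftmost factor of every summand already has index $\geq0$ (if $j<0$ then $n-j-1\geq n$). No reordering is ever needed, and induction on the number of generator letters closes at once. The first identity also settles your ``delicate'' case: the scalar is $-m$, which vanishes at $m=0$.

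The mechanism you propose instead does not close. Commutators of opposite modes are
\[
[a^o_p,b^o_q]=-\sum_{j\geq0}\binom{p}{j}(a_{(j)}b)^o_{p+q-j}.
\]
With $p+q=m-1$, the resulting indices $m-1-j$ are negative once $j\geq m$, and for $m=0$ all of them are. For example, $[(\Lcur_\xi)^o_p,(a_\ell)^o_q]=-(\rho_\xi a_\ell)^o_{-1}$ when $p+q=-1$.

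Your set $S$ only controls $c^o_n$ for $n\geq0$, and negative opposite modes genuinely need not map into $N'$. Already $\mathbf 1^o_{-1}=\id$, while $\mathbf 1\notin N'$ because the block $\epsilon$ vanishes on $N'$ and $\epsilon(\mathbf 1)=1$. Lowering the PBW filtration does not help, since the problem is the mode index, not the state. The same objection applies to your unspecified ``correction terms''.

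The fallback is also insufficient. Opposite modes of composite states are infinite sums of products of generator modes, not elements of the Lie algebra those modes generate. So stability of $N'$ under $\mathfrak D_+^\infty$ and $E_\infty$ does not by itself bound $c^o_n\mathcal Y$. What is needed is precisely that each $c^o_n$ with $n\geq0$ acts through the right ideal generated by $E_\infty$ and $\mathfrak D_+^\infty$. That is what the reversed-ordering identity delivers.
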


\begin{proof}
As in \eqref{eq:1pt-outgoing-op}, the outgoing operators for the mini-twistor quasi-conformal structure are given by the opposite modes
\begin{equation*}
c_n^o=(-1)^\Delta\sum_{j\geq0}\frac1{j!}
\bigl((\mathscr L_1^{\mathrm{MT}})^jc\bigr)_{(2\Delta-n-j-2)}, \qquad n\geq0,
\end{equation*}
for homogeneous $c\in\mathcal Y$ of weight $\Delta=\operatorname{wt}_{\mathrm{MT}}(c)$. Because the strong generators $a_\ell$ and $\Lcur_\xi$ are $\mathscr L_1^{\mathrm{MT}}$-primary of weights $-d$ and $1-m$ (for $\ell\in \g^\vee u^{d-1}$ and $\xi\in\g u^m$), the sum collapses to the $j=0$ term. As $n$ ranges over $\mathbb Z_{\geq0}$, the resulting operators $(a_\ell)_n^o = (-1)^{-d}(a_\ell)_{(-2d-n-2)}$ and $(\Lcur_\xi)_n^o = (-1)^{1-m}(\Lcur_\xi)_{(-2m-n)}$ span $E_\infty$ and $\mathfrak D_+^\infty$ respectively, proving the inclusion $E_\infty\mathcal Y + \mathfrak D_+^\infty\mathcal Y \subseteq \mathfrak g_{\mathrm{out}}(\mathcal Y)\mathcal Y$.

For the reverse inclusion, use the opposite-mode identities \cite[Eqs.~(2.57)--(2.62)]{HuangLepowskyZhangI}, with opposite modes extended to all integer indices as in Lem.~\ref{lem:block-ext-prod-descent}:
\begin{equation*}
\begin{aligned}
(\partial a)_n^o&=-na_{n-1}^o,\\
(a_{(-1)}b)_n^o
&=\sum_{j<0}b_{n-j-1}^o a_j^o
+\sum_{j\geq0}a_j^o b_{n-j-1}^o.
\end{aligned}
\end{equation*}
In the expansion of $(a_{(-1)}b)_n^o$ for $n\geq0$, every term on the right-hand side has a leftmost operator with non-negative index $k\geq0$. Since $\mathcal Y$ is strongly generated by $a_\ell$ and $\Lcur_\xi$, an induction on the generation of states shows that every $c_n^o$ for $n\geq0$ acts as an operator in the right ideal generated by $E_\infty$ and $\mathfrak D_+^\infty$. This establishes $\mathfrak g_{\mathrm{out}}(\mathcal Y)\mathcal Y \subseteq E_\infty\mathcal Y + \mathfrak D_+^\infty\mathcal Y$, completing the proof.
\end{proof}


We extend the residue pairing between $\mathfrak D_-^*$ and $\mathfrak D_-$ via PBW symmetrization $\operatorname{sym}:\operatorname{Sym}(\mathfrak D_-)\xrightarrow{\sim}U(\mathfrak D_-)$ to an evaluation pairing on $U(\mathfrak D_-)$, which we continue to denote by $\langle-,-\rangle$. For $f\in\operatorname{Sym}(\mathfrak D_-^*)$ and $y_1,\ldots,y_n\in\mathfrak D_-$,
\begin{equation}
\left\langle f,\operatorname{sym}(y_1\cdots y_n)\right\rangle
=
\left.
\partial_{s_1}\cdots\partial_{s_n}
f\!\left(\sum_{i=1}^n s_i y_i\right)
\right|_{s_1=\cdots=s_n=0},
\qquad \langle f,1\rangle=f(0),
\end{equation}
where $\operatorname{sym}(y_1\cdots y_n) = \frac{1}{n!}\sum_{\sigma\in S_n}y_{\sigma(1)}\cdots y_{\sigma(n)}$ averages over all $n!$ orderings.

We also consider the graded dual $U(\mathfrak D_-)^*$ with the convolution product
\begin{equation*}
\langle fg,Y\rangle
=\langle f\otimes g,\Delta(Y)\rangle,
\qquad Y\in U(\mathfrak D_-).
\end{equation*}
Since PBW symmetrization is a coalgebra isomorphism, the above pairing identifies $\operatorname{Sym}(\mathfrak D_-^*)$ with a subalgebra of $U(\mathfrak D_-)^*$. An element of $\operatorname{Sym}(\mathfrak D_-^*)$ is a finite linear combination of monomials, whereas a homogeneous element of $U(\mathfrak D_-)^*$ of bidegree $(d, R)$ is a formal sum of monomials in the commuting symbols $\ell t^k$ whose bidegrees add up to $(d, R)$.

Equip $(U(\mathfrak D_-)^*)_{[d],R} =(U(\mathfrak D_-)_{[-d],-R})^\vee$ with the topology whose neighborhoods of zero are
\begin{equation*}
W^\perp:=\{f:\langle f,W\rangle=0\},
\qquad
W\subset U(\mathfrak D_-)_{[-d],-R}
\ \text{finite-dimensional}.
\end{equation*}
The continuous dual of $(U(\mathfrak D_-)^*)_{[d],R}$ is canonically $U(\mathfrak D_-)_{[-d],-R}$. The subspace $\operatorname{Sym}(\mathfrak D_-^*)_{[d],R}$ is dense: every such $W$ is contained in the span of finitely many monomials, and the restriction of any $f\in(U(\mathfrak D_-)^*)_{[d],R}$ to that span is represented by a finite linear combination of the dual monomials.

Applying Prop.~\ref{prop:kop-matched-pair} to the matched pair $\mathfrak D=\mathfrak D_-\bowtie\mathfrak D_+$, the enveloping algebra $U(\mathfrak D_-)$ is a right $U(\mathfrak D_+)$-module coalgebra with action $Y\triangleleft X := p_-(S(X)Y)$. Its transpose defines a left action on $U(\mathfrak D_-)^*$:
\begin{equation}
\langle X\triangleright f,Y\rangle
:=\langle f,p_-(S(X)Y)\rangle,
\qquad Y\in U(\mathfrak D_-).
\label{eq:mt-coord-action}
\end{equation}
If $f\in(U(\mathfrak D_-)^*)_{[d],R}$ and $X\in U(\mathfrak D_+)_{[p],K}$, then $X\triangleright f\in(U(\mathfrak D_-)^*)_{[d+p],R+K}$. This action also makes $U(\mathfrak D_-)^*$ a left $U(\mathfrak D_+)$-module algebra:
\begin{equation}
\begin{aligned}
(XX')\triangleright f &=X\triangleright(X'\triangleright f),\\
X\triangleright(fg) &=\sum_{(X)}(X_{(1)}\triangleright f) (X_{(2)}\triangleright g).
\end{aligned}
\label{eq:mt-transpose-mod-alg}
\end{equation}
The following proposition realizes this action by commutation in $\widetilde U(\mathcal Y)$.

\begin{proposition}
\label{prop:mt-mode-realization}
The map $\iota_-$ extends uniquely to a continuous algebra homomorphism
$U(\mathfrak D_-)^*\to\widetilde U(\mathcal Y)$.
For $X\in U(\mathfrak D_+)$ and $f\in U(\mathfrak D_-)^*$, one has
\begin{equation}
\begin{aligned}
\iota_+(X)\iota_-(f)
&=\sum_{(X)}
\iota_-(X_{(1)}\triangleright f)\iota_+(X_{(2)}),\\
\iota_-(f)\iota_+(X)
&=\sum_{(X)}
\iota_+(X_{(1)})
\iota_-(S(X_{(2)})\triangleright f).
\end{aligned}
\label{eq:mt-mode-reorder}
\end{equation}
In particular, these relations hold as locally finite operator identities when acting on $\mathcal Y$.
\end{proposition}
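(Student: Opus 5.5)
The plan is to reduce everything to the basic mode commutator between a single current mode and a single coordinate mode, and then to bootstrap using the module-algebra identities \eqref{eq:mt-transpose-mod-alg}.

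\emph{Extension of $\iota_-$.} A homogeneous $f\in(U(\mathfrak D_-)^*)_{[d],R}$ is a formal sum of monomials in the commuting symbols $\ell t^k$ of fixed total bidegree $(d,R)$. Since $\iota_-$ is an algebra map and coordinate modes commute ($[a_{(i)},b_{(j)}]=0$), I define $\iota_-(f)$ as the corresponding formal sum of ordered products of modes. This sum converges in the completed mode topology. Acting on a fixed vector of $\mathcal Y$, which has bounded standard weight and bounded spectral degree, only finitely many monomials survive. The reason is that the total $t$-degree is fixed, so a monomial with infinitely many terms would need modes of arbitrarily large positive index; in each such monomial some mode then kills the vector once we commute it to the right. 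Uniqueness and continuity follow from density of $\operatorname{Sym}(\mathfrak D_-^*)_{[d],R}$ in the $W^\perp$-topology, together with the fact that $\iota_-$ is continuous on polynomials. The latter holds because a neighbourhood of zero in $\widetilde U(\mathcal Y)$ is cut out by finitely many constraints on the monomials.

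\emph{Generator case.} Take $X=\xi t^k\in\mathfrak D_+$ primitive and $f=\ell t^j$ a single generator. The mode relation $[(\Lcur_\xi)_{(k)},(a_\ell)_{(j)}]=(\rho_\xi a_\ell)_{(k+j)}$ then has to be compared with $\iota_-(\xi t^k\triangleright \ell t^j)$. Via \eqref{eq:rat-dressing-transpose}, $\rho_\xi a_\ell$ is the transpose of $Y\mapsto -(\xi\cdot Y)$, i.e.\ of $-([\xi,Y])_-$ on $U(u^{-1}\g[u^{-1}])$, and this is a nonlinear function in general. On the other side, $p_-(S(\xi)Y)=-p_-(\xi Y)$ equals $-([\xi,y_1])_-y_2\cdots$ plus reordering terms. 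Its transpose is exactly the induced action of $\xi$ on the symmetric coalgebra dual, extended as a derivation. Attaching the $t$-modes works because the level is zero and all OPEs are simple poles, so the $t$-indices simply add: $\xi t^k$ and $\ell t^j$ combine into index $k+j$. This is consistent with the double residue pairing, and the identity holds in this case.

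\emph{General case.} Both sides of the first identity in \eqref{eq:mt-mode-reorder} are multiplicative in $f$, by the second line of \eqref{eq:mt-transpose-mod-alg} together with the derivation property of commutators. They are also multiplicative in $X$: the left side since $\iota_+$ is an algebra map, the right side by the first line of \eqref{eq:mt-transpose-mod-alg} and coassociativity. So the identity extends from generators to all of $U(\mathfrak D_+)\otimes\operatorname{Sym}(\mathfrak D_-^*)$. It then extends to $U(\mathfrak D_-)^*$ by continuity, since $X\triangleright$ is continuous in the $W^\perp$ topology, being transpose to $Y\mapsto p_-(S(X)Y)$. The second identity follows formally from the first by the antipode axiom: substitute $S(X_{(2)})\triangleright f$ for $f$ and sum.

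The main obstacle is the generator comparison. It requires a careful check that the transpose of the nonlinear dressing vector field agrees with $p_-(S(\xi)\,\cdot)$ under the symmetrization pairing, including the signs from $S$, and that locality in $z$ interacts correctly with the $t$-grading.
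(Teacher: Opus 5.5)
Your outline matches the paper's proof. Extend $\iota_-$ using the fact that only finitely many monomials avoid high modes. Get the generator case from $[(\Lcur_\xi)_{(k)},(a_\ell)_{(j)}]=(\rho_\xi a_\ell)_{(k+j)}$ and \eqref{eq:rat-dressing-transpose}. Propagate with \eqref{eq:mt-transpose-mod-alg} and continuity, and obtain the second identity from the antipode.

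\textbf{The gap: the order of extension.} Your step fails in the order you wrote it, because $\operatorname{Sym}(\mathfrak D_-^*)$ is not stable under $\triangleright$. Already $(\xi t^k)\triangleright(\ell t^j)=\sigma_{k+j}(\rho_\xi a_\ell)$. Here $\sigma_n\colon A\to U(\mathfrak D_-)^*$ distributes the mode index over the factors:
$\sigma_n(a_{\ell_1}\cdots a_{\ell_N})=\sum_{i_1+\cdots+i_N=n+1-N}(\ell_1t^{i_1})\cdots(\ell_Nt^{i_N})$.
As you note, $\rho_\xi a_\ell$ is nonlinear in general, so this is an infinite formal sum over all $i_1\in\mathbb Z$ as soon as a quadratic term appears. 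To pass from $X''$ to $\xi X''$ you must apply the $\xi$-identity to $X''_{(1)}\triangleright f$. That element lies in $U(\mathfrak D_-)^*$ but typically not in $\operatorname{Sym}(\mathfrak D_-^*)$, so an induction on $X$ over $U(\mathfrak D_+)\otimes\operatorname{Sym}(\mathfrak D_-^*)$ does not close.

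The repair is the paper's order. For each fixed primitive $X=\xi t^k$:
\begin{enumerate}
\item pass from linear $f$ to $\operatorname{Sym}(\mathfrak D_-^*)$ by the Leibniz rule;
\item pass to all of $U(\mathfrak D_-)^*$ by continuity of $f\mapsto[\iota_+(X),\iota_-(f)]$ and $f\mapsto\iota_-(X\triangleright f)$;
\item only then induct on the PBW length of $X$.
\end{enumerate}

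\textbf{The generator comparison.} Your comparison leaves the same mechanism implicit. The commutator produces the mode of the polynomial state $\rho_\xi a_\ell$, and this must be identified with $\iota_-$ of an element of $U(\mathfrak D_-)^*$. The paper uses the normal-ordered product relation \eqref{eq:FBZ-completed-mode-alg} together with the regular OPEs among coordinate fields. These give $[P\otimes t^n]=\iota_-(\sigma_n(P))$. A residue identity then reduces the generator case to \eqref{eq:rat-dressing-transpose}. That identity says that pairing $\sigma_n(P)$ with a symmetrized $t$-graded word equals $\langle P,\cdot\rangle$ on the underlying $u$-word times a Kronecker delta on total $t$-degree. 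The relevant input is the simple pole in $\Lcur_\xi(z)a(w)$ and the regular $a$--$a$ OPE; the level plays no role here.

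\textbf{Finiteness.} In your finiteness argument, also state explicitly that every monomial in $f\in(U(\mathfrak D_-)^*)_{[d],R}$ has at most $d$ factors, since each coordinate mode has spectral degree at least one. Fixing the total $t$-degree alone does not force a large positive mode.
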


\begin{proof}
Let $f\in(U(\mathfrak D_-)^*)_{[d],R}$, and express it as a formal sum of monomials $e_1\cdots e_n$, where $e_j=\ell_jt^{-r_j-1}$, $\ell_j\in\g^\vee u^{d_j-1}$, and $d_j\geq1$. Then $\sum_jd_j=d$, $\sum_jr_j=R$, and $n\leq d$. For each $N>0$, only finitely many of these monomials satisfy $r_j\geq-N$ for every $j$. Every other monomial contains a factor of mode degree $-r_j-1\geq N$, and hence maps into the open left ideal generated by modes of degree at least $N$. Thus $\iota_-(f)$ is defined modulo every such ideal by a finite sum, and define an element of $\widetilde U(\mathcal Y)$. The inverse image of this open ideal contains the annihilator of the finite-dimensional span of the monomials dual to the finitely many terms above. Hence $\iota_-$ is continuous for this topology, and density of $\operatorname{Sym}(\mathfrak D_-^*)$ gives its uniqueness. Continuity of convolution and of multiplication in $\widetilde U(\mathcal Y)$ shows that the extension is an algebra homomorphism.

On a state $v\in\mathcal Y_{[q],h}$, we have 
\begin{equation*}
\iota_-(e_1)\cdots\iota_-(e_n)v\ne0
\quad\Longrightarrow\quad
n\leq d,\qquad -h\leq r_j\leq R+(d-1)h.
\end{equation*}
Indeed, commuting each factor next to $v$ gives $r_j\geq-h$,
and $\sum_jr_j=R$ gives the upper bound.
Only finitely many monomials satisfy these inequalities, so
$\iota_-(f)$ acts by a finite sum and shifts state bidegree
by $(d,R)$.

Define the linear map
$\sigma_n:A=\cO(G_-)\to U(\mathfrak D_-)^*$
on monomials by
\begin{equation*}
\sigma_n(a_{\ell_1}\cdots a_{\ell_N}) = \sum_{i_1+\cdots+i_N=n+1-N} (\ell_1t^{i_1})\cdots(\ell_Nt^{i_N}), \qquad \sigma_n(1)=\delta_{n,-1}.
\end{equation*}
For $P\in A_{[d]}$, this lies in
$(U(\mathfrak D_-)^*)_{[d],-n-1}$.
The defining normal-product relation in
\eqref{eq:FBZ-completed-mode-alg} gives $[P\otimes t^n] = \iota_-\!\left(\sigma_n(P)\right)$ in $\widetilde U(\mathcal Y)$. We compare \eqref{eq:mt-coord-action} with multiplication in $\widetilde U(\mathcal Y)$.  For $y_i\in\dminus$ and $r_i\in\mathbb Z$, set
\begin{equation*}
Y_t:=\operatorname{sym}(y_1t^{r_1}\cdots y_st^{r_s}),
\qquad
Y_u:=\operatorname{sym}(y_1\cdots y_s).
\end{equation*}
Then for every $P\in A$,
\begin{equation}
\langle\sigma_n(P),Y_t\rangle
=\delta_{n+\sum_{j=1}^s r_j,-1}
\langle P,Y_u\rangle.
\label{eq:mt-sigma-res-pairing}
\end{equation}
Using the above equation and Eqs.~\eqref{eq:rat-dressing-transpose}, together with the residue condition, give
\begin{equation*}
\langle(\xi t^k)\triangleright(\ell t^j),Y_t\rangle =\langle\ell t^j,p_-(-(\xi t^k)Y_t)\rangle  =\langle\sigma_{k+j}(\rho_\xi a_\ell),Y_t\rangle,
\end{equation*}
Thus we have $(\xi t^k)\triangleright(\ell t^j) =\sigma_{k+j}(\rho_\xi a_\ell)$. The mode commutator formula now gives
\begin{equation*}
\bigl[\iota_+(\xi t^k),\iota_-(\ell t^j)\bigr] = [\rho_\xi a_\ell\otimes t^{k+j}] = \iota_-\!\left( (\xi t^k)\triangleright(\ell t^j) \right).
\end{equation*}
For fixed $X=\xi t^k$, \eqref{eq:mt-transpose-mod-alg} extends this commutator identity to $f\in\operatorname{Sym}(\mathfrak D_-^*)$. The map $X\triangleright-$ is continuous for the topology on $U(\mathfrak D_-)^*$: for every finite-dimensional $W\subset U(\mathfrak D_-)$, $(X\triangleright-)^{-1}(W^\perp) =p_-(S(X)W)^\perp$. Consequently, both maps
\begin{equation*}
f\longmapsto[\iota_+(X),\iota_-(f)],
\qquad
f\longmapsto\iota_-(X\triangleright f)
\end{equation*}
are continuous. They agree on the dense subspace
$\operatorname{Sym}(\mathfrak D_-^*)$, and therefore agree on
$U(\mathfrak D_-)^*$.
Induction on the number of factors in a PBW monomial $X$ gives the first identity in \eqref{eq:mt-mode-reorder}; the antipode identity gives the second.
\end{proof}

\begin{proposition}
\label{prop:mt-vec-space-comp}
Via the vacuum-evaluation isomorphism of Prop.~\ref{prop:mt-vac-quotient}, the assignment $\lambda\mapsto F_\lambda$ uniquely characterized by
\begin{equation}
\langle f,F_\lambda(X)\rangle = \lambda(fX\mathbf1),
\qquad f\in\operatorname{Sym}(\mathfrak D_-^*),\ X\in U(\mathfrak D_+)
\label{eq:mt-mode-fn-PBW-pairing}
\end{equation}
defines a degree-preserving vector-space isomorphism
\begin{equation}
\mathscr B^{\mathrm{MT}} \cong \left(
\frac{\widetilde U(\mathcal Y)}
{I_{\mathrm{vac}}+
 \mathfrak g_{\mathrm{out}}(\mathcal Y)\widetilde U(\mathcal Y)}
\right)^\vee
\xrightarrow{\sim}\mathcal R_{\mathrm{MT}},
\qquad \lambda \longmapsto F_\lambda.
\label{eq:mt-mode-fn-comp}
\end{equation}
Moreover, the reordered map $F_\lambda^R$ defined in \eqref{eq:mt-reorder-map} satisfies
\begin{equation}
\langle f,F_\lambda^R(X)\rangle = \lambda(Xf\mathbf1).
\label{eq:mt-reorder-mode-fn-pairing}
\end{equation}
\end{proposition}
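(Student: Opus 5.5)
The map $\lambda\mapsto F_\lambda$ is built in three stages: define it, compare the two-chart conditions with the relations in the block quotient, then prove bijectivity.

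\textbf{Defining $F_\lambda$.} By Prop.~\ref{prop:mt-vac-quotient}, a block is a functional $P\mapsto\lambda(P\mathbf1)$ on $\widetilde U(\mathcal Y)$ that kills $I_{\mathrm{vac}}+\mathfrak g_{\mathrm{out}}(\mathcal Y)\widetilde U(\mathcal Y)$. For fixed $X\in U(\mathfrak D_+)$, the assignment $f\mapsto\lambda(fX\mathbf1)$ is a linear functional on $\operatorname{Sym}(\mathfrak D_-^*)$. I would first show that it is continuous of bounded bidegree, so that it is represented by a unique element $F_\lambda(X)\in\widehat{U(\mathfrak D_-)}$. Continuity follows from the bidegree bounds in the proof of Prop.~\ref{prop:mt-mode-realization}: for fixed $X\mathbf1$ and fixed target degree, only finitely many monomials act nontrivially. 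Together with the density of $\operatorname{Sym}(\mathfrak D_-^*)$ and the identification of the continuous dual of $(U(\mathfrak D_-)^*)_{[d],R}$ with $U(\mathfrak D_-)_{[-d],-R}$, this gives \eqref{eq:mt-mode-fn-PBW-pairing}. The map respects degrees because $\lambda$ is supported on spectral components.

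For \eqref{eq:mt-reorder-mode-fn-pairing}, expand $Xf\mathbf1$ with the first identity in \eqref{eq:mt-mode-reorder}:
\[
Xf\mathbf1=\sum_{(X)}(X_{(1)}\triangleright f)X_{(2)}\mathbf1 .
\]
Pairing with $F_\lambda$ and transposing $\triangleright$ via \eqref{eq:mt-coord-action} produces $\langle f,\sum p_-(S(X_{(1)})F_\lambda(X_{(2)}))\rangle$, which is the definition of $F^R_\lambda$.

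\textbf{Two-chart conditions.} Next I would show that the defining relations of the quotient correspond exactly to the two-chart conditions \eqref{eq:mt-2chart-conds} together with the image conditions defining $\mathcal K(U_0,U_\infty)$.
\begin{itemize}
\item The vacuum relations are that $\mathfrak D_+^0$ and $E_0$ annihilate $\mathbf1$. The first gives $F_\lambda(U(\mathfrak D_+)\mathfrak D_+^0)=0$. For the second, use the PBW isomorphism \eqref{eq:mt-coord-current-PBW}, move $E_0$-factors rightwards past $X$ with the second identity in \eqref{eq:mt-mode-reorder}, and obtain that $\lambda(Xf\mathbf1)$ depends on $f$ only modulo the ideal generated by $E_0=(\mathfrak D_-^0)^\perp$. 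That is exactly $\operatorname{im}F^R_\lambda\subseteq\widehat{U(\mathfrak D_-^0)}$.
\item The outgoing relations are, by Lem.~\ref{lem:mt-outgoing-gens}, left multiplication by $E_\infty$ and $\mathfrak D_+^\infty$. Applied to $fX\mathbf1$, multiplication by $E_\infty$ gives $\operatorname{im}F_\lambda\subseteq\widehat{U(\mathfrak D_-^\infty)}$. Multiplication by $\mathfrak D_+^\infty$, after reordering $\xi f$ with \eqref{eq:mt-mode-reorder}, is equivalent to $F^R_\lambda(\mathfrak D_+^\infty U(\mathfrak D_+))=0$; equivalently, it is the invariance under the action $\cdot_\infty$ from \eqref{eq:mt-chart-actions}.
\end{itemize}
Hence $F_\lambda\in\mathcal R_{\mathrm{MT}}$.

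\textbf{Bijectivity.} Conversely, given $F\in\mathcal R_{\mathrm{MT}}$, set $\lambda(fX\mathbf1):=\langle f,F(X)\rangle$. By \eqref{eq:mt-coord-current-PBW}, restricting to $f\in\operatorname{Sym}((\mathfrak D_-^0)^*)$ and $X\in U(t^{-1}\dplus[t^{-1}])$ already determines $\lambda$ on $\mathcal Y$. This gives injectivity. For well-definedness one must check that the extra relations, namely the values on $f$ outside $\operatorname{Sym}((\mathfrak D_-^0)^*)$ and on $X$ outside the PBW range, are consistent; the vacuum conditions are exactly what guarantees this. Finally, the outgoing conditions show that $\lambda$ vanishes on $\mathfrak g_{\mathrm{out}}(\mathcal Y)\mathcal Y$.

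\textbf{Main obstacle.} The hardest step is this consistency check in the converse direction. Both $U(\mathfrak D_+)$ and $\operatorname{Sym}(\mathfrak D_-^*)$ overcount $\mathcal Y$ relative to \eqref{eq:mt-coord-current-PBW}, so one must show that $I_{\mathrm{vac}}$ is generated, as a left ideal in the relevant completed sense, by $E_0$ and $\mathfrak D_+^0$ modulo the reordering relations \eqref{eq:mt-mode-reorder}. I would first try to prove this from the PBW theorem~\ref{thm:vertex-env-PBW}, filtering by the number of current factors and checking that the associated graded statement is the classical one for $\operatorname{Sym}\otimes U$.
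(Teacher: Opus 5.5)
Your route is the paper's: define $F_\lambda$ by the pairing, get the $F^R_\lambda$ identity from the first formula in \eqref{eq:mt-mode-reorder}, turn the vacuum relations ($\mathfrak D_+^0$, $E_0$) and the outgoing relations ($\mathfrak D_+^\infty$, $E_\infty$, via Lem.~\ref{lem:mt-outgoing-gens}) into the four conditions defining $\mathcal R_{\mathrm{MT}}$, and invert using \eqref{eq:mt-coord-current-PBW}.

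\textbf{The existence of $F_\lambda(X)$ is not justified as written.} You need $F_\lambda(X)$ to lie in $\widehat{U(\mathfrak D_-)}$, and the bounds you cite do not give this.
The bounds in the proof of Prop.~\ref{prop:mt-mode-realization} are for $f$ of a \emph{fixed} bidegree $(d,R)$; the upper bound $r_j\le R+(d-1)h$ depends on $R$.
So they only show that $f\mapsto\lambda(fX\mathbf1)$ is continuous on each $(U(\mathfrak D_-)^*)_{[d],R}$.
For $X$ of bidegree $(p,h)$ this places $F_\lambda(X)$ in $\prod_R U(\mathfrak D_-)_{[p-q],-R}$.
Since $fX\mathbf1\in\mathcal Y_{[q],h+R}$, and an arbitrary functional on $\mathcal Y_{[q]}$ can be nonzero in infinitely many standard weights, nothing yet forces finitely many $R$. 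In particular $F_\lambda\in\mathcal K$ is not established.

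The bound in $R$ comes from the block condition, and this is how the paper proceeds:
\begin{enumerate}
\item The $\iota_-$ factors commute, so any $e_i=\ell_it^{-r_i-1}$ with $r_i>2d_i$ can be moved to the far left.
\item There it lies in $E_\infty\subset\mathfrak g_{\mathrm{out}}(\mathcal Y)$, so $\lambda$ kills the term.
\item Combined with the vacuum bound $r_i\ge -h$ and $\sum_i d_i=q-p$, only finitely many monomials survive.
\end{enumerate}
In your write-up the $E_\infty$ relation must therefore enter the definition of $F_\lambda(X)$, not only the later image condition.

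\textbf{The converse.} The paper handles the consistency check you flag as follows.
It works on $U(\mathfrak D_-)^*\otimes U(\mathfrak D_+)$ rather than on $\operatorname{Sym}(\mathfrak D_-^*)\otimes U(\mathfrak D_+)$, because $X\triangleright f$ is generally an infinite sum such as $\sigma_n(\rho_\xi a_\ell)$.
On this space it uses $\Phi_F(h\otimes X)=\langle h,F(X)\rangle$ and $m_{\mathbf1}(h\otimes X)=\iota_-(h)\iota_+(X)\mathbf1$.
The key claim is that $\ker m_{\mathbf1}$ is the closed span of two families, with $\xi_0\in\mathfrak D_+^0$ and $b\in E_0$:
\begin{itemize}
\item the elements $h\otimes X\xi_0$, which $\Phi_F$ kills by $F(U(\mathfrak D_+)\mathfrak D_+^0)=0$;
\item the elements $\sum_{(X)}g(X_{(1)}\triangleright b)\otimes X_{(2)}$, which $\Phi_F$ sends to $\sum\langle(S(X_{(2)})\triangleright g)b,F^R(X_{(1)})\rangle$; this vanishes by $\operatorname{im}F^R\subseteq\widehat{U(\mathfrak D_-^0)}$.
\end{itemize}

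You do not need a PBW-filtration argument via Thm.~\ref{thm:vertex-env-PBW} to justify this kernel description:
\begin{enumerate}
\item Both families visibly lie in $\ker m_{\mathbf1}$.
\item Use $U(\mathfrak D_+)=U(t^{-1}\dplus[t^{-1}])\,U(\mathfrak D_+^0)$ together with the first family.
\item Then reduce every $h\otimes X$ modulo the second family to $\operatorname{Sym}((\mathfrak D_-^0)^*)\otimes U(t^{-1}\dplus[t^{-1}])$, by induction on the PBW length of $X$. The term with $X_{(1)}=1$ is $gb\otimes X$, and all other terms have shorter $X_{(2)}$.
\item Injectivity of $m_{\mathbf1}$ on these normal forms is \eqref{eq:mt-coord-current-PBW}.
\end{enumerate}
The outgoing vanishing of $\lambda_F$ then follows as you indicate. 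For the $\mathfrak D_+^\infty$ part you use $\lambda_F(Xh\mathbf1)=\langle h,F^R(X)\rangle$, which you should record explicitly.
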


\begin{proof}
Via Prop.~\ref{prop:mt-vac-quotient}, regard $\lambda\in\mathscr B^{\mathrm{MT}}$ equivalently as the linear functional $\Lambda(P) := \lambda(P\mathbf1)$ on $\widetilde U(\mathcal Y)$ vanishing on $I_{\mathrm{vac}}+\mathfrak g_{\mathrm{out}}(\mathcal Y)\widetilde U(\mathcal Y)$, so $\Lambda(fX) = \lambda(fX\mathbf1)$ and $\Lambda(Xf) = \lambda(Xf\mathbf1)$.

Because both sides of the expected isomorphism are products over spectral degree, suppose first that $\Lambda$ has degree $-q$. Let $X\in U(\mathfrak D_+)$ have bidegree $(p,h)$ with $X\mathbf1\ne0$ (so $h\geq 0$). For basis elements $e_i=\ell_it^{-r_i-1}$ with $\ell_i\in \g^\vee u^{d_i-1}$ ($d_i\geq1$), a nonzero evaluation $\Lambda(e_1\cdots e_nX)\ne0$ requires
\begin{equation*}
\sum_{i=1}^n d_i=q-p,\qquad -h\leq r_i\leq2d_i.
\end{equation*}
The lower bound holds by commuting $e_i$ to $X\mathbf1$; the upper bound holds by commuting it to the far left, since $e_i\in E_\infty\subset\mathfrak g_{\mathrm{out}}(\mathcal Y)$ for $r_i>2d_i$. Since $d_i\geq1$, only finitely many monomials $e_1\cdots e_n$ satisfy these bounds, so \eqref{eq:mt-mode-fn-PBW-pairing} determines a unique element $F_\lambda(X)\in U(\mathfrak D_-)_{[p-q]}$. Applying the reordering identity \eqref{eq:mt-mode-reorder} to $Xf$ gives
\begin{equation*}
\Lambda(Xf)
=\sum_{(X)}
\langle X_{(1)}\triangleright f,F_\lambda(X_{(2)})\rangle
=\langle f,F_\lambda^R(X)\rangle,
\end{equation*}
which proves \eqref{eq:mt-reorder-mode-fn-pairing}.

The image conditions defining $\mathcal R_{\mathrm{MT}}$ in \eqref{eq:mt-1chart-algs} correspond to annihilators under this pairing: for $i\in\{0,\infty\}$, we have
\begin{equation*}
\left\{Y\in\widehat{U(\mathfrak D_-)}: \langle \operatorname{Sym}(\mathfrak D_-^*)E_i, Y\rangle=0\right\}=\widehat{U(\mathfrak D_-^i)}.
\end{equation*}

Now let $\xi_0\in\mathfrak D_+^0$, $e_0\in E_0$, $\xi_\infty\in\mathfrak D_+^\infty$, $e_\infty\in E_\infty$, $f\in\operatorname{Sym}(\mathfrak D_-^*)$, and $X\in U(\mathfrak D_+)$. For these elements, the defining relations give $fX\xi_0,Xfe_0\in I_{\mathrm{vac}}$ and $\xi_\infty Xf,e_\infty fX\in \mathfrak g_{\mathrm{out}}(\mathcal Y)\widetilde U(\mathcal Y)$.
Since $\Lambda$ vanishes on both subspaces, the defining formulas for $F_\lambda$ and $F_\lambda^R$ imply
\begin{align*}
\langle f,F_\lambda(X\xi_0)\rangle
&=\Lambda(fX\xi_0)=0,
&
\langle f,F_\lambda^R(\xi_\infty X)\rangle
&=\Lambda(\xi_\infty Xf)=0,\\
\langle fe_0,F_\lambda^R(X)\rangle
&=\Lambda(Xfe_0)=0,
&
\langle fe_\infty,F_\lambda(X)\rangle
&=\Lambda(fe_\infty X)=\Lambda(e_\infty fX)=0.
\end{align*}
The first row gives the two vanishing conditions in \eqref{eq:mt-2chart-conds}. The second row gives the two image inclusions $\operatorname{im}F_\lambda^R\subseteq\widehat{U(\mathfrak D_-^0)}$ and $\operatorname{im}F_\lambda\subseteq\widehat{U(\mathfrak D_-^\infty)}$. Hence $F_\lambda\in\mathcal R_{\mathrm{MT}}$.

Conversely, let $F\in(\mathcal R_{\mathrm{MT}})_{[-q]}$. We first define a map $\Phi_F:U(\mathfrak D_-)^*\otimes U(\mathfrak D_+) \to \kk$ via 
\begin{equation*}
\Phi_F(h\otimes X):=\langle h,F(X)\rangle
\end{equation*}
The vacuum evaluation gives us a map $m_{\mathbf1}:U(\mathfrak D_-)^*\otimes U(\mathfrak D_+)\to \mathcal Y$, defined by $m_{\mathbf1}(h\otimes X):=\iota_-(h)\iota_+(X)\mathbf1$.  For homogeneous $X$, the map $h\mapsto\Phi_F(h\otimes X)$ is continuous because $F(X)\in U(\mathfrak D_-)$, while $h\mapsto m_{\mathbf1}(h\otimes X)$ is continuous by Prop.~\ref{prop:mt-mode-realization}.

To show that $\Phi_F$ descends to a map $\lambda_F:\mathcal{Y} \to \kk$, we only need to show that $\ker m_{\mathbf1} \subset \ker\Phi_F $. The vacuum vector $\mathbf{1}$ is only annihilated by $E_0$ and $\mathfrak{D}_+^0$. By the commutation relation \eqref{eq:mt-mode-reorder}, $\ker m_{\mathbf1}$ is the closed span of 
\begin{equation*}
h\otimes X\xi_0,\quad \sum_{(X)}g(X_{(1)}\triangleright b)\otimes X_{(2)},
\end{equation*}
for $h,g\in U(\mathfrak D_-)^*$, $X\in U(\mathfrak D_+)$, $\xi_0\in\mathfrak D_+^0$, $b\in E_0$.  The defining conditions on $F$ give
\begin{equation*}
\Phi_F(h\otimes X\xi_0)=\langle h,F(X\xi_0)\rangle=0,
\end{equation*}
and
\begin{equation*}
\Phi_F\left(\sum_{(X)}g(X_{(1)}\triangleright b)\otimes X_{(2)}\right)=\sum_{(X)}\left\langle(S(X_{(2)})\triangleright g)b,F^R(X_{(1)})\right\rangle =0.
\end{equation*}
For the last equality, each homogeneous term $F^R(X_{(1)})$ belongs
to $U(\mathfrak D_-^0)$, whereas
\begin{equation*}
\langle ab,Y\rangle
=\sum_{(Y)}\langle a,Y_{(1)}\rangle\langle b,Y_{(2)}\rangle=0,
\qquad
a\in U(\mathfrak D_-)^*,\quad b\in E_0,\quad
Y\in U(\mathfrak D_-^0).
\end{equation*}
Thus the continuous functional $\Phi_F$ vanishes on $\ker m_{\mathbf1}$ and defines a unique $\lambda_F\in\mathcal Y^\vee$ of spectral degree $-q$. It satisfies
\begin{equation}
\begin{aligned}
\lambda_F\bigl(\iota_-(h)\iota_+(X)\mathbf1\bigr) &=\langle h,F(X)\rangle,\\
\lambda_F\bigl(\iota_+(X)\iota_-(h)\mathbf1\bigr) &=\left\langle h, \sum_{(X)}p_-\bigl(S(X_{(1)})F(X_{(2)})\bigr) \right\rangle=\langle h,F^R(X)\rangle.
\end{aligned}
\label{eq:mt-conv-mode-pairings}
\end{equation}

It remains to prove $\lambda_F(\mathfrak g_{\mathrm{out}}(\mathcal Y)\mathcal Y)=0$. Using Lem.~\ref{lem:mt-outgoing-gens} It suffices to prove
\begin{equation*}
\lambda_F(e_\infty fX\mathbf1)=0, \qquad \lambda_F(\xi_\infty fX\mathbf1)=0.
\end{equation*}
Since $fe_\infty\in\operatorname{Sym}(\mathfrak D_-^*)E_\infty$ and $F(X)\in\widehat{U(\mathfrak D_-^\infty)}$, we have $\lambda_F(e_\infty fX\mathbf1)=\langle fe_\infty,F(X)\rangle=0$. We also have  $\lambda_F(\xi_\infty fX\mathbf1)
=\sum_{(X)}\left\langle S(X_{(2)})\triangleright f,F^R(\xi_\infty X_{(1)})\right\rangle=0$, because $F^R(\mathfrak D_+^\infty U(\mathfrak D_+))=0$. Hence $\lambda_F\in\mathscr B^{\mathrm{MT}}$.

For $f\in\operatorname{Sym}(\mathfrak D_-^*)$ and $X\in U(\mathfrak D_+)$, the first identity in \eqref{eq:mt-conv-mode-pairings} gives
\begin{equation*}
\langle f,F_{\lambda_F}(X)\rangle
 =\lambda_F(fX\mathbf1)=\langle f,F(X)\rangle.
\end{equation*}
Thus $F_{\lambda_F}=F$. For $f\in\operatorname{Sym}((\mathfrak D_-^0)^*)$ and $X\in U(t^{-1}\dplus[t^{-1}])$, the defining formulas give
\begin{equation*}
\lambda_{F_\lambda}(fX\mathbf1)
 =\langle f,F_\lambda(X)\rangle=\lambda(fX\mathbf1).
\end{equation*}
By \eqref{eq:mt-coord-current-PBW}, $\lambda_{F_\lambda}=\lambda$. The two assignments preserve spectral degree, so their direct product over $q\geq0$ is the topological vector-space isomorphism \eqref{eq:mt-mode-fn-comp}.
\end{proof}

\begin{proposition}
\label{prop:mt-conv-comp}
The linear isomorphism $\lambda\mapsto F_\lambda$ of Prop.~\ref{prop:mt-vec-space-comp} intertwines the mini-twistor convolution product $\star$ on $\mathscr B^{\mathrm{MT}}$ with the Koppinen product $\diamond$ on $\mathcal R_{\mathrm{MT}}$:
\begin{equation}
\langle f, F_{\lambda\star\mu}(X)\rangle
=(\lambda\otimes\mu)\Delta(fX\mathbf1)
=\langle f,(F_\lambda\diamond F_\mu)(X)\rangle,
\qquad f\in\operatorname{Sym}(\mathfrak D_-^*),\ X\in U(\mathfrak D_+).
\label{eq:mt-ordered-mode-coprod}
\end{equation}
Consequently, $\mathcal R_{\mathrm{MT}}$ is a closed unital subalgebra of $(\mathcal K,\diamond)$, and $\lambda\mapsto F_\lambda$ defines an isomorphism of topological algebras $(\mathscr B^{\mathrm{MT}},\star)\xrightarrow{\sim}(\mathcal R_{\mathrm{MT}},\diamond)$.
\end{proposition}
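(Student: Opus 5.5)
The first equality in \eqref{eq:mt-ordered-mode-coprod} is just the definition of $\star$ together with \eqref{eq:mt-mode-fn-PBW-pairing}, so everything rests on the second. I would compute $\Delta(fX\mathbf1)$ directly in the mode algebra. Since $\Delta$ is a vertex-algebra morphism, it induces a continuous algebra map $\widetilde U(\mathcal Y)\to\widetilde U(\mathcal Y\otimes\mathcal Y)$ compatible with vacuum evaluation. It therefore suffices to know $\Delta$ on the images of $\iota_\pm$.

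On the function side, $\Delta(a)=m^*a$ on $A$. Transported to modes through $\sigma_n$, this should say that $\Delta\circ\iota_-=(\iota_-\otimes\iota_-)\circ\Delta^*$, where $\Delta^*$ is the convolution coproduct on $\operatorname{Sym}(\mathfrak D_-^*)\subset U(\mathfrak D_-)^*$ dual to the multiplication of $U(\mathfrak D_-)$. The key observation is that $m^*$ on $\mathcal O(G_-)$ is dual to multiplication in $U(\dminus)$ under the restricted pairing of Cor.~\ref{thm:reductive-1pt-blocks}. Adding the $t$-modes is harmless by \eqref{eq:mt-sigma-res-pairing}. Concretely, I would expect
\[
\langle \Delta f, Y\otimes Y'\rangle=\langle f,YY'\rangle .
\]

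On the current side, $\Delta(\Lcur_\xi)=\Lcur_\xi\otimes1+\Lcur^{(2)}_{M_{g_1}\xi}$. Rewriting $M_{g_1}\xi=\xi\triangleleft g_1$ as the coaction of functions, this becomes, in modes,
\[
\Delta\iota_+(\xi t^k)=\iota_+(\xi t^k)\otimes1+\text{(a sum of }\iota_-(\cdot)\otimes\iota_+(\cdot)\text{)},
\]
with coefficients that are the matrix coefficients of the right action $X\triangleleft Y$ of $U(\mathfrak D_-)$ on $U(\mathfrak D_+)$. Extending multiplicatively to $X\in U(\mathfrak D_+)$ should give
\[
\Delta\bigl(\iota_+(X)\bigr)=\sum \iota_+(X_{(1)})\otimes \iota_-(\phi)\,\iota_+\bigl(X_{(2)}\triangleleft\,\cdot\,\bigr),
\]
with $\phi$ dual to the argument of $\triangleleft$.

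Next I would reorder $\Delta(fX)=\Delta(f)\Delta(X)$ into normal order $\iota_-\iota_+$ in each factor, using \eqref{eq:mt-mode-reorder}, and then apply $\lambda\otimes\mu$ via \eqref{eq:mt-mode-fn-PBW-pairing}. In the first factor this produces $\langle f_{(1)}\cdots,F_\lambda(X_{(1)})\rangle$, which fixes $Y=F_\lambda(X_{(1)})$. In the second factor the remaining function insertion is paired against $Y$, and $X_{(2)}$ has been acted on by $Y_{(1)}$. Dualizing $\langle f,YY'\rangle$ against $Y_{(2)}G(\cdots)$ then yields exactly
\[
\sum Y_{(2)}\,F_\mu\bigl(X_{(2)}\triangleleft Y_{(1)}\bigr),
\]
which is \eqref{eq:mt-kop-prod}. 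Convergence is controlled by spectral degree, as in Prop.~\ref{prop:mt-vec-space-comp}, so each pairing involves only finitely many terms.

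I expect the main obstacle to be this middle step: identifying precisely the coefficient functions in $\Delta\iota_+(X)$ with the action $X\triangleleft Y=p_+(S(Y)X)$ at the level of $t$-modes. The cleanest route is to prove it for $X=\xi t^k$ using the dressing transpose \eqref{eq:rat-dressing-transpose} and $M_{g}$. For general $X$ I would use induction on PBW length, relying on the module-coalgebra property of the matched pair from Prop.~\ref{prop:kop-matched-pair}. A further check is needed that the Sweedler ordering ($Y_{(1)}$ acting, $Y_{(2)}$ outside) matches the reordering produced by \eqref{eq:mt-mode-reorder}, since mismatched antipodes would give $F^R$-type formulas instead.

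Finally, once the intertwining is established, the remaining claims follow. Bijectivity comes from Prop.~\ref{prop:mt-vec-space-comp}, and $\mathscr B^{\mathrm{MT}}$ is closed under $\star$ by Prop.~\ref{prop:block-ord-descent}. Together these show $\mathcal R_{\mathrm{MT}}$ is closed under $\diamond$. The unit matches because $F_\epsilon(X)=\epsilon(X)1$, and continuity holds since both products are graded by spectral degree.
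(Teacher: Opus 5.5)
Your outline is the paper's: the duality $\langle\Delta f,Y\otimes Z\rangle=\langle f,YZ\rangle$ on the coordinate side, the generator case $X=\xi t^k$ obtained by matching the coefficient functions of $M_{g_1}\xi$ with $(\xi t^k)\triangleleft Z_t$, and induction on PBW length. The gap is in the inductive step, which is where the content lies, and the tool you name for it does not close it.

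The module-coalgebra property \eqref{eq:kop-mod-coalgebra} describes how $\triangleleft$ interacts with the \emph{coproduct} of $U(\mathfrak D_+)$. The induction instead needs how $\triangleleft$ interacts with \emph{multiplication} in $U(\mathfrak D_+)$, and there it is not multiplicative. By Lem.~\ref{lem:kop-current-word}, for $X'\in\mathfrak D_+$,
\[
(X'X)\triangleleft Z=\sum_{(Z)}(X'\triangleleft Z_{(1)})(X\triangleleft Z_{(2)})-X\triangleleft p_-(X'Z).
\]
So ``extending multiplicatively'' from $\Delta\iota_+(\xi t^k)$ does not by itself give the Koppinen formula. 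Consider the term in which the current $X'$ is transported to the second factor. Pairing with $Y=F_\lambda(X_{(1)})$ produces, inside $\langle f,-\rangle$, the expression $\sum Y_{(3)}F_\mu\bigl((X'\triangleleft Y_{(1)})(X_{(2)}\triangleleft Y_{(2)})\bigr)$. By the recursion this is the desired $\sum Y_{(2)}F_\mu\bigl((X'X_{(2)})\triangleleft Y_{(1)}\bigr)$ plus a defect $\sum Y_{(2)}F_\mu\bigl(X_{(2)}\triangleleft p_-(X'Y_{(1)})\bigr)$.

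The missing idea is that this defect is cancelled exactly by the commutator that appears when you normal-order the term in which $X'$ stays in the first factor. Moving $\iota_+(X')$ past a coefficient $\iota_-(h)$ coming from $\Delta(X\mathbf 1)$ produces $\iota_-(X'\triangleright h)$, with $\langle X'\triangleright h,Z\rangle=-\langle h,p_-(X'Z)\rangle$. After pairing with $Y$, this contributes precisely $-\sum Y_{(2)}F_\mu\bigl(X_{(2)}\triangleleft p_-(X'Y_{(1)})\bigr)$. The two surviving terms then match the two summands of $\Delta_U(X'X)=\sum X'X_{(1)}\otimes X_{(2)}+\sum X_{(1)}\otimes X'X_{(2)}$ in \eqref{eq:mt-kop-prod}. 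This is exactly how the paper's inductive step combines \eqref{eq:mt-gen-mode-reorder} with \eqref{eq:mt-action-induction}. It is also what resolves your worry about Sweedler orderings and antipodes; without \eqref{eq:kop-word-recursion}, or equivalently \eqref{eq:kop-PBW-reorder}, the induction does not go through.

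Relatedly, your display for general $X$ puts $\iota_-(\phi)$ in the second tensor factor. The coefficients of $\Lcur^{(2)}_{M_{g_1}\xi}$ are functions of $g_1$, so they live in the first factor. Schematically, the normal-ordered shape is
\[
\Delta(X\mathbf 1)=\sum_{(X),\alpha}\iota_-(\phi_\alpha)\iota_+(X_{(1)})\mathbf 1\otimes\iota_+(X_{(2)}\triangleleft e_\alpha)\mathbf 1,
\]
with $\{\phi_\alpha\}$ dual to a basis $\{e_\alpha\}$ of $U(\mathfrak D_-)$. This placement is forced by the target formula: the argument $Y_{(1)}$ of $\triangleleft$ comes from $F_\lambda(X_{(1)})$, so it must be evaluated by $\lambda$. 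The commutator terms above arise precisely because these $\iota_-(\phi_\alpha)$ are interleaved with currents in the first factor. Your treatment of the unit, closedness under $\diamond$, and continuity is fine.
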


\begin{proof}
By continuity in the spectral product topology, it suffices to verify \eqref{eq:mt-ordered-mode-coprod} for homogeneous blocks $\lambda\in\mathscr B_{[-i]}^{\mathrm{MT}}$, $\mu\in\mathscr B_{[-j]}^{\mathrm{MT}}$ ($i+j=q$), $f\in\operatorname{Sym}(\mathfrak D_-^*)$, and PBW monomials $X\in U(\mathfrak D_+)$. Write $\Delta_U$ for the primitive coproducts on $U(\mathfrak D_\pm)$; $\Delta$ denotes the vertex coproduct.

The transpose of $m^*$ under $\langle-,-\rangle$ is multiplication in $U(\mathfrak D_-)$, i.e.
\begin{equation}
\langle\Delta(f),Y\otimes Z\rangle=\langle f,YZ\rangle,
\qquad Y,Z\in U(\mathfrak D_-).
\label{eq:mt-coord-coprod-duality}
\end{equation}
We prove by induction on the number of factors in a PBW monomial $X$
that
\begin{equation}
(\lambda\otimes\mu)\Delta(fX\mathbf1)
=
\sum_{(X),(Y)}
\left\langle
f,
Y_{(2)}F_\mu\bigl(X_{(2)}\triangleleft Y_{(1)}\bigr)
\right\rangle,
\qquad
Y:=F_\lambda(X_{(1)}),
\label{eq:mt-conv-induction}
\end{equation}
for every $f\in\operatorname{Sym}(\mathfrak D_-^*)$, where
$\Delta_U(X)=\sum_{(X)}X_{(1)}\otimes X_{(2)}$ and
$\Delta_U(Y)=\sum_{(Y)}Y_{(1)}\otimes Y_{(2)}$.
For $X=1$, $1\triangleleft Z=\epsilon(Z)1$, and
\eqref{eq:mt-coord-coprod-duality} gives
\eqref{eq:mt-conv-induction}.

For $\xi\in\dplus$, and write $M_g\xi=(\operatorname{Ad}_{g^{-1}}\xi)_+ =\sum_i a_i(g)\xi_i$ with $a_i\in A$, $\xi_i\in\dplus$. For $y_j\in\dminus$ and $r_j\in\mathbb Z$ $(1\leq j\leq n)$, put $Z_t:=\operatorname{sym}(y_1t^{r_1}\cdots y_nt^{r_n})\in U(\mathfrak D_-)$, and $Z_u:=\operatorname{sym}(y_1\cdots y_n)\in U(\dminus)$. We have
\begin{equation}
\begin{aligned}
&(\Delta\Lcur_\xi)_{(k)}=(\Lcur_\xi)_{(k)}\otimes1 +\sum_{i,s}\iota_-(\sigma_s(a_i))  \otimes(\Lcur_{\xi_i})_{(k-s-1)},\\
&\sum_{i,s}\langle\sigma_s(a_i),Z_t\rangle\,\xi_it^{k-s-1} =t^{k+\sum_jr_j} \sum_i\langle a_i,Z_u\rangle\xi_i =(\xi t^k) \triangleleft Z_t.
\end{aligned}
\label{eq:mt-gen-coprod-action}
\end{equation}
Since the elements $Z_t$ span $U(\mathfrak D_-)$, \eqref{eq:mt-coord-coprod-duality} and \eqref{eq:mt-gen-coprod-action} give \eqref{eq:mt-conv-induction} for $X=\xi t^k$. By linearity, this holds for every $X\in\mathfrak D_+$.

Assume \eqref{eq:mt-conv-induction} for $X$ and let $X'\in\mathfrak D_+$. For $h\in U(\mathfrak D_-)^*$ and $Z\in U(\mathfrak D_-)$, we have 
\begin{equation}
\iota_+(X')\iota_-(h)=\iota_-(h)\iota_+(X')+\iota_-(X'\triangleright h),\quad \langle X'\triangleright h,Z\rangle=-\langle h,p_-(X' Z)\rangle.
\label{eq:mt-gen-mode-reorder}
\end{equation}
The PBW decomposition and
$X\triangleleft Z=p_+(S(Z)X)$ give the following identities
(see Lem.~\ref{lem:kop-current-word}):
\begin{equation}
\begin{aligned}
X'Z
&=p_-(X' Z)+\sum_{(Z)}Z_{(2)}(X'\triangleleft Z_{(1)}),\\
S(Z)X'
&=-S\bigl(p_-(X' Z)\bigr)+\sum_{(Z)}(X'\triangleleft Z_{(1)})S(Z_{(2)}),\\
(X' X)\triangleleft Z
&=\sum_{(Z)}(X'\triangleleft Z_{(1)}) (X\triangleleft Z_{(2)})-X\triangleleft p_-(X' Z).
\end{aligned}
\label{eq:mt-action-induction}
\end{equation}
For the primitive coproduct,
\begin{equation*}
\Delta_U(X'X)=\sum_{(X)}X'X_{(1)}\otimes X_{(2)}+\sum_{(X)}X_{(1)}\otimes X'X_{(2)}.
\end{equation*}
The induction hypothesis combined with  \eqref{eq:mt-gen-coprod-action}, \eqref{eq:mt-gen-mode-reorder} and \eqref{eq:mt-action-induction} give \eqref{eq:mt-conv-induction} for $X'X$. Thus \eqref{eq:mt-conv-induction} holds for every PBW monomial $X$, and its right-hand side equals $\langle f,(F_\lambda\diamond F_\mu)(X)\rangle$ by definition.

Summing over $i+j=q$ extends the identity to full blocks. Since $\lambda\mapsto F_\lambda$ is a linear isomorphism onto $\mathcal R_{\mathrm{MT}}$ and $\mathscr B^{\mathrm{MT}}$ is closed under $\star$ (Prop.~\ref{prop:block-ord-descent}), the relation $F_\lambda\diamond F_\mu = F_{\lambda\star\mu}$ implies that $\mathcal R_{\mathrm{MT}}$ is a closed unital subalgebra of $(\mathcal K,\diamond)$ and $\lambda\mapsto F_\lambda$ is an isomorphism of topological algebras, completing the proof of Thm.~\ref{thm:mt-kop-comp}.
\end{proof}

\subsection{A filtration on blocks and its classical limit}
\label{sec:mt-classical-limit}

We first recall the completed associated graded construction for filtered complete algebras. Let $B = \prod_{n\geq0}B_n$ be the degree completion of an $\mathbb N$-graded algebra $B^\circ = \bigoplus_{n\geq0}B_n$, equipped with the inverse-limit product topology. Suppose that $B$ is endowed with an increasing multiplicative filtration $P_\bullet$ compatible with the grading:
\begin{equation*}
P_r B = \prod_{n\geq0} P_r B_n, \qquad P_{-1}B = 0,
\end{equation*}
whose restriction to each homogeneous subspace $B_n$ is finite and exhaustive ($P_{N(n)}B_n = B_n$).

For a general element $(b_n)_{n\geq0}\in B$, the filtration degree of $b_n$ may grow arbitrarily with $n$. The ordinary associated graded algebra $\operatorname{gr}_P B = \bigoplus_{r\geq0} P_r B / P_{r-1} B$ captures only elements with a uniform filtration bound across all degrees. In the category of complete topological algebras, the natural associated graded object is instead the \emph{completed associated graded algebra} (cf.~\cite{QuillenRationalHomotopy}):
\begin{equation*}
\widehat{\operatorname{gr}}_P B
:= \varprojlim_N \operatorname{gr}_P\left( B\big/\prod_{n\geq N}B_n \right)
= \prod_{n\geq0}\operatorname{gr}_P B_n.
\end{equation*}
Equivalently, $\widehat{\operatorname{gr}}_P B$ is the degree completion of the ordinary associated graded algebra $\operatorname{gr}_P B^\circ = \bigoplus_{n\geq0}\operatorname{gr}_P B_n$ of the dense graded subalgebra.

For minitwistor blocks, we apply this construction to the spectral degree completion $\mathscr B^{\mathrm{MT}} = \prod_{q\geq0}\mathscr B_{[-q]}^{\mathrm{MT}}$. By the PBW decomposition \eqref{eq:mt-coord-current-PBW}, every state in $\mathcal Y$ is a linear combination of vectors $fX\mathbf1$ with $f\in\operatorname{Sym}((\mathfrak D_-^0)^*)$ and $X\in U(t^{-1}\dplus[t^{-1}])$. We filter blocks by the polynomial degree in the coordinate variables $f$:
\begin{equation}
P_r\mathscr B_{[-q]}^{\mathrm{MT}}
:= \{\lambda\in\mathscr B_{[-q]}^{\mathrm{MT}} : \lambda(fX\mathbf1)=0 \text{ for all } f \in \operatorname{Sym}^{>r}((\mathfrak D_-^0)^*)\},
\;
P_r\mathscr B^{\mathrm{MT}} := \prod_{q\geq0} P_r\mathscr B_{[-q]}^{\mathrm{MT}},
\label{eq:mt-block-filtration}
\end{equation}
with $P_{-1}=0$. Since each coordinate mode $\ell t^k$ has strictly positive spectral degree, $P_q\mathscr B_{[-q]}^{\mathrm{MT}} = \mathscr B_{[-q]}^{\mathrm{MT}}$, so the filtration is finite and exhaustive on each spectral component, yielding the completed associated graded algebra $\widehat{\operatorname{gr}}_P\mathscr B^{\mathrm{MT}} = \prod_{q\geq0}\operatorname{gr}_P\mathscr B_{[-q]}^{\mathrm{MT}}$.

To describe the classical limit, consider the sheaf of Lie algebras on $\mathbb{MT}$:
\begin{equation*}
\mathscr G_{\mathrm{MT}}
:= (\g\otimes\cO_{\mathbb{MT}}) \ltimes (\g^\vee\otimes p^*\cO_{\mathbb P^1}(-4)),
\end{equation*}
where the abelian second summand is equipped with the coadjoint action of the first. The double-residue pairing identifies $\mathfrak D_-^*$ with $\Gamma(\widetilde U_{01},\g^\vee\otimes p^*\cO_{\mathbb P^1}(-4))$. With $E_i = (\mathfrak D_-^i)^\perp$, the Lie algebras of sections on the overlap and the two charts are
\begin{equation*}
\mathscr G_{01} := \Gamma(\widetilde U_{01},\mathscr G_{\mathrm{MT}}) = \mathfrak D_+\ltimes\mathfrak D_-^*,
\qquad
\mathscr G_i := \Gamma(\widetilde U_i,\mathscr G_{\mathrm{MT}}) = \mathfrak D_+^i\ltimes E_i
\quad (i=0,\infty).
\end{equation*}
These are precisely the relations among the symbols of the generator modes, with $\mathfrak D_+$ of $P$-degree zero and $\mathfrak D_-^*$ of $P$-degree one.

\begin{theorem}
\label{thm:mt-classical-limit}
The filtration \eqref{eq:mt-block-filtration} is multiplicative and almost commutative:
\begin{equation*}
P_r\star P_s\subseteq P_{r+s}, \qquad [P_r,P_s]_\star\subseteq P_{r+s-1}.
\end{equation*}
Moreover, there is an isomorphism of complete commutative algebras
\begin{equation}
\widehat{\operatorname{gr}}_P\mathscr B^{\mathrm{MT}}
\cong
\bigl(U(\mathscr G_{01})^\vee\bigr)^{
\mathscr G_\infty\oplus\mathscr G_0}
\cong
\left(
\frac{U(\mathscr G_{01})}
{\mathscr G_\infty U(\mathscr G_{01})
+U(\mathscr G_{01})\mathscr G_0}
\right)^\vee.
\label{eq:mt-double-invariants}
\end{equation}
where $\vee$ is the full algebraic dual with convolution product for the enveloping coproduct. The isomorphism preserves spectral degree and identifies $P$-degree with the degree contributed by $\mathfrak D_-^*$.
\end{theorem}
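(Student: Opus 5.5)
Via Thm.~\ref{thm:mt-kop-comp}, I will work throughout in $\mathcal R_{\mathrm{MT}}\subset(\mathcal K,\diamond)$. Transported along $\lambda\mapsto F_\lambda$, the filtration \eqref{eq:mt-block-filtration} says that $F\in P_r$ when $\langle f,F(X)\rangle=0$ for every $f\in\operatorname{Sym}^{>r}$. By the PBW pairing this is the same as requiring $F(X)$ to lie in the PBW filtration piece $U_{\leq r}(\mathfrak D_-)$ for all $X$. The first thing to check is that restricting to $f\in\operatorname{Sym}((\mathfrak D_-^0)^*)$ in \eqref{eq:mt-block-filtration} agrees with this; for that I will use the reordering \eqref{eq:mt-mode-reorder} together with the vacuum relations, which do not increase the number of coordinate factors. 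Hence $P_r\mathcal K:=\{F:\operatorname{im}F\subseteq\widehat{U_{\leq r}(\mathfrak D_-)}\}$.

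\textbf{Multiplicativity and almost commutativity.} In \eqref{eq:mt-kop-prod}, if $Y\in U_{\leq r}$ then each $Y_{(2)}$ has PBW degree at most $r-\deg Y_{(1)}$. The action $X\triangleleft Y_{(1)}$ lands in $U(\mathfrak D_+)$, and $G$ adds at most $s$, so $P_r\diamond P_s\subseteq P_{r+s}$. For the commutator I will expand modulo $P_{r+s-1}$. The terms with $Y_{(1)}\ne1$ lose at least one PBW degree, so $F\diamond G\equiv\sum F(X_{(1)})G(X_{(2)})$ modulo $P_{r+s-1}$. Since $U(\mathfrak D_+)$ is cocommutative and $\operatorname{gr}U(\mathfrak D_-)=\operatorname{Sym}(\mathfrak D_-)$ is commutative, this leading term is symmetric in $F,G$, which gives $[P_r,P_s]\subseteq P_{r+s-1}$.

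\textbf{The associated graded.} The leading-term computation above identifies $\operatorname{gr}_P\mathcal K$, componentwise in spectral degree, with $\operatorname{Hom}(U(\mathfrak D_+),\operatorname{Sym}(\mathfrak D_-))$ equipped with the convolution product. Dualizing $\operatorname{Sym}(\mathfrak D_-)$ against $\operatorname{Sym}(\mathfrak D_-^*)$, this space is the (graded) dual of $U(\mathfrak D_+)\otimes\operatorname{Sym}(\mathfrak D_-^*)=U(\mathfrak D_+\ltimes\mathfrak D_-^*)=U(\mathscr G_{01})$, with the convolution product for the enveloping coproduct. I will then pass the two-chart conditions to symbols. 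The conditions $F(U\mathfrak D_+^0)=0$ and $\operatorname{im}F^R\subseteq U(\mathfrak D_-^0)$ become annihilation of $U(\mathscr G_{01})\mathscr G_0$. Here the reordering $F\mapsto F^R$ differs from $F$ by lower $P$-order terms, because $p_-(S(X)Y)$ differs from $\epsilon(X)Y$ by terms involving $\triangleright$, which are of lower PBW degree. Symmetrically, the $\infty$-chart conditions become annihilation of $\mathscr G_\infty U(\mathscr G_{01})$. This gives an injective map of $\operatorname{gr}_P\mathcal R_{\mathrm{MT}}$ into the right-hand side of \eqref{eq:mt-double-invariants}, compatible with spectral degree and sending $P$-degree to $\mathfrak D_-^*$-degree. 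The identification of the quotient form with $\mathscr G_\infty\oplus\mathscr G_0$-invariants is formal, as in Prop.--Def.~\ref{prop:mt-kop-subspace}.

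\textbf{Main obstacle: surjectivity.} Not every invariant symbol need a priori lift to an element of $\mathcal R_{\mathrm{MT}}$. I plan to avoid lifting symbols directly and instead work on the coinvariant side. Put a filtration on $Q:=U(\mathscr D)$-type coinvariants $\mathcal Y/\mathfrak g_{\mathrm{out}}\mathcal Y$ by the number of coordinate factors, using Prop.~\ref{prop:mt-vac-quotient} and Lem.~\ref{lem:mt-outgoing-gens}. I will then show that there is a surjection from $U(\mathscr G_{01})/(\mathscr G_\infty U+U\mathscr G_0)$ onto $\operatorname{gr}$ of these coinvariants, and that it is injective. The key input for injectivity is a PBW-type flatness argument: the relations generating $I_{\mathrm{vac}}+\mathfrak g_{\mathrm{out}}\widetilde U$ are the left and right ideals generated by $E_0,\mathfrak D_+^0$ and $E_\infty,\mathfrak D_+^\infty$, and their symbols are the corresponding ideals in $U(\mathscr G_{01})$. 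The filtration is finite in each spectral degree, so I expect a rank comparison degree by degree to suffice. Once $\operatorname{gr}$ of the coinvariants is identified in this way, dualizing degreewise finite filtrations gives the equality $\widehat{\operatorname{gr}}_P\mathscr B^{\mathrm{MT}}=\prod_q(\cdots)^\vee$. Commutativity of the result is then automatic from the almost-commutativity shown above.
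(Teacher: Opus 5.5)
Your multiplicativity and almost-commutativity argument inside $(\mathcal K,\diamond)$ works. It is a legitimate alternative to the paper's route, which shows that the descended coproduct on $\mathscr Q^{\mathrm{MT}}$ preserves the coordinate-count filtration $F$, that $\Delta-\Delta^{\mathrm{op}}$ raises it by one, and then takes annihilators. To transport $P_r$ to $\{F:\operatorname{im}F\subseteq\widehat{U_{\le r}(\mathfrak D_-)}\}$, what you need is that reordering never \emph{lowers} the number of coordinate factors. It can raise it, because $\xi\triangleright\ell$ has higher-order terms.

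Your claim that $F^R\equiv F$ modulo lower $P$-order is false. For $\xi\in\mathfrak D_+$ one has $F^R(\xi)=F(\xi)-p_-(\xi F(1))$, and $p_-(\xi y)=[\xi,y]_-$ has the same PBW degree as $y$. For example, $\xi=xt^k$ and $y=y'u^{-1}t^l$ give $[x,y']u^{-1}t^{k+l}$. The leading term of $F\mapsto F^R$ twists the symbol by the matched-pair (coadjoint) action of $\mathfrak D_+$ on $\operatorname{Sym}(\mathfrak D_-)$. This twist is exactly what converts the chart conditions into annihilation of $U(\mathscr G_{01})\mathscr G_0$ and $\mathscr G_\infty U(\mathscr G_{01})$ for the semidirect product. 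If the symbols of $F$ and $F^R$ agreed, you would instead obtain the double quotient for the direct sum $\mathfrak D_+\oplus\mathfrak D_-^*$. So the symbol embedding of $\operatorname{gr}_P\mathscr B^{\mathrm{MT}}$ into the right-hand side holds, but not for the reason you give.

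The substantive gap is the step you yourself flag. Both things you can establish cheaply give the same inequality, $\dim\mathscr Q^{\mathrm{MT}}_{[q]}\le\dim\bigl(U(\mathscr G_{01})/(\mathscr G_\infty U+U\mathscr G_0)\bigr)_{[q]}$. These are the symbol embedding above and the natural surjection $U(\mathscr G_{01})/(\mathscr G_\infty U+U\mathscr G_0)\twoheadrightarrow\operatorname{gr}_F\mathscr Q^{\mathrm{MT}}$. Asserting that the symbols of $I_{\mathrm{vac}}+\mathfrak g_{\mathrm{out}}(\mathcal Y)\widetilde U(\mathcal Y)$ are the corresponding ideals of $U(\mathscr G_{01})$ is precisely the statement to be proved. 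The associated graded of a sum of a left and a right ideal can be strictly larger than the sum of their symbol ideals. A rank comparison also presupposes an independent computation of $\dim\mathscr Q^{\mathrm{MT}}_{[q]}$, which you do not have.

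What is missing is a quantum normal form giving the reverse bound. The paper supplies it in Lem.~\ref{lem:mt-normal-form}. Write $\mathfrak D_+^\infty=\g\ltimes\mathfrak a$, and let $\mathfrak b$ be the complement of $\mathfrak D_+^0+\mathfrak D_+^\infty$ in $\mathfrak D_+$. Using \eqref{eq:mt-mode-reorder} and the vacuum relations, the lemma shows that $\mathcal Y/E_\infty\mathcal Y$ is free over $U(\mathfrak a)$ on $\operatorname{Sym}(\mathfrak D_-^*/(E_0+E_\infty))\otimes U(\mathfrak b)$, compatibly with the coordinate-count filtration. Combined with Lem.~\ref{lem:mt-outgoing-gens}, this gives $\mathscr Q^{\mathrm{MT}}\cong\bigl(\operatorname{Sym}(\mathfrak D_-^*/(E_0+E_\infty))\otimes U(\mathfrak b)\bigr)_\g$. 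That matches the PBW description of the classical double quotient, where the residual $\g$-coinvariants come from $\mathfrak D_+^0\cap\mathfrak D_+^\infty=\g$. With this in hand, the paper checks that $U(\mathscr G_{01})/(\mathscr G_\infty U+U\mathscr G_0)\cong\operatorname{gr}_F\mathscr Q^{\mathrm{MT}}$ is a coalgebra map, using primitivity of the generator symbols, and dualizes degreewise. The detour through $\mathcal K$ is then unnecessary.
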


Before proving this, we first establish a normal form for the coinvariants. Define the Lie subalgebras
\begin{equation*}
\mathfrak a := t^{-1}\g[u,t^{-1}]\cap\mathfrak D_+^\infty,
\qquad
\mathfrak b := \bigoplus_{m\geq1}\bigoplus_{j=1}^{2m-1}\g u^mt^{-j}.
\end{equation*}
Then $t^{-1}\g[u,t^{-1}]=\mathfrak a\oplus\mathfrak b$ as vector spaces, $\mathfrak D_+^\infty=\g\ltimes\mathfrak a$, and $\mathfrak b \cong \mathfrak D_+/(\mathfrak D_+^0+\mathfrak D_+^\infty)$ as bigraded $\g$-modules. Because $\mathfrak D_-^*/(E_0+E_\infty) \cong \bigoplus_{d\geq1}\bigoplus_{r=0}^{2d} (\g^\vee u^{d-1})t^{-r-1}$, we have
\begin{equation*}
\mathcal Y/E_\infty\mathcal Y
\cong \operatorname{Sym}\!\left(\mathfrak D_-^*/(E_0+E_\infty)\right)\otimes U(t^{-1}\g[u,t^{-1}]),
\end{equation*}

Since $p_-(XU(\mathfrak D_-^\infty)) \subseteq U(\mathfrak D_-^\infty)$ for $X\in\mathfrak D_+^\infty$, the action $X\triangleright f$ on $f\in E_\infty$ pairs to zero with $U(\mathfrak D_-^\infty)$, so $[\mathfrak D_+^\infty,E_\infty]\mathcal Y \subseteq E_\infty\mathcal Y$. Hence $\mathcal Y/E_\infty\mathcal Y$ is a $\mathfrak D_+^\infty$-module, and Lem.~\ref{lem:mt-outgoing-gens} identifies the full coinvariants with $\mathcal Y/\mathfrak g_{\mathrm{out}}(\mathcal Y)\mathcal Y \cong ((\mathcal Y/E_\infty\mathcal Y)_\mathfrak a)_\g$.

\begin{lemma}
\label{lem:mt-normal-form}
The map
\begin{equation*}
\operatorname{Sym}\!\left(\mathfrak D_-^*/(E_0+E_\infty)\right)\otimes U(\mathfrak b)
\xrightarrow{\sim} \left(\mathcal Y/E_\infty\mathcal Y\right)_{\mathfrak a},
\qquad f\otimes X\longmapsto[fX\mathbf1],
\end{equation*}
is an isomorphism of bigraded $\g$-modules. Moreover, it is a filtered isomorphism for the descending filtration by the number of $\mathfrak D_-^*$ factors, with $\mathfrak D_+$ of filtration degree zero.
\end{lemma}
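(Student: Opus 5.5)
The plan is to deduce the lemma from the PBW description of $\mathcal Y/E_\infty\mathcal Y$ displayed just before it, combined with a filtration argument that reduces the $\mathfrak a$-coinvariants to those of a free $U(\mathfrak a)$-module.

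\emph{Step 1: a PBW splitting.} By \eqref{eq:mt-coord-current-PBW} and the displayed identification, $\mathcal Y/E_\infty\mathcal Y\cong\operatorname{Sym}(\mathfrak D_-^*/(E_0+E_\infty))\otimes U(t^{-1}\g[u,t^{-1}])$ via $f\otimes X\mapsto[fX\mathbf1]$. Since $t^{-1}\g[u,t^{-1}]=\mathfrak a\oplus\mathfrak b$ with $\mathfrak a$ a Lie subalgebra, PBW gives $U(t^{-1}\g[u,t^{-1}])\cong U(\mathfrak a)\otimes U(\mathfrak b)$, where $X_{\mathfrak a}\otimes X_{\mathfrak b}\mapsto X_{\mathfrak a}X_{\mathfrak b}$. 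Every state of $\mathcal Y/E_\infty\mathcal Y$ is therefore a combination of classes $[fX_{\mathfrak a}X_{\mathfrak b}\mathbf1]$.

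The $\mathfrak a$-action is by left multiplication with the modes $\iota_+(\alpha)$, $\alpha\in\mathfrak a$. By Prop.~\ref{prop:mt-mode-realization},
\[
\alpha\cdot\bigl(fX\mathbf1\bigr)=f\,\alpha X\mathbf1+(\alpha\triangleright f)X\mathbf1 .
\]
Here the first term simply left-multiplies the $U(\mathfrak a)$-factor, and the second term modifies only the coordinate factor $f$. Working modulo $E_\infty$ is legitimate because $[\mathfrak D_+^\infty,E_\infty]\mathcal Y\subseteq E_\infty\mathcal Y$.

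\emph{Step 2: the filtration and the associated graded.} Consider the descending filtration by the number of $\mathfrak D_-^*$ factors. We have $(\xi t^k)\triangleright(\ell t^j)=\sigma_{k+j}(\rho_\xi a_\ell)$, and the dressing action fixes $e$, so $\rho_\xi a_\ell\in\mathfrak m_e$. Hence $\alpha\triangleright f$ has at least as many coordinate factors as $f$, and the $\mathfrak a$-action is filtered.

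On the associated graded, $\alpha$ acts on $\operatorname{Sym}\otimes U(\mathfrak a)\otimes U(\mathfrak b)$ by a $\operatorname{Sym}$-degree-preserving linear operator $\delta_\alpha$ on the first factor, plus left multiplication on $U(\mathfrak a)$. This is the tensor product of the $\mathfrak a$-module $M=\operatorname{Sym}\otimes U(\mathfrak b)$ (with $\mathfrak a$ acting through $\delta_\alpha$ on $\operatorname{Sym}$ and trivially on $U(\mathfrak b)$) with the left-regular module $U(\mathfrak a)$. The standard untwisting isomorphism $M\otimes U(\mathfrak a)\cong U(\mathfrak a)\otimes M_{\mathrm{triv}}$ shows that this module is free. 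Its coinvariants are therefore $M=\operatorname{Sym}\otimes U(\mathfrak b)$, and its higher Lie algebra homology vanishes.

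\emph{Step 3: lifting from the associated graded.} In each fixed bidegree, the number of coordinate factors is bounded by the spectral degree, since every coordinate mode $\ell t^k$ has spectral degree at least $1$. So the filtration is finite on each bigraded piece. Vanishing of $H_1(\mathfrak a,\operatorname{gr})$ makes taking $\mathfrak a$-coinvariants strict for the filtration. Finite induction (equivalently, a degenerate spectral sequence) then shows the map of the lemma is surjective and injective, and that it is a filtered isomorphism.

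$\g$-equivariance is checked separately. $\mathfrak b$ and $\mathfrak D_-^*/(E_0+E_\infty)$ are $\g$-stable, and $\g\subset\mathfrak D_+^\infty$ normalizes $\mathfrak a$ because $\mathfrak D_+^\infty=\g\ltimes\mathfrak a$. Hence the $\g$-action descends to $\mathfrak a$-coinvariants, and on the associated graded it matches the diagonal action on the source. Bigrading is preserved throughout because all operations are homogeneous.

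The main obstacle I expect is Step 2. One must verify carefully that the modification term $\alpha\triangleright f$ never lowers the number of coordinate factors, i.e.\ that $\sigma$ of an element of $\mathfrak m_e$ has no constant part modulo $E_0+E_\infty$. One must also check that the resulting linear part $\delta_\alpha$ is a genuine $\mathfrak a$-action, so that the untwisting argument applies. If the mode-realization formula leaves degree-lowering corrections, I would instead filter by spectral degree of $U(\mathfrak a)$ and argue by induction on it.
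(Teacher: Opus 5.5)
Your argument is correct, but it is organized differently from the paper's.

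\textbf{The paper's route.} It puts $U(\mathfrak a)$ on the far left. From the reordering $X'f=\sum_{(X')}(X'_{(1)}\triangleright f)X'_{(2)}$ of Prop.~\ref{prop:mt-mode-realization}, it asserts that $X'\otimes f\otimes X\mapsto X'fX\mathbf1$ is an isomorphism of left $U(\mathfrak a)$-modules $U(\mathfrak a)\otimes\operatorname{Sym}(\mathfrak D_-^*/(E_0+E_\infty))\otimes U(\mathfrak b)\cong\mathcal Y/E_\infty\mathcal Y$. The module is then visibly free, and the coinvariants are read off immediately. The filtered statement is checked separately, using $\langle X'\triangleright f,1\rangle=0$ for $X'\in\mathfrak a$ and an induction on PBW degree.

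\textbf{Your route.} You keep $f$ on the left, where the normal form is the one already displayed before the lemma, and accept that the $\mathfrak a$-action is twisted. You untwist $N\otimes U(\mathfrak a)\cong U(\mathfrak a)\otimes N_{\mathrm{triv}}$ only on $\operatorname{gr}$. You then lift using $H_1(\mathfrak a,\operatorname{gr})=0$ and the finiteness of the filtration in each bidegree.

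\textbf{Comparison.} Your untwisting is exactly the associated graded of the paper's reordering. The paper obtains honest freeness with no homological input, but leaves implicit the triangularity argument behind its first isomorphism. Yours uses only the established normal form, and strictness of the filtration comes for free from the degenerate spectral sequence.

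\textbf{The check you flagged.} It goes through. Take $\alpha=xu^mt^{-k}\in\mathfrak a$ (so $k\geq1$) and a coordinate mode $\ell t^j$ with $j\leq-1$. The linear part of $\alpha\triangleright(\ell t^j)=\sigma_{j-k}(\rho_{xu^m}a_\ell)$ lies in $\dminus^*\,t^{j-k}$ with $j-k\leq-2$, so it has no $E_0$-component. Nonnegative modes occur only in the nonlinear part of $\sigma_{j-k}$. Commuting those past $X$ yields terms in strictly higher coordinate filtration, by the same $\mathfrak m_e$ argument. So your claim that the correction ``modifies only the coordinate factor'' holds on $\operatorname{gr}$, but not literally before passing to it.

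\textbf{$\g$-equivariance.} You should state that it holds on the nose, not just on $\operatorname{gr}$. This is true because constant currents act on coordinate modes linearly without shifting the $t$-exponent, $[\g,\mathfrak b]\subseteq\mathfrak b$, and $\g\mathbf1=0$.
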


\begin{proof}
Put $I:=\operatorname{Sym}^{\geq1}\!\left(\mathfrak D_-^*/(E_0+E_\infty)\right)$, where we regard $\operatorname{Sym}(\mathfrak D_-^*/(E_0+E_\infty))$ as the subalgebra of $\operatorname{Sym}(\mathfrak D_-^*)$ generated by $\alpha u^{d-1}t^{-r-1}$ with $\alpha \in\g^\vee$, $d\geq1$, and $0\leq r\leq2d$. By the reordering \eqref{eq:mt-mode-reorder}, multiplication gives an isomorphism of left $U(\mathfrak a)$-modules
$$
U(\mathfrak a)\otimes
\operatorname{Sym}\!\left(\mathfrak D_-^*/(E_0+E_\infty)\right)
\otimes U(\mathfrak b)
\xrightarrow{\sim}\mathcal Y/E_\infty\mathcal Y,
\qquad X'\otimes f\otimes X\longmapsto X'fX\mathbf1.
$$

Taking $\mathfrak a$-coinvariants gives
$$
(\mathcal Y/E_\infty\mathcal Y)_{\mathfrak a}
\cong
\operatorname{Sym}\!\left(\mathfrak D_-^*/(E_0+E_\infty)\right)
\otimes U(\mathfrak b),
$$
and the resulting map is $f\otimes X\mapsto[fX\mathbf1]$. For $X'\in\mathfrak a$, \eqref{eq:mt-coord-action} gives $\langle X'\triangleright f,1\rangle
=\langle f,p_-(S(X'))\rangle=0$, so \eqref{eq:mt-transpose-mod-alg} implies $X'\triangleright I^p\subseteq \overline{\operatorname{Sym}^{\geq p}(\mathfrak D_-^*)}$, where the closure is taken in each bidegree of $U(\mathfrak D_-)^*$. Hence, by induction on PBW degree using the two identities in \eqref{eq:mt-mode-reorder} and the vacuum relations, the above isomorphism and its inverse preserve the filtrations by $I^p$, and the isomorphism is filtered. Finally, multiplication and the action $\triangleright$ preserve the bigrading and are $\g$-equivariant.
\end{proof}

\begin{proof}[Proof of Thm.~\ref{thm:mt-classical-limit}]
We denote the space of coinvariants by $ \mathscr{Q}^{\mathrm{MT}}:=\mathcal Y/\mathfrak g_{\mathrm{out}}(\mathcal Y)\mathcal Y$. We define a filtration $F$ on $\mathcal{Y}$ by
\begin{equation*}
F^p\mathcal Y:= \operatorname{span}\left\{ fX\mathbf1: f\in\operatorname{Sym}^{\geq p}((\mathfrak D_-^0)^*), X\in U(t^{-1}\dplus[t^{-1}])\right\}.
\end{equation*}
Let $F^p\mathscr{Q}^{\mathrm{MT}}$ be the image of $F^p\mathcal Y$, and set
\begin{equation*}
F^p(\mathcal Y\otimes\mathcal Y)
:=\sum_{i+j=p}F^i\mathcal Y\otimes F^j\mathcal Y,
\qquad
F^p(\mathscr{Q}^{\mathrm{MT}}\otimes\mathscr{Q}^{\mathrm{MT}}):=\sum_{i+j=p}F^i\mathscr{Q}^{\mathrm{MT}}\otimes F^j\mathscr{Q}^{\mathrm{MT}}.
\end{equation*}
Since $\mathscr B_{[-q]}^{\mathrm{MT}}=(\mathscr{Q}^{\mathrm{MT}}_{[q]})^\vee$,
\begin{equation*}
P_r\mathscr B_{[-q]}^{\mathrm{MT}}
=\{\lambda\in (\mathscr{Q}^{\mathrm{MT}}_{[q]})^\vee:\lambda|_{(F^{r+1}\mathscr{Q}^{\mathrm{MT}})_{[q]}}=0\}
\cong\bigl(\mathscr{Q}^{\mathrm{MT}}_{[q]}/(F^{r+1}\mathscr{Q}^{\mathrm{MT}})_{[q]}\bigr)^\vee.
\end{equation*}
For $f\in\mathfrak D_-^*$ and $X\in\mathfrak D_+$, the coproduct formulas give
\begin{equation}
\begin{aligned}
\Delta(\iota_-(f)\mathbf1)
-\bigl(\iota_-(f)\mathbf1\otimes\mathbf1
+\mathbf1\otimes\iota_-(f)\mathbf1\bigr)
&\in F^2(\mathcal Y\otimes\mathcal Y),\\
\Delta(\iota_+(X)\mathbf1)
-\bigl(\iota_+(X)\mathbf1\otimes\mathbf1
+\mathbf1\otimes\iota_+(X)\mathbf1\bigr)
&\in F^1(\mathcal Y\otimes\mathcal Y).
\end{aligned}
\label{eq:mt-filtered-coprod-gens}
\end{equation}
Consequently, the coproduct induced on $\mathscr{Q}^{\mathrm{MT}}$ satisfies
\begin{equation*}
\Delta(F^p\mathscr{Q}^{\mathrm{MT}})\subseteq F^p(\mathscr{Q}^{\mathrm{MT}}\otimes\mathscr{Q}^{\mathrm{MT}}),
\quad
(\Delta-\Delta^{\mathrm{op}})(F^p\mathscr{Q}^{\mathrm{MT}})\subseteq F^{p+1}(\mathscr{Q}^{\mathrm{MT}}\otimes\mathscr{Q}^{\mathrm{MT}}).
\end{equation*}
Taking annihilators gives
\begin{equation*}
P_r\star P_s\subseteq P_{r+s},
\qquad
[P_r,P_s]_\star\subseteq P_{r+s-1}.
\end{equation*}

Since $\mathfrak D_+^0\cap\mathfrak D_+^\infty=\g$, PBW gives a bigraded vector-space isomorphism
\begin{equation*}
\frac{U(\mathscr G_{01})}
{\mathscr G_\infty U(\mathscr G_{01})+U(\mathscr G_{01})\mathscr G_0}
\cong
\left(
\operatorname{Sym}\!\left(\mathfrak D_-^*/(E_0+E_\infty)\right)
\otimes U(\mathfrak b)
\right)_\g.
\end{equation*}
Under this isomorphism, $[fX]$ corresponds to $[f\otimes X]$. The identification $\mathscr{Q}^{\mathrm{MT}}\cong((\mathcal Y/E_\infty\mathcal Y)_{\mathfrak a})_\g$ and Lem.~\ref{lem:mt-normal-form} give the filtered isomorphism
\begin{equation*}
\left(
\operatorname{Sym}\!\left(\mathfrak D_-^*/(E_0+E_\infty)\right)
\otimes U(\mathfrak b)
\right)_\g
\xrightarrow{\sim}\mathscr{Q}^{\mathrm{MT}},
\qquad
[f\otimes X]\longmapsto[fX\mathbf1].
\end{equation*}
The $\g$-action preserves the polynomial degree. Hence, for $f\in\operatorname{Sym}^p\!\left(\mathfrak D_-^*/(E_0+E_\infty)\right)$ and $X\in U(\mathfrak b)$, the associated graded of the composition of the above two isomorphism is
\begin{equation}
\begin{aligned}
\frac{U(\mathscr G_{01})}
{\mathscr G_\infty U(\mathscr G_{01})+U(\mathscr G_{01})\mathscr G_0}
&\xrightarrow{\sim}\operatorname{gr}_F \mathscr{Q}^{\mathrm{MT}},\\
[fX]&\longmapsto[fX\mathbf1]+F^{p+1}\mathscr{Q}^{\mathrm{MT}}.
\end{aligned}
\label{eq:mt-coinv-comp}
\end{equation}
Every element of $\mathscr G_0\cup\mathscr G_\infty$ is primitive, so $\mathscr G_\infty U(\mathscr G_{01})+U(\mathscr G_{01})\mathscr G_0$
is a coideal contained in the kernel of the counit. For $f\in\mathfrak D_-^*$ and $X\in\mathfrak D_+$, after projection to $\mathscr{Q}^{\mathrm{MT}}$,
\eqref{eq:mt-filtered-coprod-gens} implies that
$[\iota_-(f)\mathbf1]+F^2\mathscr{Q}^{\mathrm{MT}}\in F^1\mathscr{Q}^{\mathrm{MT}}/F^2\mathscr{Q}^{\mathrm{MT}}$ and
$[\iota_+(X)\mathbf1]+F^1\mathscr{Q}^{\mathrm{MT}}\in F^0\mathscr{Q}^{\mathrm{MT}}/F^1\mathscr{Q}^{\mathrm{MT}}$ are primitive. Since
$\Delta(a_{(n)}v)=\Delta(a)_{(n)}\Delta(v)$, induction on PBW length shows that
\eqref{eq:mt-coinv-comp} intertwines the coproducts. Hence it is an isomorphism of cocommutative coalgebras. For every spectral degree $q$,
\begin{equation*}
\operatorname{gr}_P\mathscr B_{[-q]}^{\mathrm{MT}}
\cong\bigl((\operatorname{gr}_F\mathscr{Q}^{\mathrm{MT}})_{[q]}\bigr)^\vee
\cong
\left(
\frac{U(\mathscr G_{01})}
{\mathscr G_\infty U(\mathscr G_{01})+U(\mathscr G_{01})\mathscr G_0}
\right)_{[q]}^\vee.
\end{equation*}
Finally,
\begin{equation*}
\left(
\frac{U(\mathscr G_{01})}
{\mathscr G_\infty U(\mathscr G_{01})+U(\mathscr G_{01})\mathscr G_0}
\right)^\vee
=\bigl(U(\mathscr G_{01})^\vee\bigr)^{\mathscr G_\infty\oplus\mathscr G_0}.
\end{equation*}
Taking the product over the spectral degrees proves
\eqref{eq:mt-double-invariants}.
\end{proof}

The above description is in direct analogy with the result in \cite{CostelloPaquetteCelestial}: it is the algebra of formal functions on the overlap, invariant under the gauge transformations on $U_0$ and $U_\infty$. The algebra also has a natural cohomological description. The \v{C}ech complex for $\{\widetilde U_0,\widetilde U_\infty\}$ and Serre duality yield
\begin{equation*}
H^0(\mathbb{MT},\mathscr G_{\mathrm{MT}}) = \g,
\qquad
H^1(\mathbb{MT},\mathscr G_{\mathrm{MT}}) \cong \frac{\mathfrak D_+}{\mathfrak D_+^0+\mathfrak D_+^\infty} \oplus \frac{\mathfrak D_-^*}{E_0+E_\infty}
\end{equation*}
as spectrally graded $\g$-modules.

\begin{corollary}
\label{cor:mt-cohomology}
There is an isomorphism of completed commutative algebras
\begin{equation*}
\widehat{\operatorname{gr}}_P\mathscr B^{\mathrm{MT}}
\cong \left(\operatorname{Sym}\!\left(H^1(\mathbb{MT},\mathscr G_{\mathrm{MT}})\right)^\vee\right)^\g,
\end{equation*}
endowed with convolution for the symmetric coalgebra.
\end{corollary}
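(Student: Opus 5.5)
The plan is to start from Thm.~\ref{thm:mt-classical-limit}, which already gives
\[
\widehat{\operatorname{gr}}_P\mathscr B^{\mathrm{MT}}\cong\left(U(\mathscr G_{01})/(\mathscr G_\infty U(\mathscr G_{01})+U(\mathscr G_{01})\mathscr G_0)\right)^\vee
\]
as completed commutative algebras, with convolution for the enveloping coproduct. It then suffices to identify the cocommutative coalgebra $C:=U(\mathscr G_{01})/(\mathscr G_\infty U(\mathscr G_{01})+U(\mathscr G_{01})\mathscr G_0)$ with the coinvariant coalgebra $\operatorname{Sym}(H^1)_\g$, compatibly with spectral degree. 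The dual of $\operatorname{Sym}(H^1)_\g$ is $(\operatorname{Sym}(H^1)^\vee)^\g$, with convolution for the symmetric coalgebra, which is the claimed form.

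Step one is to recall the normal form from the proof of Thm.~\ref{thm:mt-classical-limit}. There we have $C\cong\left(\operatorname{Sym}(\mathfrak D_-^*/(E_0+E_\infty))\otimes U(\mathfrak b)\right)_\g$, with $\mathfrak b\cong\mathfrak D_+/(\mathfrak D_+^0+\mathfrak D_+^\infty)$ as bigraded $\g$-modules. By the \v{C}ech/Serre computation stated just before the corollary, $H^1\cong\mathfrak b\oplus\mathfrak D_-^*/(E_0+E_\infty)$ as $\g$-modules, and $H^0=\g=\mathfrak D_+^0\cap\mathfrak D_+^\infty$.

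Step two replaces $U(\mathfrak b)$ by $\operatorname{Sym}(\mathfrak b)$ via PBW symmetrization $\operatorname{sym}$. This is a coalgebra isomorphism and is $\g$-equivariant, because $\g$ acts by derivations on both sides. One must check that the composite $\operatorname{Sym}(\mathfrak D_-^*/(E_0+E_\infty))\otimes\operatorname{Sym}(\mathfrak b)\to C$ is a coalgebra map. The cleanest route is to write $C$ as the quotient of $U(\mathscr G_{01})$ by a coideal, which Thm.~\ref{thm:mt-classical-limit} already notes. Then one uses that $\operatorname{sym}:\operatorname{Sym}(\mathscr G_{01})\to U(\mathscr G_{01})$ is a $\mathscr G_{01}$-equivariant coalgebra isomorphism. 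The remaining task is to show that the image of $\mathscr G_\infty\operatorname{Sym}+\operatorname{Sym}\mathscr G_0$ under $\operatorname{sym}$ agrees with the two-sided quotient up to the $\g$-coinvariance relation. This holds on the associated graded for the PBW filtration, and lifts because both sides are coideals of the same graded dimension in each bidegree; each bidegree piece is finite dimensional, by the bounds used in Prop.~\ref{prop:mt-vec-space-comp}.

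The main obstacle is this middle step. The quotient by a left ideal plus a right ideal is not obviously symmetric, and the identification with $\operatorname{Sym}(\mathfrak D_+/(\mathfrak D_+^0+\mathfrak D_+^\infty))$ requires controlling commutators $[\mathscr G_\infty,\mathscr G_0]$. My first attempt would be a filtration argument on PBW length: the commutator terms have lower length, and at the top symbol the quotient is $\operatorname{Sym}(\mathscr G_{01}/(\mathscr G_0+\mathscr G_\infty))$ modulo the action of $\mathscr G_0\cap\mathscr G_\infty=\g$, i.e.\ $\operatorname{Sym}(H^1)_\g$. Because we only need a coalgebra isomorphism and primitives map to primitives, comparing primitive parts and using cocommutativity in characteristic zero (Milnor--Moore for connected coalgebras, applied degreewise) would finish the argument. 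Dualizing degreewise and taking the product over spectral degrees then gives the corollary.
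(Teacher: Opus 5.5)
Your outline is exactly the paper's proof: Thm.~\ref{thm:mt-classical-limit}, then the normal form $C:=U(\mathscr G_{01})/J\cong(\operatorname{Sym}(W)\otimes U(\mathfrak b))_\g$ (with $J:=\mathscr G_\infty U(\mathscr G_{01})+U(\mathscr G_{01})\mathscr G_0$ and $W:=\mathfrak D_-^*/(E_0+E_\infty)$), symmetrization of $U(\mathfrak b)$, the \v{C}ech identification $H^1\cong W\oplus\mathfrak b$, and degreewise dualization. You are right that the normal-form identification must respect coproducts; the paper treats this as immediate. However, the argument you sketch for that step does not work as written.

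\begin{itemize}
\item The coideals you want to compare by dimension are infinite-dimensional in every bidegree. Already $U(\mathfrak D_+)$ in bidegree $(0,0)$ contains $(xt^k)(yt^{-k})$ for every $k\in\mathbb Z$. The bounds of Prop.~\ref{prop:mt-vec-space-comp} concern values of blocks, i.e.\ the quotient. Even for finite-dimensional quotients, equal dimensions prove nothing without a containment or a map.
\item That the symbols of $J$ are just $(\mathscr G_0+\mathscr G_\infty)\operatorname{Sym}(\mathscr G_{01})+\g\cdot\operatorname{Sym}(\mathscr G_{01})$ does not follow from ``commutators have lower length''. Leading terms of combinations $xP-Qy$ ($x\in\mathscr G_\infty$, $y\in\mathscr G_0$) can cancel. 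This fact is what Lem.~\ref{lem:mt-normal-form} supplies, and even then it only describes $\operatorname{gr}C$.
\item Matching primitives does not give an isomorphism of connected cocommutative coalgebras: $\mathbb C\oplus V$ with $V$ primitive and $\operatorname{Sym}(V)$ have the same primitives. Milnor--Moore-type arguments need a coalgebra map to start with, and that map is exactly what was missing.
\end{itemize}

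The step is in fact formal, and you never need to symmetrize $U(\mathscr G_{01})$.

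\begin{enumerate}
\item Realize $\operatorname{Sym}(W)$ as the subalgebra of $\operatorname{Sym}(\mathfrak D_-^*)=U(\mathfrak D_-^*)$ generated by the representatives $\alpha u^{d-1}t^{-r-1}$, $0\le r\le 2d$, as in Lem.~\ref{lem:mt-normal-form}.
\item $\operatorname{Sym}(W)$ and $U(\mathfrak b)$ are sub-bialgebras of $U(\mathscr G_{01})$, and its coproduct is multiplicative: $\Delta(fX)=\sum f_{(1)}X_{(1)}\otimes f_{(2)}X_{(2)}$. So $f\otimes X\mapsto fX$ is a coalgebra map from the tensor-product coalgebra. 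Composing with the coideal quotient gives a coalgebra map $\operatorname{Sym}(W)\otimes U(\mathfrak b)\to C$.
\item For $x\in\g\subseteq\mathscr G_0\cap\mathscr G_\infty$ one has $[x,fX]=xfX-fXx\in J$. Because $\g$ preserves bidegree, $[x,-]$ restricts to the tensor-product $\g$-action on $\operatorname{Sym}(W)\otimes U(\mathfrak b)$.
\item This action is by coderivations, so the map descends to the quotient coalgebra $(\operatorname{Sym}(W)\otimes U(\mathfrak b))_\g$. There it is the bijection $[f\otimes X]\mapsto[fX]$ from the proof of Thm.~\ref{thm:mt-classical-limit}, hence a coalgebra isomorphism.
\item Compose with the $\g$-equivariant coalgebra isomorphism $\mathrm{id}\otimes\operatorname{sym}$ and use $\operatorname{Sym}(W)\otimes\operatorname{Sym}(\mathfrak b)=\operatorname{Sym}(W\oplus\mathfrak b)$. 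This gives $C\cong\operatorname{Sym}(H^1)_\g$ as coalgebras.
\item Each spectral piece is finite-dimensional, so dualizing degreewise yields $(\operatorname{Sym}(H^1)^\vee)^\g$ with the convolution product.
\end{enumerate}
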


\begin{proof}
By Lem.~\ref{lem:mt-normal-form}, the coalgebra in \eqref{eq:mt-double-invariants} is $(\operatorname{Sym}(\mathfrak D_-^*/(E_0+E_\infty))\otimes U(\mathfrak b))_\g$. PBW symmetrization $\operatorname{Sym}(\mathfrak b)\cong U(\mathfrak b)$ and the \v{C}ech identification of $\mathfrak b$ and $\mathfrak D_-^*/(E_0+E_\infty)$ with the two summands of $H^1(\mathbb{MT},\mathscr G_{\mathrm{MT}})$ identify this quotient coalgebra with $\operatorname{Sym}(H^1(\mathbb{MT},\mathscr G_{\mathrm{MT}}))_\g$. Taking the full dual in each spectral degree yields the isomorphism.
\end{proof}

\section*{Acknowledgements}
The author thanks Kevin Costello and Wenjun Niu for helpful discussions and valuable comments. This research was supported by the Center of Mathematical Sciences and Applications at Harvard University.

\appendix
\section{The Koppinen smash product}
\label{app:kop}

All tensor products and linear maps in this appendix are over $\kk$.
Let $A$ be a unital associative algebra with multiplication $\mu_A$, and let $C$ be a coassociative counital coalgebra with coproduct $\Delta_C$ and counit $\epsilon_C$, with no finite-dimensionality assumptions.

\begin{definition}
\label{def:kop-entwining}
An entwining structure consists of $A,C$ and a linear map
$\psi:C\otimes A\to A\otimes C$ satisfying
\begin{equation*}
\begin{aligned}
\psi(\id_C\otimes\mu_A)
&=(\mu_A\otimes\id_C)(\id_A\otimes\psi)(\psi\otimes\id_A),
&\psi(c\otimes1_A)&=1_A\otimes c,\\
(\id_A\otimes\Delta_C)\psi
&=(\psi\otimes\id_C)(\id_C\otimes\psi)(\Delta_C\otimes\id_A),
\qquad&
(\id_A\otimes\epsilon_C)\psi(c\otimes a)&=\epsilon_C(c)a.
\end{aligned}
\end{equation*}
We denote these data by $(A,C,\psi)$ and write
$\Delta_C(c)=\sum_{(c)}c_{(1)}\otimes c_{(2)}$ and
$\psi(c\otimes a)=\sum_\alpha a_\alpha\otimes c^\alpha$.
\end{definition}

\begin{proposition}
\label{prop:kop-assoc}
For an entwining structure $(A,C,\psi)$, the product
\begin{equation}
(F\diamond G)(c)
=\sum_{(c),\alpha}F(c_{(2)})_\alpha\,
G(c_{(1)}^\alpha),
\qquad F,G\in\operatorname{Hom}_{\kk}(C,A),
\label{eq:kop-entwined-prod}
\end{equation}
is associative, with unit $c\mapsto\epsilon_C(c)1_A$
\cite[Sec.~1]{BrzezinskiEntwiningCohomology}, where $\psi(c_{(1)}\otimes F(c_{(2)}))=\sum_\alpha F(c_{(2)})_\alpha\otimes c_{(1)}^\alpha$. We call this algebra the Koppinen smash product associated to $(A,C,\psi)$.
\end{proposition}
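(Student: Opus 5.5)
The proof is a direct calculation in Sweedler-type notation: expand both bracketings of a triple product and match them term by term, using the two algebra-side axioms of Definition~\ref{def:kop-entwining} (compatibility of $\psi$ with $\mu_A$ and $1_A$) for one bracketing and the two coalgebra-side axioms (compatibility with $\Delta_C$ and $\epsilon_C$) for the other. No finiteness is needed, since each step uses only finite sums attached to a fixed $c\in C$. To keep track of repeated applications of $\psi$, I will write $\psi(c\otimes a)=a_\alpha\otimes c^\alpha$ with summation understood, and attach a new Greek index each time $\psi$ is applied.

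\emph{Unit.} Set $\mathbf 1(c)=\epsilon_C(c)1_A$. Then
\[
(\mathbf 1\diamond G)(c)=\epsilon_C(c_{(2)})\,(1_A)_\alpha\,G(c_{(1)}^\alpha).
\]
The axiom $\psi(c\otimes 1_A)=1_A\otimes c$ and counitality give $G(c)$. Likewise,
\[
(F\diamond\mathbf 1)(c)=F(c_{(2)})_\alpha\,\epsilon_C(c_{(1)}^\alpha).
\]
The counit axiom $(\id_A\otimes\epsilon_C)\psi(c\otimes a)=\epsilon_C(c)a$ turns this into $\epsilon_C(c_{(1)})F(c_{(2)})=F(c)$.

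\emph{Associativity.} I will first expand the left bracketing. Coassociativity and the definition give
\[
((F\diamond G)\diamond H)(c)=\bigl(F(c_{(3)})_\beta\,G(c_{(2)}^\beta)\bigr)_\alpha\,H(c_{(1)}^\alpha).
\]
The multiplicativity axiom $\psi(c\otimes ab)=a_\alpha b_\gamma\otimes (c^\alpha)^\gamma$ splits the outer application of $\psi$, yielding
\[
F(c_{(3)})_{\beta\alpha}\,G(c_{(2)}^\beta)_\gamma\,H\bigl((c_{(1)}^{\alpha})^{\gamma}\bigr).
\]
Here $F(c_{(3)})$ is moved past $c_{(2)}$ (index $\beta$) and then past $c_{(1)}$ (index $\alpha$).

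Next I will expand the right bracketing:
\[
(F\diamond(G\diamond H))(c)=F(c_{(2)})_\alpha\,G\bigl((c_{(1)}^\alpha)_{(2)}\bigr)_\gamma\,H\bigl((c_{(1)}^\alpha)_{(2)}{}^{\!\!\cdot}\,(c_{(1)}^\alpha)_{(1)}^{\gamma}\bigr)\Big|_{\text{read as } H((c_{(1)}^\alpha)_{(1)}^{\gamma})}.
\]
To compare the two, I apply the coproduct axiom
\[
(\id_A\otimes\Delta_C)\psi=(\psi\otimes\id_C)(\id_C\otimes\psi)(\Delta_C\otimes\id_A)
\]
to $c_{(1)}\otimes F(c_{(2)})$. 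This rewrites $a_\alpha\otimes (c^\alpha)_{(1)}\otimes(c^\alpha)_{(2)}$ as the result of moving $a$ first past $c_{(2)}$ and then past $c_{(1)}$, after relabelling via coassociativity. The effect is that $(c_{(1)}^\alpha)_{(2)}$ becomes $c_{(2)}^\beta$ and $(c_{(1)}^\alpha)_{(1)}$ becomes $c_{(1)}^{\alpha'}$, with $F(c_{(3)})_{\beta\alpha'}$ in front. After this substitution the right bracketing coincides with the left one.

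The main obstacle is purely bookkeeping. I must check carefully that the order of the indices ($F$ passes $c_{(2)}$ first, then $c_{(1)}$) produced by the coproduct axiom is exactly the order produced by the multiplicativity axiom on the left. In particular, the $\gamma$-twist of $c_{(1)}$ coming from moving $G(\cdot)$ past it must be applied after the $\alpha$-twist in both expressions. To avoid index errors, I would write both sides as composites of $\psi$, $\Delta_C$, $\mu_A$, $F$, $G$, $H$ on $C$, and compare the resulting string diagrams.
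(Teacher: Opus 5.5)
Your proof is correct. It is the standard direct check of unitality and associativity from the four entwining axioms. The paper gives no argument of its own here and simply cites Brzezi\'nski, and your computation is the standard verification behind that citation.

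The one blemish is the garbled display for the right bracketing. It should simply read $F(c_{(2)})_\alpha\,G\bigl((c_{(1)}^\alpha)_{(2)}\bigr)_\gamma\,H\bigl(((c_{(1)}^\alpha)_{(1)})^\gamma\bigr)$, as your own annotation indicates. With that reading, the $\Delta_C$-axiom substitution matches your left-bracketing expression $F(c_{(3)})_{\beta\alpha}\,G(c_{(2)}^\beta)_\gamma\,H\bigl((c_{(1)}^\alpha)^\gamma\bigr)$ term by term.
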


\begin{example}
\label{ex:kop-doi}
Let $H$ be a bialgebra, let $A$ be a right $H$-comodule algebra with unital coaction $\rho_A(a)=\sum a_{(0)}\otimes a_{(1)}$, and let $C$ be a right $H$-module coalgebra with action $\triangleleft$:
\begin{equation}
\Delta_C(c\triangleleft h)
=\sum_{(c),(h)}(c_{(1)}\triangleleft h_{(1)})
\otimes(c_{(2)}\triangleleft h_{(2)}),
\qquad
\epsilon_C(c\triangleleft h)=\epsilon_C(c)\epsilon_H(h).
\label{eq:kop-mod-coalgebra}
\end{equation}
These data yield the entwining structure \cite[Sec.~1]{SchauenburgDoiKoppinen}
\begin{equation*}
\psi(c\otimes a)
=\sum a_{(0)}\otimes(c\triangleleft a_{(1)}),
\end{equation*}
under which the Koppinen product \eqref{eq:kop-entwined-prod} becomes
\begin{equation*}
(F\diamond G)(c)
=\sum_{(c)}F(c_{(2)})_{(0)}
G\bigl(c_{(1)}\triangleleft F(c_{(2)})_{(1)}\bigr).
\end{equation*}
\end{example}

\begin{proposition}
\label{prop:kop-matched-pair}
Let $\mathfrak D=\mathfrak D_+\bowtie\mathfrak D_-$ be a matched pair of Lie algebras, so that $\mathfrak D=\mathfrak D_+\oplus\mathfrak D_-$ as vector spaces. Set $H=A=U(\mathfrak D_-)$ and $C=U(\mathfrak D_+)$, with their standard cocommutative coproducts $\Delta$, counits $\epsilon$, and antipodes $S$, and with $H$ coacting on $A=H$ via $\Delta$.
Let $p_+:U(\mathfrak D)\to U(\mathfrak D_+)$ be the PBW coalgebra projection characterized by $p_+(XY)=X\epsilon(Y)$ for $X\in U(\mathfrak D_+)$ and $Y\in U(\mathfrak D_-)$. The formula
\begin{equation}
X\triangleleft Y=p_+(S(Y)X)
\label{eq:kop-env-action}
\end{equation}
makes $C$ a right $H$-module coalgebra. Together with the coaction $\Delta$ on $A$, it gives the entwining
\begin{equation}
\psi(X\otimes Y)
=\sum_{(Y)}Y_{(1)}\otimes(X\triangleleft Y_{(2)})
=\sum_{(Y)}Y_{(2)}\otimes(X\triangleleft Y_{(1)}),
\label{eq:kop-env-entwining}
\end{equation}
and the associative Koppinen product on $\operatorname{Hom}_{\kk}(U(\mathfrak D_+),U(\mathfrak D_-))$:
\begin{equation}
(F\diamond G)(X)
=\sum_{(X),(Y)}Y_{(2)}G(X_{(2)}\triangleleft Y_{(1)}),
\qquad Y=F(X_{(1)}).
\label{eq:kop-env-prod}
\end{equation}
\end{proposition}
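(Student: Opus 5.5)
The proof reduces to the general Koppinen product of Prop.~\ref{prop:kop-assoc} via Ex.~\ref{ex:kop-doi}: I need to check that $C=U(\mathfrak D_+)$ with $X\triangleleft Y=p_+(S(Y)X)$ is a right $H$-module coalgebra over $H=U(\mathfrak D_-)$. Once that holds, Ex.~\ref{ex:kop-doi} with $A=H$ coacting on itself by $\Delta$ gives the entwining \eqref{eq:kop-env-entwining}, and the displayed Doi--Koppinen product specializes to \eqref{eq:kop-env-prod}. The two expressions in \eqref{eq:kop-env-entwining} agree by cocommutativity of $U(\mathfrak D_-)$. The product formula also needs cocommutativity of $U(\mathfrak D_+)$ to swap $X_{(1)}$ and $X_{(2)}$ and match the conventions of \eqref{eq:kop-env-prod}.

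\textbf{Step 1: $\triangleleft$ is a right action.} The key structural fact is that $U(\mathfrak D)\cong U(\mathfrak D_+)\otimes U(\mathfrak D_-)$ via multiplication (PBW for the matched pair). Hence $U(\mathfrak D)$ is a free right $U(\mathfrak D_-)$-module, and $p_+$ factors through the quotient
\[
U(\mathfrak D)\otimes_{U(\mathfrak D_-)}\kk \cong U(\mathfrak D)/U(\mathfrak D)\mathfrak D_-\cong U(\mathfrak D_+).
\]
This quotient is a left $U(\mathfrak D)$-module, namely the induced module. I would realize $X\triangleleft Y$ as the left action of $S(Y)$ on the class of $X$ in this induced module. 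The right action property then follows:
\[
(X\triangleleft Y)\triangleleft Y' = S(Y')S(Y)\cdot X = S(YY')\cdot X,
\]
since $S$ is an antihomomorphism. The unit axiom $X\triangleleft 1 = X$ is immediate.

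\textbf{Step 2: module-coalgebra identities \eqref{eq:kop-mod-coalgebra}.} The induced module $U(\mathfrak D)\otimes_{U(\mathfrak D_-)}\kk$ is a quotient coalgebra of $U(\mathfrak D)$, because $U(\mathfrak D)\mathfrak D_-$ is a coideal: elements of $\mathfrak D_-$ are primitive. Moreover, left multiplication by $U(\mathfrak D)$ is compatible with the coproduct, $\Delta(Zw)=\Delta(Z)\Delta(w)$, so the induced module is a $U(\mathfrak D)$-module coalgebra. Restricting along $S$, and using that $S$ is a coalgebra map on the cocommutative Hopf algebra $U(\mathfrak D_-)$, gives
\[
\Delta(X\triangleleft Y)=\sum (X_{(1)}\triangleleft Y_{(1)})\otimes(X_{(2)}\triangleleft Y_{(2)}),
\]
and the counit identity follows from $\epsilon\circ S=\epsilon$. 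I also need the identification $U(\mathfrak D_+)\cong U(\mathfrak D)/U(\mathfrak D)\mathfrak D_-$ to be a coalgebra isomorphism. It is, because it is induced by the inclusion $U(\mathfrak D_+)\hookrightarrow U(\mathfrak D)$, which is a coalgebra map. This is also why $p_+$ is called the PBW coalgebra projection.

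\textbf{Main obstacle.} The main work lies in Step 2, specifically in making the identification of $p_+$ with the induced-module quotient precise and confirming that no ordering subtlety arises from $p_+(XY)=X\epsilon(Y)$. The remaining steps are bookkeeping in Sweedler notation: substituting $\psi$ from \eqref{eq:kop-env-entwining} into \eqref{eq:kop-entwined-prod} and using cocommutativity to reach exactly \eqref{eq:kop-env-prod}.
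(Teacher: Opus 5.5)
Your proposal is correct and follows the paper's argument. Your induced-module picture is the paper's observation that $\ker p_+=U(\mathfrak D)\mathfrak D_-$ is a left ideal, which gives the right-action property. Your quotient-coalgebra step is the paper's use of the fact that $p_+$ and $S$ are coalgebra maps. After that, both you and the paper invoke Ex.~\ref{ex:kop-doi} and the cocommutativity of $U(\mathfrak D_\pm)$ to obtain \eqref{eq:kop-env-entwining} and \eqref{eq:kop-env-prod}.
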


\begin{proof}
Since $\ker p_+=U(\mathfrak D)\mathfrak D_-$ is a left ideal,
\begin{equation*}
(X\triangleleft Y)\triangleleft Y'
=p_+\bigl(S(Y')S(Y)X\bigr)
=X\triangleleft(YY'),
\qquad X\triangleleft1=X.
\end{equation*}
Since $p_+$ and $S$ are coalgebra maps for the cocommutative coproduct, applying $\Delta$ and $\epsilon$ to \eqref{eq:kop-env-action} gives \eqref{eq:kop-mod-coalgebra}. Thus Ex.~\ref{ex:kop-doi} applies. Cocommutativity of $U(\mathfrak D_-)$ gives the second expression in \eqref{eq:kop-env-entwining}; substituting it into \eqref{eq:kop-entwined-prod} and using cocommutativity of $U(\mathfrak D_+)$ gives \eqref{eq:kop-env-prod}.
\end{proof}

\begin{lemma}[PBW change-of-basis isomorphism]
\label{lem:kop-opp-PBW}
For the opposite PBW order $U(\mathfrak D)\cong H\otimes C$ ($H=U(\mathfrak D_-)$, $C=U(\mathfrak D_+)$) with projection $\bar p_+(YX)=\epsilon(Y)X$, the action $X\triangleleft_R Y:=\bar p_+(XY)$ induces the associative product $(F\diamond_R G)(X):=\sum_{(X),(Y)}Y_{(2)}G(X_{(2)}\triangleleft_R Y_{(1)})$ on $\operatorname{Hom}(C,H)$, with $Y=F(X_{(1)})$.
The assignment $F\mapsto F^R$, characterized together with its inverse by
\begin{equation*}
F^R(X):=\sum_{(X)}p_-\bigl(S(X_{(1)})F(X_{(2)})\bigr),
\qquad
F(X)=\sum_{(X)}p_-\bigl(X_{(1)}F^R(X_{(2)})\bigr),
\end{equation*}
defines an algebra isomorphism
\begin{equation*}
(-)^R:(\operatorname{Hom}(C,H),\diamond)\xrightarrow{\sim}(\operatorname{Hom}(C,H),\diamond_R),
\qquad
(F\diamond G)^R=F^R\diamond_RG^R.
\end{equation*}
\end{lemma}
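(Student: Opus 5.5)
The plan is to avoid verifying $(F\diamond G)^R=F^R\diamond_R G^R$ by brute force. Instead I would realize both Koppinen algebras as the same intrinsic algebra built from the coalgebra $U(\mathfrak D)$ with its two PBW factorizations. Concretely, consider the space $\mathcal E$ of left $H$-linear maps $\Phi:U(\mathfrak D)\to H$, where $H$ acts on $U(\mathfrak D)$ by left multiplication. Equip $\mathcal E$ with the convolution-type product
\[
(\Phi\ast\Psi)(Z)=\sum_{(Z)}\Phi\bigl(Z_{(1)}\bigr)_{\ldots}\,\Psi(\ldots),
\]
chosen so that it is the dual ring of the $H$-coring structure carried by $U(\mathfrak D)$. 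Here $U(\mathfrak D)$ is an $H$-bimodule under multiplication, and its coproduct is compatible with the bimodule structure because $U(\mathfrak D_-)\subset U(\mathfrak D)$ is a sub-bialgebra. Restricting $\Phi$ along the two factorizations $U(\mathfrak D)\cong C\otimes H$ (via $XY$) and $U(\mathfrak D)\cong H\otimes C$ (via $YX$) produces two elements of $\operatorname{Hom}(C,H)$. I would show that these are $F$ and $F^R$ for the same $\Phi$, and that under each restriction $\ast$ becomes $\diamond$, respectively $\diamond_R$. The algebra isomorphism then follows formally.

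The first step is the bijectivity claim, which is a direct antipode computation. Substituting $F^R(X)=\sum p_-(S(X_{(1)})F(X_{(2)}))$ into $\sum p_-(X_{(1)}F^R(X_{(2)}))$ gives $\sum p_-\bigl(X_{(1)}\,p_-(S(X_{(2)})F(X_{(3)}))\bigr)$. To simplify this I will use that $p_-(X\,p_-(Z))=p_-(XZ)$ for $X\in C$ and $Z\in U(\mathfrak D)$. This holds because $\ker p_-=U(\mathfrak D_-)\,\mathfrak D_+U(\mathfrak D_+)$, i.e.\ the kernel is $H\cdot C^+$, which is a right ideal up to reordering; I will check this using the first identity of Lem.~\ref{lem:kop-current-word}-type reordering, $X'Z=p_-(X'Z)+\sum Z_{(2)}(X'\triangleleft Z_{(1)})$, inductively on the PBW length of $X$. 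With that identity the expression collapses via $\sum X_{(1)}S(X_{(2)})=\epsilon(X)$ to $F(X)$. The other composite is handled symmetrically.

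The second step identifies the restrictions. Given $F$, set $\Phi_F(YX):=Y\,F(X)$, with $Y\in H$ and $X\in C$; this is left $H$-linear by construction. Rewriting $XY'$ in the order $H\otimes C$ as $\sum Y'_{(2)}(X\triangleleft Y'_{(1)})$ plus reordering terms, I then read off the other restriction and check that it is $F^R$. For the product, I verify that $\Phi_{F\diamond G}=\Phi_F\ast\Phi_G$ when evaluated on elements $YX$; this is essentially the entwining description in Prop.~\ref{prop:kop-matched-pair}. Evaluating the same identity on elements $XY$, using the $\diamond_R$ formula with $\bar p_+$, gives $F^R\diamond_RG^R$. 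Unitality is immediate, since $\epsilon(X)1$ is fixed by $(-)^R$.

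I expect the main obstacle to be the second half of the second step: showing that the intrinsic product $\ast$, when written in the opposite factorization, really is $\diamond_R$. This requires the module-coalgebra compatibility for $\triangleleft_R$, and identities such as the second line of \eqref{eq:mt-action-induction}, $S(Z)X'=-S(p_-(X'Z))+\sum (X'\triangleleft Z_{(1)})S(Z_{(2)})$. If the coring formalism becomes awkward, my fallback is to prove $(F\diamond G)^R=F^R\diamond_RG^R$ by induction on the PBW length of $X$. The base case $X\in\mathfrak D_+$ would use these three reordering identities together with cocommutativity of $U(\mathfrak D_\pm)$, and the inductive step would use the primitive coproduct of $X'X$, exactly as in the induction of Prop.~\ref{prop:mt-conv-comp}.
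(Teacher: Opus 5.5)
You have a genuine gap in the multiplicativity step. Your first step is correct and matches the paper's ``antipode identities''. The reason is simpler than you suggest: $\ker p_-=U(\mathfrak D)C^+$ (with $C^+=\ker\epsilon$) is a \emph{left} ideal, not a right one, so $p_-(Z'p_-(Z))=p_-(Z'Z)$ for all $Z,Z'\in U(\mathfrak D)$ without any reordering induction.

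The problem is $(F\diamond G)^R=F^R\diamond_RG^R$, where you have three issues.
\begin{enumerate}
\item The product $\ast$ on $\mathcal E$ is never written down, and the coring it should dualize does not exist as described. The coproduct of $U(\mathfrak D)$ satisfies $\Delta(YZ)=\Delta(Y)\Delta(Z)$, not $\Delta(YZ)=Y\Delta(Z)$. Even the left $H$-linear candidate $YX\mapsto\sum YX_{(1)}\otimes_HX_{(2)}$ is not right $H$-linear for multiplication: for $\xi\in\mathfrak D_+$, $y\in\mathfrak D_-$ the defect is $[\xi,y]_-\otimes_H1$. The coring whose dual ring gives $\diamond$ is the entwined one, with right action $\psi(\xi\otimes y)=y\otimes\xi+1\otimes[\xi,y]_+$. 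That is $\xi y$ with the term $p_-(\xi y)$ deleted, so it is not $U(\mathfrak D)$ with multiplication.
\item $F^R$ is not a restriction of $\Phi_F(YX)=YF(X)$. In either factorization $\Phi_F$ restricts to $F$ on $C$. By contrast, $F^R(\xi)=F(\xi)-p_-(\xi F(1))$ acts on the \emph{values} of $F$ through the left $C$-action $X\cdot Y=p_-(XY)$ on $H$. A natural intrinsic picture lives on the dual side instead, as functionals on $H^*\# C$ read in the orders $fX$ and $Xf$; cf.\ \eqref{eq:mt-mode-fn-PBW-pairing} and \eqref{eq:mt-reorder-mode-fn-pairing}.
\item The fallback also fails. $F,G$ are arbitrary linear maps, so $F(\xi X)$ is unrelated to $F(X)$ and there is no recursion to induct on. The induction in Prop.~\ref{prop:mt-conv-comp} works only because $(\lambda\otimes\mu)\Delta(fX\mathbf1)$ is computed recursively in the mode algebra.
\end{enumerate}

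The missing idea, which is the paper's argument, is to compare the two PBW expansions of $XY$ for $X\in C$, $Y\in H$. Since $Z=\sum p_-(Z_{(1)})\bar p_+(Z_{(2)})$, one has $XY=\sum p_-(X_{(1)}Y_{(1)})(X_{(2)}\triangleleft_R Y_{(2)})$. Next, the identity $\bar p_+(Z)=\sum p_+\bigl(S(p_-(Z_{(1)}))Z_{(2)}\bigr)$ holds (check it on $Z=YX$). It gives $X\triangleleft_R Y=\sum X_{(2)}\triangleleft p_-(X_{(1)}Y)$, and hence, by cocommutativity,
\[
\sum p_-(X_{(1)}Y_{(1)})\otimes(X_{(2)}\triangleleft_R Y_{(2)})=\sum W_{(2)}\otimes(X_{(2)}\triangleleft W_{(1)}),\qquad W=p_-(X_{(1)}Y).
\]
Now set $\Phi=F^R$ and $\Gamma=G^R$, and write
$\sum p_-\bigl(X_{(1)}(\Phi\diamond_R\Gamma)(X_{(2)})\bigr)=\sum p_-\bigl(X_{(1)}V_{(2)}\Gamma(X_{(3)}\triangleleft_R V_{(1)})\bigr)$ with $V=\Phi(X_{(2)})$.
\begin{enumerate}
\item Expand $X_{(1)}V_{(2)}$ by the first expansion and pull the $H$-factor out of $p_-$.
\item Use the module-coalgebra property of $\triangleleft_R$ to assemble $G(X')=\sum p_-(X'_{(1)}\Gamma(X'_{(2)}))$. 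This gives $\sum p_-(X_{(1)}V_{(1)})\,G(X_{(2)}\triangleleft_R V_{(2)})$ with $V=\Phi(X_{(3)})$.
\item The displayed identity turns this into $\sum W_{(2)}G(X_{(2)}\triangleleft W_{(1)})$ with $W=p_-(X_{(1)}\Phi(X_{(3)}))$.
\item Cocommutativity identifies $\sum p_-(X_{(1)}\Phi(X_{(3)}))\otimes X_{(2)}$ with $\sum F(X_{(1)})\otimes X_{(2)}$.
\end{enumerate}
The result is $(F\diamond G)(X)$. After applying $(-)^R$ this is exactly $(F\diamond G)^R=F^R\diamond_R G^R$, and associativity of $\diamond_R$ then follows by transport of structure.
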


\begin{proof}
Associativity of $\diamond_R$ follows as in Prop.~\ref{prop:kop-matched-pair}, since $\ker\bar p_+=H^+U(\mathfrak D)$ is a right ideal. The antipode identities show that the two displayed formulas are mutually inverse. Expanding $XY$ in the two PBW factorizations,
\begin{equation*}
XY = \sum_{(X),(W)}W_{(2)}(X_{(2)}\triangleleft W_{(1)}) = \sum_{(X),(Y)}p_-(X_{(1)}Y_{(1)})(X_{(2)}\triangleleft_RY_{(2)})
\end{equation*}
(with $W=p_-(X_{(1)}Y)$), and substituting into \eqref{eq:kop-env-prod} yields $(F\diamond G)^R=F^R\diamond_R G^R$.
\end{proof}

The next lemma computes the matched-pair action when a Lie generator is added to a word.

\begin{lemma}
\label{lem:kop-current-word}
In the notation of Prop.~\ref{prop:kop-matched-pair}, for $\xi\in\mathfrak D_+$, $X\in U(\mathfrak D_+)$, and $Y\in U(\mathfrak D_-)$ one has
\begin{equation}
\xi Y=p_-(\xi Y)
+\sum_{(Y)}Y_{(2)}(\xi\triangleleft Y_{(1)}),
\label{eq:kop-PBW-reorder}
\end{equation}
and consequently
\begin{equation}
(\xi X)\triangleleft Y
=\sum_{(Y)}(\xi\triangleleft Y_{(1)})(X\triangleleft Y_{(2)})
-X\triangleleft p_-(\xi Y).
\label{eq:kop-word-recursion}
\end{equation}
\end{lemma}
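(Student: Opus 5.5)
We must prove Lemma~\ref{lem:kop-current-word}: \eqref{eq:kop-PBW-reorder} and the recursion \eqref{eq:kop-word-recursion}. The plan is to work entirely inside $U(\mathfrak D)$ with the PBW factorization $U(\mathfrak D)\cong U(\mathfrak D_-)\otimes U(\mathfrak D_+)$ in the order $YX$ (since $p_+(XY)=X\epsilon(Y)$ means the $\mathfrak D_-$ part is on the right for $p_+$, we use the complementary factorization in which $p_-$ and $p_+$ read off the two tensor components), and to reduce everything to the defining formula $X\triangleleft Y=p_+(S(Y)X)$ together with cocommutativity of $U(\mathfrak D_\pm)$.

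For \eqref{eq:kop-PBW-reorder}, write $\xi Y$ in PBW form $\sum_i Y_i X_i$ with $Y_i\in U(\mathfrak D_-)$, $X_i\in U(\mathfrak D_+)$. Since $\xi$ is primitive and $\mathfrak D_+$-degree is filtered, $X_i$ has degree at most one, so $\xi Y=Y^0+\sum Y'_j\xi_j$ with $Y^0=p_-(\xi Y)$ the degree-zero component. The key identity is to identify the degree-one part. I would prove $\xi Y=p_-(\xi Y)+\sum_{(Y)}Y_{(2)}(\xi\triangleleft Y_{(1)})$ by induction on the length of a PBW monomial $Y=yY'$, $y\in\mathfrak D_-$: use $\xi y=y\xi+[\xi,y]$, split $[\xi,y]=\phi_-(y)\xi$-part in $\mathfrak D_+$ plus a $\mathfrak D_-$-part, and check that $\xi\triangleleft y=p_+(-y\xi)=-p_+([y,\xi])$ reproduces exactly the $\mathfrak D_+$ component, while $\xi\triangleleft(yY')=(\xi\triangleleft y)\triangleleft Y'$ (module property from Prop.~\ref{prop:kop-matched-pair}) and the primitive coproduct $\Delta(yY')=yY'_{(1)}\otimes Y'_{(2)}+Y'_{(1)}\otimes yY'_{(2)}$ handle the bookkeeping. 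Alternatively and more cleanly: apply $\Delta$ to the PBW expansion and use that the factorization map $U(\mathfrak D_-)\otimes U(\mathfrak D_+)\to U(\mathfrak D)$ is a coalgebra isomorphism; then the $\mathfrak D_+$-primitive part is extracted by $(p_-\otimes p_+)\Delta(\xi Y)$, and comparing with $\epsilon$ on the left factor via $S$ gives $\sum Y_{(2)}p_+(S(Y_{(1)})\xi Y_{(3)}\cdots)$; I expect this coalgebra argument to be the main obstacle, specifically verifying that $p_+(S(Y_{(1)})\,\xi\, Y_{(2)})$ collapses to $\xi\triangleleft Y_{(1)}$ times the counit, and I would fall back on the induction if it gets messy.

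For \eqref{eq:kop-word-recursion}, compute $(\xi X)\triangleleft Y=p_+(S(Y)\xi X)$. Apply the antipode to \eqref{eq:kop-PBW-reorder}, obtaining $S(Y)\xi=-S(p_-(\xi Y))+\sum_{(Y)}(\xi\triangleleft Y_{(1)})S(Y_{(2)})$ after moving $\xi$ back through (this is the second line of \eqref{eq:mt-action-induction}); more precisely one applies $S$ to $\xi S(Y)$ version, using $S(\xi)=-\xi$ and anti-multiplicativity. Multiply on the right by $X$ and apply $p_+$. The first term gives $-p_+(S(p_-(\xi Y))X)=-X\triangleleft p_-(\xi Y)$. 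In the second, $p_+$ is left $U(\mathfrak D_+)$-linear (since $\ker p_+=U(\mathfrak D)\mathfrak D_-$ is a left ideal), so $p_+((\xi\triangleleft Y_{(1)})S(Y_{(2)})X)=(\xi\triangleleft Y_{(1)})\,(X\triangleleft Y_{(2)})$. Summing gives \eqref{eq:kop-word-recursion}.
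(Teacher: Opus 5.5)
Your proposal is correct. Your derivation of \eqref{eq:kop-word-recursion} is exactly the paper's: apply $S$ to \eqref{eq:kop-PBW-reorder}, multiply on the right by $X$, apply $p_+$, and use that $p_+$ is left $U(\mathfrak D_+)$-linear.

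For \eqref{eq:kop-PBW-reorder} you take a different route.

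\textbf{The paper's argument.} It is closed-form. It starts from the Hopf identity $\xi Y=\sum_{(Y)}Y_{(1)}\ad_{S(Y_{(2)})}\xi$, which writes $\xi Y$ directly as an element of $U(\mathfrak D_-)\cdot\mathfrak D$. It then splits each $\ad_{S(Y_{(2)})}\xi\in\mathfrak D$ into its $\mathfrak D_\pm$ parts. The identity $p_+(ZY')=p_+(Z)\epsilon(Y')$ for $Y'\in U(\mathfrak D_-)$ gives $p_+(\ad_{S(Y)}\xi)=p_+(S(Y)\xi)=\xi\triangleleft Y$, and cocommutativity finishes the proof. This is the clean version of the coalgebra argument you sketched and flagged as the obstacle. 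The collapse you were unsure of is exactly that identity for $p_+$.

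\textbf{Your argument.} You induct on word length $Y=yY'$, using three ingredients:
\begin{itemize}
\item the base case $\xi\triangleleft y=p_+[\xi,y]$;
\item the module property $(\xi\triangleleft y)\triangleleft Y'=\xi\triangleleft(yY')$ from Prop.~\ref{prop:kop-matched-pair};
\item the coproduct $\Delta(yY')=yY'_{(1)}\otimes Y'_{(2)}+Y'_{(1)}\otimes yY'_{(2)}$.
\end{itemize}
This checks out, with two conditions. First, run the induction uniformly in $\xi\in\mathfrak D_+$, since the hypothesis is applied both to $\xi Y'$ and to $(\xi\triangleleft y)Y'$. Second, observe that the leftover $y\,p_-(\xi Y')+p_-((\xi\triangleleft y)Y')+p_-[\xi,y]\,Y'$ lies in $U(\mathfrak D_-)$, and is therefore $p_-(\xi Y)$.

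\textbf{Comparison.} The paper's route is shorter and non-inductive. It shows at once that $\xi\triangleleft Y\in\mathfrak D_+$, and implicitly gives $p_-(\xi Y)=\sum_{(Y)}Y_{(1)}\,p_-\bigl(\ad_{S(Y_{(2)})}\xi\bigr)$. Yours is more elementary, needing only the Lie bracket and the already-established module property.

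\textbf{Two small points.}
\begin{itemize}
\item $p_+$ does not read off the $U(\mathfrak D_+)$-component of the $YX$ factorization; that map is $\bar p_+$ of Lem.~\ref{lem:kop-opp-PBW}. Only $p_-$ does. You do use $p_+$ correctly later, so this is harmless.
\item The antipode step needs $S(\xi\triangleleft Y_{(1)})=-\,\xi\triangleleft Y_{(1)}$, i.e.\ $\xi\triangleleft Y\in\mathfrak D_+$. You should say so explicitly; your induction supplies it. Applying $S$ directly to \eqref{eq:kop-PBW-reorder} suffices, so no ``$\xi S(Y)$ version'' is needed.
\end{itemize}
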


\begin{proof}
Adjoint reordering in $U(\mathfrak D)$ gives $\xi Y=\sum_{(Y)}Y_{(1)}\ad_{S(Y_{(2)})}\xi$. Applying $p_+$ to $\ad_{S(Y)}\xi=\sum_{(Y)}S(Y_{(1)})\xi Y_{(2)}$ yields $p_+(S(Y)\xi)=\xi\triangleleft Y\in\mathfrak D_+$, as the rightmost factor is evaluated by $\epsilon$. Splitting each adjoint term into its negative and positive Lie parts gives $\xi Y = p_-(\xi Y) + \sum_{(Y)}Y_{(1)}(\xi\triangleleft Y_{(2)})=p_-(\xi Y)+\sum_{(Y)}Y_{(2)}(\xi\triangleleft Y_{(1)})$ by cocommutativity, proving \eqref{eq:kop-PBW-reorder}. Applying $S$ and multiplying on the right by $X$ gives $S(Y)\xi X = -S(p_-(\xi Y))X + \sum_{(Y)}(\xi\triangleleft Y_{(1)})S(Y_{(2)})X$; applying $p_+$ yields \eqref{eq:kop-word-recursion}.
\end{proof}

\end{document}